\def\NZQ{\mathbb}               
\def\NN{{\NZQ N}}
\def\QQ{{\NZQ Q}}
\def\ZZ{{\NZQ Z}}
\def\RR{{\NZQ R}}
\newtheorem{Theorem}{Theorem}[section]
\newtheorem{Lemma}[Theorem]{Lemma}
\newtheorem{Corollary}[Theorem]{Corollary}
\newtheorem{Proposition}[Theorem]{Proposition}
\newtheorem{Remark}[Theorem]{Remark}
\newtheorem{Example}[Theorem]{Example}
\newtheorem{Definition}[Theorem]{Definition}
\let\epsilon\varepsilon
\let\phi=\varphi
\let\kappa=\varkappa
\begin{document}
\title{The  Minkowski equality of filtrations}
\author{Steven Dale Cutkosky}

\thanks{Partially supported by NSF grant DMS-1700046.}

\address{Steven Dale Cutkosky, Department of Mathematics,
University of Missouri, Columbia, MO 65211, USA}
\email{cutkoskys@missouri.edu}

\begin{abstract}
Suppose that $R$ is an analytically irreducible or excellent local domain with maximal ideal $m_R$. We 
 consider multiplicities and mixed multiplicities of $R$ by filtrations of $m_R$-primary ideals. We show that the theorem of Teissier, Rees and Sharp, and Katz, characterizing equality in the Minkowski inequality for multiplicities of ideals, is true for divisorial filtrations, and for the larger category of bounded filtrations. This theorem is not true for arbitrary filtrations of $m_R$-primary ideals. 
 \end{abstract}

\keywords{Mixed Multiplicity, Valuation, Divisorial Filtration}
\subjclass[2010]{13H15, 13A18, 14C17}

\maketitle

\section{Introduction}\label{Intro}

The study of mixed multiplicities of $m_R$-primary ideals in a  local ring $R$ with maximal ideal $m_R$  was initiated by Bhattacharya \cite{Bh}, Rees  \cite{R} and Teissier  and Risler \cite{T1}. 
In \cite{CSS}  the notion of mixed multiplicities is extended to arbitrary,  not necessarily Noetherian, filtrations of $R$ by $m_R$-primary ideals ($m_R$-filtrations). It is shown in \cite{CSS} that many basic theorems for mixed multiplicities of $m_R$-primary ideals are true for $m_R$-filtrations. 

The development of the subject of mixed multiplicities and its connection to Teissier's work on equisingularity \cite{T1} can be found in \cite{GGV}.   A  survey of the theory of  mixed multiplicities of  ideals  can be found in  \cite[Chapter 17]{HS}, including discussion of the results of  the papers \cite{R1} of Rees and \cite{S} of  Swanson, and the theory of Minkowski inequalities of Teissier \cite{T1}, \cite{T2}, Rees and Sharp \cite{RS} and Katz \cite{Ka}.   Later, Katz and Verma \cite{KV}, generalized mixed multiplicities to ideals that are not all $m_R$-primary.  Trung and Verma \cite{TV} computed mixed multiplicities of monomial ideals from mixed volumes of suitable polytopes.  


A filtration $\mathcal I=\{I_n\}_{n\in\NN}$ of  a ring $R$ is a descending chain
$$
R=I_0\supset I_1\supset I_2\supset \cdots
$$
of ideals such that $I_iI_j\subset I_{i+j}$ for all $i,j\in \NN$.  An $m_R$-filtration $\mathcal I=\{I_n\}$ is a filtration $\mathcal I=\{I_n\}_{n\in\NN}$ of $R$ such that   $I_n$ is $m_R$-primary for $n\ge 1$.

A filtration $\mathcal I=\{I_n\}_{n\in\NN}$ of  a ring $R$ is said to be Noetherian if $\bigoplus_{n\ge 0}I_n$ is a finitely generated $R$-algebra.
 

The following theorem is the key result needed to define the multiplicity of an $m_R$-filtration.  Let $\ell_R(M)$ denote the length of an $R$-module $M$.

\begin{Theorem} \label{TheoremI20} (\cite[Theorem 1.1]{C2} and  \cite[Theorem 4.2]{C3}) Suppose that $R$ is a  local ring of dimension $d$, and  $N(\hat R)$ is the nilradical of the $m_R$-adic completion $\hat R$ of $R$.  Then   the limit 
\begin{equation}\label{I5}
\lim_{n\rightarrow\infty}\frac{\ell_R(R/I_n)}{n^d}
\end{equation}
exists for any $m_R$-filtration  $\mathcal I=\{I_n\}$, if and only if $\dim N(\hat R)<d$.
\end{Theorem}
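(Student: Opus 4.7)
The plan is to pass to the $m_R$-adic completion $\hat R$ and treat the two directions separately. Since $\ell_R(R/I_n)=\ell_{\hat R}(\hat R/I_n\hat R)$ for each $m_R$-primary $I_n$, and every $m_R$-filtration of $R$ extends to an $m_{\hat R}$-filtration of $\hat R$ with the same length sequence, I may assume in the existence (``if'') direction that $R$ is itself complete. For the converse (``only if'') direction, I need to construct a bad $m_R$-filtration of $R$ using the nilpotent structure of $\hat R$; I would first work in $\hat R$ and then contract.

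For the existence direction, assume $R$ is complete with $\dim N(R)<d$, and let $N=N(R)$. The exact sequence
\[
0\to (I_n+N)/I_n\to R/I_n\to R/(I_n+N)\to 0
\]
shows that to prove existence of the limit it is enough to do so for $R/N$, provided $\ell((I_n+N)/I_n)=o(n^d)$. But $(I_n+N)/I_n$ is a quotient of the $R$-module $N$, which has Krull dimension $<d$, and a standard dimension bound then gives the desired estimate. So I am reduced to the case that $R$ is reduced and complete. Additivity over the minimal primes of dimension $d$ reduces further to the case of a complete local domain $R$. In that case I would deploy the Okounkov-body / valuation-semigroup technology of Kaveh--Khovanskii: pick a rational rank $d$ valuation $\nu$ of $\mathrm{Frac}(R)$ dominating $R$, and associate to $\mathcal I$ the graded semigroup
\[
S(\mathcal I)=\{(\nu(f),n)\in\NN^d\times\NN: n\geq 1,\;f\in I_n\setminus\{0\}\}.
\]
With $\nu$ chosen appropriately, $\ell(R/I_n)$ equals the number of lattice points in the $n$-th slice of $S(\mathcal I)$, and the Kaveh--Khovanskii lattice-point theorem then gives convergence of $\ell(R/I_n)/n^d$ to the $d$-dimensional volume of the associated Okounkov body.

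For the converse, suppose $\dim N(\hat R)=d$, and pick a nonzero $\xi\in N(\hat R)$ with $\dim\hat R/\mathrm{Ann}(\xi)=d$. The plan is to build an $m_R$-filtration of $R$ (via contraction from $\hat R$) whose length ratio $\ell(R/I_n)/n^d$ has two distinct limit points. The idea is that multiplication by $\xi$ produces a top-dimensional submodule whose contribution to length can be turned on or off by controlling whether $\xi$-multiples lie in $I_n$. I would interleave two patterns along two arithmetic subsequences, one of shape $I_n\sim m_{\hat R}^{a_n}$ and one of shape $I_n\sim m_{\hat R}^{a_n}+\xi\cdot m_{\hat R}^{b_n}$, choosing parameters so that the filtration axiom $I_iI_j\subset I_{i+j}$ survives (a power of $\xi$ vanishes, which helps) yet the leading coefficients in $n^d$ along the two subsequences differ. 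Intersecting with $R$ preserves $m_R$-primality and the filtration axiom.

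The main obstacle is the existence direction in the complete domain case, because $\bigoplus_n I_n$ need not be a Noetherian $R$-algebra, so classical Hilbert polynomial methods do not apply. The technical heart is verifying that the semigroup $S(\mathcal I)$ satisfies the Kaveh--Khovanskii hypotheses (appropriate finite index in $\ZZ^{d+1}$, convexity of the generating cone) for \emph{every} $m_R$-filtration and not merely the $m_R$-adic one; this is precisely what is carried out in \cite{C2} and \cite{C3}.
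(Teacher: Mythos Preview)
The paper does not itself prove Theorem~\ref{TheoremI20}; it is quoted from \cite{C2} and \cite{C3}, with the converse direction attributed in the surrounding text to an observation of Dao and Smirnov. The framework the paper develops in Sections~\ref{SecFrame} and~\ref{SecMF} (the flag valuation $\nu$, the semigroups $\Gamma(\mathcal I)$, and formula~(\ref{eqMF5})) is exactly the Okounkov-body machinery you invoke, specialized to the analytically irreducible case. So at the level of strategy your outline for the existence direction --- pass to $\hat R$, peel off the nilradical using $\dim N(\hat R)<d$, reduce to a complete domain via minimal primes, then apply valuation semigroups --- agrees with both the cited proof and the paper's later implementation.

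Two points need correction. First, your sentence ``$\ell(R/I_n)$ equals the number of lattice points in the $n$-th slice of $S(\mathcal I)$'' is false as written: the $n$-th slice $\{\nu(f):0\ne f\in I_n\}$ is an infinite set, since $I_n\supset m_R^{cn}$ is infinite-dimensional over $R/m_R$. What one actually does (compare (\ref{eqMF5})) is truncate both $\Gamma(R)$ and $\Gamma(\mathcal I)$ by a half-space $H_c^-$ to obtain finite lattice-point counts, take their \emph{difference}, which up to the factor $\delta=[\mathcal O_\nu/m_\nu:R/m_R]$ gives $\ell_R(R/I_n)$ for $c$ large, and then pass to the limit to get a difference of two compact volumes. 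Without the truncation and the difference structure the Kaveh--Khovanskii theorem does not even apply to the semigroup you wrote down.

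Second, your converse has a genuine gap. You construct the bad filtration $\{J_n\}$ in $\hat R$ using $\xi\in N(\hat R)$ and then propose to ``intersect with $R$''. But contraction controls lengths only from one side: from $I_n=J_n\cap R$ you get $I_n\hat R\subset J_n$, hence $\ell_R(R/I_n)=\ell_{\hat R}(\hat R/I_n\hat R)\ge \ell_{\hat R}(\hat R/J_n)$, a one-sided inequality that does not transfer oscillation of $\ell_{\hat R}(\hat R/J_n)/n^d$ to oscillation of $\ell_R(R/I_n)/n^d$. The repair is to build the ideals directly in $R$: choose $\xi_n\in R$ with $\xi_n\equiv\xi\pmod{m_{\hat R}^{n}}$ and set $I_n=m_R^{a_n}+\xi_{c_n}m_R^{b_n}\subset R$ with parameters arranged so that $I_n\hat R$ equals your intended $J_n$ on the nose, not merely up to contraction.
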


The problem of existence of such limits (\ref{I5}) has been considered by Ein, Lazarsfeld and Smith \cite{ELS} and Musta\c{t}\u{a} \cite{Mus}.
When the ring $R$ is a domain and is essentially of finite type over an algebraically closed field $k$ with $R/m_R=k$, Lazarsfeld and Musta\c{t}\u{a} \cite{LM} showed that
the limit exists for all $m_R$-filtrations.  Cutkosky \cite{C3} proved it in the complete generality  stated above in Theorem \ref{TheoremI20}. 
Lazarsfeld and Musta\c{t}\u{a}  use  in \cite{LM} the method of counting asymptotic vector space dimensions of graded families using ``Okounkov bodies''. This method, which is reminiscent of the geometric methods used by Minkowski in number theory, was developed by Okounkov \cite{Ok}, Kaveh and Khovanskii \cite{KK} and Lazarsfeld and   Musta\c{t}\u{a}   \cite{LM}. We also use this wonderful method. 
The fact that $\dim N(R)=d$ implies there exists a filtration without a limit was observed by Dao and Smirnov.

As can be seen from this theorem,  one must impose the condition that 
the dimension of the nilradical of the completion $\hat R$ of $R$ is less than the dimension of $R$ to ensure the existence of limits. The nilradical $N(R)$ of a $d$-dimensional ring $R$ is 
$$
N(R)=\{x\in R\mid x^n=0 \mbox{ for some positive integer $n$}\}.
$$
We have that $\dim N(R)=d$ if and only if there exists a minimal prime $P$ of $R$ such that $\dim R/P =d$ and $R_P$ is not reduced. In particular,  the condition $\dim N(\hat R)<d$ holds if $R$ is analytically unramified; that is, $\hat R$ is reduced.  Thus it holds if $R$ is excellent and reduced. 
We define the multiplicity of $R$ with respect to the $m_R$-filtration $\mathcal I=\{I_n\}$ to be 
$$
e_R(\mathcal I)=e_R(\mathcal I;R)=
\lim_{n\rightarrow \infty}\frac{\ell_R(R/I_n)}{n^d/d!}.
$$

The multiplicity of a ring with respect to a non Noetherian filtration can be an irrational number. 
A simple example on a regular local ring is given in \cite{CSS}.

Mixed multiplicities of filtrations are defined in \cite{CSS}. 
 Let $M$ be a finitely generated $R$-module where $R$ is a $d$-dimensional  local ring with $\dim N(\hat R)<d$. Let 
 $$
 \mathcal I(1)=\{I(1)_n\},\ldots, \mathcal I(r)=\{I(r)_n\}
 $$
  be $m_R$-filtrations. 
 In  \cite[Theorem 6.1]{CSS} and  \cite[Theorem 6.6]{CSS}, it is shown that the function
\begin{equation}\label{M2}
P(n_1,\ldots,n_r)=\lim_{m\rightarrow \infty}\frac{\ell_R(M/I(1)_{mn_1}\cdots I(r)_{mn_r}M)}{m^d}
\end{equation}
is  a homogeneous polynomial  of total degree $d$ with real coefficients for all  $n_1,\ldots,n_r\in\NN$.  
The mixed multiplicities of $M$ are defined from the coefficients of $P$, generalizing the definition of mixed multiplicities for $m_R$-primary ideals. Specifically,   
 we write 
\begin{equation}\label{eqV6}
P(n_1,\ldots,n_r)=\sum_{d_1+\cdots +d_r=d}\frac{1}{d_1!\cdots d_r!}e_R(\mathcal I(1)^{[d_1]},\ldots, \mathcal I(r)^{[d_r]};M)n_1^{d_1}\cdots n_r^{d_r}.
\end{equation}
We say that $e_R(\mathcal I(1)^{[d_1]},\ldots,\mathcal I(r)^{[d_r]};M)$ is the mixed multiplicity of $M$ of type $(d_1,\ldots,d_r)$ with respect to the $m_R$-filtrations $\mathcal I(1),\ldots,\mathcal I(r)$.
Here we are using the notation 
\begin{equation}\label{eqI6}
e_R(\mathcal I(1)^{[d_1]},\ldots, \mathcal I(r)^{[d_r]};M)
\end{equation}
  to be consistent with the classical notation for mixed multiplicities of $M$ with respect to $m_R$-primary ideals from \cite{T1}. The mixed multiplicity of $M$ of type $(d_1,\ldots,d_r)$ with respect to $m_R$-primary ideals $I_1,\ldots,I_r$, denoted by $e_R(I_1^{[d_1]},\ldots,I_r^{[d_r]};M)$ (\cite{T1}, \cite[Definition 17.4.3]{HS}) is equal to the mixed multiplicity $e_R(\mathcal I(1)^{[d_1]},\ldots,\mathcal I(r)^{[d_r]};M)$, where the Noetherian $I$-adic filtrations $\mathcal I(1),\ldots,\mathcal I(r)$ are defined by $\mathcal I(1)=\{I_1^i\}_{i\in \NN}, \ldots,\mathcal I(r)=\{I_r^i\}_{i\in \NN}$.

We have that 
\begin{equation}\label{eqX31}
e_R(\mathcal I(j);M)=e_R(\mathcal I(j)^{[d]};M)
\end{equation}
for all $j$.
 

The multiplicities and mixed multiplicities of powers of $m_R$-primary ideals  are always positive (\cite{T1} or \cite[Corollary 17.4.7]{HS}). The multiplicities and mixed multiplicities of $m_R$-filtrations are always nonnegative, as is clear for multiplicities, and is established for mixed multiplicities in \cite[Proposition 1.3]{CSV}. However, they can be zero. If $R$ is analytically irreducible, then all mixed multiplicities are positive if and only if the multiplicities $e_R(\mathcal I(j);R)$ are positive for $1\le j\le r$. This is established in \cite[Theorem 1.4]{CSV}.

When the module $M$ is $R$ and $R$ is understood, we will usually write $e(\mathcal I)=e_R(\mathcal I)$ and $e(\mathcal I(1)^{[d_1]},\ldots,\mathcal I(r)^{[d_r]})=e_R(\mathcal I(1)^{[d_1]},\ldots,\mathcal I(r)^{[d_r]})$.

\subsection{Divisorial and bounded $m_R$-filtrations}
Suppose that $R$ is a $d$-dimensional  local domain, with quotient field $K$. A valuation $\nu$ of $K$ is called an $m_R$-valuation if $\nu$ dominates $R$ ($R\subset \mathcal O_{\nu}$ and $m_{\nu}\cap R=m_R$ where $\mathcal O_{\nu}$ is the valuation ring of $\nu$ with maximal ideal $m_{\nu}$) and ${\rm trdeg}_{R/m_R}\mathcal O_{\nu}/m_{\nu}=d-1$.

Associated to an $m_R$-valuation $\nu$ are valuation ideals
\begin{equation}\label{eqX2}
I(\nu)_n=\{f\in R\mid \nu(f)\ge n\}
\end{equation}
for $n\in \NN$.
In general, the $m_R$-filtration $\mathcal I(\nu)=\{I(\nu)_n\}$ is not Noetherian. 
In  a two-dimensional normal local ring $R$, the condition that the  filtration of valuation ideals of $R$ is Noetherian for all  $m_R$-valuations dominating $R$ is the condition (N) of Muhly and Sakuma \cite{MS}. It is proven in \cite{C8} that a complete normal local ring of dimension two satisfies condition (N) if and only if its divisor class group is a torsion group. 
An example is given in \cite{CGP} of an $m_R$-valuation $\nu$ of a 3-dimensional regular local ring $R$ such that the filtration $\mathcal I(\nu)$ is not Noetherian. The multiplicity $e(\mathcal I(\nu))$  is however a rational number.  
In Section \ref{SecExample}, we give an example  of an $m_R$-valuation $\nu$ dominating a normal  excellent local domain $R$ of dimension three such that $e_R(\mathcal I(\nu))=e_R(\mathcal I(\nu_{E_2}))$ is an irrational number. The filtration is necessarily non Noetherian.

\begin{Definition}\label{DefDF} Suppose that $R$ is a  local domain. We
say that an $m_R$-filtration $\mathcal I$ is a divisorial filtration if 
$\mathcal I=\mathcal I(a_1\mu_1+\cdots+a_s\mu_s)=\{I(a_1\mu_1+\cdots+a_s\mu_s)_m\}$
where $\mu_1,\ldots,\mu_s$ are $m_R$-valuations, $a_1,\ldots,a_s\in \NN$ are not all zero,  and
$$
I(a_1\mu_1+\cdots+a_s\mu_s)_m:=I(\mu_1)_{a_1m}\cap\cdots\cap I(\mu_s)_{a_sm}
$$
for $m\in \NN$. We sometimes write $D=a_1\mu_1+\cdots+a_s\mu_s$  and $\mathcal I(D)=\mathcal I(a_1\mu_1+\cdots+a_s\mu_s)=\{I(mD)\}$.

We can also define real divisorial $m_R$-filtrations by taking $a_1,\ldots,a_s\in \RR_{\ge 0}$ and defining an $m_R$-filtration
$\mathcal I(a_1\mu_1+\cdots+a_s\mu_s)=\{I(a_1\mu_1+\cdots+a_s\mu_s)_n\}$ by
$$
I(a_1\mu_1+\cdots+a_s\mu_s)_n=I(\mu_1)_{\lceil na_1\rceil }\cap\cdots\cap I(\mu_s)_{\lceil na_s\rceil}.
$$
\end{Definition}

Let $R$ be a local ring and $\mathcal I=\{I_m\}$ be an $m_R$-filtration. Then the integral closure $\overline{R[\mathcal I]}$ of $R[\mathcal I]=\sum_{m\ge 0}I_mu^m$ in $R[u]$ is
$\overline{R[\mathcal I]}=\sum_{n\ge 0}J_mu^m$ where $\{J_m\}$ is the $m_R$-filtration defined by $J_m=\{f\in R\mid f^r\in \overline{I_{rm}}$ for some $r>0\}$ (Lemma \ref{BdLemma1}). 

Let $R$ be a local domain. 
If $\mathcal I=\mathcal I(D)$ is a divisorial $m_R$-filtration, then $R[\mathcal I(D)]$ is integrally closed (Lemma \ref{BdLemma2}). 

\begin{Definition} Let $R$ be a local domain. 
An $m_R$-filtration $\mathcal I=\{I_n\}$ is said to be bounded if there exists an integral divisorial $m_R$-filtration $\mathcal I(D)$ such that $\overline{R[\mathcal I]}=R[\mathcal I(D)]$. 

A filtration $\mathcal I$ is said to be real bounded if there exists a real divisorial filtration $\mathcal I(D)$ such that $\overline{R[\mathcal I]}=R[\mathcal I(D)]$.
\end{Definition}

 If $\mathcal I=\{I^n\}$ is the classical $m_R$-filtration of powers of a fixed $m_R$-primary ideal $I$, then $\mathcal I$ is bounded (Lemma \ref{BdLemma3}).

Suppose that $R$ is an excellent or analytically unramified local domain and $I$ is an $m_R$-primary ideal in $R$. Let $X$ be the normalization of the blowup of $I$, with projective birational morphism $\phi:X\rightarrow \mbox{Spec}(R)$. Let $E_1,\ldots,E_t$ be the prime exceptional divisors of $X$. For $1\le i\le t$, let $\nu_{E_i}$ be the discrete valuation whose valuation ring is  $\mathcal O_{\nu_i}=\mathcal O_{X,E_i}$.
 If $R$ is normal, then $X$ is equal to the blowup of the integral closure $\overline{I^s}$ of an appropriate power $I^s$ of $I$. The $\nu_{E_i}$ are $m_R$-valuations. They are the Rees valuations of $I$.  Every $m_R$-valuation is a Rees valuation of some $m_R$-primary ideal.

\subsection{Rees's Theorem} \label{SubSecEqChar}
Rees has shown in \cite{R} that if $R$ is a formally equidimensional  local ring and $I\subset I'$ are $m_R$-primary ideals then the following are equivalent:
\begin{enumerate}
\item[1)] $e(I')=e(I)$
\item[2)] $\overline{\sum_{n\ge 0}(I')^nt^n}=\overline{\sum_{n\ge 0}I^nt^n}$.
\item[3)] $\overline{I'}=\overline{I}$
\end{enumerate}
 The statement 3) $\Rightarrow$ 1) is true for an arbitrary local ring.  
 
 This raises the question of whether  the conditions
 \begin{enumerate}
\item[1)] $e(\mathcal I')=e(\mathcal I)$
\item[2)] $\overline{\sum_{n\ge 0}I'_nt^n}=\overline{\sum_{n\ge 0}I_nt^n}$.
\end{enumerate} 
are equivalent for arbitrary $m_R$-filtrations $\mathcal I'\subset \mathcal I$.
 
 The statement 2) $\Rightarrow$ 1) is true  for arbitrary $m_R$-filtrations in a local ring which satisfies $\dim N(\hat R)<d$. This is shown in \cite[Theorem 6.9]{CSS} and Appendix \cite{C6}.
However the statement 1) $\Rightarrow$ 2)  is not true in general for $m_R$-filtrations (a simple example in a regular local ring is given in \cite{CSS}).

Rees's theorem is true for bounded $m_R$-filtrations.

\begin{Theorem}\label{NewRBT}(Theorem \ref{RBT}) Suppose that $R$ is an excellent local domain, $\mathcal I(1)$ is a real  bounded $m_R$-filtration and $\mathcal I(2)$ is an arbitrary $m_R$-filtration such that $\mathcal I(1)\subset \mathcal I(2)$. Then the following are equivalent
\begin{enumerate}
\item[1)]  $e(\mathcal I(1))=e(\mathcal I(2))$.
\item[2)]  There is equality of  integral closures  
$$
\overline{\sum_{m\ge 0}I(1)_mt^m}=\overline{\sum_{m\ge 0}I(2)_mt^m}
$$
  in $R[t]$.
\end{enumerate}
 \end{Theorem}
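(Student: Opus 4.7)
The implication $2) \Rightarrow 1)$ is already available from \cite[Theorem 6.9]{CSS} and its appendix \cite{C6} for arbitrary $m_R$-filtrations in local rings with $\dim N(\hat R) < d$; since $R$ is an excellent domain this hypothesis holds, so the substantive content is $1) \Rightarrow 2)$.

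For $1) \Rightarrow 2)$, the plan is to use the real-bounded hypothesis to reduce to a divisorial versus integrally-closed comparison, and then to invoke the Okounkov body construction of \cite{LM} and \cite{C3} to convert equality of multiplicities into equality of Rees algebra integral closures. First, by the real-bounded hypothesis pick a real divisorial filtration $\mathcal I(D)$, with $D = a_1\mu_1 + \cdots + a_s\mu_s$, such that $\overline{R[\mathcal I(1)]} = R[\mathcal I(D)]$; applying $2)\Rightarrow 1)$ gives $e(\mathcal I(1)) = e(\mathcal I(D))$. By Lemma \ref{BdLemma1}, the integral closure $\overline{R[\mathcal I(2)]}$ is the Rees algebra of the filtration $\mathcal J(2) = \{J(2)_n\}$ defined by $J(2)_n = \{f \in R : f^r \in \overline{I(2)_{rn}} \text{ for some } r \geq 1\}$, and $e(\mathcal I(2)) = e(\mathcal J(2))$. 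The hypothesis $\mathcal I(1) \subset \mathcal I(2)$ then gives $I(D)_n \subset J(2)_n$ for all $n$, together with $e(\mathcal I(D)) = e(\mathcal J(2))$; the remaining task is to show $I(D)_n = J(2)_n$ for every $n$.

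For the main step, attach Okounkov bodies $\Delta(\mathcal I(D)), \Delta(\mathcal J(2)) \subset \RR^d$ following Lazarsfeld--Musta\c{t}\u{a} and \cite{C3}, so that $e(\mathcal I) = d!\,\mathrm{vol}(\Delta(\mathcal I))$. The containment of filtrations yields $\Delta(\mathcal I(D)) \subset \Delta(\mathcal J(2))$, and equality of multiplicities forces the two convex bodies to have equal volume; since both have nonempty interior (when the multiplicity is positive) and one sits inside the other, they coincide as convex sets. The body $\Delta(\mathcal I(D))$ of the divisorial filtration is a polytope whose supporting halfspaces encode precisely the valuation constraints $\mu_i(\cdot) \geq n a_i$; translating this coincidence back to ideals yields $J(2)_n \subset I(\mu_i)_{\lceil n a_i\rceil}$ for every $i$ and $n$, and intersecting over $i$ gives $J(2)_n \subset I(D)_n$, as required.

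The principal obstacle is the passage from equality of convex bodies back to pointwise valuation inequalities at each level $n$, not merely asymptotic ones. This sharpening relies on each $J(2)_n$ being integrally closed (from Lemma \ref{BdLemma1}) and on the polytopal structure of $\Delta(\mathcal I(D))$ cut out by the $\mu_i$. A viable alternative route is a Minkowski inequality argument for mixed multiplicities of filtrations, exploiting the Teissier--Rees--Sharp--Katz equality characterization adapted to the divisorial setting; either route must confront the same core difficulty of making the valuation bounds effective uniformly in $n$. Degenerate cases with vanishing multiplicities would require separate treatment, likely by a perturbation argument to reduce to the positive-multiplicity setting.
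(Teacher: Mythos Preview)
Your overall reduction matches the paper's: both directions are set up correctly, and the heart of $1)\Rightarrow 2)$ is indeed to show that $\mathcal I(D)\subset \mathcal J(2)$ together with $e(\mathcal I(D))=e(\mathcal J(2))$ forces $I(nD)=J(2)_n$ for all $n$ (this is Corollary~\ref{CorDiv2} in the paper). However, the mechanism you propose for this step is incorrect on two points. First, the multiplicity is not $d!\,{\rm Vol}(\Delta(\mathcal I))$; by (\ref{eqMF5}) it is the \emph{difference} $d!\,\delta\,[{\rm Vol}(\Delta_c(R))-{\rm Vol}(\Delta_c(\mathcal I))]$ for a suitable truncation. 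This still gives $\Delta_c(\mathcal I(D))=\Delta_c(\mathcal J(2))$ under containment and equal multiplicities, so the conclusion about equality of bodies survives, but the formula as you wrote it is wrong. Second, and more seriously, $\Delta(\mathcal I(D))$ is \emph{not} in general a polytope whose supporting halfspaces are ``the valuation constraints $\mu_i(\cdot)\ge na_i$.'' The Okounkov body is built from a single rank-$d$ flag valuation $\nu$, and there is no reason the divisorial valuations $\mu_1,\ldots,\mu_s$ appear as faces of it. Without that polytopal picture, your passage from $\Delta(\mathcal I(D))=\Delta(\mathcal J(2))$ back to level-by-level inclusions $J(2)_n\subset I(\mu_i)_{\lceil na_i\rceil}$ has no justification.

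The paper closes this gap differently, via the invariants $\gamma_\mu(\mathcal I)=\inf_m \mu(I_m)/m$. For each fixed $m_R$-valuation $\mu$, one chooses the flag valuation $\nu$ in Section~\ref{SecFrame} to be \emph{composite with} $\mu$, so that $\nu(f)=(\mu(f),\ldots)$; then $\gamma_\mu(\mathcal I)$ is the minimum of the projection of $\Delta_c(\mathcal I)$ onto the first coordinate. Equality of the bodies therefore gives $\gamma_\mu(\mathcal I(D))=\gamma_\mu(\mathcal J(2))$ for that $\mu$ (Theorem~\ref{Theorem2}), and one repeats this for each $\mu=\nu_{E_i}$ separately. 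The pointwise statement you need then follows from the elementary inequality $\tau_{\mu,m}(\mathcal J(2))\ge m\gamma_\mu(\mathcal J(2))$ together with $\gamma_{E_i}(D)\ge a_i$ and Lemma~\ref{LemmaAR1}: for $f\in J(2)_m$ one gets $\nu_{E_i}(f)\ge m\gamma_{E_i}(D)$, hence $\nu_{E_i}(f)\ge\lceil m\gamma_{E_i}(D)\rceil$, hence $f\in I(mD)$. Note also that the Okounkov-body argument requires $R$ analytically irreducible; the paper first proves this case (Corollary~\ref{CorDiv}, $R$ normal excellent) and then passes to the normalization to handle a general excellent domain (Corollary~\ref{CorDiv2}), a reduction your proposal does not address. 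The worry about vanishing multiplicities is moot: real divisorial filtrations always have positive multiplicity (Proposition~\ref{PropPos}).
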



\subsection{The Minkowski inequalities and equality of  mixed multiplicities}\label{SubSecINEQ}


The Minkowski inequalities were formulated and proven for   $m_R$-primary ideals in reduced equicharacteristic zero local rings by Teissier \cite{T1}, \cite{T2} and proven for  $m_R$-primary ideals in full generality, for  local rings,  by Rees and Sharp \cite{RS}. The same inequalities hold for filtrations.

\begin{Theorem}(Minkowski Inequalities for filtrations)(\cite[Theorem 6.3]{CSS})\label{TheoremMI}  Suppose that $R$ is a  $d$-dimensional  local ring with $\dim N(\hat R)<d$, $M$ is a finitely generated $R$-module and $\mathcal I(1)=\{I(1)_j\}$ and $\mathcal I(2)=\{I(2)_j\}$ are $m_R$-filtrations. Let $e_i=e_R(\mathcal I(1)^{[d-i]},\mathcal I(2)^{[i]};M)$ for $0\le i\le d$.
Then 
\begin{enumerate}
\item[1)] $e_i^2\le e_{i-1}e_{i+1}$ 
for $1\le i\le d-1$.
\item[2)]   
$e_ie_{d-i}\le e_0e_d$ for $0\le i\le d$.
\item[3)] $e_i^d\le e_0^{d-i}e_d^i$ for $0\le i\le d$. 
\item[4)]  $e_R(\mathcal I(1)\mathcal I(2));M)^{\frac{1}{d}}\le e_0^{\frac{1}{d}}+e_d^{\frac{1}{d}}$,  where $\mathcal I(1)\mathcal I(2)=\{I(1)_jI(2)_j\}$.
\end{enumerate}
\end{Theorem}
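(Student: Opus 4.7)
The plan is to prove inequality 1) first; the other three follow from it by algebraic manipulations due to Teissier. Taking logarithms in 1) shows that $\{\log e_i\}_{0 \le i \le d}$ is a convex sequence in $i$ (assuming all $e_i > 0$; a vanishing $e_i$ is degenerate and handled separately using nonnegativity of mixed multiplicities), so
$$
\log e_i \le \frac{d-i}{d}\log e_0 + \frac{i}{d}\log e_d,
$$
which is inequality 3). Applying 3) to both $e_i$ and $e_{d-i}$ and multiplying gives 2). For 4), setting $n_1 = n_2 = 1$ in (\ref{M2}) identifies $e_R(\mathcal{I}(1)\mathcal{I}(2); M) = \sum_{i=0}^d \binom{d}{i} e_i$; combining with 3) and the binomial theorem yields
$$
e_R(\mathcal{I}(1)\mathcal{I}(2); M) \le \sum_{i=0}^d \binom{d}{i} e_0^{(d-i)/d} e_d^{i/d} = \bigl(e_0^{1/d} + e_d^{1/d}\bigr)^d,
$$
and taking $d$-th roots gives 4).

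To prove 1), my plan is to approximate each $\mathcal{I}(j)$ by the Noetherian $I(j)_n$-adic filtration $\{I(j)_n^k\}_{k \in \NN}$ and invoke the classical Teissier-Rees-Sharp Minkowski inequality for $m_R$-primary ideals. The essential auxiliary statement is the scaling formula
$$
e_R\bigl(\mathcal{I}(1)^{[d-i]}, \mathcal{I}(2)^{[i]}; M\bigr) = \lim_{n \to \infty} \frac{e_R\bigl(I(1)_n^{[d-i]}, I(2)_n^{[i]}; M\bigr)}{n^d},
$$
which identifies the filtration-level mixed multiplicity with a limit of ideal-level ones. Granting this, the classical inequality $\bigl(e_R(I(1)_n^{[d-i]}, I(2)_n^{[i]}; M)\bigr)^2 \le e_R(I(1)_n^{[d-i+1]}, I(2)_n^{[i-1]}; M) \cdot e_R(I(1)_n^{[d-i-1]}, I(2)_n^{[i+1]}; M)$ applied at each level $n$, divided by $n^{2d}$, and passed to the limit, yields 1).

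The main obstacle is establishing the scaling formula: because the filtrations $\mathcal{I}(j)$ are not Noetherian in general, the standard Hilbert-theoretic comparison between a filtration and the individual ideals comprising it is only approximate. One needs a quantitative estimate for the discrepancy, and the natural tool is the Okounkov body method of \cite{LM} and \cite{C3}. After reducing to an analytically irreducible local domain using the hypothesis $\dim N(\hat R) < d$, one chooses a rank-$d$ valuation of the function field and realises both $e_R(\mathcal{I}(j); M)$ and $e_R(I(j)_n; M)/n^d$ as volumes of closely related convex bodies in $\RR^d$; their difference vanishes as $n \to \infty$ by standard convex-geometric approximation, and an analogous argument handles mixed multiplicities and mixed volumes. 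Once the scaling formula is in hand, the rest of the proof is purely formal via the ideal case.
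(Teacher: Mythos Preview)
The paper does not give its own proof of this theorem; it is quoted from \cite[Theorem 6.3]{CSS}, and the only comment made here is the remark immediately following the statement that inequalities 2)--4) can be deduced directly from 1), with references to \cite{T1}, \cite{T2}, \cite{RS} and \cite[Corollary 17.7.3]{HS}. Your deductions of 2), 3), and 4) from 1) are exactly these standard manipulations, and your expansion $e_R(\mathcal I(1)\mathcal I(2);M)=\sum_i\binom{d}{i}e_i$ together with 3) and the binomial theorem is precisely the argument the paper later spells out in Section~\ref{SecMinEQ} when analysing the equality case.

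For inequality 1) itself, your approach---the approximation formula
\[
e_R\bigl(\mathcal I(1)^{[d-i]},\mathcal I(2)^{[i]};M\bigr)=\lim_{n\to\infty}\frac{e_R\bigl(I(1)_n^{[d-i]},I(2)_n^{[i]};M\bigr)}{n^d}
\]
combined with the classical Rees--Sharp inequality for $m_R$-primary ideals at each level $n$---is the route taken in \cite{CSS}. The approximation formula is established there (it is the content behind \cite[Theorems 6.1 and 6.6]{CSS}), and the Okounkov-body/volume method you invoke to justify it, after reduction to the analytically irreducible case, is the correct mechanism. So your proposal is sound and agrees with the argument the paper is citing, though the paper itself does not reproduce it.
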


We write out the last inequality without   abbreviation as
\begin{equation}\label{eqMinkIn}
e_R(\mathcal I(1)\mathcal I(2));M)^{\frac{1}{d}}\le e_R(\mathcal I(1);M)^{\frac{1}{d}}+e_R(\mathcal I(2);M)^{\frac{1}{d}}
\end{equation}
where $\mathcal I(1)\mathcal I(2)=\{I(1)_mI(2)_m\}$.
This equation is called The Minkowski Inequality.

 The fourth inequality  4)  was proven for $m_R$-filtrations  in a regular local ring with algebraically closed residue field by Musta\c{t}\u{a} (\cite[Corollary 1.9]{Mus}) and more recently in this situation by Kaveh and Khovanskii (\cite[Corollary 7.14]{KK1}). The inequality 4) was proven with our assumption that $\dim N(\hat R)<d$ in \cite[Theorem 3.1]{C3}.
Inequalities 2) - 4) can be deduced directly from inequality 1), as explained in \cite{T1}, \cite{T2}, \cite{RS} and \cite[Corollary 17.7.3]{HS}.   

There is a beautiful characterization of when equality holds in the Minkowski inequality (\ref{eqMinkIn}) by 
Teissier \cite{T3} (for Cohen-Macaulay normal two-dimensional complex analytic $R$), Rees and Sharp \cite{RS} (in dimension 2) and Katz \cite{Ka} (in complete generality).

They have shown that 
 if $R$ is a formally equidimensional  local ring and $I(1), I(2)$  are $m_R$-primary ideals then the following are equivalent:
\begin{enumerate}
\item[1)]  The Minkowski inequality 
$$
e_R(I(1)I(2))^{\frac{1}{d}}=e(I(1))^{\frac{1}{d}}+e(I(2))^{\frac{1}{d}}
$$
holds.
\item[2)] 
There exist positive integers $a$ and $b$ such that 
$$
\overline{\sum_{n\ge 0}I(1)^{an}t^{n}}=\overline{\sum_{n\ge 0}I(2)^{bn}t^{n}}.
$$
\item[3)] There exist positive integers $a$ and $b$ such that 
$\overline{I(1)^a}=\overline{I(2)^b}$

\end{enumerate}

 The Teissier, Rees and Sharp, Katz theorem leads to the question of 
  whether the following conditions are equivalent for $m_R$-filtrations $\mathcal I(1)$ and $\mathcal I(2)$.

   \begin{enumerate}
\item[1)] The Minkowski equality 
$$
e_R(\mathcal I(1)\mathcal I(2))^{\frac{1}{d}}=e(\mathcal I(1))^{\frac{1}{d}}+e(\mathcal I(2)^{\frac{1}{d}}
$$
holds.
\item[2)] There exist positive integers $a$ and $b$ such that 
$$
\overline{\sum_{n\ge 0}I(1)^{an}t^{n}}=\overline{\sum_{n\ge 0}I(2)^{bn}t^{n}}.
$$
\end{enumerate}

 We show in Theorem \ref{MinNew} that if $\mathcal I(1)$ and $\mathcal I(2)$ are $m_R$-filtrations on a local ring $R$ such that  $\dim N(\hat R)<d$
 and condition 2) holds then the Minkowski equality 1) holds, but the converse statement, that  the Minkowski equality 1) implies condition 2)   is not true for filtrations, even in a regular local ring, as is shown in a simple example in \cite{CSS}.
 
 In Theorems \ref{TRSKT} and \ref{TRSKTA}, we show that 1) and 2) are equivalent for bounded $m_R$-filtrations on an analytically irreducible or excellent  local domain, giving a complete generalization of the Teissier, Rees and Sharp, Katz Theorem for bounded $m_R$-filtrations.

\begin{Theorem}\label{NewTRSKT}(Theorem \ref{TRSKT} and Theorem \ref{TRSKTA}) Suppose that $R$ is a $d$-dimensional analytically irreducible or excellent local domain and $\mathcal I(1)$ and $\mathcal I(2)$ are bounded $m_R$-filtrations. Then the following are equivalent
\begin{enumerate}
\item[1)]  The Minkowski equality
$$
e(\mathcal I(1)\mathcal I(2))^{\frac{1}{d}}=e(\mathcal I(1))^{\frac{1}{d}}+e(\mathcal I(2))^{\frac{1}{d}}
$$
holds.
\item[2)] There exist positive integers $a,b$ such that there is equality of  integral closures
$$
\overline{\sum_{n \ge 0}I(1)_{an}t^n}=\overline{\sum_{n\ge 0}I(2)_{bn}t^n}
$$
 in $R[t]$.
\end{enumerate}
 \end{Theorem}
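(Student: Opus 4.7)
I would handle the two implications separately, since the bounded hypothesis is used very differently in each.

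For $(2)\Rightarrow(1)$: write $\mathcal I(j)^{(k)}=\{I(j)_{kn}\}_{n\in\NN}$, so $(2)$ reads $\overline{R[\mathcal I(1)^{(a)}]}=\overline{R[\mathcal I(2)^{(b)}]}$. A mixed-multiplicity analogue of Theorem \ref{NewRBT}, which I would extract by the same Okounkov-body argument that underlies it, shows that every mixed multiplicity of the two bounded filtrations $\mathcal I(1)^{(a)}$ and $\mathcal I(2)^{(b)}$ equals $e_R(\mathcal I(1)^{(a)})=a^d e_R(\mathcal I(1))$. Homogeneity of mixed multiplicities under reparametrization then gives $a^{d-i}b^i e_i=a^d e_R(\mathcal I(1))$ for $e_i=e_R(\mathcal I(1)^{[d-i]},\mathcal I(2)^{[i]})$, so $e_i=(a/b)^i e_R(\mathcal I(1))$, and the expansion
$$
e_R(\mathcal I(1)\mathcal I(2))=\sum_{i=0}^{d}\binom{d}{i}e_i = e_R(\mathcal I(1))\bigl(1+\tfrac{a}{b}\bigr)^{d}
$$
yields $(1)$ upon taking $d$-th roots.

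For $(1)\Rightarrow(2)$: by boundedness, each $\mathcal I(j)$ has the same integral closure as a divisorial filtration $\mathcal I(D_j)$, and passage to the integral closure preserves all multiplicities and mixed multiplicities, so I may assume $\mathcal I(j)=\mathcal I(D_j)$ is divisorial. I would then pass to a common normal projective birational model $\phi:X\to\mathrm{Spec}(R)$ on which every $m_R$-valuation occurring in $D_1,D_2$ is realized as a prime exceptional divisor, so that $D_1,D_2$ live on $X$. On such a model the multiplicities and mixed multiplicities of $\mathcal I(D_1),\mathcal I(D_2)$ admit a convex-geometric description via Okounkov bodies (equivalently, an intersection-theoretic description on $X$), with $e_R(\mathcal I(D_1)\mathcal I(D_2))$ realized (up to $d!$) as the volume of the Minkowski sum of the bodies attached to $D_1$ and $D_2$. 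The Minkowski equality then forces the equality case of the Brunn--Minkowski / Khovanskii--Teissier inequality, so the convex bodies are homothetic. Using the rigidity of the divisorial construction, homothety should upgrade to proportionality of $D_1$ and $D_2$, producing positive integers $a,b$ such that $bD_1$ and $aD_2$ define the same integrally closed Rees algebra, which is $(2)$.

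The principal obstacle will be the last step of $(1)\Rightarrow(2)$: promoting Brunn--Minkowski equality on Okounkov bodies to genuine proportionality of the divisors $D_1,D_2$. For arbitrary $m_R$-filtrations homothetic Okounkov bodies do not force any algebraic relation between the filtrations --- this is exactly what fails in the counterexample in \cite{CSS} --- so the argument must genuinely exploit that $D_j$ has finite support on the exceptional locus of $X$ and that its coefficients are recovered from the Okounkov body by explicit linear functionals. A secondary, more routine step is the reduction between the two hypotheses on $R$: in the analytically irreducible case one descends directly to the complete local domain $\hat R$, while in the excellent case $\hat R$ is only reduced, so one must argue component by component on the minimal primes of $\hat R$ and transfer the Minkowski equality and the resulting integral-closure information back to $R$ via faithful flatness.
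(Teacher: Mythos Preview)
Your outline for $(2)\Rightarrow(1)$ is essentially the paper's argument (Theorem \ref{MinNew} together with Proposition \ref{BoundProp}), and your reduction at the start of $(1)\Rightarrow(2)$ to divisorial filtrations on a common model is also what the paper does. The Brunn--Minkowski step is Theorem \ref{Theorem3}, and the ``linear functionals'' you allude to are realized in the paper by projecting the Okounkov body onto its first coordinate to recover the invariant $\gamma_\mu(\mathcal I)=\inf_m \mu(I_m)/m$ for each $m_R$-valuation $\mu$; homothety of the bodies then gives $\gamma_{E_i}(D_2)/\gamma_{E_i}(D_1)=e_d^{1/d}/e_0^{1/d}$ for every exceptional prime $E_i$ (Theorem \ref{Theorem1}). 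One correction: this is \emph{not} proportionality of $D_1$ and $D_2$ themselves. The coefficients $a_i$ of $D_1=\sum a_iE_i$ are only lower bounds for $\gamma_{E_i}(D_1)$ and can differ from them; the example in Section \ref{SecExample} exhibits divisors in the ``third region'' which are not proportional as divisors but still satisfy Minkowski equality, precisely because their $\gamma$-vectors are proportional. What is actually used is $I(mD_j)=I(\lceil m\sum_i\gamma_{E_i}(D_j)E_i\rceil)$ (Lemma \ref{LemmaAR1}), so proportionality of the $\gamma$-vectors is the relevant condition.

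The genuine gap is the step you pass over in ``producing positive integers $a,b$'': the homothety ratio $\xi=e_d^{1/d}/e_0^{1/d}$ is a priori only a real number --- indeed $e_0$, $e_d$, and the individual $\gamma_{E_i}(D_j)$ can all be irrational (again Section \ref{SecExample}) --- and nothing in the Okounkov-body picture forces it to be rational. Without $\xi\in\QQ$ you cannot conclude $I(maD_1)=I(mbD_2)$ for integers $a,b$. The paper handles this in Theorem \ref{Theorem8} by a separate argument using the order function $w_{\mathcal I}(f)=\max\{m:f\in I_m\}$: for an \emph{integral} divisorial filtration one has $w_{\mathcal I(D)}(f^{nd})=n\,w_{\mathcal I(D)}(f^d)$ for suitable $d$ (Lemma \ref{ratfun}), and then a Diophantine-approximation squeeze, using the containments $I(qD_2)\subset I(pD_1)$ and $I(p'D_1)\subset I(q'D_2)$ for $p/q\le\xi\le p'/q'$ close to $\xi$, forces $\xi=w_{\mathcal I(D_1)}(f)/w_{\mathcal I(D_2)}(f)\in\QQ$. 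This rationality step is the heart of the proof and is not a routine consequence of rigidity of the divisorial construction. Finally, for the excellent but not analytically irreducible case the paper does not pass to minimal primes of $\hat R$ as you suggest; it passes to the normalization $S$ of $R$ (finite over $R$, with maximal ideals $m_1,\ldots,m_t$), shows via an algebraic inequality that Minkowski equality descends to each normal local ring $S_{m_i}$ with the \emph{same} ratio $\xi$ on every component, applies Theorem \ref{Theorem8} there, and then intersects back down to $R$ (Theorem \ref{Theorem10}).
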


\section{An overview of the proof}

In this section, we suppose that $R$ is a $d$-dimensional normal excellent local domain.

\subsection{Multiplicities of filtrations}  We summarize Sections \ref{SecFrame} and \ref{SecMF} in this subsection.
We use the method of  counting asymptotic vector space dimensions of graded families by computing volumes of convex bodies associated to appropriate semigroups
 introduced in \cite{Ok}, \cite{LM} and \cite{KK}.
Let $\nu$ be a valuation of the quotient field $K$ of $R$ which dominates $R$ and has value group isomorphic to $\ZZ^d$. Further suppose that $\nu(f)\in \NN^d$ if $0\ne f\in R$. Then we can associate to an $m_R$-filtration $\mathcal I=\{I_n\}$ a semigroup $\Gamma(\mathcal I)\subset \NN^{d+1}$ defined by
$\Gamma(\mathcal I)=\{(\nu(f),n)\mid f\in I_n\}$. Let $\Delta(\mathcal I)$ be the intersection of the closure of the  real cone generated by $\Gamma(\mathcal I)$ with $\RR^d\times\{1\}$. 
 Similarly, we define $\Delta(R)$ to be the subset of $\RR^d$ constructed from $\Gamma(R)$ by replacing $I_n$ with $R$ for all $n$. 
 
 For $c\in \RR_{>0}$, let 
 $$
 H_c^-=\{(x_1,\ldots,x_d)\in \RR^d\mid x_1+\cdots+x_d\le c\}.
 $$
 
 Using some commutative algebra, we  find a constant $c>0$ such that 
 \begin{equation}\label{eqIn4}
\Delta(\mathcal I)\setminus (\Delta(\mathcal I)\cap H_c^-)
=\Delta(R)\setminus   (\Delta(R)\cap H_c^-).
\end{equation}

Then $\Delta(\mathcal I)\cap H_c^-$ and $\Delta(R)\cap H_c^{-}$ are compact convex sets and by (\ref{eqMF5}),
\begin{equation}\label{eqIn5*}
\frac{e_R(\mathcal I)}{d!}=\delta[{\rm Vol}(\Delta(R)\cap H_c^-)-{\rm Vol}(\Delta(\mathcal I)\cap H_c^-)]
\end{equation}
where $\delta=[\mathcal O_{\nu}/m_{\nu}:R/m_R]$.

\subsection{ The Integral closure of a filtration $\mathcal I$ and the convex sets $\Delta(\mathcal I)$}
Suppose that $\mathcal I'\subset \mathcal I$ are $m_R$-filtrations. Then we have $\Delta(\mathcal I')\subset \Delta(\mathcal I)$, so we have $e_R(\mathcal I)= e_R(\mathcal I')$ if and only if $\Delta(\mathcal I')=\Delta(\mathcal I)$. 

If $\mathcal I'$ is a Noetherian $m_R$-filtration, and  $\mathcal I$ is an $m_R$-filtration such that $\mathcal I'\subset \mathcal I$, then we have that 
$e_{R}(\mathcal I')=e_R(\mathcal I)$ if and only if $\Delta(\mathcal I')=\Delta(\mathcal I)$ which holds if and only if $R[\mathcal I]=\sum_{m\ge 0}I_mu^m\subset  \overline{\sum_{m\ge 0}I'_mu^m}=\overline{R[\mathcal I']}$. This can be proven as follows.  By taking suitable Veronese subalgebras, we  reduce to the case where $\mathcal I$ and $\mathcal I'$ are the filtrations of powers of fixed $m_R$-primary ideals $I$ and $I'$, so that the result then follows from  Rees's Theorem \cite{R} for normal excellent local domains. Rees's theorem was discussed at the beginning of Subsection \ref{SubSecEqChar}. 

For arbitrary $m_R$-filtrations $\mathcal I'\subset \mathcal I$ such that $R[\mathcal I]=
\sum I_mt^m\subset 
\overline{\sum_{m\ge 0}I'_mt^m}=\overline{R[\mathcal I']}$ we have that $e_R(\mathcal I')=e_R(\mathcal I)$, as shown in \cite[Theorem 6.9]{CSS} and \cite[Appendix]{C6}. However, as we mentioned  in the beginning of Subsection \ref{SubSecEqChar}, there exists a non-Noetherian $m_R$-filtration $\mathcal I'$ and a Noetherian $m_R$-filtration $\mathcal I$ such that $\mathcal I'\subset \mathcal I$, $e_R(\mathcal I')=e_R(\mathcal I)$ and $R[\mathcal I]=\sum_{m\ge 0}I_mt^m$ is not a subset of $\overline{R[\mathcal I']}=\overline{\sum_{m\ge 0}I'_mt^m}$. 

\subsection{The invariant $\gamma_{\mu}(\mathcal I)$}\label{SubSecInv} This subsection is a summary of Subsection \ref{SubSecGam}.
Let $\mu$ be an $m_R$-valuation and $\mathcal I$ be an $m_R$-filtration. Define $\tau_m=\min\{\mu(f)\mid f\in I_m\}$ and $\gamma_{\mu}(\mathcal I)=\inf_m\{\tau_m\}$. The numbers $\tau_m\in \ZZ_{>0}$ for all $m$ but $\gamma_{\mu}(\mathcal I)$ can be an irrational number, even when $\mathcal I$ is a divisorial $m_R$-filtration, as shown in Section \ref{SecExample}) and explained in Subsection \ref{SubExEx}.

Theorem \ref{Theorem2} shows that if $\mathcal I'\subset \mathcal I$ and $e_R(\mathcal I')=e_R(\mathcal I)$ then $\gamma_{\mu}(\mathcal I')=\gamma_{\mu}(\mathcal I)$ for all
$m_R$-valuations $\mu$. This is proven by taking the valuation $\nu$ used to compute $\Delta$ to be composite with $\mu$, so $\nu(f)=(\mu(f),\cdots)\in \NN^d$ for $f\in R$. The condition $e_R(\mathcal I')=e_R(\mathcal I)$ implies $\Delta(\mathcal I')=\Delta(\mathcal I)$ and $\gamma_{\mu}(\mathcal I'),\gamma_{\mu}(\mathcal I)$ are the smallest points of the projections of $\Delta(\mathcal I')$, respectively $\Delta(\mathcal I)$ onto the first coordinate of $\RR^d$.

\subsection{Divisorial Filtrations} In this subsection, we summarize material from Section \ref{SecVal}.
Let $\phi:X\rightarrow \mbox{Spec}(R)$ be a birational projective morphism such that $X$ is normal and is the blow up of an $m_R$-primary ideal. Let $E_1,\ldots,E_r$ be the prime exceptional divisors of $\phi$, and for $1\le i \le r$, let $\nu_{E_i}$ be the $m_R$-valuation whose valuation ring is $\mathcal O_{X,E_i}$.  Suppose that $D=\sum a_iE_i$ with $a_i\in \NN$ is an effective   Weil divisor on $X$ with exceptional support. 

  Define $\gamma_{E_i}(D)=\gamma_{\nu_{E_i}}(\mathcal I(D))$ for $1\le i \le r$. 
  Then $\gamma_{E_i}(D)\ge a_i$ for all $i$. 
We have that $ma_i$ is the prescribed order of vanishing of elements of $I(mD)$ along $E_i$   but $m\gamma_{E_i}(D)$ is asymptotically the actual vanishing.

We remark that   $\gamma_{\mu}(\mathcal I(D))$ can be an irrational number. By Theorem \ref{Theorem5}, the example $X$ of Section \ref{SecExample} has two 
 prime exceptional divisors $E_1$ and $E_2$ such that 
$$
\gamma_{E_1}(E_2)=\frac{3}{9-\sqrt{3}}
$$
is an irrational number. This example is surveyed in Subsection \ref{SubExEx}.

We have that 
\begin{equation}\label{eqIn1}
I(mD)=I(\lceil \sum m\gamma_{E_i}(D)E_i\rceil)
\end{equation}
for all $m\in \NN$, where $\lceil x\rceil$ is the round up of a real number $x$. In this way, we are led to extend our category of divisorial $m_R$-filtrations to real divisorial $m_R$-filtrations.

Now let $\mathcal I=\mathcal I(\sum_{i=1}^s a_i\mu_i)$ with $a_i\in \NN$ be a divisorial $m_R$-filtration. 
 A representation of $\mathcal I$ is a pair $\phi:X\rightarrow \mbox{Spec}(R)$ and a divisor
 $\sum_{i=1}^s a_iE_i$ such that $X$ is as in the above paragraph, and $\nu_{E_i}=\mu_i$ for $1\le i\le s\le r$. We remark that it is not always possible to construct an $X$ for which $r=s$, even in dimension $d=2$. An example of a two dimensional excellent normal local domain without a ``one fibered ideal''  is given in \cite{C8}. A one fibered ideal is an $m_R$-primary ideal $I$ such that the normalization of its blowup has only one prime exceptional divisor.
 
 \subsection{Rees's theorem for divisorial $m_R$-filtrations} It follows from Corollary \ref{CorDiv2} that if $\mathcal I(D_1)\subset \mathcal I(D_2)$ are divisorial $m_R$-filtrations such that $e(\mathcal I(D_2))=e(\mathcal I(D_1))$, then $\mathcal I(D_2)=\mathcal I(D_1)$.
 This is proven in Section \ref{SecMF}.
  Let $X\rightarrow \mbox{Spec}(R)$ be a representation of $D_1$ and $D_2$, and write
 $D_1=\sum a_iE_i$ and $D_2=\sum b_iE_i$ as Weil divisors on $X$. 
 
 By Theorem \ref{Theorem2}, whose proof was discussed in Subsection \ref{SubSecInv},
 $\gamma_{E_i}(D_1)=\gamma_{E_i}(D_2)$ for $1\le i\le r$. 
 Thus $I(mD_1)=I(mD_2)$ for all $m\in \NN$ by (\ref{eqIn1}).
 
 \subsection{The Teissier, Rees and Sharp, Katz Theorem for divisorial $m_R$-filtrations} 

 Suppose that we have equality in the Minkowski inequality (\ref{eqMinkIn}) for the divisorial $m_R$-filtrations $\mathcal I(D_1)$ and $\mathcal I(D_2)$.  We will give an outline of our proof  that there exist $a,b\in \ZZ_{>0}$ such that $I(maD_1)=I(bmD_2)$ for all $m\in \NN$. Let
  $$
 f(n_1,n_2):=\lim_{m\rightarrow \infty} \frac{\ell_R(R/I(mn_1D_1)I(mn_2D_2))}{m^d}.
 $$
 Using the Minkowski inequalities $e_i^d\le e_0^{d-i}e_d^i$ of 3) of Theorem \ref{TheoremMI}, we obtain in (\ref{eq47}) of Section \ref{SecMinEQ} that 
 $$
 f(n_1,n_2)=\frac{1}{d!}(e_0^{\frac{1}{d}}n_1+e_d^{\frac{1}{d}}n_2)^d
 $$
 where $e_0=e_R(\mathcal I(D_1))$ and $e_d=e_R(\mathcal I(D_2))$.
 
 We now survey Section \ref{SecMMF}.
 Define semigroups $\Gamma(n_1,n_2)=\Gamma(\{I(mn_1D_1)I(mn_2D_2)\})$ and 
 associated closed convex sets $\Delta(n_1,n_2)$. We can find $\phi\in \RR_{>0}$ such that 
 letting
 $$
 H_{\Phi,n_1,n_2}^-=\{(x_1,\ldots,x_d)\in \RR^d\mid x_1+\cdots+x_d\le\phi e_0^{\frac{1}{d}}n_1+\phi e_d^{\frac{1}{d}}n_2\},
 $$
 $$
 \Delta_{\Phi}(n_1,n_2)=\Delta(n_1,n_2)\cap H_{\Phi,n_1,n_2}^-
 $$
 and
 $$
 \tilde \Delta_{\Phi}(n_1,n_2)=\Delta(R)\cap H_{\Phi,n_1,n_2}^-,
 $$
  we have (\ref{eq11}) that
 $$
 f(n_1,n_2)=\delta[{\rm Vol}(\tilde\Delta_{\Phi}(n_1,n_2)-{\rm Vol}(\Delta_{\Phi}(n_2,n_2))]
 $$
 as in (\ref{eqIn5*}).
 Since $\Delta(R)$ is a closed cone with vertex at the origin, by (\ref{eq31}) and (\ref{eq41})
 $$
 {\rm Vol}(\tilde \Delta_{\Phi}(n_1,n_2))=(n_1\alpha_1+n_2\alpha_2)^d\phi^d{\rm Vol}(\Delta(R)\cap H_1^-).
 $$
 
 We now survey Section \ref{SecGeom}.
 We define in (\ref{eq3})
 $$
 h(n_1,n_2)={\rm Vol}(\Delta_{\Phi}(n_1,n_2))={\rm Vol}(\tilde \Delta_{\Phi}(n_1,n_2))
 -\frac{f(n_1,n_2)}{\delta} =\lambda(\alpha_1n_1+\alpha_2n_2)^d.
 $$
 for some $\lambda\in \RR_{>0}$.
 
 Let 
 $$
 g(n_1,n_2):= {\rm Vol}(n_1\Delta_{\Phi}(1,0)+n_2\Delta_{\Phi}(0,1)),
 $$
 which is a homogeneous real polynomial of degree $d$ (Theorem \ref{ConvMix})
 Since 
 $$
 n_1\Delta_{\Phi}(1,0)+n_2\Delta_{\Phi}(0,1)\subset  \Delta_{\Phi}(n_1,n_2), 
 $$
 we have that $g(n_1,n_2)\le h(n_1,n_2)$ for all $n_1,n_2\in\NN$, $g(1,0)=h(1,0)$ and $g(0,1)=h(0,1)$. Thus for $0<t<1$,
 $$
 \begin{array}{lll}
 h(1-t,t)^{\frac{1}{d}}&=&(1-t)h(1,0)^{\frac{1}{d}}+th(0,1)^{\frac{1}{d}}
 =(1-t)g(1,0)^{\frac{1}{d}}+tg(0,1)^{\frac{1}{d}}\\
 &\le& g(1-t,t)^{\frac{1}{d}}\le h(1-t,t)^{\frac{1}{d}}.
 \end{array}
 $$
 where the first inequality on the second line is the Brunn-Minkowski inequality of convex geometry (Theorem \ref{BMTheorem}). We see from this equation that we  have equality in the 
  Brunn-Minkowski inequality. Thus by Theorem \ref{BMTheorem}, we have that $\Delta_{\Phi}(1,0)$ and $\Delta_{\Phi}(0,1)$ are homothetic; that is, there is an affine transformation $T(\vec x)=
  c\vec x+\gamma$ such that $T(\Delta_{\Phi}(1,0))=\Delta_{\Phi}(1,0)$. 
  We  then show  in Theorem \ref{Theorem3} that 
  $$
  e_d^{\frac{1}{d}}\Delta_{\Phi}(1,0)=e_0^{\frac{1}{d}}\Delta_{\Phi}(0,1),
  $$ 
 and applying Theorem \ref{Theorem1}, which is proved like Theorem \ref{Theorem2} discussed in Subsection \ref{SubSecInv}, we get that 
 \begin{equation}\label{eqIn2}
 \frac{\gamma_{E_j}(D_1)}{e_0^{\frac{1}{d}}}=\frac{\gamma_{E_j}(D_2)}{e_d^{\frac{1}{d}}} \end{equation}
 for $1\le j\le r$. 
 
 It is shown in Theorem \ref{Theorem8} that (assuming the Minkowski equality holds) the  real number $\frac{e_d^{\frac{1}{d}}}{e_0^{\frac{1}{d}}}$ is actually  a rational number $\frac{a}{b}$.   This is in spite of the fact that the multiplicities $e_0$ and $e_d$ can be irrational numbers and the $\gamma_{E_j}(D_i)$ can be irrational numbers (as shown in the example of Section \ref{SecExample}, which is surveyed in subsection \ref{SubExEx}).
 
 Now combining  this fact, (\ref{eqIn1}) and (\ref{eqIn2}) we obtain in Theorem \ref{Theorem8} that
 $$
 I(maD_1)=\Gamma(X,\mathcal O_X(-\lceil \sum_{i=1}^rma\gamma_{E_i}(D_1)E_i\rceil)
 =\Gamma(X,\mathcal O_X(-\lceil \sum_{i=1}^rmb\gamma_{E_i}(D_2)E_i\rceil)
 =I(mbD_2)
 $$
 for all $m\in \NN$.
 
 The proof of Theorem \ref{Theorem8} uses the invariant
 $$
 w_{\mathcal I}(f)=\max\{m\mid f\in I_m\}
 $$
 for a filtration $\mathcal I=\{I_m\}$ and $f\in R$,
 which is either a natural number or $\infty$,
 and the fact that  an integral divisorial $m_R$-filtration $\mathcal I(D)$ has the good property that  for $f\in R$, there exists $d\in \ZZ_{>0}$ such that $w_{\mathcal I(D)}(f^{nd})=nw_{\mathcal I(D)}(f^d)$ for all $n\in \NN$ (Lemma \ref{ratfun}).
 
 It is natural to define 
  $$
 \overline w_{\mathcal I}(f)=\limsup_{n\rightarrow\infty}\frac{w_{\mathcal I}(f^n)}{n}
 $$
 which generalizes to filtrations the asymptotic Samuel function $\overline \nu_I(f)$ of an ideal in $R$ (\cite[Definition 6.9.3]{HS}). We use a theorem of Rees in \cite{R3} about the asymptotic Samuel function (reduced order) $\overline\nu_{m_R}$ in our proof of Lemma \ref{LemmaA}.

 
  \subsection{An Example}\label{SubExEx}  The above concepts and results are analyzed in an example from \cite{C7} in Section \ref{SecExample}.  The example is of the blowup $\phi:X\rightarrow \mbox{Spec}(R)$ of an $m_R$ primary ideal in a normal and excellent three dimensional local ring $R$ which is a resolution of singularities. The map $\phi$ has two prime exceptional divisors $E_1$ and $E_2$. The function
  $$
  f(n_1,n_2)=\lim_{m\rightarrow \infty}\frac{\ell_R(R/I(mn_1E_1+mn_2E_2))}{m^3}
  $$
  is computed in \cite{C7} and is  reproduced here. 
  
  \begin{Theorem}(\cite[Theorem 1.4]{C7}) For $n_1,n_2\in \NN$,
    $$
   f(n_1,n_2)
  =\left\{\begin{array}{ll}
  33 n_1^3&\mbox{ if }n_2<n_1\\
  78n_1^3-81n_1^2n_2+27n_1n_2^2+9n_2^3&\mbox{ if }n_1\le n_2<n_1\left(3-\frac{\sqrt{3}}{3}\right)\\
  \left(\frac{2007}{169}-\frac{9\sqrt{3}}{338}\right)n_2^3&\mbox{ if }n_1\left(3-\frac{\sqrt{3}}{3}\right)<n_2.
  \end{array}\right.
  $$
    \end{Theorem}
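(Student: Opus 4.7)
My plan is to compute $f(n_1,n_2)$ on the resolution $\phi\colon X\to\mbox{Spec}(R)$ using the Newton--Okounkov framework already in force in the paper, then to break the $(n_1,n_2)$-plane into the three chambers by analyzing the base loci of the line bundles $\mathcal O_X(-m(n_1E_1+n_2E_2))$. Set $D=n_1E_1+n_2E_2$. Since $R$ is normal and $\phi$ is proper birational with $X$ normal,
$$I(mD)=\Gamma(X,\mathcal O_X(-mD)).$$
Choose a rank-$3$ $m_R$-valuation $\nu$ whose first component is $\nu_{E_1}$, so the Newton--Okounkov body $\Delta(\mathcal I(D))$ and the identity (\ref{eqIn5*}) apply. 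Then
$$f(n_1,n_2)=6\delta\bigl[\mbox{Vol}(\Delta(R)\cap H_c^-)-\mbox{Vol}(\Delta(\mathcal I(D))\cap H_c^-)\bigr]$$
for a suitable $c$, and the whole computation reduces to the second volume.

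The next step is to describe $\mathcal I(D)$ via the $\gamma$-invariants of Subsection \ref{SubSecInv}. By (\ref{eqIn1}),
$$I(mD)=\Gamma(X,\mathcal O_X(-\lceil m\gamma_{E_1}(D)E_1+m\gamma_{E_2}(D)E_2\rceil)).$$
The three chambers correspond to three regimes of the pair $(\gamma_{E_1}(D),\gamma_{E_2}(D))$ as $(n_1,n_2)$ varies. In the chamber $n_2<n_1$, the ``Zariski decomposition'' of $-D$ absorbs $E_2$ entirely into a fixed part, so $f$ depends only on $n_1$; in the chamber $n_2>n_1(3-\sqrt{3}/3)$ the roles reverse; in the middle chamber both $\gamma$'s are genuine linear functions of $(n_1,n_2)$. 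The walls $n_2=n_1$ and $n_2=n_1(3-\sqrt{3}/3)$ are precisely the loci where the base-locus structure changes, and the right-hand wall matches $\gamma_{E_1}(E_2)=3/(9-\sqrt{3})$ noted in Subsection \ref{SubExEx}.

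Within each chamber the volume of $\Delta(\mathcal I(D))\cap H_c^-$ is a homogeneous cubic polynomial in $(n_1,n_2)$ whose coefficients can be computed by Riemann--Roch/intersection theory on a birational model resolving the base locus of $|-mD|$. The integers $33$, $78$, $-81$, $27$, $9$ come from intersection products of $E_1$, $E_2$ and the exceptional divisors introduced by the base-locus resolution, and the number $2007/169-9\sqrt{3}/338$ arises as the limit of a geometric series governed by the same quadratic irrationality. Continuity of $f$ at the two walls provides a useful consistency check.

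The main obstacle is the irrational wall $n_2=n_1(3-\sqrt{3}/3)$. In dimension three there is no finite Zariski decomposition, so $\gamma_{E_1}(D)$ need not be rational, and the base-locus analysis requires an infinite sequence of blow-ups. One must show that this sequence is driven by a linear recursion whose characteristic polynomial has $\sqrt{3}$ as a root, and then summing the resulting geometric series produces both the irrational chamber wall and the coefficient $2007/169-9\sqrt{3}/338$ in the third chamber. Once this recursion is identified, the remaining intersection-theoretic computations in the first two chambers are finite-dimensional and essentially mechanical.
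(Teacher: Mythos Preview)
This theorem is not proved in the present paper; it is quoted verbatim from \cite[Theorem~1.4]{C7}. What the paper does provide (all cited from \cite{C7}) are the three ingredients from which the formula follows by a short computation: the intersection numbers $(E_1^3)=468$, $(E_1^2\cdot E_2)=-162$, $(E_1\cdot E_2^2)=54$, $(E_2^3)=54$ of (\ref{eqT12}); the values of $\gamma_{E_1}(D),\gamma_{E_2}(D)$ in each of the three regions together with the fact that $-\gamma_{E_1}(D)E_1-\gamma_{E_2}(D)E_2$ is nef (Theorem~\ref{Theorem5}); and the identity $f(n_1,n_2)=-\frac{1}{3!}\langle(-D)^3\rangle$ together with Proposition~\ref{Prop1T}, which reduces the anti-positive intersection product to the ordinary one $\frac{1}{6}(\gamma_{E_1}(D)E_1+\gamma_{E_2}(D)E_2)^3$ once nefness is known. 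Plugging in, e.g.\ in region~1, gives $\frac{n_1^3}{6}(E_1+E_2)^3=\frac{n_1^3}{6}(468-486+162+54)=33n_1^3$, and the other two regions are equally mechanical.

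Your Newton--Okounkov set-up is correct but is an unnecessary detour: you never need to compute ${\rm Vol}(\Delta(\mathcal I(D))\cap H_c^-)$ directly, because formula (\ref{eq11T}) and Proposition~\ref{Prop1T} already convert the limit into a cubic intersection number on $X$ itself, with no further blow-ups required once the $\gamma$'s are known. The genuine content --- and the only place an infinite recursion and the quadratic irrationality enter --- is the computation of $\gamma_{E_1}(D)$ and $\gamma_{E_2}(D)$ and the verification of nefness, i.e.\ Theorem~\ref{Theorem5}. You have correctly located this as the hard step, but your proposal does not carry it out; it only asserts that a linear recursion with characteristic root $\sqrt{3}$ governs the base loci. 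That is exactly what \cite{C7} proves, and neither the present paper nor your sketch supplies an independent argument for it. So your plan is viable in outline, but the Okounkov-body layer adds nothing, and the substantive gap --- proving Theorem~\ref{Theorem5} --- remains.
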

    Thus $f(n_1,n_2)$  is not a polynomial, 
  but it is ``piecewise a polynomial''; that is, $\RR^2_{\ge 0}$ consists of three triangular regions determined by lines through the origin such that $f(n_1,n_2)$ is a polynomial function within each of these three regions. The line separating the second and third regions has irrational slope, and the function $f(n_1,n_2)$ has an irrational coefficient in the third region.  The  middle region is the ample cone and is also the Nef cone.

 We compute the functions $\gamma_{E_1}$ and $\gamma_{E_2}$ in \cite[Theorem 4.1]{C7}, as summarized in the following theorem. Observe that $\gamma_{E_1}$ is an  irrational number in the third region.
    
  \begin{Theorem}(\cite[Theorem 4.1]{C7}) Let $D=n_1E_1+n_2E_2$ with $n_1,n_2\in \NN$, an effective exceptional  divisor on $X$.
  \begin{enumerate}
  \item[1)] Suppose that $n_2<n_1$. Then  $\gamma_{E_1}(D)=n_1$ and $\gamma_{E_2}(D)=n_1$.
  \item[2)]   Suppose that $n_1\le n_2<n_1 \left(3-\frac{\sqrt{3}}{3}\right)$. Then   $\gamma_{E_1}(D)=n_1$ and $\gamma_{E_2}(D)=n_2$.
  \item[3)] Suppose that $n_1 \left(3-\frac{\sqrt{3}}{3}\right)<n_2$. Then   $\gamma_{E_1}(D)=\frac{3}{9-\sqrt{3}}n_2$ and $\gamma_{E_2}(D)=n_2$.
  \end{enumerate}
  In all three cases, $-\gamma_{E_1}(D)E_1-\gamma_{E_2}(D)E_2$ is nef on $X$. 
    \end{Theorem}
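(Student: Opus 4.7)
The plan is to identify $\gamma_{E_i}(D)$ as the $E_i$-coefficient of the smallest $\RR$-divisor $\widetilde D = \gamma_1 E_1 + \gamma_2 E_2$ with $\gamma_j \geq n_j$ for $j = 1,2$ and with $-\widetilde D$ nef on $X$. Once this identification is in hand, the three cases of the theorem become a direct optimization in the $(\gamma_1,\gamma_2)$-plane. Case~2 is precisely the region where $-D$ itself is already nef, so one takes $\widetilde D = D$ and reads off $\gamma_{E_1}(D) = n_1$, $\gamma_{E_2}(D) = n_2$. In Cases~1 and~3 the pair $(n_1,n_2)$ lies outside the nef wedge, and the minimal $\widetilde D$ dominating $D$ with $-\widetilde D$ nef is obtained by pushing $(n_1,n_2)$ onto the nearer boundary ray of the wedge, which yields precisely the stated formulas. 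The nef assertion in the conclusion is then built in to the construction of $\widetilde D$.

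The first technical step is to describe the nef cone of divisors of the form $-(c_1 E_1 + c_2 E_2)$ on $X$. Using the explicit geometry of $X$ from Section~\ref{SecExample}, I would enumerate the extremal effective curve classes contained in the exceptional locus of $\phi$. From the three-region structure in the statement I expect exactly two such extremal classes $C_1, C_2$, and the two inequalities $(-c_1 E_1 - c_2 E_2) \cdot C_j \geq 0$ should carve out a wedge in $\RR_{\geq 0}^2$ bounded by the rays through $(1,1)$ and $(1,\,3-\sqrt{3}/3)$. The irrational slope $3-\sqrt{3}/3$, and hence the irrational coefficient $\frac{3}{9-\sqrt{3}}$ in Case~3, is forced by the specific intersection numbers of $E_1$ and $E_2$ with $C_2$ produced by the construction in \cite{C7}.

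The main obstacle is proving the asymptotic identity
$$
I(mD) = \Gamma\bigl(X,\, \mathcal O_X(-\lceil m \widetilde D \rceil)\bigr)
$$
for $m \gg 0$, which pins down $\gamma_{E_i}(D) = \gamma_i$. One direction is immediate: any $f \in R$ with $\nu_{E_j}(f) \geq \lceil m \gamma_j \rceil$ automatically satisfies $\nu_{E_j}(f) \geq m n_j$, hence $f \in I(mD)$. The reverse inclusion is the substantive content and uses nefness of $-\widetilde D$: a Kawamata--Viehweg/Serre-type vanishing argument applied to $-m\widetilde D$ shows that the global sections of $\mathcal O_X(-\lceil m \widetilde D \rceil)$ generate the correct ideal asymptotically, and combined with the description of $\gamma_{E_i}$ from Subsection~\ref{SubSecInv} as the projection of the Newton--Okounkov-type body $\Delta(\mathcal I(D))$ onto the first coordinate, one recovers the desired equality. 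A secondary subtlety is that the slope $3-\sqrt{3}/3$ is irrational, so $\widetilde D$ is genuinely an $\RR$-divisor; the real divisorial framework of Definition~\ref{DefDF} and the rounding $I(mD) = I(\lceil m \gamma_{E_1}(D) E_1 + m \gamma_{E_2}(D) E_2 \rceil)$ must be invoked, rather than passing to integral Veronese subalgebras.
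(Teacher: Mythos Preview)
The paper does not contain a proof of this theorem; it is quoted from \cite[Theorem 4.1]{C7} and used only as input for the computations in Section~\ref{SecExample}. There is therefore no proof in the present paper to compare your proposal against.

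Your overall strategy --- identify $\sum_i\gamma_{E_i}(D)E_i$ with the smallest $\widetilde D\ge D$ such that $-\widetilde D$ is nef, then read off the three cases from the position of $(n_1,n_2)$ relative to the nef wedge --- is the right shape, and is indeed how \cite{C7} proceeds. But you have the roles of the two inclusions reversed. The containment $\Gamma(X,\mathcal O_X(-\lceil m\widetilde D\rceil))\subset I(mD)$ is trivially true since $\gamma_j\ge n_j$; the substantive inclusion $I(mD)\subset\Gamma(X,\mathcal O_X(-\lceil m\widetilde D\rceil))$ asserts that \emph{every} $f\in I(mD)$ already vanishes to order $\ge m\gamma_j$ along $E_j$. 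That is a base-locus (fixed-part) statement, not a consequence of Kawamata--Viehweg or Serre vanishing; vanishing theorems produce sections, they do not force extra zeros on all sections. In dimension two this step is exactly Zariski decomposition; in dimension three it can fail in general and holds here only because of the specific intersection numbers (\ref{eqT12}) of the example in \cite{C7}. Vanishing (or more precisely, base-point-freeness coming from nefness of $-\widetilde D$) enters on the \emph{other} side: it lets you produce sections of $\mathcal O_X(-\lceil m\widetilde D\rceil)$ with no additional vanishing along $E_i$, which witnesses $\gamma_{E_i}(D)\le\gamma_i$. Finally, the determination of the nef cone --- the source of the slopes $1$ and $3-\sqrt{3}/3$ --- is not a formality: it is the core computation of \cite{C7} and depends on the explicit construction of $X$ carried out there, which your outline treats as a black box.
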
  
    
    We determine the divisors for which Minkowski's inequality holds in the following Corollary, reproduced from Section \ref{SecExample}.
    
 \begin{Corollary}(Corollary \ref{Cor20T}) Suppose that $D_1$ and $D_2$ are
 effective integral exceptional divisors
 on $X$.   
 If $D_1$ and $D_2$ are in the first region of Theorem \ref{Theorem4}, then Minkowski's equality holds between them. If $D_1$ and $D_2$ are  in the second region, then Minkowski's equality holds between them if and only if $D_2$ is a rational multiple of $D_1$. If $D_1$ and $D_2$ are in the third region, then Minkowski's equality holds between them. 
  Minkowski's equality cannot hold between $D_1$ and $D_2$ in different regions.  
  \end{Corollary}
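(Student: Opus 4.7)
The plan is to combine Theorem \ref{NewTRSKT} with the explicit formulas for the invariants $\gamma_{E_i}(D)$ recorded in Theorem \ref{Theorem4}. Divisorial filtrations are bounded (Lemma \ref{BdLemma2}), so Theorem \ref{NewTRSKT} applies and reduces Minkowski equality for $\mathcal I(D_1)$ and $\mathcal I(D_2)$ to the existence of $a,b\in\ZZ_{>0}$ with
$$
\overline{\sum_{n\ge 0} I(naD_1)\,t^n} \;=\; \overline{\sum_{n\ge 0} I(nbD_2)\,t^n}.
$$
Lemma \ref{BdLemma2} tells us that each side already equals its own Rees algebra, so this simplifies to the filtration identity $I(naD_1)=I(nbD_2)$ for every $n\in\NN$.

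By (\ref{eqIn1}) each such ideal expands as
$$
I(naD_1) \;=\; I(\nu_{E_1})_{\lceil na\gamma_{E_1}(D_1)\rceil}\,\cap\, I(\nu_{E_2})_{\lceil na\gamma_{E_2}(D_1)\rceil},
$$
and analogously for $D_2$. Equating the two sides for all $n$, dividing the exponent data by $n$ and letting $n\to\infty$, forces $a\gamma_{E_i}(D_1)=b\gamma_{E_i}(D_2)$ for $i=1,2$; conversely, these equalities make the ceilings coincide for every $n$ and every $i$. Hence Minkowski equality for $\mathcal I(D_1)$ and $\mathcal I(D_2)$ is equivalent to the two vectors $\bigl(\gamma_{E_1}(D_j),\gamma_{E_2}(D_j)\bigr)$, $j=1,2$, being positive rational multiples of one another.

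Substituting the values from Theorem \ref{Theorem4} settles each claim. In Region 1 the vector is $n_1(1,1)$, so every divisor there points along the common ray $(1,1)$ and Minkowski equality holds between any two. In Region 3 the vector is $\tfrac{n_2}{9-\sqrt{3}}(3,\,9-\sqrt{3})$, giving the common ray $(3,\,9-\sqrt{3})$ and Minkowski equality between any two. In Region 2 the vector is $(n_1,n_2)$ itself, so rational parallelism is exactly the condition $n_1^{(1)}n_2^{(2)}=n_1^{(2)}n_2^{(1)}$, i.e.\ that $D_2=\lambda D_1$ for some $\lambda\in\QQ_{>0}$.

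For divisors in different regions, compare the possible slopes: Region 1 has slope $1$, the strict interior of Region 2 has rational slopes in $\bigl(1,\,3-\tfrac{\sqrt{3}}{3}\bigr)$, and Region 3 has the irrational slope $\tfrac{9-\sqrt{3}}{3}$. A rational cannot equal an irrational, and the Region 1 slope $1$ lies strictly outside the range of Region 2 interior slopes, so no ray in one region coincides with a ray in another; thus Minkowski equality cannot hold across the regions. The main conceptual step is the initial translation from the Rees-algebra identity in Theorem \ref{NewTRSKT} to the coordinate identities $a\gamma_{E_i}(D_1)=b\gamma_{E_i}(D_2)$, which is what makes the case analysis a routine substitution into Theorem \ref{Theorem4}.
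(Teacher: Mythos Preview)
Your argument is correct in spirit but takes a longer route than the paper. The paper's proof is one line: it invokes Theorem~\ref{Theorem6}, whose condition~3) says directly that Minkowski equality holds between $\mathcal I(D_1)$ and $\mathcal I(D_2)$ if and only if the ratios $\gamma_{E_i}(D_2)/\gamma_{E_i}(D_1)$ are independent of $i$, and then reads off the answer from Theorem~\ref{Theorem5}. You instead go through Theorem~\ref{NewTRSKT} to get the filtration identity $I(naD_1)=I(nbD_2)$ and then rederive the $\gamma$-proportionality condition from it. This is valid, but you are essentially reproving the implication $1)\Leftrightarrow 3)$ of Theorem~\ref{Theorem6} (and implicitly using the rationality statement of Theorem~\ref{Theorem8}) rather than citing it. Two small points: first, your references are off --- the $\gamma_{E_i}(D)$ values you use are recorded in Theorem~\ref{Theorem5}, not Theorem~\ref{Theorem4}; second, the phrase ``dividing the exponent data by $n$ and letting $n\to\infty$'' is imprecise, since equality of the intersections $I(\nu_{E_1})_{c_1}\cap I(\nu_{E_2})_{c_2}$ does not by itself force equality of the exponents $c_i$. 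The clean argument is that equal filtrations have equal invariants $\gamma_{E_i}$, so $\gamma_{E_i}(aD_1)=\gamma_{E_i}(bD_2)$, and then one uses $\gamma_{E_i}(aD_1)=a\gamma_{E_i}(D_1)$ (as in the proof of Theorem~\ref{Theorem9}).
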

  
  The above theorem allows us to compute the mixed multiplicities of any two 
  divisors $D_1=a_1E_1+a_2E_2$ and $D_2=b_1E_1+b_2E_2$ by interpreting mixed multiplicities as the anti positive intersection multiplicities 
   of (\ref{eq21T}). 
      
 In particular, we can compare $f(n_1,n_2)$ with the polynomial 
 $$
  P(n_1,n_2)=\lim_{m\rightarrow \infty}\frac{\ell_R(R/I(mn_1E_1)I(mn_2E_2))}{m^d}.
  $$ 
  We calculate in (\ref{eqT20}) that 
   $$
   \begin{array}{lll}
  P(n_1,n_2)&=&
  \frac{1}{3!}e(\mathcal I(E_1)^{[3]})n_1^3+\frac{1}{2!}e(\mathcal I(E_1)^{[2]},\mathcal I(E_2)^{[1]})n_1^2n_2\\
  &&+\frac{1}{2!}e(\mathcal I(E_1)^{[1]},\mathcal I(E_2)^{[2]})n_1n_2^2
  +\frac{1}{3!}e(\mathcal I(E_2)^{[3]})n_2^3\\
  &=&33n_1^3+(\frac{891}{26}+\frac{99}{26}\sqrt{3})n_1^2n_2+(\frac{12042}{338}-\frac{27}{338}\sqrt{3})n_1n_2^2+\left(\frac{2007}{169}-\frac{9\sqrt{3}}{338}\right)n_2^3.
  \end{array}
  $$

   \section{Notation}

We will denote the nonnegative integers by $\NN$ and the positive integers by $\ZZ_{>0}$,   the set of nonnegative rational numbers  by $\QQ_{\ge 0}$  and the positive rational numbers by $\QQ_{>0}$. 
We will denote the set of nonnegative real numbers by $\RR_{\ge0}$ and the positive real numbers by $\RR_{>0}$. For a real number $x$, $\lceil x\rceil$ will denote the smallest integer that is $\ge x$ and $\lfloor x\rfloor$ will denote the largest integer that is $\le x$. If $E_1,\ldots, E_r$ are prime divisors on a normal scheme $X$ and $a_1,\ldots,a_r\in \RR$, then $\lfloor \sum a_iE_i\rfloor$ denotes the integral divisor $\sum \lfloor a_i\rfloor E_i$ and $\lceil \sum a_iE_i\rceil$ denotes the integral divisor $\sum \lceil a_i\rceil E_i$.

A local ring is assumed to be Noetherian.
The maximal ideal of a local ring $R$ will be denoted by $m_R$. The quotient field of a domain $R$ will be denoted by ${\rm QF}(R)$. We will denote the length of an $R$-module $M$ by $\ell_R(M)$. Excellent local rings have many excellent properties which are enumerated in \cite[Scholie IV.7.8.3]{EGA}. We will make use of some of these properties without further reference.

\section{Preliminaries}

\subsection{Approximation of irrational numbers}

The following formula for approximation of real numbers appears in \cite{HW} (Remark on bottom of page 156).

\begin{Lemma}\label{LemmaHW} Suppose that $\xi,\alpha\in \RR_{>0}$. Then
\begin{enumerate}
\item[a)] There exist $p_0,q_0\in \ZZ_{>0}$ such that
$$
0\le \xi-\frac{p_0}{q_0}<\frac{\alpha}{q_0}.
$$
\item[b)] There exist $p_0',q_0'\in \ZZ_{>0}$ such that
$$
-\frac{\alpha}{q_0'}<\xi-\frac{p_0'}{q_0'}\le 0
$$
\end{enumerate}
\end{Lemma}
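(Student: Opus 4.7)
The plan is to split on whether $\xi$ is rational or irrational. If $\xi = r/s$ with $r,s \in \ZZ_{>0}$, then setting $p_0 = p_0' = r$ and $q_0 = q_0' = s$ makes $\xi - p_0/q_0 = 0$, which satisfies the inequalities in both (a) and (b) because $\alpha/q_0 > 0 > -\alpha/q_0$.

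For irrational $\xi$ I would appeal to the standard fact (Dirichlet's approximation theorem, or equivalently the density of $\{n\xi \bmod 1 : n \in \ZZ_{>0}\}$ in $[0,1)$): one can find arbitrarily large $q \in \ZZ_{>0}$ whose fractional part $\{q\xi\}$ is as close to $0$, or as close to $1$, as we like. For (a), choose $q_0 \in \ZZ_{>0}$ large enough that $q_0\xi \ge 1$ and $\{q_0 \xi\} < \alpha$, and set $p_0 = \lfloor q_0\xi\rfloor$; then $p_0 \ge 1$ and $0 < q_0\xi - p_0 < \alpha$, so dividing by $q_0$ gives $0 < \xi - p_0/q_0 < \alpha/q_0$. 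For (b), choose $q_0' \in \ZZ_{>0}$ with $\{q_0'\xi\} > 1 - \alpha$ and set $p_0' = \lceil q_0'\xi\rceil$. Since $q_0'\xi > 0$, $p_0' \ge 1$, and $0 < p_0' - q_0'\xi = 1 - \{q_0'\xi\} < \alpha$, yielding $-\alpha/q_0' < \xi - p_0'/q_0' < 0$.

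There is essentially no obstacle; the statement is a classical elementary Diophantine approximation result, exactly as the reference to Hardy--Wright suggests. The only minor subtlety is ensuring $p_0, p_0' \ge 1$, which is trivial in the rational case and, in the irrational case, is handled by taking $q_0, q_0'$ sufficiently large among the infinitely many admissible choices supplied by Dirichlet's theorem.
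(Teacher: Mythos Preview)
Your proof is correct and handles the rational case exactly as the paper does. In the irrational case, however, the paper takes a different route: it invokes the continued fraction expansion of $\xi$ and uses that the convergents $p_n/q_n$ satisfy $\xi - p_n/q_n = (-1)^n\delta_n/(q_nq_{n+1})$ with $0<\delta_n<1$ and $q_n\ge n$, so the sign alternates with $n$ and the magnitude can be made less than $\alpha/q_n$ by taking $n$ large. Your argument instead appeals to the density of $\{n\xi \bmod 1 : n\in\ZZ_{>0}\}$ in $[0,1)$ and picks $q$ with fractional part near $0$ or near $1$, then reads off $p$ via floor or ceiling. Both are standard elementary facts from Hardy--Wright; the continued-fraction approach is slightly more constructive and gives the two signs simultaneously via the parity of $n$, while your density argument is arguably more direct but treats (a) and (b) separately. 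Either is entirely adequate for the lemma's role in the paper.
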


\begin{proof} If $\xi$ is a rational number we need only write $\xi=\frac{p_0}{q_0}$ with $p_0,q_0\in \ZZ_{>0}$ (or $\xi=\frac{p_0'}{q_0'}$ with $p_0',q_0'\in \ZZ_{>0}$).

Suppose that $\xi$ is an irrational number. By \cite[Theorem 170]{HW}, we can express $\xi$ as an infinite simple continued fraction. Let $\frac{p_n}{q_n}$ be the convergents of this continued fraction for $n\in \ZZ_{>0}$. By \cite[Theorem 156]{HW}, $q_n\ge n$, and by \cite[Theorem 164]{HW} and \cite[Theorem 171]{HW}, we have that
$$
\xi-\frac{p_n}{q_n}=\frac{(-1)^n\delta_n}{q_nq_{n+1}}
$$
with $0<\delta_n<1$ for all $n$ from which the lemma follows.
\end{proof}

\subsection{The Brunn-Minkowski inequality in Convex Geometry}

Let $K$ and $L$ be compact convex subsets of $\RR^d$. For $\lambda\in \RR_{\ge 0}$, define
$$
\lambda K=\{\lambda x\mid x\in K\}
$$
and for $\lambda_1,\lambda_2\in \RR_{\ge 0}$, define the Minkowski sum
$$
\lambda_1K+\lambda_2L=\{\lambda_1x+\lambda_2y\mid x\in K, y\in L\}.
$$

A proof of the following theorem can be found in \cite[Section 29, page 42]{BF}.

\begin{Theorem}\label{ConvMix} Suppose that $K_1,\ldots,K_r$ are compact convex subsets of $\RR^d$. Then the volume function
${\rm Vol}(\lambda_1K_1+\cdots+\lambda_rK_r)$ is a homogeneous real polynomial of degree $d$ for $\lambda_1,\ldots,\lambda_r\in \RR_{\ge 0}$. 
\end{Theorem}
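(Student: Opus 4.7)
The plan is to prove this classical result of Minkowski via the standard two-step reduction: first establish the statement for convex polytopes by induction on the ambient dimension $d$, then extend to arbitrary compact convex bodies by Hausdorff approximation. Write $V(\lambda_1,\ldots,\lambda_r) := {\rm Vol}(\lambda_1 K_1+\cdots+\lambda_r K_r)$ for the function to be studied.

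Homogeneity is immediate. Since $t(\lambda_1 K_1 + \cdots + \lambda_r K_r) = (t\lambda_1)K_1 + \cdots + (t\lambda_r)K_r$ and ordinary Lebesgue volume satisfies ${\rm Vol}(tS) = t^d\,{\rm Vol}(S)$ for $t \ge 0$, we obtain $V(t\lambda_1,\ldots,t\lambda_r) = t^d V(\lambda_1,\ldots,\lambda_r)$. So the only nontrivial content is the polynomiality.

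For polytopes $K_1,\ldots,K_r$, the main tool is the classical formula
$$
{\rm Vol}(P) = \frac{1}{d}\sum_{u} h_P(u)\,{\rm Vol}_{d-1}(F_u(P))
$$
expressing the volume of a full-dimensional polytope $P$ as a weighted sum over outward unit normals $u$ of its facets $F_u(P)$, where $h_P$ denotes the support function. Support functions are additive under Minkowski sum, so $h_{\sum \lambda_i K_i}(u) = \sum_i \lambda_i h_{K_i}(u)$ is linear in $\lambda$. Meanwhile, $\RR_{\ge 0}^r$ decomposes into finitely many chambers on each of which the combinatorial type of $\sum \lambda_i K_i$ is constant, and each facet $F_u$ is a Minkowski sum $\sum_i \lambda_i F_u^{(i)}$ of fixed $(d-1)$-dimensional faces of the $K_i$ in the supporting hyperplane with normal $u$. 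By the inductive hypothesis applied in this hyperplane, ${\rm Vol}_{d-1}(F_u)$ is polynomial of degree $d-1$ in $\lambda$ on each chamber. Combined with the linear support function, $V$ is polynomial of degree $d$ on each chamber. Continuity of $V$ forces these chamber polynomials to agree on their shared $(r-1)$-dimensional boundaries, and a standard polynomial-identity argument yields a single global homogeneous polynomial of degree $d$.

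For the extension to arbitrary compact convex bodies $K_i$, approximate each $K_i$ in the Hausdorff metric by polytopes $K_i^{(n)}$. Both Minkowski addition and volume are continuous with respect to Hausdorff convergence on the space of compact convex sets, so the polynomials $V^{(n)}$ produced by the previous step converge pointwise to $V$. Since the space of homogeneous polynomials of degree $d$ in $r$ variables is finite-dimensional, pointwise convergence at sufficiently many test points forces the coefficients to converge, so $V$ is itself a homogeneous polynomial of degree $d$. The main obstacle is the combinatorial bookkeeping in the polytope step---specifically, verifying that the chamber-by-chamber polynomial expressions patch together into a single polynomial. This is resolved by combining continuity of $V$ along the chamber boundaries with the finite-dimensionality of the relevant polynomial space.
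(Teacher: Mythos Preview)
The paper does not supply its own proof of this theorem; it simply cites \cite[Section 29, page 42]{BF} (Bonnesen--Fenchel). Your outline is essentially the classical argument found there: induction on $d$ for polytopes via the facet-volume decomposition, followed by Hausdorff approximation for general compact convex bodies.

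One correction to the polytope step. The chamber decomposition you invoke is unnecessary, and the patching justification you give does not actually work as stated: two homogeneous polynomials of degree $d$ in $r$ variables that agree on a hyperplane through the origin need not coincide (for instance $x^2$ and $xy$ agree on $x=0$). The cleaner and correct observation is that for $\lambda$ in the \emph{open} orthant $\RR_{>0}^r$ the face lattice of $\sum_i \lambda_i K_i$ is constant, since the face in any direction $u$ is $\sum_i \lambda_i F_u(K_i)$ with fixed summands $F_u(K_i)$, and the dimension of such a sum is independent of the positive scalars $\lambda_i$. Hence the set of facet normals is fixed throughout $\RR_{>0}^r$, the inductive hypothesis gives a single degree-$(d-1)$ polynomial for each facet volume, and you obtain one degree-$d$ polynomial valid on all of $\RR_{>0}^r$; continuity then extends it to the closed orthant. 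No patching across chambers is required.
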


The coefficients of the polynomial of the theorem are called mixed volumes.

 We now state the Brunn-Minkowski Theorem of convex geometry. A couple of proofs of this   theorem are  on \cite[Page 94]{BF} and in \cite{Kla}. 
 
 \begin{Theorem}\label{BMTheorem} Let $K$ and $L$ be compact convex subsets of $\RR^d$.
 Then
 \begin{equation}\label{eqBM}
 {\rm Vol}\left((1-t)K+tL\right)^{\frac{1}{d}}\ge (1-t){\rm Vol}(K)^{\frac{1}{d}}+t{\rm Vol}(L)^{\frac{1}{d}}
 \end{equation}
 for $0\le t\le 1$. Further, if ${\rm Vol}(K)$ and ${\rm Vol}(L)$ are positive, then equality holds in (\ref{eqBM}) for some $t$ with $0<t<1$ if and only if $K$ and $L$ are homothetic; that is, there exists $0<c\in \RR$ and $\vec\gamma\in \RR^d$ such that $L=cK+\vec\gamma$.
 
 If $K$ and $L$ are homothetic, then equality holds in (\ref{eqBM}) for all $t$ with $0\le t\le 1$.
 \end{Theorem}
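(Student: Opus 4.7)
The plan is the classical Hadwiger-Ohmann proof: reduce (\ref{eqBM}) to the case of axis-aligned boxes, where it is a direct consequence of AM-GM, extend by induction to finite disjoint unions of boxes, and then pass to general compact convex sets by Hausdorff approximation.

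First, for $K=\prod_{i=1}^d[0,a_i]$ and $L=\prod_{i=1}^d[0,b_i]$, one has $(1-t)K+tL=\prod_{i=1}^d[0,(1-t)a_i+tb_i]$, and (\ref{eqBM}) becomes
$$\prod_{i=1}^d\bigl((1-t)a_i+tb_i\bigr)^{1/d}\ge (1-t)\prod_{i=1}^d a_i^{1/d}+t\prod_{i=1}^d b_i^{1/d},$$
which after dividing through by the left side reduces to two applications of the weighted AM-GM inequality to the ratios $a_i/((1-t)a_i+tb_i)$ and $b_i/((1-t)a_i+tb_i)$. To extend to finite disjoint unions of boxes, I would use Hadwiger-Ohmann induction on the total number of constituent boxes in $K$ and $L$: if $K$ contains at least two boxes, pick a coordinate hyperplane $H$ separating two of them, and translate $L$ so that $H$ splits $L$ into two pieces with the same volume ratio as it splits $K$; then the inductive hypothesis applied to each half yields (\ref{eqBM}) with equality in the final addition. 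Passing from elementary sets to arbitrary compact convex sets uses Hausdorff continuity of both the volume functional and the Minkowski sum on compact convex bodies.

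For the equality statement, the direction ``homothetic implies equality'' is immediate: if $L=cK+\vec\gamma$ then $(1-t)K+tL=((1-t)+tc)K+t\vec\gamma$, and (\ref{eqBM}) becomes an equality by translation-invariance and degree-$d$ homogeneity of ${\rm Vol}$. For the converse, my plan is to use Theorem \ref{ConvMix}: the function $g(t):={\rm Vol}((1-t)K+tL)$ is a polynomial of degree $d$ in $t$, and the inequality (\ref{eqBM}) says $g^{1/d}$ is concave on $[0,1]$. Equality at one interior point forces $g^{1/d}$ to be affine throughout $[0,1]$, hence $g(t)=\bigl((1-t){\rm Vol}(K)^{1/d}+t{\rm Vol}(L)^{1/d}\bigr)^d$. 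Comparing coefficients with the mixed-volume expansion then reduces the problem to equality in the Minkowski quadratic inequality for the pair $(K,L)$.

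The main obstacle is this last step: deducing homothety from equality in Minkowski's quadratic mixed-volume inequality is the classical Minkowski uniqueness theorem, and it is not elementary. The standard route is to reduce to dimension two by orthogonal projection and invoke Bonnesen's sharpened planar Brunn-Minkowski inequality, from which the uniqueness of the planar solution propagates upward by a support-function argument. This rigidity step, not the inequality (\ref{eqBM}) itself, is where the real difficulty lies; in a paper focused on algebra I would simply invoke it from the convex-geometry literature (e.g., \cite[Page 94]{BF}) rather than reprove it.
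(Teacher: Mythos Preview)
The paper does not prove this theorem at all: it is stated as a quotation from convex geometry, with the sentence ``A couple of proofs of this theorem are on \cite[Page 94]{BF} and in \cite{Kla}'' immediately preceding the statement. So there is no proof in the paper to compare against; the author treats Brunn--Minkowski, including the equality characterization, as a black box imported from Bonnesen--Fenchel and Klain.

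Your proposal therefore goes well beyond what the paper does. The Hadwiger--Ohmann induction you sketch for the inequality itself is correct and standard, and your treatment of the easy direction of the equality case (homothetic $\Rightarrow$ equality) is fine. You also correctly identify that the hard direction (equality at one interior $t$ $\Rightarrow$ homothety) is the genuinely nontrivial rigidity statement, and that in a paper like this one would simply cite it. That last remark is exactly what the author does: the paper invokes \cite[Page 94]{BF} and \cite{Kla} and moves on. So your closing recommendation coincides with the paper's actual approach; the rest of your sketch is extra detail the paper does not attempt to supply.
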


 \section{$m_R$-valuations and divisorial $m_R$-filtrations on local domains $R$}\label{SecVal}

\subsection{$m_R$-valuations and $m_R$-filtrations}\label{SubSecGam}

In this subsection,  suppose that $R$ is a $d$-dimensional  local domain, with quotient field $K$. A valuation $\nu$ of $K$ is called an $m_R$-valuation if $\nu$ dominates $R$ ($R\subset V_{\nu}$ and $m_{\nu}\cap R=m_R$ where $V_{\nu}$ is the valuation ring of $\nu$ with maximal ideal $m_{\nu}$) and ${\rm trdeg}_{R/m_R}V_{\nu}/m_{\nu}=d-1$.

Let $\mathcal I=\{I_i\}$ be  an $m_R$-filtration. Let $\mu$ be an $m_R$-valuation.
Let
$$
I(\mu)_m=\{f\in R\mid \mu(f)\ge m\},
$$
 and
 define
$$
\tau_{\mu,m}(\mathcal I)=\mu(I_m)=\min\{\mu(f)\mid f\in I_m\}.
$$
Since $\tau_{\mu,mn}(\mathcal I)\le n\tau_{\mu,m}(\mathcal I)$, we have that
\begin{equation}\label{eqAR1}
\frac{\tau_{\mu,mn}(\mathcal I)}{mn}\le\min\{\frac{\tau_{\mu,m}(\mathcal I)}{m},\frac{\tau_{\mu,n}(\mathcal I)}{n}\}
\end{equation}
for $m,n\in \NN$.

Define
\begin{equation}\label{eqAR3}
\gamma_{\mu}(\mathcal I)=\inf_m\frac{\tau_{\mu,m}(\mathcal I)}{m}.
\end{equation}

\subsection{Divisors on blowups of normal local domains} In this subsection suppose that $R$ is a normal excellent local domain. Let $\phi:X\rightarrow \mbox{Spec}(R)$ be a birational projective morphism such that $X$ is normal and $X$ is the blowup of an $m_R$-primary ideal. 
Let $E_1,\ldots,E_r$ be the prime divisors on $X$ with exceptional support. A real divisor $D$ on $X$ with exceptional support is a formal sum $D=\sum_{i=1}^r a_iE_i$ with $a_i\in \RR$ for all $i$. $D$ is said to be effective if $a_i\ge 0$ for all $i$. $D$ is said to be a rational divisor if all $a_i\in \QQ$ and $D$ is said to be an integral divisor if all $a_i\in \ZZ$.

Now suppose that $D$ is an effective integral divisor with exceptional support. In this case, $D$ is a Weil divisor on $X$.  A rank one reflexive sheaf is associated to the Weil divisor $D$.
Let $U$ be the open set of regular points of $X$ and let $i:U\rightarrow X$ be the inclusion. We have that $\dim(X\setminus U)\le d-2$ since $X$ is normal. 
Then $D|U$ is a Cartier divisor.
The reflexive coherent sheaf $\mathcal O_X(-D)$ of $\mathcal O_X$-modules is defined by
$\mathcal O_X(-D)=i_*\mathcal O_U(-D|U)$ The basic properties of this sheaf are developed for instance in \cite[Section 13.2]{C5}.  Since $R$ is normal, we have that $\Gamma(X,\mathcal O_X)=R$, and if $D$ is a nontrivial integral exceptional divisor with effective support, then $I(D)=\Gamma(X,\mathcal O_X(-D))$ is an $m_R$-primary ideal.

Now let $D=\sum_{i=1}^r a_iE_i$ be an effective real divisor with exceptional support. 
 Let $\mathcal I(D)$ be the $m_R$-filtration
$\mathcal I(D)=\{I(mD)\}$ where 
$$
I(mD)=\Gamma(X,\mathcal O_X(-\lceil \sum_{i=1}^r ma_iE_i\rceil)).
$$
The round up $\lceil x\rceil$ of a real number $x$ is the smallest integer $a$ such that $x\le a$.
When $D$ is an integral divisor, we have that 
$I(mD)=\Gamma(X,\mathcal O_X(-mD))$ for all $m$.

Let $\nu_{E_i}$ be the $m_R$-valuation whose valuation ring is $\mathcal O_{X,E_i}$
for $1\le i\le r$. Let $\tau_{m,i}=\tau_{E_i,m}(D)
=\tau_{m,\nu_{E_i}}(\mathcal I(D))$.
Now define
$$
\gamma_{E_i}(D)=\gamma_{\nu_{E_i}}(\mathcal I(D)).
$$
 We have that 
$$
I(mD)= \Gamma(X,\mathcal O_X(-\lceil \sum_{i=1}^ra_imE_i\rceil))=\{f\in R\mid \nu_{E_i}(f)\ge \lceil ma_i\rceil \mbox{ for }1\le i\le r\}.
$$
Thus $\tau_{E_i,m}(D)\ge ma_i$ for all $m\in\NN$, and so 
\begin{equation}\label{eqAR12}
\gamma_{E_i}(D)\ge a_i\mbox{ for all $i$}.
\end{equation}

\begin{Lemma}(\cite[Lemma 3.1]{C6})\label{LemmaAR1}
We have that
$$
I(mD)=\Gamma(X,\mathcal O_X(-\lceil \sum_{i=1}^ra_imE_i\rceil))
=\Gamma(X,\mathcal O_X(-\lceil \sum_{i=1}^r m\gamma_{E_i}(D)E_i\rceil))
$$
for all $m\in \NN$.
\end{Lemma}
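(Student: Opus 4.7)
The plan is to verify the second equality of the stated identity (the first equality is the definition of $I(mD)$ as given earlier in the subsection). I will split it into two containments and exploit the fact that although $\gamma_{E_i}(D)$ may be irrational, the numbers $\tau_{E_i,m}(D)$ are integers and lie above $m\gamma_{E_i}(D)$ by the definition of $\gamma_{E_i}$ as an infimum.

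\textbf{Step 1 (easy containment).} Since $\gamma_{E_i}(D)\ge a_i$ by (\ref{eqAR12}), we have $\lceil m\gamma_{E_i}(D)\rceil \ge \lceil ma_i\rceil$ for each $i$. Thus the effective integral divisor $\lceil \sum_i m\gamma_{E_i}(D)E_i\rceil$ dominates $\lceil \sum_i ma_iE_i\rceil$, and the inclusion of reflexive sheaves $\mathcal O_X(-\lceil \sum_i m\gamma_{E_i}(D)E_i\rceil) \subseteq \mathcal O_X(-\lceil \sum_i ma_iE_i\rceil)$ gives, upon taking global sections,
$$
\Gamma\bigl(X, \mathcal O_X(-\lceil \textstyle\sum_i m\gamma_{E_i}(D)E_i\rceil)\bigr) \subseteq I(mD).
$$

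\textbf{Step 2 (reverse containment).} Let $f\in I(mD)$. By definition of $\tau_{E_i,m}(D)=\min\{\nu_{E_i}(g): g\in I(mD)\}$ we have $\nu_{E_i}(f)\ge \tau_{E_i,m}(D)$ for each $i$. On the other hand, (\ref{eqAR3}) gives $\gamma_{E_i}(D)\le \tau_{E_i,m}(D)/m$, so
$$
m\gamma_{E_i}(D)\le \tau_{E_i,m}(D).
$$
Here the crucial point is that $\tau_{E_i,m}(D)\in\ZZ$, so rounding up preserves this inequality:
$$
\lceil m\gamma_{E_i}(D)\rceil \le \tau_{E_i,m}(D)\le \nu_{E_i}(f).
$$
Hence $f$ vanishes to order at least $\lceil m\gamma_{E_i}(D)\rceil$ along each $E_i$, which (since the $E_i$ exhaust the prime divisors with exceptional support and $f$ is regular away from the exceptional locus) places $f$ in $\Gamma(X, \mathcal O_X(-\lceil \sum_i m\gamma_{E_i}(D)E_i\rceil))$.

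\textbf{Obstacle and remarks.} There is no deep obstacle; the lemma is essentially a bookkeeping statement, and the only nontrivial ingredient is the observation in Step 2 that the integrality of $\tau_{E_i,m}(D)$ allows us to replace $m\gamma_{E_i}(D)$ by its round-up without loss. The resulting identity is what justifies extending the theory from integral divisorial filtrations to real divisorial filtrations, since it shows that passing from the formally prescribed vanishing $ma_i$ to the asymptotically actual vanishing $m\gamma_{E_i}(D)$ does not change the filtration.
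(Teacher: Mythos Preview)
Your proof is correct and follows essentially the same route as the paper: both containments are established exactly as you do, using $\gamma_{E_i}(D)\ge a_i$ for one direction and the chain $\nu_{E_i}(f)\ge \tau_{E_i,m}(D)\ge m\gamma_{E_i}(D)$ together with integrality for the other. The only cosmetic difference is that the paper invokes $\nu_{E_i}(f)\in\NN$ rather than $\tau_{E_i,m}(D)\in\ZZ$ to pass to the ceiling, which amounts to the same thing.
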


\begin{proof} We have that 
$$
\Gamma(X,\mathcal O_X(-\lceil \sum_{i=1}^rm\gamma_{E_i}(D)E_i\rceil))
\subset \Gamma(X,\mathcal O_X(-\lceil \sum_{i=1}^ra_im E_i\rceil))
$$
by (\ref{eqAR12}).

Suppose that $f\in \Gamma(X,\mathcal O_X(-\lceil\sum_{i=1}^ra_imE_i\rceil))$. Then 
$\nu_{E_i}(f)\ge\tau_{E_i,m}(D)\ge m\gamma_{E_i}(D)$ for all $i$, so that
$\nu_{E_i}(f)\ge \lceil m\gamma_{E_i}(D)\rceil$ for all $i$ since $\nu_{E_i}(f)\in \NN$.
\end{proof}

\subsection{Divisors on blowups of  local domains}\label{Not}
In this subsection, suppose that $R$ is an excellent $d$-dimensional local domain. Let $S$ be the normalization of $R$, which is a finitely generated $R$-module,  and let $m_1,\ldots,m_t$ be the maximal ideals of $S$. Let $\phi:X\rightarrow \mbox{Spec}(R)$ be a birational projective morphism such that $X$ is the normalization of   the blowup of an $m_R$-primary ideal. Since $X$ is  normal, $\phi$ factors through $\mbox{Spec}(S)$. Let $\phi_i:X_i\rightarrow \mbox{Spec}(S_{m_i})$ be the induced  projective morphisms where $X_i=X\times_{\mbox{Spec}(S)}\mbox{Spec}(S_{m_i})$. For $1\le i\le t$, let $\{E_{i,j}\}$ be the prime exceptional divisors in $\phi_i^{-1}(m_i)$.

A real divisor $D$ on $X$ with exceptional support is a formal sum $D=\sum a_{i,j}E_{i,j}$ with $a_{i,j}\in \RR$ for all $i,j$. $D$ is said to be effective if all $a_{i,j}\ge 0$. $D$ is said to be a rational divisor if all $a_{i,j}\in \QQ$ and $D$ is said to be an integral divisor if all $a_{i,j}\in \ZZ$.

Suppose that $D$ is an effective real divisor on $X$ with exceptional support. Write $D=\sum_{i,j} a_{i,j}E_{i,j}$ with  $a_{i,j}\in\RR_{\ge 0}$. Define $D_i=\sum_ja_{i,j}E_{i,j}$ for $1\le i\le t$.

Let $D=\sum_{i,j}a_{i,j}E_{i,j}$ be an effective real divisor with exceptional support on $X$.
Let $\mathcal I(D)$ be the $m_R$-filtration 
$\mathcal I(D)=\{I(mD)\}$ where 
$$
I(mD)=\Gamma(X,\mathcal O_X(-\lceil \sum_{i=1}^r ma_{i,j}E_{i,j}\rceil))\cap R.
$$

When $D$ is an integral divisor, we have that 
$I(mD)=\Gamma(X,\mathcal O_X(-mD))\cap R$ for all $m$.


 Now let $D$ be an effective integral divisor with exceptional support.

Let
\begin{equation}\label{eqX30}
\begin{array}{l}
J(D)=\Gamma(X,\mathcal O_X(-D)),\\
 J(D_i)=\Gamma(X_i,\mathcal O_{X_i}(-D_i)),\\
   I(D)=J(D)\cap R,\\
    I(D_i)=J(D_i)\cap R.
    \end{array}
\end{equation}
 We have that
\begin{equation}\label{eqR6}
S/J(D)\cong \bigoplus_{i=1}^t S_{m_i}/\Gamma(X_i,\mathcal O_{X_i}(-D_i))\cong \bigoplus_{i=1}^t S_{m_i}/J(D_i)
\end{equation}
and so
\begin{equation}\label{eqR15}
\ell_R(S/J(D))=\sum_{i=1}^t\ell_R(S_{m_i}/J(D_i))=\sum_{i=1}^t[S/m_i:R/m_R]\ell_{S_{m_i}}(S_{m_i}/J(D_i)).
\end{equation}
We have that $[S/m_i:R/m_R]<\infty$ for all $i$ since $S$ is a finitely generated $R$-module.

Let $D(1),\ldots,D(r)$ be effective integral divisors on $X$ with exceptional support.

\begin{Lemma}\label{LemmaR1}(\cite[Lemma 2.2]{C6})   For $n_1,\ldots,n_r\in \NN$,
$$
\lim_{n\rightarrow\infty}\frac{\ell_R(R/I(nn_1D(1))\cdots I(nn_rD(r)))}{n^d}
=
\lim_{n\rightarrow\infty}\frac{\ell_R(S/J(nn_1D(1))\cdots J(nn_rD(r)))}{n^d}.
$$

\end{Lemma}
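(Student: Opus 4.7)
The plan is to establish the asymptotic bound
$$\bigl|\ell_R(S/B_n) - \ell_R(R/A_n)\bigr| = O(n^{d-1}),$$
where $A_n := I(nn_1D(1))\cdots I(nn_rD(r))$ and $B_n := J(nn_1D(1))\cdots J(nn_rD(r))$; dividing by $n^d$ and passing to the limit then gives the equality of limits asserted in the lemma.

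Since $R$ is excellent, $S$ is a finite $R$-module, and since $S$ is the normalization of the domain $R$ they share a common quotient field, so the conductor $\mathfrak c := \mathrm{Ann}_R(S/R)$ is a nonzero ideal of both $R$ and $S$. By Krull's Hauptidealsatz, $\dim_R(S/R) \le d-1$, and for any nonzero $c \in \mathfrak c$ both $R/cR$ and $S/cS$ are $(d-1)$-dimensional. The relation $I(mD(k)) = J(mD(k))\cap R$ combined with $\mathfrak c\cdot S\subset R$ gives $\mathfrak c\cdot J(mD(k))\subset I(mD(k))$, and multiplying $r$ such inclusions yields $\mathfrak c^r B_n \subset A_n \subset A_nS \subset B_n$. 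In particular, $B_n/A_nS$ is annihilated by $\mathfrak c^r$ and (using $\mathfrak c\cdot A_nS = A_n\cdot\mathfrak cS \subset A_n$) the quotient $(A_nS\cap R)/A_n$ is annihilated by $\mathfrak c$.

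Tensoring the short exact sequence $0\to R\to S\to S/R\to 0$ with $R/A_n$ and reading off kernel and cokernel gives the four-term exact sequence
$$0\to(A_nS\cap R)/A_n\to R/A_n\to S/A_nS\to(S/R)/A_n(S/R)\to 0,$$
and combining its alternating length sum with $\ell_R(S/B_n)=\ell_R(S/A_nS)-\ell_R(B_n/A_nS)$ gives
$$\ell_R(S/B_n)-\ell_R(R/A_n)=\ell_R((S/R)/A_n(S/R))-\ell_R((A_nS\cap R)/A_n)-\ell_R(B_n/A_nS).$$
It therefore suffices to bound each of the three lengths on the right by $O(n^{d-1})$.

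Because $X$ is the normalization of the blowup of an $m_R$-primary ideal, $m_R\mathcal O_X$ is invertible and its associated effective divisor meets every exceptional $E_{i,j}$ with positive coefficient; hence there is a constant $C>0$ independent of $n$ with $m_R^{Cn}\subset A_n$. Applying Hilbert--Samuel to the finitely generated $R$-module $S/R$ of dimension $\le d-1$ bounds the first term by $\ell_R((S/R)/m_R^{Cn}(S/R))=O(n^{d-1})$. For the other two, fix nonzero $c\in\mathfrak c$; the annihilator conditions give
$$(A_nS\cap R)/A_n\hookrightarrow(A_n:_R c)/A_n\quad\text{and}\quad B_n/A_nS\hookrightarrow(A_nS:_S c^r)/A_nS,$$
and a standard multiplication-by-$c$ (resp.\ $c^r$) snake argument identifies the $R$-lengths of the targets with $\ell_R(R/(cR+A_n))$ and $\ell_R(S/(c^rS+A_nS))$, each bounded by the Hilbert--Samuel function of the $(d-1)$-dimensional $R$-modules $R/cR$ and $S/c^rS$ at the ideal $m_R^{Cn}$, hence $O(n^{d-1})$.

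The main technical obstacle is identifying a nonzero annihilator from the conductor for each of the three error modules and verifying $m_R^{Cn}\subset A_n$; once those reductions are in place, all three bounds follow routinely from Hilbert--Samuel estimates on modules of dimension at most $d-1$.
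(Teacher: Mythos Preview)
Your argument is correct and follows the standard conductor-plus-Hilbert--Samuel approach; the paper does not reprove the lemma here but cites \cite[Lemma 2.2]{C6}, and the method you use is exactly the kind of reduction the paper invokes elsewhere (see the conductor argument in the proof of Corollary~\ref{CorDiv2}). The four-term exact sequence, the annihilation of the three error modules by powers of the conductor, and the length identities $\ell_R((A_n:_R c)/A_n)=\ell_R(R/(cR+A_n))$ and $\ell_R((A_nS:_S c^r)/A_nS)=\ell_R(S/(c^rS+A_nS))$ via multiplication by $c$ (resp.\ $c^r$) on a finite-length module are all fine.

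One small inaccuracy: you assert that $m_R\mathcal O_X$ is invertible, but $X$ is the normalization of the blowup of some $m_R$-primary ideal $I$, so it is $I\mathcal O_X$ that is invertible, not $m_R\mathcal O_X$ in general. This does not affect the conclusion: since each $\nu_{E_{i,j}}$ is an $m_R$-valuation one has $\nu_{E_{i,j}}(m_R)\ge 1$, so for each $k$ there is $C_k$ with $m_R^{C_kn}\subset\bigcap_{i,j}I(\nu_{E_{i,j}})_{nn_ka_{i,j}^{(k)}}=I(nn_kD(k))$, and then $m_R^{Cn}\subset A_n$ with $C=\sum_k C_k$. Alternatively and even more directly, each $I(n_kD(k))$ is $m_R$-primary, so $m_R^{c_k}\subset I(n_kD(k))$ for some $c_k$ and hence $m_R^{c_kn}\subset I(n_kD(k))^n\subset I(nn_kD(k))$. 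Either route gives the $O(n^{d-1})$ bounds you need.
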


\subsection{Divisorial $m_R$-Filtrations}  In this subsection, let $R$ be a  local domain.

Let $\mu_1,\ldots,\mu_s$ be $m_R$-valuations, and $a_1,\ldots,a_s\in \NN$ with $a_1+\cdots+a_s>0$. Then
we define a divisorial $m_R$-filtration 
$$
\mathcal I(a_1\mu_1+\cdots+a_s\mu_s)=\{I(a_1\mu_1+\cdots+a_s\mu_s)_n\}
$$
 by
$$
I(a_1\mu_1+\cdots+a_s\mu_s)_n=I(\mu_1)_{na_1}\cap\cdots\cap I(\mu_s)_{na_s}.
$$

We can also define real divisorial $m_R$-filtrations by taking $a_1,\ldots,a_s\in \RR_{\ge 0}$ and defining an $m_R$-filtration
$\mathcal I(a_1\mu_1+\cdots+a_s\mu_s)=\{I(a_1\mu_1+\cdots+a_s\mu_s)_n\}$ by
$$
I(a_1\mu_1+\cdots+a_s\mu_s)_n=I(\mu_1)_{\lceil na_1\rceil }\cap\cdots\cap I(\mu_s)_{\lceil na_s\rceil}.
$$
A real divisorial $m_R$-filtration will be called a rational divisorial $m_R$-filtration if $a_i\in \QQ_{\ge 0}$ for all $i$ and will be called an integral divisorial $m_R$-filtration, or just a divisorial $m_R$-filtration if
$a_i\in \NN$ for all $i$.

The first statement of the following  proposition is proven for the case when
 $\mathcal I=\mathcal I(a_1\mu_1+\cdots+a_t\mu_t)$ is an integral  divisorial $m_R$-filtration in \cite[Proposition 2.1]{C6}. 
However, the proof given there extends to the case when $\mathcal I$ is a real divisorial $m_R$-filtration. The second statement follows from \cite[Theorem 1.4]{CSV}.
 
\begin{Proposition}\label{PropPos}(\cite[Proposition 2.1]{C6}, \cite[Theorem 1.4]{CSV})   Suppose that $R$ is an excellent, analytically irreducible $d$-dimensional local domain.
\begin{enumerate}
\item[1)] Suppose that $\mathcal I=\mathcal I(a_1\mu_1+\cdots+a_t\mu_t)$ is a real divisorial $m_R$-filtration. 
Then 
$$
e_R(\mathcal I;R)>0.
$$
\item[2)] Suppose that $\mathcal I(1),\ldots,\mathcal I(r)$ are $m_R$-filtrations  such that $e_R(\mathcal I(j))>0$ for all $j$.
Then
$$
e_R(\mathcal I(1)^{[d_1]},\ldots,\mathcal I(r)^{[d_r]};R)>0
$$
for all $d_1,\ldots,d_r\in \NN$ with $d_1+\cdots+d_r=d$.
\end{enumerate}
\end{Proposition}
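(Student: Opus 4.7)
The plan is to handle the two statements separately, each by reduction to an existing result. For (1), I would deduce the real-coefficient case from the integral case of \cite[Proposition 2.1]{C6} via an order-reversing comparison with a rational-coefficient filtration and a Veronese scaling. For (2), the statement is a verbatim instance of \cite[Theorem 1.4]{CSV}, so no argument beyond the citation is required.

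Concretely, for (1), since the $a_i$ are not all zero, I would pick an index $i_0$ with $a_{i_0}>0$ and a positive rational $b \in \QQ_{>0}$ with $b \le a_{i_0}$, and compare $\mathcal I$ with the one-valuation real divisorial filtration $\mathcal I' := \mathcal I(b \mu_{i_0})$. The rounding inequality $\lceil nb\rceil \le \lceil na_{i_0}\rceil$ yields
\[
I_n \;=\; \bigcap_{i=1}^t I(\mu_i)_{\lceil na_i\rceil} \;\subseteq\; I(\mu_{i_0})_{\lceil na_{i_0}\rceil} \;\subseteq\; I(\mu_{i_0})_{\lceil nb\rceil} \;=\; I'_n
\]
for every $n \in \NN$, so $\ell_R(R/I_n) \ge \ell_R(R/I'_n)$ and hence $e_R(\mathcal I) \ge e_R(\mathcal I')$. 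It therefore suffices to prove $e_R(\mathcal I')>0$.

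For this, write $b = p/q$ with $p,q \in \ZZ_{>0}$. Since $\lceil qnb\rceil = np$ is an integer, the $q$-th Veronese subfiltration $\{I'_{qn}\}_{n\in\NN}$ of $\mathcal I'$ coincides with the integral divisorial filtration $\mathcal I(p\mu_{i_0}) = \{I(\mu_{i_0})_{np}\}$. Evaluating the limit defining the multiplicity along the subsequence $m = qn$ gives the scaling identity
\[
e_R(\mathcal I(p\mu_{i_0})) \;=\; q^d\, e_R(\mathcal I').
\]
By \cite[Proposition 2.1]{C6} applied to the integral filtration $\mathcal I(p\mu_{i_0})$, the left-hand side is strictly positive, which forces $e_R(\mathcal I')>0$ and completes (1).

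I expect the only subtle point to be checking that the Veronese identity survives the ceiling function inherent in the real-coefficient definition of $\mathcal I'$; the choice $b\in\QQ$ is precisely what makes this transparent, since $qb\in\ZZ$ and the rounding disappears along the chosen subsequence. With (1) established, statement (2) is just the positivity criterion \cite[Theorem 1.4]{CSV}, and so requires no separate argument.
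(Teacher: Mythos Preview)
Your argument is correct. For part (2) you and the paper do the same thing: cite \cite[Theorem 1.4]{CSV} directly. For part (1), however, you take a genuinely different route. The paper simply asserts that the proof of \cite[Proposition 2.1]{C6} for integral divisorial filtrations extends without change to real coefficients, leaving the reader to re-examine that argument. You instead use \cite[Proposition 2.1]{C6} as a black box and reduce the real case to the integral one by a monotonicity-plus-Veronese trick: bound $\mathcal I$ below by a single-valuation filtration $\mathcal I(b\mu_{i_0})$ with $b\in\QQ_{>0}$, then pass to the $q$-th Veronese to land in the integral case. This is more self-contained and requires no inspection of the original proof in \cite{C6}; the paper's approach is shorter on the page but asks the reader to do that verification off-stage. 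Both are valid; yours is the more transparent reduction.
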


If $\mathcal I$ is a real divisorial $m_R$-filtration on an analytically irreducible excellent local ring $R$, then Rees's Izumi Theorem \cite{R3} shows that $\gamma_{\mu}(\mathcal I)>0$ for all $m_R$-valuations $\mu$.

\subsection{Representations of divisorial $m_R$-filtrations on normal local rings}
In this subsection, suppose that $R$ is a normal excellent local domain.
We now define  a representation of a real divisorial $m_R$-filtration $\mathcal I(b_1\mu_1+\cdots+b_s\mu_s)$.
 Let $\phi:X\rightarrow \mbox{Spec}(R)$ be a birational projective morphism that is the   blowup of an $m_R$-primary ideal such that $X$ is normal, and so that if 
$E_1,\ldots,E_r$ are the prime exceptional divisors of $\phi$
and  $\nu_{E_i}$ are the discrete valuations with valuation rings $\mathcal O_{X,E_i}$ for $1\le i\le r$, then $\mu_i=\nu_{E_i}$ for $1\le i\le s$ with $1\le s\le r$.

The pair of  $X\rightarrow \mbox{Spec}(R)$ and the real divisor $b_1E_1+\cdots+b_sE_s$ will be called a representation of the real divisorial $m_R$-filtration $\mathcal I(b_1\mu_1+\cdots+b_s\mu_s)$.

We remark that it may not be possible to construct an $X$ for which $r=s$, even in dimension $d=2$. This follows from the example of a two dimensional excellent normal local domain without a ``one fibered ideal''  given in \cite{C8}.

We now tie this back in with our original  real divisorial $m_R$-filtration $\mathcal I(b_1\mu_1+\cdots+b_s\mu_s)$, for which the pair of $X$ and $b_1E_1+\cdots+b_sE_s$ is a representation. Letting $D$ be the real divisor $D=b_1E_1+\cdots+b_sE_s$ on $X$,  we have for all $m\in\NN$ that
$$
I(m(\gamma_{E_1}(D)E_1+\cdots+\gamma_{E_r}(D)E_r))\\
=
I(mD)=
I(b_1\mu_1+\cdots+b_s\mu_s)_m
$$
for all $m$. Thus we have equality of $m_R$-filtrations
$$
\mathcal I(\gamma_{E_1}(D)E_1+\cdots+\gamma_{E_r}(D)E_r)
=
\mathcal I(D)
=\mathcal I(b_1\mu_1+\cdots+b_s\mu_s).
$$

In particular, every divisorial $m_R$-filtration has the form $\mathcal I(D)$ for some  divisor
$D=\sum a_iE_i$ with exceptional support on some $X$. 

If the pair $X'$ and $D'$ is another representation of $\mathcal I(b_1\mu_1+\cdots+b_s\mu_s)$, then there are prime exceptional divisors $E_1',\ldots, E_s'$ on $X'$ such that we have equality of  local rings $\mathcal O_{X,E_i}=\mathcal O_{X',E_i'}=\mathcal O_{\mu_i}$ for $1\le i\le s$ and $D'=\sum_{i=1}^sb_iE_i'$. 

We remark that  even when $\mathcal I$ is an integral divisorial $m_R$-filtration, $\gamma_{\mu}(\mathcal I)$ can be an irrational number for some $m_R$-valuation $\mu$. 
From \ref{Theorem4}, we find an example of $X$ with two prime exceptional divisors $E_1$ and $E_2$ such that 
$$
\gamma_{E_1}(E_2)=\frac{3}{9-\sqrt{3}}
$$
is an irrational number.  We will often abuse notation, denoting a real divisorial  $m_R$-filtration by $\mathcal I(D)$.

\subsection{Bounded $m_R$-Filtrations}\label{SubSecBound}

\begin{Definition} Let $R$ be a local ring and $\mathcal I$ be an $m_R$-filtration. Let $R[\mathcal I]$ be the $R$-algebra
$$
R[\mathcal I]=\sum_{m\ge 0}I_mt^m
$$
and
$\overline{R[\mathcal I]}$ be the integral closure of $R[\mathcal I]$ in the polynomial ring $R[t]$.
\end{Definition}

If $I$ is an ideal in a local ring $R$, let $\overline I$ denote its integral closure. 

\begin{Lemma}\label{BdLemma1} Let $R$ be a local ring and $\mathcal I$ be an $m_R$-filtration. Then
$$
\overline{R[\mathcal I]}=\sum_{m\ge 0}J_mt^m
$$
where $\{J_m\}$ is the $m_R$-filtration
$$
J_m=\{f\in R\mid f^r\in \overline{I_{rm}}\mbox{ for some }r>0\}.
$$
\end{Lemma}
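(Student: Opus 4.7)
I would begin by noting that $R[\mathcal I]=\bigoplus_{m\ge 0}I_mt^m$ is a graded $R$-subalgebra of the graded polynomial ring $R[t]$, generated as an $R$-algebra by homogeneous elements. The standard fact that the integral closure of such a subring in a graded overring is itself graded then gives a decomposition $\overline{R[\mathcal I]}=\bigoplus_{m\ge 0}J'_mt^m$, reducing the problem to showing $J'_m=J_m$. Taking the degree-$nm$ homogeneous component of any monic integral relation for $ft^m$ over $R[\mathcal I]$ shows that $f\in J'_m$ if and only if there exist $n\in\ZZ_{>0}$ and $g_i\in I_{im}$ for $i=1,\ldots,n$ with
\[
f^n+g_1f^{n-1}+\cdots+g_n=0 \text{ in } R;
\]
call this relation $(\ast)$. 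The inclusion $J_m\subset J'_m$ is direct: given $f^r\in\overline{I_{rm}}$ via $(f^r)^s+a_1(f^r)^{s-1}+\cdots+a_s=0$ with $a_i\in I_{rm}^i\subset I_{rmi}$ (by the filtration property), multiplication by $t^{rms}$ produces a monic integral relation for $(ft^m)^r$ over $R[\mathcal I]$, and since integrality is preserved under $r$-th roots (the tower $R[\mathcal I]\subset R[\mathcal I][(ft^m)^r]\subset R[\mathcal I][ft^m]$ is module-finite), one obtains $ft^m\in\overline{R[\mathcal I]}$.

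The reverse inclusion $J'_m\subset J_m$ is the main obstacle. Starting from $(\ast)$, I would prove that $f^N\in\overline{I_{Nm}}$ with $N:=n!$, via the valuative criterion for integral closure of ideals. Reducing modulo each minimal prime $P$ of $R$---an integral dependence of $f^N$ over $I_{Nm}$ in $R$ can be assembled from such dependences modulo each minimal prime by multiplying the obstructing monic polynomials across the minimal primes and then raising the resulting equation to a high power to absorb the nilpotent defect---it suffices to prove the statement in each domain $R/P$. There, for any valuation $\nu$ nonnegative on $R/P$, the relation $(\ast)$ forces $n\nu(\bar f)\ge\nu(\bar g_j)+(n-j)\nu(\bar f)$ for some $j=j(\nu)\in\{1,\ldots,n\}$, whence $j\,\nu(\bar f)\ge\nu(I_{jm}(R/P))$. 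Setting $\alpha_s:=\nu(I_{sm}(R/P))$, the filtration condition $I_{am}I_{bm}\subset I_{(a+b)m}$ gives the subadditivity $\alpha_{a+b}\le\alpha_a+\alpha_b$, hence $\alpha_{k\ell}\le k\alpha_\ell$, and so $\alpha_N/N\le\alpha_j/j$ whenever $j\mid N$. Because $j(\nu)\le n$ and $N=n!$, the divisibility $j(\nu)\mid N$ always holds, yielding $N\nu(\bar f)\ge\alpha_N$, i.e.\ $\bar f^N\in\overline{I_{Nm}(R/P)}$.

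It remains to verify that $\{J_m\}$ is an $m_R$-filtration. Monotonicity $J_{m+1}\subset J_m$ is immediate from $I_{r(m+1)}\subset I_{rm}$. The product inclusion $J_iJ_j\subset J_{i+j}$ follows from $(fg)^{rs}=(f^r)^s(g^s)^r\in(\overline{I_{ri}})^s(\overline{I_{sj}})^r\subset\overline{I_{ri}^sI_{sj}^r}\subset\overline{I_{rs(i+j)}}$, using the standard $\overline A\cdot\overline B\subset\overline{AB}$ and $I_{ri}^sI_{sj}^r\subset I_{rs(i+j)}$. Finally, each $J_m$ is $m_R$-primary because of the sandwich $I_m\subset J_m\subsetneq R$, the properness coming from the fact that $1\in J_m$ would force $I_{rm}=R$ for some $r$, contradicting the $m_R$-primariness of $I_{rm}$. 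The crux of the whole argument is the divisibility step $j(\nu)\mid n!$ in the middle paragraph, which collapses the valuation-dependent index into a uniform exponent $N$ and so delivers the equivalence $J'_m=J_m$.
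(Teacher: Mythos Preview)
Your proof is correct, and the hard direction $J'_m\subset J_m$ is handled by a genuinely different argument from the paper's.

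The paper proceeds by first treating the case where $R[\mathcal I]$ is Noetherian: then by a result of Bourbaki there is some $r>0$ with $I_{ri}=I_r^i$ for all $i$, and since $(ft^n)^r$ is again integral over $R[\mathcal I]$, the homogeneous relation for $(ft^n)^r$ has coefficients lying in the \emph{powers} $I_{rn}^{d-i}$, giving $f^r\in\overline{I_{rn}}$ directly. The general case is then reduced to the Noetherian one by passing to the truncated subfiltration $\mathcal I_a$ generated by $I_1,\ldots,I_a$ for $a$ large enough to witness the integral relation.

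Your route avoids both the Noetherian reduction and the Bourbaki structure result, instead going through the valuative criterion and the subadditivity $\alpha_{a+b}\le\alpha_a+\alpha_b$ of $\alpha_s=\nu(I_{sm}(R/P))$. The key observation that the valuation-dependent index $j(\nu)$ always divides $N=n!$ is what produces a single exponent $N$ working for every valuation simultaneously, after which the standard reduction to minimal primes (your ``assembly'' step, which is e.g.\ \cite[Proposition 1.1.5]{HS}) finishes the argument. This is a clean and self-contained alternative; the trade-off is that you invoke the valuative characterization of integral closure, while the paper's proof stays entirely within the definition via monic equations. Your explicit verification that $\{J_m\}$ is an $m_R$-filtration is a nice addition that the paper leaves implicit.
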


\begin{Remark} If $\mathcal I=\{I^i\}$ is the filtration of powers of a fixed $m_R$-primary ideal $I$ then $J_m=\overline{I_m}$ for all $m$.
\end{Remark}

\begin{proof} 
The ring $\overline{R[\mathcal I]}$ is graded by \cite[Theorem 2.3.2]{HS}. Thus it suffices to show that for $f\in R$ and $n\in \ZZ_{>0}$ we have that $ft^n$ is integral over  $R[\mathcal I]$ if and only if $f\in\overline{I_{rn}}$ for some $r\ge 1$. Now $ft^n$ is integral over $R[\mathcal I]$ if and only if there exists a homogeneous relation
\begin{equation}\label{eqBd1}
(ft^n)^d+a_{d-1}t^n(ft^n)^{d-1}+\cdots+a_it^{n(d-i)}(ft^n)^i+\cdots+a_0t^{nd}=0
\end{equation}
for some $d>0$ with $a_i\in I_{n(d-i)}$ for all $i$.

We will show that $ft^n$ is integral over $R[\mathcal I]$ if and only if there exists $r>0$ such that $f^r\in \overline{I_{rn}}$. 

Suppose that $f^r\in \overline{I_{rn}}$. Then there exists a relation
$$
(f^r)^d+a_{d-1}(f^r)^{d-1}+\cdots+a_i(f^r)^i+\cdots+a_0=0
$$
with $a_i\in (I_{rn})^{d-i}\subset I_{rn(d-i)}$ for all $i$. Multiply this relation by $t^{rnd}$ to get a relation of type (\ref{eqBd1}), showing that $(ft^n)^r$ is integral over $R[\mathcal I]$. Thus $ft^n$ is integral over $R[\mathcal I]$.

Now  suppose that $ft^n$ is integral over $R[\mathcal I]$. We will break the proof up into two cases.
\vskip .2truein
\noindent{\bf Case 1.} Assume that $R[\mathcal I]$ is Noetherian.  Then there exists $r>0$ such that $I_{ri}=I_r^i$ for all $i\in \ZZ_{>0}$ by \cite[Proposition 3, Section 1.3, Chapter III]{Bou}. Since $f^rt^{rn}$ is integral over $R[\mathcal I]$, there exists a relation (\ref{eqBd1}) with $f$ replaced with $f^r$ and $n$ with $rn$, so $a_i\in I_{rn(d-i)}=I_{rn}^{d-i}$ and thus $f^r\in \overline{I_{rn}}$.
\vskip .2truein
\noindent{\bf Case 2.}(General Case)  Assume that $\mathcal I$ is an arbitrary $m_R$-filtration. 

For $a\in \ZZ_{>0}$, let $\mathcal I_a=\{I_{a,n}\}$ where $I_{a,n}=I_n$ if $n\le a$ and if $n>a$ then $I_{a,n}=\sum I_{a,i}I_{a,j}$  where the sum is over $i,j>0$ such that $i+j=n$.

Now $ft^n$  integral over $R[\mathcal I]$ implies there exits $a>0$ such that $ft^n$ is integral over $R[\mathcal I_a]$. By Case 1, there exists $r>0$ such that 
$f^r\in \overline{I_{a,rn}}\subset \overline{I_{rn}}$.  
\end{proof}

\begin{Lemma}\label{BdLemma2} Let $R$ be a local domain and $\mathcal I(D)$ be a divisorial $m_R$-filtration. Then $R[\mathcal I(D)]$ is integrally closed in $R[t]$.
\end{Lemma}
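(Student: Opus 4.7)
The plan is to invoke Lemma \ref{BdLemma1} and show that the filtration $\{J_m\}$ produced there coincides with $\mathcal{I}(D)$ itself. Writing $D = a_1\mu_1 + \cdots + a_s\mu_s$, so that $I(D)_n = I(\mu_1)_{na_1} \cap \cdots \cap I(\mu_s)_{na_s}$, I would split the work into two observations about valuation ideals.

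First I would check that each individual valuation ideal $I(\mu)_m = \{f \in R \mid \mu(f) \ge m\}$ is integrally closed. This is the standard valuative argument: if $f$ satisfies a monic integral relation $f^d + c_1 f^{d-1} + \cdots + c_d = 0$ with $c_i \in I(\mu)_m^i$, then $\mu(c_i) \ge im$, and applying $\mu$ to the relation together with the non-Archimedean inequality forces $d\mu(f) \ge i\mu(f) + (d-i)m$ for some $i < d$, hence $\mu(f) \ge m$. Since an intersection of integrally closed ideals is integrally closed (apply the same integral relation separately to each factor in the intersection), it follows that $I(D)_n$ is integrally closed for every $n$. Consequently the condition $f^r \in \overline{I(D)_{rn}}$ appearing in Lemma \ref{BdLemma1} simplifies to $f^r \in I(D)_{rn}$.

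Second I would exploit the definition of $I(D)_{rn}$ valuatively: if $f \in R$ and $f^r \in I(D)_{rn}$ for some $r > 0$, then for each $i$ one has $r\mu_i(f) = \mu_i(f^r) \ge rna_i$, which immediately gives $\mu_i(f) \ge na_i$ and hence $f \in I(\mu_i)_{na_i}$ for every $i$, so $f \in I(D)_n$. Combining the two observations yields $J_n \subseteq I(D)_n$, and the reverse inclusion is trivial by taking $r = 1$ in the definition of $J_n$. Therefore $R[\mathcal{I}(D)] = \sum_n I(D)_n t^n = \sum_n J_n t^n = \overline{R[\mathcal{I}(D)]}$.

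No step is genuinely difficult; the argument is essentially a bookkeeping exercise reducing the integral closure condition to the valuative ordering via Lemma \ref{BdLemma1}. The only subtlety worth noting is that one needs integral closedness of $I(D)_n$ itself (to strip the closure bar from $\overline{I(D)_{rn}}$) before the valuative step can be applied — hence the separation of the two observations above.
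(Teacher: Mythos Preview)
Your proof is correct and rests on the same valuative idea as the paper's. The paper, however, does not route through Lemma~\ref{BdLemma1}: it takes $ft^n$ integral over $R[\mathcal I(D)]$, writes out the graded integral relation $f^d+a_{d-1}f^{d-1}+\cdots+a_0=0$ with $a_i\in I(n(d-i)D)$, and argues directly by contradiction that if $\mu_j(f)<\lceil n\alpha_j\rceil$ for some $j$ then the left side has finite $\mu_j$-value. Your two-step decomposition (first show each $I(D)_n$ is integrally closed, then show $f^r\in I(D)_{rn}\Rightarrow f\in I(D)_n$) is a cleaner packaging of the same computation and makes the role of Lemma~\ref{BdLemma1} explicit. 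One small point: the paper's proof is written for real coefficients $\alpha_i\in\RR_{>0}$ with the ceilings $\lceil n\alpha_i\rceil$, whereas you implicitly took the $a_i$ integral; your second step extends to the real case by noting $\mu_i(f)\ge na_i$ and $\mu_i(f)\in\ZZ$ together give $\mu_i(f)\ge\lceil na_i\rceil$.
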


\begin{proof} We have that $\mathcal I(D)=\mathcal I(\alpha_1\mu_1+\cdots+\alpha_s\mu_s)$ where $\mu_1,\ldots,\mu_s$ are $m_R$-valuations and $\alpha_1,\ldots,\alpha_s\in \RR_{>0}$. Since $\overline{R[\mathcal I]}$ is graded, we must show that if $f\in R$ and $n\in \ZZ_{>0}$ are such that $ft^n\in \overline{R[\mathcal I(D)]}$, then $f\in I(nD)=I(\mu_1)_{\lceil n\alpha_1\rceil}\cap\cdots\cap I(\mu_s)_{\lceil n\alpha_s\rceil}$.
Now $ft^n\in \overline{R[\mathcal I(D)]}$ implies there exists a relation 
$$
f^d+a_{d-1}f^{d-1}+\cdots+a_if^i+\cdots+a_0=0 
$$
with $a_i\in I(n(d-i)D)$ for all $i$ by (\ref{eqBd1}). Suppose that $f\not\in I(nD)$. Then there exists $j$ such that $\mu_j(f)<\lceil n\alpha_j\rceil$. Thus $\mu_j(f)<n\alpha_j$ since $\mu_j(f)\in \NN$ and so 
$$
(d-i)\mu_j(f)<n(d-i)\alpha_j\le \lceil n(d-i)\alpha_j\rceil
$$
for all $i$ with $0\le i<d$. Thus
$$
d\mu_j(f)<\lceil n(d-i)\alpha_j\rceil+i\mu_j(f)
$$
for all $i$ with $0\le i<d$ so that
$$
\mu_j(f^d+a_{d-1}f^{d-1}+\cdots+a_if^i+\cdots+a_0)=d\mu_j(f)\in\NN.
$$
Thus $f^d+a_{d-1}f^{d-1}+\cdots+a_if^i+\cdots+a_0\ne 0$, a contradiction, and so $f\in I(nD)$.
\end{proof}

\begin{Definition} Suppose that $R$ is a local domain. An $m_R$-filtration $\mathcal I=\{I_n\}$ is said to be bounded if there exists an integral divisorial $m_R$-filtration $\mathcal I(D)$ such that
$$
\overline{R[\mathcal I]}=R[\mathcal I(D)].
$$
An $m_R$-filtration $\mathcal I=\{I_n\}$ is said to be  real bounded  if there exists a real divisorial $m_R$-filtration $\mathcal I(D)$ such that
$$
\overline{R[\mathcal I]}=R[\mathcal I(D)].
$$

\end{Definition}

\begin{Lemma}\label{BdLemma3} Suppose that $R$ is an excellent local domain and $\mathcal I=\{I^n\}$ 
is the $m_R$-filtration of powers of a fixed $m_R$-primary ideal $I$. Then $\mathcal I$ is bounded.
\end{Lemma}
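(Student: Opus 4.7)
The plan is to take $\mathcal I(D)$ to be the integral divisorial $m_R$-filtration defined by the Rees valuations of $I$, and then verify that $\overline{R[\mathcal I]}=R[\mathcal I(D)]$ graded piece by graded piece.

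First I would set up the geometry. Since $R$ is excellent and $I$ is $m_R$-primary, let $\phi:X\to\mathrm{Spec}(R)$ be the normalization of the blowup of $I$, as in Subsection \ref{Not}. Let $E_1,\ldots,E_r$ be the prime exceptional divisors of $\phi$, and let $\nu_{E_i}$ be the discrete valuation with $V_{\nu_{E_i}}=\mathcal O_{X,E_i}$. Each $\nu_{E_i}$ dominates $R$ because the center of $E_i$ on $\mathrm{Spec}(R)$ is contained in $V(I)=\{m_R\}$, and ${\rm trdeg}_{R/m_R}V_{\nu_{E_i}}/m_{\nu_{E_i}}=d-1$ since $E_i$ has dimension $d-1$; hence each $\nu_{E_i}$ is an $m_R$-valuation. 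Setting $a_i=\nu_{E_i}(I)\in\ZZ_{>0}$ and $D=\sum_{i=1}^r a_iE_i$, the filtration $\mathcal I(D)$ is an integral divisorial $m_R$-filtration.

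Next I would match the two algebras degree by degree. By Lemma \ref{BdLemma1} applied to $\mathcal I=\{I^n\}$, the degree $m$ component of $\overline{R[\mathcal I]}$ is $J_m=\{f\in R\mid f^s\in\overline{I^{sm}}\text{ for some }s>0\}$. Since $I^{sm}=(I^m)^s$, an integral dependence relation $(f^s)^d+c_1(f^s)^{d-1}+\cdots+c_d=0$ with $c_i\in(I^m)^{si}$ witnessing $f^s\in\overline{(I^m)^s}$ may be read as a monic polynomial in $f$ of degree $sd$ in which the coefficient of $f^{s(d-i)}$ lies in $(I^m)^{si}$, which is an integral dependence of $f$ over $I^m$; hence $J_m=\overline{I^m}$, which is exactly the content of the remark following Lemma \ref{BdLemma1}. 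On the other hand, by the notation of Subsection \ref{Not}, the degree $m$ component of $R[\mathcal I(D)]$ is $I(mD)=\Gamma(X,\mathcal O_X(-mD))\cap R=\{f\in R\mid \nu_{E_i}(f)\ge ma_i\text{ for all }i\}$.

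The decisive input is the Rees valuation description of integral closure \cite[Theorem 10.2.2]{HS}: for any $m_R$-primary ideal $I$ in a Noetherian ring, $\overline{I^m}=\bigcap_i\{f\in R\mid \nu_i(f)\ge m\nu_i(I)\}$ where $\nu_1,\ldots,\nu_s$ are the Rees valuations of $I$. In our excellent setting these Rees valuations are precisely $\nu_{E_1},\ldots,\nu_{E_r}$, so $\overline{I^m}=I(mD)$ for every $m\ge 0$. Summing over $m$ yields $\overline{R[\mathcal I]}=R[\mathcal I(D)]$, and $\mathcal I$ is bounded. The only nontrivial ingredient is invoking this Rees valuation theorem (which already handles the non-normal case); everything else is bookkeeping with the definitions.
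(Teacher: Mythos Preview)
Your proof is correct and follows essentially the same route as the paper: both set up the normalization $X$ of the blowup of $I$, take $D=\sum a_iE_i$ with $a_i=\nu_{E_i}(I)$, and verify $\overline{I^m}=I(mD)$ for all $m$. The only cosmetic difference is in that last identification: the paper argues geometrically via $I^n\mathcal O_X=\mathcal O_X(-nD)$ and $\overline{I^n}=\Gamma(X,\mathcal O_X(-nD))\cap R$, whereas you invoke the Rees valuation theorem \cite[Theorem 10.2.2]{HS} directly; these are equivalent formulations of the same fact.
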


\begin{proof}
We have that
$\overline{R[\mathcal I]}=\oplus_{n\ge 0}\overline{I^n}u^n$ where $\overline{I^n}$ is the integral closure of $I^n$ in $R$.
The algebra $\oplus_{n\ge 0}\overline{I^n}u^n$ is a finite $R[\mathcal I]$-module, so that $\{\overline{I^n}\}$ is a Noetherian filtration.  Let $\phi:X\rightarrow \mbox{Spec}(R)$ be the normalization of the blowup of $I$ and $E_1,\ldots, E_t$ be the prime exceptional divisors of $\phi$. Then $I\mathcal O_X=\mathcal O_X(-a_1E_1-\cdots-a_tE_t)$ for some $a_1,\ldots,a_t\in \ZZ_{>0}$ is an ample Cartier divisor on $X$ and $I^n\mathcal O_X=\mathcal O_X(-na_1E_1-\cdots-na_tE_t)$ for all $n\in \NN$. Thus for  $n\in \NN$,
$$
\overline{I^n}=\Gamma(X,\mathcal O_X(-na_1E_1-\cdots-na_tE_t))\cap R
=I(a_1\nu_{E_1}+\cdots+a_t\nu_{E_t})_n
$$
where $\nu_{E_i}$ is the $m_R$-valuation whose valuation ring is $\mathcal O_{X,E_i}$.
Thus   $\{\overline{I^n}\}$ is the divisorial filtration $\mathcal I(a_1
\nu_{E_1}+\cdots+a_t\nu_{E_t})=\mathcal I(D)$ where $D=a_1E_1+\cdots+a_tE_t$ and $\overline{R[\mathcal I]}=R[\mathcal I(D)]$.
\end{proof}

\begin{Proposition}\label{BoundProp} Suppose that $R$ is a local ring with $\dim N(\hat R)<d$ and 
$$
\mathcal I(1),\ldots,\mathcal I(r),\mathcal I'(1),\ldots,\mathcal I'(r)
$$
 are $m_R$-filtrations such that 
$\overline{R[\mathcal I'(i)]}=\overline{R[\mathcal I(i)]}$ for $1\le i\le r$. Then
we have equality of all mixed multiplicities
\begin{equation}\label{eqB1}
e(\mathcal I(1)^{[d_1]},\ldots,\mathcal I(r)^{[d_r]})
=
e(\mathcal I'(1)^{[d_1]},\ldots,\mathcal I'(r)^{[d_r]}).
\end{equation}
\end{Proposition}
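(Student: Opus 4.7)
To prove Proposition \ref{BoundProp}, my plan is to reduce the equality of mixed multiplicities in $(\ref{eqB1})$ to a statement about ordinary multiplicities, by exploiting that each side is a coefficient of the polynomial $(\ref{M2})$ attached to the respective tuple of filtrations. Since a polynomial in $r$ variables is determined by its values on $\NN^r$, it is enough to show $P(n_1,\ldots,n_r) = P'(n_1,\ldots,n_r)$ for every $(n_1,\ldots,n_r) \in \NN^r$, where $P$ and $P'$ are the polynomials of $(\ref{M2})$ for $\mathcal I(1),\ldots,\mathcal I(r)$ and $\mathcal I'(1),\ldots,\mathcal I'(r)$ respectively. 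For a fixed such tuple with at least one $n_j > 0$ (the case $(0,\ldots,0)$ being trivial, as $P$ has no constant term), introduce the product $m_R$-filtrations
$$
\mathcal K = \{I(1)_{mn_1}\cdots I(r)_{mn_r}\}_{m\in\NN},\qquad \mathcal K' = \{I'(1)_{mn_1}\cdots I'(r)_{mn_r}\}_{m\in\NN}.
$$
By Theorem \ref{TheoremI20} the multiplicities $e_R(\mathcal K)$ and $e_R(\mathcal K')$ exist, and from $(\ref{M2})$ we have $P(n_1,\ldots,n_r) = e_R(\mathcal K)/d!$ and likewise for $P'$. Thus it suffices to prove $e_R(\mathcal K) = e_R(\mathcal K')$ for every such tuple.

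The central claim is that $\overline{R[\mathcal K]} = \overline{R[\mathcal K']}$ in $R[t]$. Granting this, let $\mathcal K''$ be the $m_R$-filtration whose graded pieces form this common integral closure, as described in Lemma \ref{BdLemma1}. Then $\mathcal K \subset \mathcal K''$ and $R[\mathcal K''] = \overline{R[\mathcal K]}$, so applying the implication 2)$\Rightarrow$1) of the filtration analogue of Rees's theorem recalled in Subsection \ref{SubSecEqChar} (\cite[Theorem 6.9]{CSS} together with the appendix of \cite{C6}), which is valid under our hypothesis $\dim N(\hat R) < d$, yields $e_R(\mathcal K) = e_R(\mathcal K'')$. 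The same argument with $\mathcal K'$ replacing $\mathcal K$ gives $e_R(\mathcal K') = e_R(\mathcal K'')$, and hence the desired equality of multiplicities.

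To prove the integral-closure identity, by symmetry it suffices to show $K_m \subset J_m$ for every $m$, where $J_m$ is the $m$-th graded piece of $\overline{R[\mathcal K']}$ described in Lemma \ref{BdLemma1}. A typical generator of $K_m = I(1)_{mn_1}\cdots I(r)_{mn_r}$ is a product $f = a_1\cdots a_r$ with $a_j \in I(j)_{mn_j}$. The hypothesis $\overline{R[\mathcal I(j)]} = \overline{R[\mathcal I'(j)]}$, applied via Lemma \ref{BdLemma1} to each factor, produces an integer $s_j > 0$ with $a_j^{s_j} \in \overline{I'(j)_{s_j m n_j}}$; setting $s = s_1 s_2 \cdots s_r$, we obtain $a_j^s \in \overline{I'(j)_{smn_j}}$ for every $j$. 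The standard inclusion $\overline{L_1}\cdots\overline{L_r} \subset \overline{L_1\cdots L_r}$ for ideals of a commutative ring (cf.\ \cite{HS}) then yields
$$
f^s = a_1^s\cdots a_r^s \in \overline{I'(1)_{smn_1}}\cdots\overline{I'(r)_{smn_r}} \subset \overline{I'(1)_{smn_1}\cdots I'(r)_{smn_r}} = \overline{K'_{sm}},
$$
so $f \in J_m$. Because $J_m$ is an ideal, being the $m$-th graded component of the ring $\overline{R[\mathcal K']}$, the containment extends from these generating products to arbitrary elements of $K_m$.

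The main obstacle I anticipate is precisely this last step: the transfer of integral-closure information across the product structure. The hypothesis concerns individual filtrations $\mathcal I(j)$ one at a time, whereas we must control the joint behaviour of $I(1)_{mn_1}\cdots I(r)_{mn_r}$. Taking a common exponent $s = s_1\cdots s_r$ and invoking the multiplicativity of integral closure resolves this, while Lemma \ref{BdLemma1} is essential for translating between the filtration-level integral closure appearing in the hypothesis and the ordinary integral closure of single ideals on which the multiplicativity operates; this matters because the ideals $I(j)_m$ and $I'(j)_m$ themselves need not have equal integral closures even when $\overline{R[\mathcal I(j)]} = \overline{R[\mathcal I'(j)]}$.
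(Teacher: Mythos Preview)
Your proof is correct and follows essentially the same approach as the paper: reduce the equality of mixed multiplicities to the equality of the polynomials $P$ and $P'$ at all lattice points, and establish the latter by showing that the product filtrations $\mathcal K$ and $\mathcal K'$ have the same integral closure in $R[t]$, then invoke \cite[Theorem 6.9]{CSS}. The only organizational difference is that the paper routes both $\mathcal I(i)$ and $\mathcal I'(i)$ through the common intermediate filtration $\mathcal J(i)$ given by $\overline{R[\mathcal I(i)]}=\sum J(i)_m t^m$, and phrases the integrality step as ``$\bigoplus J(1)_{m_1}\cdots J(r)_{m_r}$ is integral over $\bigoplus I(1)_{m_1}\cdots I(r)_{m_r}$, hence so is each diagonal subalgebra,'' whereas you compare $\mathcal K$ and $\mathcal K'$ directly via the elementwise description of Lemma~\ref{BdLemma1}; the content is the same.
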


\begin{proof} Write $\overline{R[\mathcal I(i)]}=\oplus_{n\ge 0}J(i)_n$  and let
$\mathcal J(i)=\{J(i)_n\}$ for $1\le i\le r$. We will show that for all mixed multiplicities,
\begin{equation}\label{eqB2}
e(\mathcal I(1)^{[d_1]},\ldots,\mathcal I(r)^{[d_r]})
=
e(\mathcal J(1)^{[d_1]},\ldots,\mathcal J(r)^{[d_r]}).
\end{equation}
The same argument applied to $\mathcal I'(1),\ldots,\mathcal I'(r)$ and $\mathcal J(1),\ldots,\mathcal J(r)$ will show show that equation (\ref{eqB1}) holds. Let 
$$
P(n_1,\ldots,n_r)=\lim_{m\rightarrow \infty}\frac{\ell_R(R/J(1)_{mn_1}\cdots J(r)_{mn_r})}{m^d}
$$
and
$$
Q(n_1,\ldots,n_r)=\lim_{m\rightarrow \infty}\frac{\ell_R(R/I(1)_{mn_1}\cdots I(r)_{mn_r})}{m^d}.
$$
Since $\oplus_{m\ge 0}J(i)_m$ is integral over $\oplus_{m\ge 0}I(i)_m$ for all $i$, we have that the graded $R$-algebra 
$\oplus_{m_1,\ldots,m_r\ge 0} J(1)_{m_1}\cdots J(r)_{m_r}$ is integral over the graded $R$-algebra 
$$
\oplus_{m_1,\ldots,m_r\ge 0} I(1)_{m_1}\cdots I(r)_{m_r}.
$$
 Thus
for fixed $n_1,\ldots,n_r\in \NN$, we have that
$\oplus_{m\ge 0}J(1)_{mn_1}\cdots J(r)_{mn_r}$ is integral over $\oplus_{m\ge 0}I(1)_{mn_1}\cdots I(r)_{mn_r}$. By \cite[Theorem 6.9]{CSS} or \cite[Appendix]{C6} (summarized in Subsection \ref{SubSecEqChar}) we have that
$$
P(n_1,\ldots,n_r)=Q(n_1,\ldots,n_r)
$$
for all $n_1,\ldots,n_r\in \NN$. Since $P(n_1,\ldots,n_r)$ and $Q(n_1,\ldots,n_r)$ are homogeneous polynomials of the same degree $d$, we have that $P(n_1,\ldots,n_r)$ and $Q(n_1,\ldots,n_r)$ have the same values for all $n_1,\ldots,n_r$ in the infinite field $\QQ$. Thus their coefficients are equal showing (\ref{eqB2}).

\end{proof}

\section{A framework to compute  multiplicities}\label{SecFrame} 

In this section, we summarize a construction from \cite[Section 3]{C6}.

Let $R$ be an excellent  local domain of dimension $d$ and let $\mu$ be an $m_R$-valuation. Since $R$ is  excellent, there exists a birational projective morphism
$\phi:X\rightarrow \mbox{Spec}(R)$ such that $X$ is the normalization of the blow up of an $m_R$-primary ideal, $X$ is normal and there exists a prime exceptional divisor $E$ on $X$ such that $\mu=\nu_E$. 

Let $t$ be a generator of the maximal ideal of the valuation ring $\mathcal O_{X,E}$.  Regarding $t^{-1}$ as an element of the quotient field $K$ of $R$, we compute its divisor $(t^{-1})=-E+D$ on $X$, which is a Cartier divisor and where $D$ is a Weil divisor which does not contain $E$ in its support ($D$ will have non exceptional support).
 Write $D=D_1-D_2$ where $D_1$ and $D_2$ are effective Weil divisors which do not contain $E$ in their supports. 

 Since $X\rightarrow \mbox{Spec}(R)$ is projective, there exists an  ample Cartier divisor $H$  on $X$. 
 
 For all $n$, there exist natural inclusions of reflexive rank 1 sheaves
$$
\mathcal O_X(-D_2-E+nH)\subset \mathcal O_X(-D_2+nH)\subset \mathcal O_X(nH).
$$
This can be seen by restricting to the nonsingular locus $U$ of $X$ (which has codimension $\ge 2$ in $X$) and then pushing the sequence forward to $X$. Taking global sections, we thus have inclusions

$$
\Gamma(X,\mathcal O_X(-D_2-E+nH))\subset \Gamma(X,\mathcal O_X(-D_2+nH))\subset \Gamma(X,\mathcal O_X(nH)).
$$

Since $H$ is an ample Cartier divisor, there exists a multiple $n$ of $H$ such that 
$$
\Gamma(X,\mathcal O_X(-D_2-E+nH))
$$
 is a proper subset of 
$\Gamma(X,\mathcal O_X(-D_2+nH))$.  Thus there exists 
$\sigma\in \Gamma(X,\mathcal O_X(nH))$ such that the divisor $(\sigma)$  (considering $\sigma$ as a global section of $\mathcal O_X(nH)$) is an effective Cartier divisor which has the property that the Weil divisor $(\sigma)-D_2$ is effective and $E$ is not in the support of $(\sigma)-D_2$.

Thus $-E+D+(\sigma)$ is a Cartier divisor and
$$
-E+D+(\sigma)=-E+D_1-D_2+(\sigma)=-E+F
$$
 where $F=D_1-D_2+(\sigma)$ is an effective Weil divisor which does not contain $E$ in its support. 

The natural inclusions $\mathcal O_X(-nE)\rightarrow \mathcal O_X(-nE+nF)$ for $n\in \NN$ induce inclusions
$$
I(\nu)_n=\Gamma(X,\mathcal O_X(-nE))\cap R\rightarrow \Gamma(X,\mathcal O_X(-nE))\rightarrow \Gamma(X,\mathcal O_X(-nE+nF))
$$
for all $n$. 


Let $q\in E$ be a  closed point that   is nonsingular on both $X$ and  $E$ and is not contained in the support of $F$.  Let
\begin{equation}\label{eqAR2}
X=X_0\supset X_1=E\supset \cdots \supset X_d=\{q\}
\end{equation}
be a flag; that is, the $X_j$ are subvarieties of $X$ of dimension $d-j$ such that there is a regular system of parameters $b_1,\ldots,b_d$ in $\mathcal O_{X,q}$ such that  $b_1=\cdots=b_j=0$ are local equations of $X_j$ for $1\le j\le d$. 

The flag determines a valuation $\nu$ on the quotient field $K$ of $R$  which dominates $R$ as follows.  We have a sequence of natural surjections of regular local rings
\begin{equation}\label{eqGA3} \mathcal O_{X,q}=
\mathcal O_{X_0,q}\overset{\sigma_1}{\rightarrow}
\mathcal O_{X_1,q}=\mathcal O_{X_0,q}/(b_1)\overset{\sigma_2}{\rightarrow}
\cdots \overset{\sigma_{d-1}}{\rightarrow} \mathcal O_{X_{d-1},q}=\mathcal O_{X_{d-2},q}/(b_{d-1}).
\end{equation}
Define a rank-$d$ discrete valuation $\nu$ on $K$ (an Abhyankar valuation)  by prescribing for $s\in \mathcal O_{X,q}$,
$$
\nu(s)=({\rm ord}_{X_1}(s),{\rm ord}_{X_2}(s_1),\cdots,{\rm ord}_{X_d}(s_{d-1}))\in (\ZZ^d)_{\rm lex}
$$
where 
$$
s_1=\sigma_1\left(\frac{s}{b_1^{{\rm ord}_{X_1}(s)}}\right),
s_2=\sigma_2\left(\frac{s_1}{b_2^{{\rm ord}_{X_2}(s_1)}}\right),\ldots,
s_{d-1}=\sigma_{d-1}\left(\frac{s_{d-2}}{b_{d-1}^{{\rm ord}_{X_{d-1}}(s_{d-2})}}\right)
$$
and $\mbox{ord}_{X_{j+1}}(s_j)$ is the highest power of $b_{j+1}$  that divides $s_j$ in $\mathcal O_{X_j,q}$.
We have that
$$
\nu(s)=\left(\mu(s)=\nu_{E}(s),\omega\left(\frac{s}{b_1^{\mu_{E}(s)}}\right)\right)
$$
where $\omega$ is the rank-$(d-1)$ Abhyankar valuation on the function field of $ E$ determined by the flag
$$
E=X_1\supset \cdots \supset X_d=\{q\}
$$
on the projective $k$-variety $E$, where $k=R/m_R$.

By our construction, $\mathcal O_X(-E+F)$ is an invertible sheaf on $X$ and so $\mathcal O_X(-E+F)\otimes\mathcal O_{E}$ is an invertible sheaf on $E$.
Consider the graded linear series $L_n:=\Gamma(E,\mathcal O_X(-nE+nF)\otimes_{\mathcal O_X}\mathcal O_E)$ on $E$. Recall that  $b_1=0$ is a local equation of $E$ in $\mathcal O_{X,q}$. Let $g=b_1$. Thus, since $q$ is not in the support of $F$, for $n\in \NN$, we have a natural commutative diagram
\begin{equation}\label{eqT10}
\begin{array}{cccccc}
I(\mu)_n&\subset\Gamma(X,\mathcal O_X(-nE))&\rightarrow &\Gamma(X,\mathcal O_X(-nE+nF))&\rightarrow & \Gamma(E,\mathcal O_X(-nE+nF)\otimes\mathcal O_{E})\\
 &\downarrow&&  \downarrow&&\downarrow\\
&\mathcal O_X(-nE)_q&\stackrel{=}{\rightarrow}&  \mathcal O_X(-nE+nF)_q&\rightarrow&\mathcal O_X(-nE+nF)_q\otimes_{\mathcal O_{X,q}}\mathcal O_{E,q}\\
&=\mathcal O_{X,q}g^n&&   =\mathcal O_{X,q}g^n&&\cong \mathcal O_{ E,q}\otimes_{\mathcal O_{X,q}}\mathcal O_{X,q}g^n\end{array}
\end{equation}
where we denote the rightmost vertical arrow by $s\mapsto \epsilon_n(s)\otimes g^n$ 
and the bottom horizontal arrow is 
$$
f\mapsto \left[\frac{f}{g^n}\right]\otimes g^n,
$$
where $\left[\frac{f}{g^n}\right]$ is the class of $\frac{f}{g^n}$ in $\mathcal O_{E,q}$.

Let $\Xi$ be the semigroup defined by
\begin{equation}\label{eq21}
\Xi = \{(n,\omega(\epsilon_n(s)))\mid n\in \NN\mbox{ and } s\in \Gamma( E,\mathcal O_X(-nE+nF)\otimes_{\mathcal O_X}\mathcal O_{E})\}\subset \ZZ^d,
\end{equation}
and let 
\begin{equation}\label{eq22}
\mbox{$\Delta(\Xi)$ be the intersection of the closed convex cone generated by $\Xi$ in $\RR^d$ with $\{1\}\times\RR^{d-1}$.}
\end{equation}
 By the proof of Theorem 8.1 \cite{C2}, $\Delta(\Xi)$ is  compact and convex. Let 
\begin{equation}\label{eq23}
\Xi_n:=\{(n,\omega(\epsilon_n(s)))\mid s\in \Gamma(E,\mathcal O_Z(-nE+nF)\otimes_{\mathcal O_X}\mathcal O_{E})\}
\end{equation}
be the elements of $\Xi$ at level $n$.

We will require the following important observation, which follows from the diagram (\ref{eqT10}).
\begin{equation}\label{eq31}
\mbox{Suppose that  $f\in R$ and $\nu(f)=(a_1,\ldots,a_d)$. Then $\nu(f)\in \Xi_{a_1}$.}
\end{equation}

\section{Multiplicities of filtrations}\label{SecMF}

Let notations be as in Section \ref{SecFrame}, so that $R$ is an excellent local domain. We further assume in this section that $R$ is analytically irreducible.

Let $\mathcal I=\{I_i\}$ be an  $m_R$-filtration. 
For $m\in \NN$, define
$$
\Gamma(\mathcal I)_m=\{(\nu(f),m)\mid f\in I_m\}\subset \NN^{d+1}
$$
which are the elements at level $m$ of the semigroup
$$
\Gamma(\mathcal I)=\cup_{m\in \NN}\{(\nu(f),m)\mid f\in I_m\}.
$$
Define an associated closed convex set $\Delta(\mathcal I)\subset \RR^d$ as follows. 
Let $\Sigma(\mathcal I)$ be the closed convex cone with vertex at the origin generated by $\Gamma(\mathcal I)$ and let $\Delta(\mathcal I)=\Sigma(\mathcal I)\cap (\RR^d\times\{1\})$.
The set 
 $\Delta(\mathcal I)$ is the closure in the Euclidean topology of the set
$$
\left\{\left(\frac{a_1}{i},\cdots,\frac{a_d}{i}\right)\mid (a_1,\ldots,a_d,i)\in \Gamma(\mathcal I)\mbox{ and }i>0\right\}.
$$

For $m\in \NN$, define
$$
\Gamma(R)_m=\{(\nu(f),m)\mid f\in R\}\subset \NN^{d+1}.
$$
which are the elements at level $m$ of the semigroup
$$
\Gamma(R)=\cup_{m\in \NN}\{(\nu(f),m)\mid f\in R\}.
$$

Define an associated closed convex set $\Delta(R)\subset \RR^d$
as follows. 
Let $\Sigma(R)$ be the closed convex cone with vertex at the origin generated by $\Gamma(R)$ and let $\Delta(R)=\Sigma(R)\cap (\RR^d\times\{1\})$.
The set  $\Delta(R)$ is the closure in the Euclidean topology of the set
$$
\left\{\left(\frac{a_1}{i},\cdots,\frac{a_d}{i}\right)\mid (a_1,\ldots,a_d,i)\in \Gamma(R)\mbox{ and }i>0\right\}.
$$

\begin{Lemma} The closed convex set $\Delta(R)$ is a closed convex  cone in $\RR_{\ge 0}^d$
with vertex at the origin 0. 
\end{Lemma}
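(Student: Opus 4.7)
The plan is to verify the three asserted properties of $\Delta(R)$ directly from its description as the closure of $\{(\nu(f)/i) \mid (\nu(f),i)\in \Gamma(R),\ i>0\}$, using only the basic valuation-theoretic facts and the multiplicativity of $\nu$.

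First, I would argue that $\Delta(R)\subset \RR^d_{\ge 0}$ and that $0\in\Delta(R)$. The flag valuation $\nu$ is constructed from the regular system of parameters $b_1,\ldots,b_d$ in $\mathcal{O}_{X,q}$ by taking successive $b_i$-orders, and since $R\subset \mathcal{O}_{X,q}$ (because $\phi:X\to\operatorname{Spec}(R)$ and $q$ lies over $m_R$), every nonzero $f\in R$ has $\nu(f)\in \NN^d$. Thus $\frac{\nu(f)}{i}\in \RR^d_{\ge 0}$ for all $(\nu(f),i)\in \Gamma(R)$ with $i>0$, and this property is preserved under closure. Taking $f=1$, one has $\nu(1)=(0,\ldots,0)$ and $(0,m)\in\Gamma(R)$ for every $m\ge 1$, so $0\in \Delta(R)$.

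Next, I would show closure under addition and under non-negative scalar multiplication. For scaling by a positive rational $c=p/q$: if $x=\nu(f)/m$ with $f\in R$, then
$$
cx=\frac{p\,\nu(f)}{qm}=\frac{\nu(f^p)}{qm}\in \Delta(R),
$$
since $f^p\in R$ and $\nu(f^p)=p\,\nu(f)$. For arbitrary $c\ge 0$, approximate by rationals $c_n\to c$; since $c_nx\to cx$ and $\Delta(R)$ is closed, $cx\in\Delta(R)$. For sums, take $x,y\in \Delta(R)$ as limits $x=\lim \nu(f_n)/m_n$, $y=\lim \nu(g_n)/l_n$ with $f_n,g_n\in R$; then
$$
\frac{\nu(f_n)}{m_n}+\frac{\nu(g_n)}{l_n}=\frac{l_n\nu(f_n)+m_n\nu(g_n)}{m_nl_n}=\frac{\nu(f_n^{l_n}g_n^{m_n})}{m_nl_n},
$$
which lies in the predecessor set of $\Delta(R)$ because $f_n^{l_n}g_n^{m_n}\in R$. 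Passing to the limit and using closedness, $x+y\in \Delta(R)$. Together, these two closure properties show that $\Delta(R)$ is a convex cone with vertex at the origin.

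There is no real obstacle here: every step relies either on multiplicativity of $\nu$ together with $R$ being closed under products and powers, or on the fact that we have taken the Euclidean closure in the definition of $\Delta(R)$. The convexity is a consequence of being a cone closed under addition, so no separate argument is needed beyond what is sketched above.
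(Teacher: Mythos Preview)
Your proof is correct and follows essentially the same approach as the paper's: both establish the cone property by using multiplicativity of $\nu$ to get rational scaling, then a closure/density argument to pass to real scalars. Two small remarks: your closure-under-addition step is redundant, since $\Delta(R)$ is convex by its very definition as a hyperplane slice of a closed convex cone; and your scalar-multiplication argument is written only for $x$ of the special form $\nu(f)/m$, so strictly speaking you should add one line passing to general $x\in\Delta(R)$ by approximating $x$ with such points and using closedness (exactly as you already do in the addition case, and as the paper does via an explicit $\epsilon$-$\delta$ estimate).
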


\begin{proof}  We identify $\RR^d\times\{1\}$ with $\RR^d$. We have that $(\nu(1),1)=(0,\ldots,0,1)\in \Gamma(R)$. Thus $(0,\ldots,0)\in \Delta(R)\subset \RR^d$.

Suppose that $(a_1,\ldots,a_d,i)\in \Gamma(R)$ with $i>0$. Let $x=(\frac{a_1}{i},\ldots,\frac{a_d}{i})\in \Delta(R)$. Let $\alpha\in \QQ_{>0}$. 
Then $\alpha=\frac{m}{n}$ with $m,n\in \ZZ_{>0}$. There exists $f\in R$ such that 
$\nu(f)=(a_1,\ldots,a_d)$. Now $f^m\in R$ so $(\nu(f^m),in)=(ma_1,\ldots,ma_d,in)\in \Gamma(R)$. Thus $\alpha x\in \Delta(R)$.

Suppose that $x\in \Delta(R)$ is non zero. Let $U=\{tx\mid t\in \RR_{\ge0}\}$. We must show that $U\subset \Delta(R)$. Let $y\in U$ be nonzero.  Then $y=sx$ for some $s\in \RR_{>0}$. 
Suppose that $\epsilon\in \RR_{>0}$. Choose $\delta \in \RR_{>0}$ such that $\delta<\min\{1,\frac{1}{|s|},\frac{1}{|x|}\}\epsilon$.
There exists $(a_1,\ldots,a_d,i)\in \Gamma(R)$ with $i>0$ such that $|x-(\frac{a_1}{i},\ldots,\frac{a_d}{i})|<\delta$ and there exist $m,n\in \ZZ_{>0}$ such that $|s-\frac{m}{n}|<\delta$.
Now $\frac{m}{n}(\frac{a_1}{i},\ldots,\frac{a_d}{i})\in \Delta(R)$ as we showed in the above paragraph. Let $\alpha=s-\frac{m}{n}$, $v=x-(\frac{a_1}{i},\ldots,\frac{a_d}{i})$. We compute
$$
\begin{array}{lll}
|y-\frac{m}{n}(\frac{a_1}{i},\ldots,\frac{a_d}{i})|
&=&|sx-(s-\alpha)(x-v)|=|sv+\alpha x-\alpha v|\\
&\le& |s||v|+|\alpha||x|+|\alpha||v|\le |s|\delta+|x|\delta+\delta^2<3\epsilon.
\end{array}
$$
Since we can make $\epsilon$ arbitrarily small and $\Delta(R)$ is a closed set,  we have that $y\in \Delta(R)$.

\end{proof}

For $c\in \RR_{>0}$, let
\begin{equation}\label{MF2}
H_c=\{(x_1,\ldots,x_d)\in \RR^d\mid x_1+\cdots+x_d=c\},
\end{equation}
\begin{equation}\label{MF3}
H^{-}_c=\{(x_1,\ldots,x_d)\in \RR^d\mid x_1+\cdots+x_d\le c\}
\end{equation}
and
\begin{equation}\label{eqMF4}
H^{+}_c=\{(x_1,\ldots,x_d)\in \RR^d\mid x_1+\cdots+x_d\ge c\}.
\end{equation}
Since $\Delta(R)$ is a closed cone in $\RR^d$ with vertex 0 and $cH_1=H_c$, $cH_1^{-}=H_c^{-}$, we have
\begin{equation}\label{eq1}
\Delta(R)\cap H_c=c(\Delta(R) \cap H_1)\mbox{ and }\Delta(R)\cap H_c^{-}=c(\Delta(R)\cap H_1^{-}).
\end{equation}

The proof of the following lemma is a simplification of the proofs of Lemmas \ref{LemmaA} and \ref{LemmaB} in the following Section \ref{SecMMF} (this is where the assumption that $R$ is analytically irreducible  is needed).

\begin{Lemma}\label{LemmaC} There exists $\lambda\in \ZZ_{>0}$ such that $\Delta(\mathcal I)\cap H_{\lambda}^+=\Delta(R)\cap H_{\lambda}^+$.
\end{Lemma}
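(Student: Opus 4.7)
The plan is to combine two inputs. First, $\mathcal I$ is an $m_R$-filtration, so there exists $N\in\ZZ_{>0}$ with $m_R^N\subset I_1$; by $I_1^m\subset I_m$ we obtain $m_R^{Nm}\subset I_m$ for every $m\in\NN$. Second, since $R$ is analytically irreducible, Rees's Izumi theorem yields a constant $C_1>0$ with $\overline{\nu}_{m_R}(f)\ge \mu(f)/C_1$ for every $0\ne f\in R$, where $\overline{\nu}_{m_R}$ is the asymptotic Samuel function. Geometrically, we are trying to show that $\Delta(\mathcal I)$ fills up the far part of $\Delta(R)$.

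First I would carry out the following lifting step: for any $f\in R$ with $\mu(f)>0$ and any $t\ge NC_1/\mu(f)$, the point $t\nu(f)$ lies in $\Delta(\mathcal I)$. Indeed, for $\epsilon>0$ and $n$ large, the Izumi bound gives $f^n\in m_R^{n(\mu(f)/C_1-\epsilon)}$; choosing $k=\lfloor n(\mu(f)/(NC_1)-\epsilon)\rfloor$ ensures $Nk\le n(\mu(f)/C_1-\epsilon)$, so $f^n\in m_R^{Nk}\subset I_k$, and hence $n\nu(f)/k\in\Delta(\mathcal I)$. Letting $n\to\infty$ and $\epsilon\to 0$ along this family, $n/k\to NC_1/\mu(f)$, yielding $NC_1\nu(f)/\mu(f)\in\Delta(\mathcal I)$; taking smaller values of $k$ similarly produces every $t\nu(f)$ with $t\ge NC_1/\mu(f)$. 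Since $\Delta(\mathcal I)$ is closed and $\Delta(R)$ is convex, approximating an arbitrary $p\in\Delta(R)$ with $p_1\ge NC_1$ by scalar multiples of elements of $\nu(R)$ whose first coordinate slightly exceeds $NC_1$ gives $\Delta(\mathcal I)\supset\Delta(R)\cap\{x_1\ge NC_1\}$.

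The main obstacle is converting ``first coordinate large'' into ``sum of coordinates large.'' For this I would establish that the cone $\Delta(R)$ has bounded opening relative to the $x_1$-axis: there is $c>0$ with $\sum_{j=1}^d x_j\le c\,x_1$ for every $x\in \Delta(R)$, equivalently $\sum_{j=1}^d\nu(f)_j\le c\,\mu(f)$ for every $0\ne f\in R$ with $\mu(f)>0$. This says the higher flag coordinates $\nu_2,\dots,\nu_d$ of the Abhyankar valuation $\nu$ are controlled by its first coordinate $\mu=\nu_1$ on the subring $R\subset \mathcal O_{X,q}$. In the regular local case it is immediate: writing $x_j=b_1b_j$ and expanding $f\in R$ in the parameters $b_1,\dots,b_d$ at $q$ forces $\sum_j\nu(f)_j\le 2\mu(f)$. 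In general it follows from Izumi-type estimates applied successively along the flag $X\supset X_1\supset\cdots\supset X_d=\{q\}$, using the analytic irreducibility of $R$. Granting this bound, take $\lambda=\lceil cNC_1\rceil$: any $p\in\Delta(R)\cap H_\lambda^+$ then satisfies $p_1\ge (\sum_j p_j)/c\ge\lambda/c\ge NC_1$, hence $p\in\Delta(\mathcal I)$; combined with the trivial inclusion $\Delta(\mathcal I)\subset\Delta(R)$ this gives the claim.
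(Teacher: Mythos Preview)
Your two–step structure matches the paper's exactly: first show $\Delta(\mathcal I)\cap\{x_1\ge\text{const}\}=\Delta(R)\cap\{x_1\ge\text{const}\}$ via Izumi, then convert ``first coordinate large'' to ``sum large'' via a bound $\sum_j x_j\le c\,x_1$ on $\Delta(R)$. Step~1 is correct, though more elaborate than necessary: the linear equivalence of topologies coming from Rees's Izumi theorem already gives an \emph{integer-level} containment $I(\mu)_{m\alpha}\subset m_R^m$, so with $m_R^c\subset I_1$ one gets $I(\mu)_{m c\alpha}\subset I_m$ and hence $(\nu(f),m)\in\Gamma(\mathcal I)$ directly whenever $\mu(f)\ge m c\alpha$, with no asymptotic Samuel function, no powers $f^n$, and no limit in $t$.

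The genuine gap is in Step~2. You assert that $\sum_j x_j\le c\,x_1$ on $\Delta(R)$ follows from ``Izumi-type estimates applied successively along the flag,'' but this is not a valid reduction. Izumi's theorem compares $m_R$-valuations of $R$; the higher flag components $\nu_2,\ldots,\nu_d$ are orders along subvarieties $X_2\supset\cdots\supset X_d$ of the exceptional divisor $E$, i.e.\ valuations on the function fields of the $X_i$, which have strictly smaller transcendence degree and are not $m_R$-valuations at all. There is no Izumi statement linking them to $\mu$. Your ``regular local case'' sketch is also unclear: for arbitrary $g\in\mathcal O_{X,q}$ there is certainly no bound $\sum_j\nu(g)_j\le 2\mu(g)$ (take $g=b_2^n$), so the constraint must come from $f$ lying in the specific subring $R$, and nothing in your argument uses that. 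The paper supplies exactly this missing ingredient through the framework of Section~\ref{SecFrame}: observation~(\ref{eq31}) says that for $f\in R$ with $\nu(f)=(a_1,\ldots,a_d)$ one has $(a_1,\ldots,a_d)\in\Xi_{a_1}$, where $\Xi$ is the Okounkov semigroup of the graded linear series $\Gamma(E,\mathcal O_X(-nE+nF)\otimes\mathcal O_E)$ on the \emph{projective} $(d-1)$-fold $E$. Projectivity of $E$ forces $\Delta(\Xi)$ to be compact (proof of \cite[Theorem~8.1]{C2}), and that compactness is what gives $a_j\le b\,a_1$. This geometric input is what your proposal is missing.
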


For $c\in \RR_{>0}$ define $\Delta_c(\mathcal I)=\Delta(\mathcal I)\cap H_c^{-}$ and 
$\Delta_c(R)=\Delta(R)\cap H_c^{-}$. These sets are compact convex subsets of $\RR_{\ge 0}^d$.

Let $\lambda$ be the number defined in Lemma \ref{LemmaC}. If $\phi\ge\lambda$, then 
\begin{equation}\label{MF1}
\Delta(\mathcal I)\setminus \Delta_c(\mathcal I)=\Delta(R)\setminus \Delta_c(R).
\end{equation}
For $m\in \NN$, let 
$$
\Gamma_c(\mathcal I)_m=\{(\nu(f),m)\mid f\in I_m\mbox{ and }a_1+\cdots+a_d\le mc\}
$$
and
$$
\Gamma_c(R)=\{(\nu(f),i)\mid f\in R \mbox{ and }a_1+\cdots+a_d\le mc\}.
$$
Define semigroups 
$\Gamma_c(\mathcal I)=\cup_{m\in \NN}\Gamma_c(\mathcal I)_m$ and 
$\Gamma_c(R)=\cup_{m\in \NN}\Gamma_c(R)_m$. The semigroups $\Gamma_c(\mathcal I)$ and $\Gamma_c(R)$ satisfy the condition (5) of \cite[Theorem 3.2]{C2} since they are 
contained in $\RR_{\ge 0}^{d+1}\cap H_c^-$.

We now verify that condition (6) of \cite[Theorem 3.2]{C2} is satisfied; that is, that $\Gamma_c(\mathcal I)$ generates $\ZZ^{d+1}$ as a group. Let $G(\Gamma_c(\mathcal I))$ be the subgroup of $\ZZ^{d+1}$ generated by $\Gamma_c(\mathcal I)$. The value group of $\nu$ is $\ZZ^d$ and $e_j=\nu(b_j)$ for $1\le j\le d$ is the natural basis of $\ZZ^d$. Write $b_j=\frac{f_j}{g_j}$ with $f_j,g_j\in R$ for $1\le j\le d$. There exists $0\ne h\in I_1$. Thus $hf_j, hg_j\in I_1$. Possibly replacing $\lambda$ with a larger value, we then have that $(\nu(hf_j),1), (\nu(hg_j),1)\in \Gamma_c(\mathcal I)$ for $1\le j\le d$. 
Thus $(e_j,0)=(\nu(hf_j)-\nu(hg_j),0)\in G(\Gamma_c(\mathcal I))$ for $1\le j\le d$. Since $(\nu(hf_j),1)\in \Gamma_c(\mathcal I)$, we then have that $(0,1)\in G(\Gamma_c(\mathcal I))$, and so condition (6) of \cite[Theorem 3.2]{C2} is satisfied.

 Thus the limits
$$
\lim_{m\rightarrow\infty}\frac{\# \Gamma_c(\mathcal I)_m}{m^d}={\rm Vol}(\Delta_c(\mathcal I))
$$
and
$$
\lim_{m\rightarrow\infty}\frac{\# \Gamma_c(R)_m}{m^d}={\rm Vol}(\Delta_c(R))
$$
exist by \cite[Theorem 3.2]{C2}.  As in \cite[Theorem 5.6]{C3}, if $c\ge \lambda$, where $\lambda$ is chosen sufficiently large, then 
\begin{equation}\label{eqMF5}
\lim_{m\rightarrow \infty}\frac{\ell_R(R/I_m)}{m^d}=\delta[{\rm Vol}(\Delta_c(R))-{\rm Vol}(\Delta_c(\mathcal I))]
\end{equation}
where $\delta=[\mathcal O_{X,p}/m_p:R/m_R]$. 

Thus the  multiplicity 
$$
e_R(\mathcal I):=d!\lim_{m\rightarrow \infty}\frac{\ell_R(R/I_m)}{m^d}=
d!\delta[{\rm Vol}(\Delta_c(R))-{\rm Vol}(\Delta_c(\mathcal I))].
$$

Define 
\begin{equation}\label{eqMF6}
\Delta_{\lambda}(\mathcal I)=\Delta(\mathcal I)\cap H^-_{\lambda} \mbox{ for an $m_R$- filtration $\mathcal I$ and $\lambda\in \RR$.}
\end{equation}

\begin{Theorem}\label{Theorem2} Suppose that $R$ is an analytically irreducible excellent local domain and that $\mathcal I(1)$ and $\mathcal I(2)$ are $m_R$-filtrations  such that $I(1)_i\subset I(2)_i$ for all $i$ and $e_R(\mathcal I(1))=e_R(\mathcal I(2))$. Then
$$
\gamma_{\mu}(\mathcal I(1))=\gamma_{\mu}(\mathcal I(2))
$$
for all $m_R$-valuations $\mu$ of $R$.
\end{Theorem}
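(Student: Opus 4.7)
The plan is to apply the convex-body framework of Sections \ref{SecFrame}--\ref{SecMF} with a rank-$d$ valuation $\nu$ engineered so that its first coordinate is exactly $\mu$, and then show that the hypotheses force equality of the associated Okounkov-type bodies $\Delta(\mathcal{I}(1)) = \Delta(\mathcal{I}(2))$. Passing to the infimum of the first coordinate will then give $\gamma_\mu(\mathcal{I}(1)) = \gamma_\mu(\mathcal{I}(2))$.

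Concretely: since $R$ is excellent, I would take a normal birational $\phi : X \to \mathrm{Spec}(R)$ (the normalization of the blowup of some $m_R$-primary ideal) carrying a prime exceptional divisor $E$ with $\nu_E = \mu$, and choose a flag $X = X_0 \supset X_1 = E \supset \cdots \supset X_d = \{q\}$ as in (\ref{eqAR2}). The associated rank-$d$ Abhyankar valuation $\nu$ of Section \ref{SecFrame} satisfies $\nu(f) = (\mu(f), \omega(f/b_1^{\mu(f)}))$ for $f \in R$, so the projection $\pi_1$ onto the first coordinate recovers $\mu$. The inclusion $\mathcal{I}(1) \subset \mathcal{I}(2)$ gives $\Gamma(\mathcal{I}(1)) \subset \Gamma(\mathcal{I}(2))$, hence $\Delta(\mathcal{I}(1)) \subset \Delta(\mathcal{I}(2))$. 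Applying Lemma \ref{LemmaC} to each filtration and taking the maximum of the two constants, I pick a single $\lambda$ large enough that (\ref{eqMF5}) holds for both filtrations and that $\Delta(\mathcal{I}(i)) \cap H_\lambda^+ = \Delta(R) \cap H_\lambda^+$ for $i = 1, 2$. The formula (\ref{eqMF5}) yields
\[
e_R(\mathcal{I}(i)) = d!\,\delta\bigl[\mathrm{Vol}(\Delta_\lambda(R)) - \mathrm{Vol}(\Delta_\lambda(\mathcal{I}(i)))\bigr],
\]
so $e_R(\mathcal{I}(1)) = e_R(\mathcal{I}(2))$ translates into $\mathrm{Vol}(\Delta_\lambda(\mathcal{I}(1))) = \mathrm{Vol}(\Delta_\lambda(\mathcal{I}(2)))$. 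Being nested compact convex subsets of $\RR^d$ of equal Lebesgue measure and full dimension, $\Delta_\lambda(\mathcal{I}(1))$ and $\Delta_\lambda(\mathcal{I}(2))$ must coincide; combined with their agreement outside $H_\lambda^-$, this gives $\Delta(\mathcal{I}(1)) = \Delta(\mathcal{I}(2))$.

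To finish, since $\pi_1 \circ \nu = \mu$, the definition of $\Delta(\mathcal{I})$ as the closure of $\{(a_1/i, \ldots, a_d/i) : (a,i) \in \Gamma(\mathcal{I}),\ i > 0\}$ together with continuity of $\pi_1$ yields
\[
\inf\{x_1 : x \in \Delta(\mathcal{I})\} = \inf_{i \ge 1} \frac{\tau_{\mu,i}(\mathcal{I})}{i} = \gamma_\mu(\mathcal{I})
\]
for any $m_R$-filtration $\mathcal{I}$; hence the equality $\Delta(\mathcal{I}(1)) = \Delta(\mathcal{I}(2))$ implies $\gamma_\mu(\mathcal{I}(1)) = \gamma_\mu(\mathcal{I}(2))$. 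The main technical obstacle is the convex-geometric step deducing equality of the nested compact convex sets from equal volumes: this demands verifying that $\Delta_\lambda(\mathcal{I}(i))$ has nonempty $d$-dimensional interior, which I would confirm by exhibiting interior points of the full-dimensional cone $\Delta(R)$ lying arbitrarily close to the hyperplane $H_\lambda$ and propagating the interior into $H_\lambda^-$ via convexity using points already contained in $\Delta(\mathcal{I}(i)) \cap H_\lambda^+ = \Delta(R) \cap H_\lambda^+$.
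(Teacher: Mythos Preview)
Your proposal is correct and follows essentially the same route as the paper's proof: build the rank-$d$ valuation $\nu$ with first coordinate $\mu$ via the framework of Section~\ref{SecFrame}, use (\ref{eqMF5}) to convert equal multiplicities into equal volumes of the nested truncated bodies $\Delta_\lambda(\mathcal I(i))$, invoke the convex-geometric fact that nested compact convex sets of equal positive volume coincide (the paper cites \cite[Lemma 3.2]{C6} for exactly this, where your ``full-dimensional'' hypothesis appears simply as ${\rm Vol}(\Delta_c(\mathcal I(1)))>0$ for $c$ large), and then read off $\gamma_\mu$ as the minimum of the first-coordinate projection.
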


The proof which we give below follows from  the first part of the proof of \cite[Theorem 3.4]{C6}, applied to our filtrations $\mathcal I(1)$ and $\mathcal I(2)$ (instead of the divisorial $m_R$-filtrations $\mathcal I(D_1)$ and $\mathcal I(D_2)$ of Cartier divisors $D_1$ and $D_2$  of the statement of \cite[Theorem 3.4]{C6}).

\begin{proof} We apply the construction of Section \ref{SecFrame} with $\nu_{E_1}=\mu$. Let $\pi_1:\RR^d\rightarrow \RR$ be the projection onto the first factor. By the definition of $\gamma_{\mu}(\mathcal I(i))$ for $i=1,2$, and since for $c$ sufficiently large, $\gamma_{\mu}(\mathcal I(i))$ is in the  compact set $\pi_1(\Delta_c(\mathcal I(i))$, $\pi_1^{-1}(\gamma_{\mu}(\mathcal I(i))\cap \Delta_c(\mathcal I(i))\ne \emptyset$ and $\pi_1^{-1}(a)\cap \Delta_c(\mathcal I(i))=\emptyset$ if $a<\gamma_{\mu}(\mathcal I(i))$.
Since $\mathcal I(1)_i\subset \mathcal I(2)_i$ for all $i$, we have that $\Delta_c(\mathcal I(1))\subset \Delta_c(\mathcal I(2))$. Now ${\rm Vol}(\Delta_c(\mathcal I(1))>0$ for $c$ sufficiently large. Since we assume $e_R(\mathcal I(1))=e_R(\mathcal I(2))$, we have that ${\rm Vol}(\Delta_c(\mathcal I(1))={\rm Vol}(\Delta_c(\mathcal I(2))$ by (\ref{eqMF5}) and so $\Delta_c(\mathcal I(1))=\Delta_c(\mathcal I(2))$ by \cite[Lemma 3.2]{C6}. Thus $\gamma_{\mu}(\mathcal I(1))=\gamma_{\mu}(\mathcal I(2))$.
\end{proof}





\begin{Corollary}\label{CorDiv} Let $R$ be a normal excellent local domain, $\mathcal I=\{I_m\}$ be an $m_R$-filtration and $\mathcal I(D)$ be a real divisorial $m_R$-filtration. Suppose that
 $I(mD)\subset I_m$ for all $m$ and $e_R(\mathcal I)=e_R(\mathcal I(D))$. Then $\mathcal I=\mathcal I(D)$.
\end{Corollary}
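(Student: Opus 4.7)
The plan is to reduce to an application of Theorem~\ref{Theorem2} on the geometric side, and then to use Lemma~\ref{LemmaAR1} to translate the resulting equality of $\gamma$-invariants back into equality of filtrations at every level.

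First, I would choose a representation of the real divisorial filtration $\mathcal I(D)$: a birational projective morphism $\phi\colon X\to\mathrm{Spec}(R)$, with $X$ normal and equal to the blowup of an $m_R$-primary ideal, such that all the valuations appearing in the definition of $\mathcal I(D)$ are of the form $\nu_{E_i}$ for prime exceptional divisors $E_1,\ldots,E_r$ of $\phi$, and $D=\sum_{i=1}^r a_iE_i$ as a real exceptional divisor on $X$ (with $a_i=0$ for those $E_i$ not actually appearing). Such an $X$ exists because $R$ is normal excellent.

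Next, I would apply Theorem~\ref{Theorem2} to the inclusion $\mathcal I(D)\subset \mathcal I$, whose multiplicities are equal by hypothesis. This yields
$$
\gamma_{\nu_{E_i}}(\mathcal I)=\gamma_{\nu_{E_i}}(\mathcal I(D))=\gamma_{E_i}(D)
\qquad\text{for every } i=1,\ldots,r.
$$
Now take any $f\in I_m$. From the definition $\tau_{\nu_{E_i},m}(\mathcal I)=\min\{\nu_{E_i}(g):g\in I_m\}$ and from (\ref{eqAR1})--(\ref{eqAR3}) one has $\nu_{E_i}(f)\ge \tau_{\nu_{E_i},m}(\mathcal I)\ge m\,\gamma_{\nu_{E_i}}(\mathcal I)=m\,\gamma_{E_i}(D)$. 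Since $\nu_{E_i}(f)$ is an integer, this upgrades to
$$
\nu_{E_i}(f)\ge \lceil m\,\gamma_{E_i}(D)\rceil\qquad\text{for all } i=1,\ldots,r.
$$

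Finally, I would invoke Lemma~\ref{LemmaAR1}: because $R$ is normal with $R=\Gamma(X,\mathcal O_X)$, the section module $\Gamma(X,\mathcal O_X(-\lceil\sum m\gamma_{E_i}(D)E_i\rceil))$ is precisely the set of $f\in R$ satisfying those $r$ lower bounds on $\nu_{E_i}(f)$, and this module equals $I(mD)$. Hence $f\in I(mD)$, so $I_m\subset I(mD)$. Combined with the hypothesis $I(mD)\subset I_m$ we obtain $I_m=I(mD)$ for all $m$, i.e.\ $\mathcal I=\mathcal I(D)$.

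The argument is essentially a direct assembly of Theorem~\ref{Theorem2} and Lemma~\ref{LemmaAR1}, so there is no serious obstacle; the only point requiring care is to make sure that the representation $X$ is chosen so that every valuation contributing to $\mathcal I(D)$ appears among the $\nu_{E_i}$, so that $\gamma_{E_i}(D)$ is well-defined and so that Lemma~\ref{LemmaAR1} applies on the nose. The possibility that $\gamma_{E_i}(D)$ is irrational (as happens in the example of Section~\ref{SecExample}) causes no difficulty, since we round up to an integer before comparing with $\nu_{E_i}(f)$.
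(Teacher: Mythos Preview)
Your proposal is correct and follows essentially the same route as the paper's proof: choose a representation of $D$, apply Theorem~\ref{Theorem2} to obtain $\gamma_{\nu_{E_i}}(\mathcal I)=\gamma_{E_i}(D)$ for each exceptional $E_i$, then use the integrality of $\nu_{E_i}(f)$ to round up and conclude $I_m\subset I(mD)$ via Lemma~\ref{LemmaAR1}. The only point the paper makes explicit that you leave implicit is the observation that a normal excellent local domain is analytically irreducible, which is needed to invoke Theorem~\ref{Theorem2}; you may wish to add that one sentence.
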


\begin{proof} The ring $R$ is analytically irreducible since $R$ is normal and excellent.  Let the pair $X\rightarrow \mbox{Spec}(R)$ and $D=\sum_{i=1}^r a_iE_i$ be a representation of $\mathcal I(D)$. We have that $\gamma_{\nu_{E_i}}(\mathcal I)=\gamma_{E_i}(D)$ for $1\le i\le r$ by Theorem \ref{Theorem2}.
We have that $I(mD)=\cap_{i=1}^rI(\nu_{E_i})_{\lceil m\gamma_{E_i}(D)\rceil}\subset I_m$ for all $m$ by assumption. Suppose that $f\in I_m$. Then
$$
\nu_{E_i}(f)\ge \tau_{\nu_{E_i},m}(\mathcal I)\ge m\gamma_{\nu_{E_i}}(\mathcal I)=m\gamma_{E_i}(D)
$$
for $1\le i\le r$. Thus $\nu_{E_i}(f)\ge \lceil m\gamma_{E_i}(D)\rceil$ for all $i$, and so
$f\in \cap_{i=1}^rI(\nu_{E_i})_{\lceil m\gamma_{E_i}(D)\rceil}=I(mD)$.
\end{proof}

\begin{Corollary}\label{CorDiv2} Let $R$ be an excellent local domain,  $\mathcal I(D)$ be a real divisorial $m_R$-filtration and $\mathcal I$ be an arbitrary  $m_R$-filtration. Suppose that
 $I(nD)\subset I_n$ for all $n$ and $e_R(\mathcal I(D))=e_R(\mathcal I)$. Then $\mathcal I=\mathcal I(D)$.
\end{Corollary}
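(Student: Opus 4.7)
The plan is to reduce the non-normal case to Corollary \ref{CorDiv} by passing to the normalization $S$ of $R$, with maximal ideals $m_1,\ldots,m_t$. Each $S_{m_i}$ is a normal excellent local domain, hence analytically irreducible, so Theorem \ref{Theorem2} and Corollary \ref{CorDiv} apply on it. The reduction rests on two ingredients: an additivity formula for filtration multiplicities along the finite extension $R\subset S$, and the elementary fact that the invariants $\gamma_\mu$ are preserved when an $m_R$-primary ideal is extended to $S_{m_i}$.

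Following the setup of Section \ref{Not}, I would form the extended $m_{S_{m_i}}$-filtrations
$$
\mathcal I^{(i)} = \{I_n S_{m_i}\}_n, \qquad \mathcal I(D)^{(i)} = \{I(nD)S_{m_i}\}_n,
$$
which are $m_{S_{m_i}}$-primary since $I_n$ and $I(nD)$ are $m_R$-primary, and which satisfy $\mathcal I(D)^{(i)}\subset\mathcal I^{(i)}$. For any $m_R$-valuation $\mu$ centered at $m_i$ on $S$, the dominance relation $\mathcal O_\mu\supset S_{m_i}$ implies $\mu\ge 0$ on $S_{m_i}$, and hence $\tau_{\mu,n}(\{J_nS_{m_i}\}) = \tau_{\mu,n}(\mathcal J)$ for every $m_R$-filtration $\mathcal J=\{J_n\}$: one bound is immediate from $J_n\subset J_nS_{m_i}$, while the reverse follows from writing $g=\sum a_k s_k$ with $a_k\in J_n$, $s_k\in S_{m_i}$ and using $\mu(s_k)\ge 0$. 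In particular, $\gamma_\mu(\mathcal I^{(i)}) = \gamma_\mu(\mathcal I)$ and $\gamma_\mu(\mathcal I(D)^{(i)}) = \gamma_\mu(\mathcal I(D))$.

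Next, I would transfer the multiplicity equality from $R$ to each $S_{m_i}$ via the additivity formula
$$
e_R(\mathcal J) = \sum_{i=1}^t [S/m_i : R/m_R]\,e_{S_{m_i}}(\{J_n S_{m_i}\})
$$
valid for every $m_R$-filtration $\mathcal J$. This follows from the short exact sequence $0\to R\to S\to S/R\to 0$: the conductor is a nonzero ideal of the domain $R$ contained in $\mbox{Ann}_R(S/R)$, so $\dim_R(S/R)\le d-1$; combining additivity of filtration multiplicity on short exact sequences of finitely generated $R$-modules with the vanishing of $e_R(\mathcal J;M)$ for $\dim_RM<d$ (both consequences of the module-theoretic framework of \cite{CSS}) yields $e_R(\mathcal J;R)=e_R(\mathcal J;S)$, and splitting lengths over the semilocal ring $S$ as in (\ref{eqR15}) produces the displayed identity. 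Applying this to both $\mathcal I$ and $\mathcal I(D)$, using $e_R(\mathcal I)=e_R(\mathcal I(D))$, and combining with the per-$i$ inequalities $e_{S_{m_i}}(\mathcal I^{(i)})\le e_{S_{m_i}}(\mathcal I(D)^{(i)})$ (from $\mathcal I(D)^{(i)}\subset\mathcal I^{(i)}$), I obtain $e_{S_{m_i}}(\mathcal I^{(i)}) = e_{S_{m_i}}(\mathcal I(D)^{(i)})$ for every $i$.

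Now Theorem \ref{Theorem2}, applied to each analytically irreducible $S_{m_i}$ with the pair $\mathcal I(D)^{(i)}\subset\mathcal I^{(i)}$ of equal multiplicity, yields $\gamma_\mu(\mathcal I^{(i)}) = \gamma_\mu(\mathcal I(D)^{(i)})$ for every $m_{S_{m_i}}$-valuation $\mu$. Since every $m_R$-valuation has some $m_i$ as its center on $S$, the invariance from the previous paragraph upgrades this to $\gamma_\mu(\mathcal I) = \gamma_\mu(\mathcal I(D))$ for every $m_R$-valuation $\mu$. Writing $\mathcal I(D) = \mathcal I(\sum_k a_k\nu_k)$, the same valuation-theoretic argument used to prove Lemma \ref{LemmaAR1} gives $I(nD) = \bigcap_k I(\nu_k)_{\lceil n\gamma_{\nu_k}(\mathcal I(D))\rceil}$, and then for any $f\in I_n$ and any $k$,
$$
\nu_k(f)\ge\tau_{\nu_k,n}(\mathcal I)\ge n\gamma_{\nu_k}(\mathcal I) = n\gamma_{\nu_k}(\mathcal I(D));
$$
integrality of $\nu_k(f)$ yields $\nu_k(f)\ge\lceil n\gamma_{\nu_k}(\mathcal I(D))\rceil$, hence $f\in I(nD)$. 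This shows $I_n\subset I(nD)$ and, together with the reverse inclusion, proves $\mathcal I=\mathcal I(D)$. The principal obstacle is the additivity identity $e_R(\mathcal J;R)=e_R(\mathcal J;S)$ for non-Noetherian filtrations; in the $I$-adic setting this is classical, and in the filtration setting it falls out of the module-theoretic multiplicity framework once one records that $\dim_R(S/R)<d$.
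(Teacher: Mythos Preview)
Your proof is correct and takes a cleaner route than the paper's. Both arguments pass to the normalization $S$ and establish the additivity formula
\[
e_R(\mathcal J)=\sum_{i=1}^t[S/m_i:R/m_R]\,e_{S_{m_i}}(\mathcal J S_{m_i}),
\]
from which termwise equality $e_{S_{m_i}}(\mathcal I(D)^{(i)})=e_{S_{m_i}}(\mathcal I^{(i)})$ follows. At this point the paper applies Corollary~\ref{CorDiv} on each $S_{m_i}$; since that corollary requires the smaller filtration to be divisorial on $S_{m_i}$, the paper must replace $\mathcal I(D)^{(i)}=\{I(nD)S_{m_i}\}$ by the genuine divisorial filtration $\mathcal J(D(i))=\{\Gamma(X_i,\mathcal O_{X_i}(-nD(i)))\}$ and replace $\mathcal I^{(i)}$ by the integral closure $\{L_nS_{m_i}\}$ of $\{I_nS_{m_i}\}$, using a conductor argument to show these substitutions preserve multiplicities and the required inclusion. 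You instead apply Theorem~\ref{Theorem2} directly to the pair $\mathcal I(D)^{(i)}\subset\mathcal I^{(i)}$, which has no divisoriality hypothesis, extract $\gamma_\mu(\mathcal I)=\gamma_\mu(\mathcal I(D))$ for all $m_R$-valuations $\mu$ via your observation that $\mu(J_n S_{m_i})=\mu(J_n)$ whenever $\mu$ is centered at $m_i$, and then conclude $I_n\subset I(nD)$ at the level of $R$ from the valuation-ideal description $I(nD)=\bigcap_k I(\nu_k)_{\lceil n\gamma_{\nu_k}(\mathcal I(D))\rceil}$ (whose proof, as you note, does not use normality). This bypasses the conductor and integral-closure maneuvers entirely. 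The paper's route has the incidental advantage of yielding the stronger intermediate statement $L_n=J(nD)$ in $S$, but for the corollary as stated your argument is more direct.
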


\begin{proof} If $R$ is normal, the corollary is immediate from Corollary \ref{CorDiv}, so we may assume that $R$ is not normal. We use the notation of Subsection \ref{Not}. Let $S$ be the normalization of $R$ and let  $m_1,\ldots,m_t$ be the maximal ideals of $S$. Let $X\rightarrow \mbox{Spec}(R)$ and $D=\sum a_{i,j}E_{i,j}$ be a representation of $D$. Let  $X_i=X\otimes_SS_{m_i}$ for $1\le i\le t$. We have that $D=\sum_{i=1}^tD(i)$ where $D(i)=\sum_j a_{i,j}E_{i,j}$. Let  
$J(nD)=\Gamma(X,\mathcal O_X(-nD))$, so that 
$\mathcal I(D)=\{I(nD)\}$ where $I(nD)=J(nD)\cap R$. Further, we have real divisorial $m_i$-filtrations $\mathcal J(D(i))=\{J(nD_i)\}$ on $S_{m_i}$ which are defined by $J(nD(i))=\Gamma(X_i,\mathcal O_{X_i}(-nD(i)))=\Gamma(X,\mathcal O_X(-nD))S_{m_i}$.

Let $\mathcal IS_{m_i}$ be the $m_i$-filtration $\mathcal IS_{m_i}=\{I_nS_{m_i}\}_{n\ge 0}$. Then we have that
$$
S/I_nS\cong \bigoplus_{i=1}^t (S_{m_i}/I_nS_{m_i})
$$
for all $n$ and  so
$$
\ell_R(S/I_nS)=\sum_{i=1}^t[S/m_i:R/m_R]\ell_{S_{m_i}}(S_{m_i}/I_nS_{m_i}).
$$
Now the proof of \cite[Lemma 2.2]{C6} extends to this situation to show that
$$
\lim_{n\rightarrow \infty}\frac{\ell_R(R/I_n)}{n^d}=\lim_{n\rightarrow \infty}\frac{\ell_R(S/I_nS)}{n^d}
$$
from which we deduce that
\begin{equation}\label{eqNT1}
e_R(\mathcal I)=\sum_{i=1}^t[S/m_i:R/m_R]e_{S_{m_i}}(\mathcal IS_{m_i}).
\end{equation}
Similarly, 
\begin{equation}\label{eqNT2}
e_R(\mathcal I(D))=\sum_{i=1}^t[S/m_i:R/m_R]e_{S_{m_i}}(\mathcal I(D)S_{m_i}).
\end{equation}

Let $0\ne x$ be in the conductor of $S/R$. Then  $xJ(nD)\subset I(nD)$ for all $n$.
Let 
$$
A:=R[\mathcal I(D)]=\sum_{n\ge 0}I(nD)t^n,
$$
 $$
 B:=\sum_{n\ge 0}J(nD)t^n,
 $$
 and for $a\in \ZZ_{>0}$, let 
 $$
{}^aA:=R[\mathcal I(D)_a],
$$
where $\mathcal I(D)_a$ is the $a$-th truncation of $\mathcal I(D)$, that is, ${}^aA$ is the sub $R$-algebra generated by $I(nD)$ such that $n\le a$ and let
 ${}^aB$ be the sub $S$-algebra of $B$ generated by $ J(nD)$ such that $n\le a$.
 
  We have $xB\subset A$ and $xB_a\subset A_a$.
Suppose that $f\in J(mD)t^m$. Then $f\in ({}^mB)_m$ and $f^n\in ({}^mB)_{mn}$ 
 for all $n$ so that  $xf^n\in ({}^mA)_{mn}$ for all $n$ so that  ${}^mA[f]\subset \frac{1}{x}({}^mA)$ which is a finitely generated ${}^mA$-module, so $f$ is integral over the Noetherian ring ${}^mA$, and therefore  $f$ is integral over $A$. Thus $B$ is integral over $A$ and so $B$ is integral over 
$C:=\sum_{n\ge 0}I(nD)St^n$, and thus
$B_{m_i}=S_{m_i}[\mathcal J(D(i))]$ is integral over $C_{m_i}=S_{m_i}[\mathcal I(D)S_{m_i}]$ for $1\le i\le t$. We then have that
\begin{equation}\label{eqNT3}
e_{S_{m_i}}(\mathcal I(D)S_{m_i})=e_{S_{m_i}}(\mathcal J(D(i))
\end{equation}
for $1\le i\le t$ by \cite[Theorem 1.4]{C6}.

Let $G=\sum_{n\ge 0}L_nt^n$ be the integral closure of $F=\sum_{n\ge 0}I_nSt^n$ in $S[t]$. Then 
\begin{equation}\label{eqNT4}
e_{S_{m_i}}(\{L_nS_{m_i}\})=e_{S_{m_i}}(\mathcal IS_{m_i})
\end{equation}
for $1\le i\le t$ by \cite[Theorem 1.4]{C6}.  Now $I(nD)S_{m_i}\subset I_nS_{m_i}$ for all $i$, so that 
\begin{equation}\label{eqNT7}
J(nD(i))\subset L_nS_{m_i}
\end{equation}
 for all $i$ and so 
\begin{equation}\label{eqNT5}
e_{S_{m_i}}(\mathcal J(D(i)))\ge e_{S_{m_i}}(\{L_nS_{m_i}\})
\end{equation}
for $1\le i\le t$.
We have that
$$
\sum_{i=1}^t[S/m_i:R/m_R]e_{S_{m_i}}(\{L_nS_{m_i}\})
=\sum_{i=1}^t[S/m_i:R/m_R]e_{S_{m_i}}(\mathcal J(D(i)))
$$
by equations (\ref{eqNT3}), (\ref{eqNT4}), (\ref{eqNT1}) and (\ref{eqNT2}). Thus, by (\ref{eqNT5}), we have 
\begin{equation}\label{eqNT6}
e_{S_{m_i}}(\{L_nS_{m_i}\})=e_{S_{m_i}}(\mathcal J(D(i)))
\end{equation}
for $1\le i\le t$. By Corollary \ref{CorDiv}, we then have that $L_nS_{m_i}=J(nD)S_{m_i}$ for $1\le i\le t$, and so
$L_n=J(nD)$ for all $n$. Thus 
$$
I(nD)=R\cap J(nD)\subset I_n\subset L_n\cap R=J(nD)\cap R=I(nD)
$$
for all $n$.


 
\end{proof}

Corollary \ref{CorDiv} is proven when $R$ is an excellent local domain and 
$D_1$ and $D_2$ are Cartier divisors in addition to $\mathcal I(D_1)$ and $\mathcal I(D_2)$ being integral divisorial $m_R$-filtrations
 in \cite[Theorem 3.5]{C6}.

\begin{Remark}\label{Rescale} Suppose that $R$ is an analytically irreducible excellent local ring, $\mathcal I=\{I_n\}$ is an $m_R$-filtration and $l\in \ZZ_{+}$. Let $\mathcal J_l$ be the $m_R$-filtration $\mathcal J_l=\{I_{ln}\}$. Then $\Delta(\mathcal J_l)=l\Delta(\mathcal I)$.
\end{Remark}

The following proposition will be used in our study of mixed multiplicities.

\begin{Proposition}\label{Prop11} Suppose that $R$ is a normal excellent local domain and that  $\gamma_1,\ldots,\gamma_r,\xi\in \RR_{\ge 0}$ are   such that $\gamma_1+\cdots+\gamma_r>0$ and $\xi>0$. Consider the $m_R$-filtrations  $\mathcal A=\mathcal I(\sum_{i=1}^r\gamma_iE_i)$ and $\mathcal B =\mathcal I(\sum_{i=1}^r\xi\gamma_iE_i)$. Then 
$$
\xi\Delta(\mathcal A)=\Delta(\mathcal B).
$$
\end{Proposition}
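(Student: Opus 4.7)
The plan is to prove both inclusions of $\xi \Delta(\mathcal A) = \Delta(\mathcal B)$ directly, working with the semigroup description given in Section \ref{SecMF}. The key preliminary observation is that since $\nu_{E_i}(f) \in \NN$ for every nonzero $f \in R$, the ceiling in the definitions of $\mathcal A_n$ and $\mathcal B_n$ is harmless; we have the simple equivalences
\begin{equation*}
f \in \mathcal A_n \iff \nu_{E_i}(f) \ge n\gamma_i \text{ for all } i, \qquad f \in \mathcal B_n \iff \nu_{E_i}(f) \ge n\xi\gamma_i \text{ for all } i.
\end{equation*}
From this I deduce the elementary index-rescaling lemma: if $f \in \mathcal A_n$ and $m := \lfloor n/\xi\rfloor$, then $m\xi\gamma_i \le n\gamma_i \le \nu_{E_i}(f)$ for each $i$, so $f \in \mathcal B_m$; symmetrically, if $g \in \mathcal B_m$ and $n := \lfloor m\xi\rfloor$, then $g \in \mathcal A_n$.

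For $\xi\Delta(\mathcal A) \subset \Delta(\mathcal B)$, I take $y \in \Delta(\mathcal A)$ and use the description of $\Delta(\mathcal A)$ as the Euclidean closure of the image of $\Gamma(\mathcal A)$ under $(a,n) \mapsto a/n$. Choose $f_k \in \mathcal A_{n_k}$ with $n_k \to \infty$ and $\nu(f_k)/n_k \to y$. Setting $m_k := \lfloor n_k/\xi\rfloor$, which is a positive integer once $n_k \ge \xi$, the lemma gives $f_k \in \mathcal B_{m_k}$, so $(\nu(f_k), m_k) \in \Gamma(\mathcal B)$. Since $n_k/m_k \to \xi$,
\begin{equation*}
\frac{\nu(f_k)}{m_k} = \frac{n_k}{m_k} \cdot \frac{\nu(f_k)}{n_k} \longrightarrow \xi y,
\end{equation*}
and $m_k \to \infty$, so $\xi y \in \Delta(\mathcal B)$. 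The reverse inclusion $\Delta(\mathcal B) \subset \xi\Delta(\mathcal A)$ is symmetric: given $z \in \Delta(\mathcal B)$ realized by $g_k \in \mathcal B_{m_k}$ with $\nu(g_k)/m_k \to z$ and $m_k \to \infty$, set $n_k := \lfloor m_k\xi\rfloor$; the lemma gives $g_k \in \mathcal A_{n_k}$, and $\nu(g_k)/n_k \to z/\xi$, so $z/\xi \in \Delta(\mathcal A)$, i.e., $z \in \xi\Delta(\mathcal A)$.

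There is no substantive obstacle here: the proof reduces to careful bookkeeping of the index rescaling $n \leftrightarrow \lfloor n/\xi\rfloor$ together with the cancellation of ceilings afforded by the integrality of $\nu_{E_i}(f)$. The hypothesis $\gamma_1 + \cdots + \gamma_r > 0$ is only needed for $\mathcal A$ and $\mathcal B$ to be genuine $m_R$-filtrations; the hypothesis $\xi > 0$ (and finite) is used both to guarantee $n_k/m_k \to \xi$ and to ensure that the rescaled indices remain positive and tend to $\infty$.
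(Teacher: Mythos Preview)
Your proof is correct, and it is genuinely more direct than the paper's. The paper proceeds by introducing an auxiliary filtration $\mathcal C=\{C_n\}$ with $C_n=\Gamma(X,\mathcal O_X(-\lceil\sum(n\gamma_i+1)E_i\rceil))$, shows $\Delta(\mathcal C)=\Delta(\mathcal A)$, invokes Lemma~\ref{LemmaC} to control the region near the truncating hyperplane, and then uses Diophantine approximation (Lemma~\ref{LemmaHW}) to pick rationals $p_0/q_0$ close to $\xi$ so as to obtain sandwich inclusions $C_p\subset B_q\subset A_p$ for multiples $p=mp_0$, $q=mq_0$; the conclusion is then extracted by an $\epsilon$--$\delta$ argument on the truncated bodies $\Delta_\tau$. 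Your argument bypasses all of this machinery by observing that the integrality of $\nu_{E_i}(f)$ makes the ceilings invisible, so that the membership conditions become linear inequalities in the level index; the floor rescaling $m=\lfloor n/\xi\rfloor$ then gives $A_n\subset B_{\lfloor n/\xi\rfloor}$ outright, and letting $n_k\to\infty$ suffices. One small point you might make explicit: the assertion that one may arrange $n_k\to\infty$ while keeping $\nu(f_k)/n_k\to y$ uses the standard power trick ($f_k\in A_{n_k}$ implies $f_k^t\in A_{tn_k}$ with the same ratio), since a priori an approximating sequence from the dense set could have bounded levels. With that noted, your argument is complete and considerably cleaner for this particular statement; the paper's route, while heavier, illustrates a template (rational approximation plus sandwich inclusions) that remains available when the membership conditions are less transparent.
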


\begin{proof} 
We have that  $\mathcal A=\{A_n\}$ and $\mathcal B=\{B_n\}$ where
$$
A_n=\Gamma(X,\mathcal O_X(-\lceil \sum_{i=1}^rn\gamma_iE_i\rceil))\mbox{ and }
B_n=\Gamma(X,\mathcal O_X(-\lceil \sum_{i=1}^rn\xi\gamma_iE_i\rceil)).
$$

It suffices to show that
for all $\tau\in\RR$ sufficiently large, we have that
$$
\xi\left(\Delta(\mathcal A)\cap H^-_{\tau}\right)=\Delta(\mathcal B)\cap H^-_{\tau\xi}.
$$
The half space $H_c^-$ is defined in (\ref{MF3}).

Let $\mathcal C=\{C_n\}$ be the $m_R$-filtration defined by
$$
 C_n=\Gamma(X,\mathcal O_X(-\lceil \sum_{i=1}^r(n\gamma_i+1)E_i\rceil)).
 $$
 Let $E=E_1+\cdots+E_r$. 
 
 We now show that $\Delta(\mathcal C)=\Delta(\mathcal A)$. Let $0\ne f\in m_R$. Then $fA_n\subset C_n$ for all $n$. The elements of the form $\frac{\nu(g)}{m}=(\frac{a_1}{m},\ldots,\frac{a_d}{m})$ with $g\in A_m$ and $m>0$ are dense in $\Delta(\mathcal A)$. Since $\Delta(\mathcal C)\subset\Delta(\mathcal A)$ is a closed set, it suffices to show that given $\epsilon>0$, there exists $n\in \ZZ_{>0}$ and $h\in C_n$ such that $|\frac{\nu(h)}{n}-\frac{\nu(g)}{m}|<\epsilon$. For all $t\in \ZZ_{>0}$, $(\nu(g^t),tm)\in A_{tm}$ and $(\nu(fg^t),tm)\in C_{tm}$.
 Thus $\frac{\nu(f)}{tm}+\frac{\nu(g)}{m}\in \Delta(\mathcal C)$, with
 $$
 |(\frac{\nu(f)}{tm}+\frac{\nu(g)}{m})-\frac{\nu(g)}{m}|=\frac{1}{tm}|\nu(f)|<\epsilon
 $$
 for $t\gg 0$. Thus $\Delta(\mathcal C)=\Delta(\mathcal A)$.

 There exists $\tau_0\in \RR_{>0}$ such that 
 \begin{equation}\label{eqT4}
 \Delta(R)\cap H_{\tau_0}^+=\Delta(\mathcal A)\cap H_{\tau_0}^+=\Delta(\mathcal C)\cap H_{\tau_0}^+
 \end{equation}
 and
 \begin{equation}\label{eqT5}
 \Delta(R)\cap H_{\xi\tau_0}^+=\Delta(\mathcal B)\cap H_{\xi\tau_0}^+
 \end{equation}
 by Lemma \ref{LemmaC}.

 Suppose that $\tau>\tau_0$. Choose $\delta\in \RR_{>0}$ such that $\tau-\delta>\tau_0$.
 Let 
 $$
 \beta=\max\{|y|\mid y\in \Delta_{\tau}(\mathcal A)\}.
 $$
   The compact convex set $\Delta_{\tau}(\mathcal A)$ is defined in (\ref{eqMF6}). The numbers $\tau$, $\delta$ and $\beta$ will be fixed throughout the proof. 
 
 Given $\alpha\in \RR_{>0}$, there exist $p_0,q_0\in \ZZ_{>0}$ such that 
 \begin{equation}\label{eqT1}
 -\frac{\alpha}{q_0}<\frac{p_0}{q_0}-\xi\le 0
 \end{equation} 
 by Lemma \ref{LemmaHW}, so that
 \begin{equation}\label{eqT3}
 p_0\le \xi q_0<p_0+\alpha.
 \end{equation}
 Let $m$ be a positive integer, and suppose that $\alpha$ is sufficiently small that 
 $$
 \alpha<\frac{1}{m\max\{\gamma_i\}}.
 $$
 Set $p=m p_0$ and $q=m q_0$.
 Then
 $$
 \gamma_i p\le \xi \gamma_i q<\gamma_ip+\alpha\gamma_im
 $$
 for all $i$, so that
 $$
 \lceil \gamma_ip\rceil \le \lceil q\gamma_i\xi\rceil \le \lceil p\gamma_i\rceil +1
 $$
 for all $i$ and so
 $$
  -(\lceil p\gamma_i\rceil +1)\le -\lceil q\gamma_i\xi\rceil\le -\lceil \gamma_ip\rceil
  $$
  implying
  $$
  -\lceil\sum_{i=1}^r(p\gamma_i+1)E_i\rceil
     \le -\lceil \sum_{i=1}^r q\gamma_i\xi E_i\rceil\le -\lceil \sum_{i=1}^r \gamma_ipE_i \rceil
  $$
  giving us that
  \begin{equation}\label{eqT2}
  C_p\subset B_q\subset A_p.
  \end{equation}
  
  We will now show that $\Delta_{\xi\tau}(\mathcal B)=  \xi\Delta_{\tau}(\mathcal A)$.
  
 First suppose that $v=(v_1,\ldots,v_d)\in \Delta_{\xi\tau}(\mathcal B)$ and that $v_1+\cdots+v_d\ge \xi\tau-\xi\delta$.  Then $v\in \Delta(\mathcal B)\cap H^+_{\xi\tau-\xi\delta}
 =\Delta(R)\cap H^+_{\xi\tau-\xi\delta}$. Then since $\Delta(R)$ is a cone with vertex at the origin, 
 $$
 \frac{1}{\xi}v\in \left(\frac{1}{\xi}\Delta(R)\right)\cap H^+_{\tau-\delta}=\Delta(R)\cap H^+_{\tau-\delta}=\Delta(\mathcal A)\cap H^+_{\tau-\delta},
 $$
 by (\ref{eqT4}) and so $v=\xi u$ for some $u\in \Delta_{\tau}(\mathcal A)$.

 Now suppose that $v=(v_1,\ldots,v_d)\in \Delta_{\xi\tau}(\mathcal B)$ and $v_1+\cdots+v_d<\xi\tau-\xi\delta$.  Since the elements of $\Delta(
 \mathcal B)$ of the form $\frac{\nu(f)}{m}$ with  $m\in \ZZ_{>0}$ and $ f\in B_m$ are dense in $\Delta(\mathcal B)$, we may suppose that $v$ has this form. Let $\epsilon>0$. we will find $u\in \xi\Delta_{\tau}(\mathcal A)$ such that $|v-u|<\epsilon$. Since $\xi\Delta_{\tau}(\mathcal A)$ is closed, this will show that $v\in \xi\Delta_{\tau}(\mathcal A)$.
 
 Choose $\alpha\in \RR_{>0}$ such that  
 $$
 \alpha<\min\{\frac{1}{m\max{\gamma_i}},\frac{\delta}{\tau},    \frac{\epsilon}{\beta}\}.
 $$
 Choose $p_0,q_0$ which satisfy (\ref{eqT1}). Thus
 $$
 0\le\xi-\frac{p_0}{q_0}<\frac{\alpha}{q_0}.
 $$
 Write $\xi=\frac{p_0}{q_0}+\lambda$ with $0\le \lambda<\frac{\alpha}{q_0}$.
 
Set $p=mp_0$ and $q=mq_0$, so that 
 $v=\left(\frac{a_1}{q},\ldots,\frac{a_d}{q}\right)= \frac{\nu(f^{q_0})}{q}$ where  $f^{q_0}\in B_q$. 
 Set
 $$
 u=\xi \left(\frac{a_1}{p},\ldots,\frac{a_d}{p}\right)\mbox{ and }
 w=-\lambda \left(\frac{a_1}{p},\ldots,\frac{a_d}{p}\right)
 $$ 
 so that $v=\frac{p}{q} \left(\frac{a_1}{p},\ldots,\frac{a_d}{p}\right)    =u+w$. 
 Now $\left(\frac{a_1}{p},\ldots,\frac{a_d}{p}\right)\in \Delta(\mathcal A)$ by (\ref{eqT2}).
Since by (\ref{eqT3}), $\frac{q}{p}<\frac{1}{\xi}+\frac{\alpha}{\xi p_0}$ and $\alpha<\frac{\delta}{\tau}$, we have that
 $$
  \begin{array}{lll}
 \frac{a_1}{p}+\cdots+\frac{a_d}{p}&=&\frac{q}{p}\left(\frac{a_1}{q}+\cdots+\frac{a_d}{q} \right)
 < \frac{q}{p}(\xi\tau-\xi\delta)\\
 &<& \left(\frac{1}{\xi}+\frac{\alpha}{\xi p_0}\right)(\xi\tau-\xi\delta)=\tau-\delta+\frac{\alpha\tau}{p_0}-\frac{\alpha\delta}{p_0}<\tau-\delta+\frac{\delta\tau}{\tau p_0}-\frac{\alpha\delta}{p_0}\\
 &=& \tau-\delta\left(1-\frac{1}{p_0}\right)-\frac{\alpha\delta}{p_0}<\tau.
 \end{array}
 $$
 Thus $\left(\frac{a_1}{p},\ldots,\frac{a_d}{p}\right)\in \Delta_{\tau}(\mathcal A)$  and 
 $|v-u|=|w|\le|\lambda|\beta<\frac{\alpha\beta}{q_0}<\epsilon$. Since we can make $\epsilon$ arbitrarily small, we have that $v\in \xi\Delta_{\tau}(\mathcal A)$.

 We will now show that $\xi\Delta_{\tau}(\mathcal A)\subset \Delta_{\xi\tau}(\mathcal B)$.
  
 First suppose that $u=(u_1,\ldots,u_d)\in \xi\Delta_{\tau}(\mathcal A)$ and that $u_1+\cdots+u_d\ge \xi\tau-\xi\delta$.  Then 
 $\frac{1}{\xi}u\in \Delta_{\tau}(\mathcal A)$ with 
 $\frac{1}{\xi}u_1+\cdots+\frac{1}{\xi}u_d       \ge \tau-\delta$, so that 
 $$
 \frac{1}{\xi}u\in \Delta(\mathcal A)\cap H^+_{\tau-\delta}=\Delta(R)\cap H^+_{\tau-\delta}
 $$
 by (\ref{eqT4}) so
 $$
 u\in \xi\left(\Delta(R)\cap H^+_{\tau-\delta}\right)=\Delta(R)\cap H^+_{\tau\xi-\delta\xi}=\Delta(\mathcal B)\cap H^+_{\tau\xi-\delta\xi}
 $$
 by (\ref{eqT5}). 
 Since $u_1+\cdots+u_d<\tau\xi$ we have that $u\in \Delta_{\xi\tau}(\mathcal B)$.

 Now suppose that $u=(u_1,\ldots,u_d)\in \xi\Delta_{\tau}(\mathcal A)$ and that $u_1+\cdots+u_d<\xi\tau-\xi\delta$.  Since the elements of $\Delta(\mathcal C)$ of the form $\frac{\nu(f)}{m}$ with  $m\in \ZZ_{>0}$ and $ f\in C_m$ are dense in $\Delta(\mathcal C)=\Delta(\mathcal A)$, we may suppose that $u$ is $\xi$ times an element of  this form. Let $\epsilon>0$. we will find $v\in \Delta_{\tau\xi}(\mathcal B)$ such that $|u-v|<\epsilon$. Since $\Delta_{\tau\xi}(\mathcal B)$ is closed, this will show that $u\in \Delta_{\tau\xi}(\mathcal B)$.
 
 Choose $\alpha\in \RR_{>0}$ such that 
  $$
 \alpha<\max\{\frac{1}{m\max\{\gamma_i\}},\frac{\epsilon}{\beta}\}.
 $$
 Choose $p_0,q_0$ which satisfy (\ref{eqT1}). Thus
 $$
 0\le\xi-\frac{p_0}{q_0}<\frac{\alpha}{q_0}.
 $$
 Write $\xi=\frac{p_0}{q_0}+\lambda$ with $0\le \lambda<\frac{\alpha}{q_0}$.
 
Set $p=mp_0$ and $q=mq_0$, so that
 $u=\xi\left(\frac{a_1}{p},\ldots,\frac{a_d}{p}\right)$ with $\left(\frac{a_1}{p},\ldots,\frac{a_d}{p}\right)= \frac{\nu(f^{p_0})}{p}$ where $f^{p_0}\in C_p$ and $\frac{a_1}{p}+\cdots+\frac{a_d}{p}
 <\tau-\delta$.  
 Set $v=\frac{p}{q}\left(\frac{a_1}{p},\ldots,\frac{a_d}{p}\right) =\left(\frac{a_1}{q},\ldots,\frac{a_d}{q}\right) $ and $w=\lambda\left(\frac{a_1}{p},\ldots,\frac{a_d}{p}\right)$ so that $u=v+w$.  
 
  We have that $v\in \Delta(\mathcal B)$  by (\ref{eqT2}). We have that
 $$
 \frac{a_1}{q}+\cdots+\frac{a_d}{q}<\frac{p}{q}(\tau-\delta)< \xi(\tau-\delta)<\xi\tau
 $$
 so that $v\in \Delta_{\xi\tau}(\mathcal B)$.

  $|v-u|=|w|\le|\lambda|\beta<\frac{\alpha\beta}{q_0}<\epsilon$. Since we can make $\epsilon$ arbitrarily small, we have that $u\in \Delta_{\xi\tau}(\mathcal B)$.

\end{proof}

\section{Computation of Mixed Multiplicities of filtrations}\label{SecMMF} Let notation be as in Section \ref{SecFrame}, so that $R$ is a $d$-dimensional excellent local domain. We further assume that $R$ is analytically irreducible in this section.

Let $\mathcal I(1)=\{I(1)_i\}$, $\mathcal I(2)=\{I(2)_i\}$ be $m_R$-filtrations.
We now define some sub semigroups of $\NN^{d+1}$ which are associated to $\mathcal I(1)$ and $\mathcal I(2)$. For $n_1,n_2\in \NN$, define
$$
\Gamma(n_1,n_2)=\{(\nu(f),i)\mid f\in I(1)_{in_1}I(2)_{in_2}\}.
$$
We define an associated closed convex subset $\Delta(n_1,n_2)$ of $\RR^d$ as follows. 
Let $\Sigma(n_1,n_2)$ be the closed convex cone with vertex at the origin generated by $\Gamma(n_1,n_2)$ and let $\Delta(n_1,n_2)=\Sigma(n_1,n_2)\cap (\RR^d\times\{1\})$.
The set  $\Delta(n_1,n_2)$ is the closure of the set 
$$
\left\{\left(\frac{a_1}{i},\ldots,\frac{a_d}{i}\right)\mid (a_1,\ldots,a_d,i)\in \Gamma(n_1,n_2)\mbox{ and }i>0\right\}
$$
in the Euclidean topology of $\RR^d$. We have that  $\Gamma(R)=\Gamma(0,0)$ and $\Delta(R)=\Delta(0,0)$ as defined in Section \ref{SecMF}.

\begin{Lemma}\label{Lemma1} For all $m_1,m_2,n_1,n_2\in \NN$, we have that
$$
\Delta(m_1,m_2)+\Delta(n_2,n_2)\subset \Delta(m_1+n_1,m_2+n_2).
$$
In particular,
$$
n_1\Delta(1,0)+n_2\Delta(0,1)\subset \Delta(n_1,n_2).
$$
\end{Lemma}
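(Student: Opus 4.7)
The plan is to verify the containment on the dense subset of rational points first, and then use closedness of $\Delta(m_1+n_1,m_2+n_2)$ to get the full Minkowski-sum inclusion. Recall that by construction, $\Delta(m_1,m_2)$ is the closure of the set of points $\nu(f)/i$ with $(a,i)\in\Gamma(m_1,m_2)$ and $i>0$, i.e.\ with $f\in I(1)_{im_1}I(2)_{im_2}$, $a=\nu(f)$.

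So I would take $x=\nu(f)/i\in\Delta(m_1,m_2)$ coming from $f\in I(1)_{im_1}I(2)_{im_2}$ and $y=\nu(g)/j\in\Delta(n_1,n_2)$ coming from $g\in I(1)_{jn_1}I(2)_{jn_2}$. The filtration property $I(k)_aI(k)_b\subset I(k)_{a+b}$ yields
$$
f^jg^i \;\in\; I(1)_{ijm_1}I(2)_{ijm_2}\cdot I(1)_{ijn_1}I(2)_{ijn_2}\;\subset\;I(1)_{ij(m_1+n_1)}I(2)_{ij(m_2+n_2)},
$$
so $(\nu(f^jg^i),ij)\in\Gamma(m_1+n_1,m_2+n_2)$. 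Since $\nu(f^jg^i)=j\nu(f)+i\nu(g)$, dividing by $ij$ gives
$$
x+y \;=\; \frac{\nu(f)}{i}+\frac{\nu(g)}{j} \;=\;\frac{\nu(f^jg^i)}{ij}\;\in\;\Delta(m_1+n_1,m_2+n_2).
$$
Thus all rational sums lie in the target. Since $\Delta(m_1+n_1,m_2+n_2)$ is closed, addition is continuous, and rational points are dense in each $\Delta(m_k,n_k)$, I conclude $\Delta(m_1,m_2)+\Delta(n_1,n_2)\subset\Delta(m_1+n_1,m_2+n_2)$.

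For the ``in particular'' statement, I would first observe that each $\Delta(n_1,n_2)$ is convex, as the intersection of the closed convex cone $\Sigma(n_1,n_2)$ with the affine hyperplane $\mathbb{R}^d\times\{1\}$. For a convex set $K$ and positive integer $n$ we have $nK=K+\cdots+K$ ($n$ summands). Iterating the inclusion just proved gives $n_1\Delta(1,0)\subset\Delta(n_1,0)$ and $n_2\Delta(0,1)\subset\Delta(0,n_2)$, and one more application yields
$$
n_1\Delta(1,0)+n_2\Delta(0,1)\;\subset\;\Delta(n_1,0)+\Delta(0,n_2)\;\subset\;\Delta(n_1,n_2).
$$

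There is no real obstacle here; the only subtlety is to remember to pass from the semigroup level to the closure, and to use the trick of raising each representative to the other's level $i$ or $j$ so that the products of filtration ideals land in $I(1)_{ij(m_1+n_1)}I(2)_{ij(m_2+n_2)}$ at the common level $ij$.
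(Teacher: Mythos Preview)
Your proof is correct and follows essentially the same approach as the paper: reduce to rational points by density, use the trick $f^jg^i\in I(1)_{ij(m_1+n_1)}I(2)_{ij(m_2+n_2)}$ to land at the common level $ij$, and conclude by closedness of the target. The paper leaves the ``in particular'' statement implicit, whereas you supply the convexity argument $nK=K+\cdots+K$ explicitly; this is a harmless elaboration rather than a different route.
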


\begin{proof} The set of points
$$
\left\{\left(\frac{a_1}{i},\ldots,\frac{a_d}{i}\right)\mid (a_1,\ldots,a_d,i)\in\Gamma(m_1,m_2)\mbox{ and }i>0\right\}
$$
is dense in the closed set $\Delta(m_1,m_2)$. Thus it suffices to show that if 
$(a_1,\ldots,a_d,i)\in \Gamma(m_1,m_2)$ and $(b_1,\ldots,b_d,j)\in \Gamma(n_1,n_2)$ with $i,j>0$,  then 
$$
\left(\frac{a_1}{i}+\frac{b_1}{j},\ldots,\frac{a_d}{i}+\frac{b_d}{j}\right)\in \Delta(m_1+n_1,m_2+n_2).
$$
 With this assumption, there exists $f\in I(1)_{im_1}I(2)_{im_2}$ such that $\nu(f)=(a_1,\ldots,a_d)$ and
there exists $g\in I(1)_{jn_1}I(2)_{jn2}$ such that $\nu(g)=(b_1,\ldots,b_d)$. We have that
$f^jg^i\in I(1)_{ij(m_1+n_1)}I(2)_{ij(m_2+n_2)}$ so 
$$
(\nu(f^jg^i),ij)=(ja_1+ib_1,\ldots,ja_d+ib_d,ij)\in \Gamma(m_1+n_1,m_2+n_2).
$$
Thus 
$$
\left(\frac{ja_1+ib_1}{ij},\ldots,\frac{ja_d+ib_d}{ij}\right)=\left(\frac{a_1}{i}+\frac{b_1}{j},\ldots,\frac{a_d}{i}+\frac{b_d}{j}\right)\in \Delta(m_1+n_1,m_2+n_2).
$$

\end{proof}

\begin{Lemma}\label{LemmaA} There exists $\overline\lambda\in \ZZ_{>0}$ such that for all $n_1,n_2\in \NN$,
$$
\begin{array}{l}
\Delta(n_1,n_2)\cap \{(x_1,\ldots,x_d)\in \RR^d \mid x_1\ge (n_1+n_2)\overline \lambda\}\\
=\Delta(R)\cap \{(x_1,\ldots,x_d)\in \RR^d \mid x_1\ge (n_1+n_2)\overline \lambda\}.
\end{array}
$$
\end{Lemma}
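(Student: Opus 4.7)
The inclusion $\supset$ is immediate since $\Gamma(n_1,n_2)\subset \Gamma(R)$. For the reverse inclusion, the plan is to exhibit a single $\overline\lambda$ that works uniformly in $(n_1,n_2)$ by linearly bounding the first coordinate $x_1=\mu(\cdot)/i$ on points of $\Delta(R)$ that should already lie in $\Delta(n_1,n_2)$. Two ingredients combine: $I(j)_1$ contains a power of $m_R$, and Rees's Izumi theorem for the analytically irreducible domain $R$ linearly bounds $\mu=\nu_E$ by the asymptotic Samuel function.

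Since $I(1)_1,I(2)_1$ are $m_R$-primary, pick $c_1,c_2\in \ZZ_{>0}$ with $m_R^{c_j}\subset I(j)_1$, so $m_R^{c_j n}\subset I(j)_1^n\subset I(j)_n$ for all $n$. With $C=\max\{c_1,c_2\}$,
$$
m_R^{Ci(n_1+n_2)}\subset m_R^{c_1in_1+c_2in_2}\subset I(1)_{in_1}I(2)_{in_2}
$$
for all $i,n_1,n_2\in\NN$. Second, Rees's theorem \cite{R3} on the asymptotic Samuel function $\overline\nu_{m_R}(f)=\lim_{n\to\infty}\nu_{m_R}(f^n)/n$ provides a constant $K\in\RR_{>0}$, depending only on the $m_R$-valuation $\mu$ and $R$, with
$$
\mu(f)\le K\,\overline\nu_{m_R}(f)\quad\text{ for all }0\ne f\in R.
$$

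Set $\overline\lambda=KC+1$. The key claim is: for $f\in R$, $i\in\ZZ_{>0}$ with $\mu(f)/i>(n_1+n_2)\overline\lambda$, one has $\nu(f)/i\in\Delta(n_1,n_2)$. Indeed, then $\overline\nu_{m_R}(f)\ge \mu(f)/K>C(n_1+n_2)i$, so choosing $\epsilon>0$ smaller than $\overline\nu_{m_R}(f)-C(n_1+n_2)i$, we get $\nu_{m_R}(f^n)\ge(\overline\nu_{m_R}(f)-\epsilon)n>C(n_1+n_2)in$ for all $n\gg 0$; hence $f^n\in m_R^{C(n_1+n_2)in}\subset I(1)_{inn_1}I(2)_{inn_2}$, so $(n\nu(f),in)\in\Gamma(n_1,n_2)$ and therefore $\nu(f)/i\in\Delta(n_1,n_2)$. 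Given any $p=(x_1,\dots,x_d)\in\Delta(R)$ with $x_1\ge(n_1+n_2)\overline\lambda$, the cone property of $\Delta(R)$ (vertex $0$) gives $tp\in\Delta(R)$ for $t>1$, with first coordinate strictly greater than $(n_1+n_2)\overline\lambda$; since $\Delta(R)$ is the closure of $\{\nu(f)/i:f\in R,\,i\in\ZZ_{>0}\}$, one can approximate $tp$ by $\nu(f_k)/i_k$ with $\mu(f_k)/i_k>(n_1+n_2)\overline\lambda$ for large $k$, and hence (by the claim) $\nu(f_k)/i_k\in\Delta(n_1,n_2)$. Closedness of $\Delta(n_1,n_2)$ yields $tp\in\Delta(n_1,n_2)$, and letting $t\to 1^+$ gives $p\in\Delta(n_1,n_2)$.

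The principal difficulty is keeping $\overline\lambda$ independent of $(n_1,n_2)$. Both bounds are linear in the right parameter: the $m_R$-primary containment is linear in $n_1+n_2$ with constant $C$ depending only on $\mathcal I(1),\mathcal I(2)$, while Izumi's inequality is linear in $\overline\nu_{m_R}$ with constant $K$ depending only on $\mu$ and $R$. Their composition produces the uniform threshold $\overline\lambda=KC+1$, which is the crux of the argument.
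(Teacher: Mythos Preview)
Your argument is correct and follows essentially the same strategy as the paper: combine the containment $m_R^{C(n_1+n_2)}\subset I(1)_{n_1}I(2)_{n_2}$ with an Izumi-type comparison between $\mu$ and the $m_R$-adic order. The one technical difference is that the paper also invokes Rees \cite{R2} (linear equivalence of the $\overline\nu_{m_R}$- and $m_R$-topologies) to obtain a direct inclusion $I(\mu)_{m\alpha}\subset m_R^m$, which places the relevant points of $\Gamma(R)$ straight into $\Gamma(n_1,n_2)$; you instead stop at $\mu(f)\le K\,\overline\nu_{m_R}(f)$ and pass to powers $f^n$, which is why you need the extra closure argument via the cone structure of $\Delta(R)$. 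Two minor fixes: the ``immediate'' inclusion is $\subset$, not $\supset$ (since $\Gamma(n_1,n_2)\subset\Gamma(R)$ gives $\Delta(n_1,n_2)\subset\Delta(R)$), and $\overline\lambda=KC+1$ need not lie in $\ZZ_{>0}$, so take $\overline\lambda=\lceil KC\rceil+1$.
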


\begin{proof} Let $\nu_1,\ldots,\nu_t$ be the Rees valuations of $m_R$. Since $R$ is analytically irreducible,  the topologies of the $\nu_j$ on $R$ are linearly equivalent to the topology of $\mu$ on $R$ by Rees's Izumi Theorem \cite{R3}. Let $\overline \nu_{m_R}$ be the reduced order. By the Rees valuation theorem (recalled in \cite{R3}), for $x\in R$, 
$$
\overline \nu_{m_R}(x)=\min_j\left\{\frac{\nu_j(x)}{\nu_j(m_R)}\right\}
$$
so the topology of $\overline\nu_{m_R}$ is linearly equivalent to the topology induced by each $\nu_j$. Further, $\overline\nu_{m_R}$ is linearly equivalent to the $m_R$-topology by \cite{R2} since $R$ is analytically irreducible. Thus there exists $\alpha\in \ZZ_{>0}$ such that
$I(u)_{m\alpha}\subset m_R^m$ for all $m\in \NN$. Since $I(1)_1$ and $I(2)_1$ are $m_R$-primary,  there exists $c\in \ZZ_{>0}$ such that $m_R^c\subset I(1)_1$ and $m_R^c\subset I(2)_1$, so that $m_R^{c(n_1+n_2)}\subset I(1)_{n_1}I(2)_{n_2}$ for all $n_1,n_2\in \ZZ_{\ge 0}$. Let $\overline\lambda=c\alpha$. Then 
\begin{equation}\label{eq30}
I(\mu)_{(n_1+n_2)\overline\lambda}\subset m_R^{c(n_1+n_2)}\subset I(1)_{n_1}I(2)_{n_2}
\end{equation}
for all $n_1,n_2\in \ZZ_{\ge 0}$. 

Suppose $(a_1,\ldots,a_d,m)\in \Gamma(R)$ is such that $m>0$ and $\frac{a_1}{m}\ge (n_1+n_2)\overline\lambda$. Then there exists $f\in R$ such that $\nu(f)=(a_1,\ldots,a_d)$. In particular, $\mu(f)=a_1$. Thus $\mu(f)\ge m(n_1+n_2)\overline\lambda$ so that $f\in I(a)_{mn_1}I(2)_{mn_2}$ by (\ref{eq30}). Thus $(a_1,\ldots,a_d,m)\in \Gamma(n_1,n_2)$ and so 
$$
\left(\frac{a_1}{m},\ldots,\frac{a_d}{m}\right)\in \Delta(n_1,n_2).
$$
\end{proof}

\begin{Lemma}\label{LemmaB} There exists $\lambda\in \ZZ_{>0}$ such that for all $n_1,n_2\in \NN$,
$$
\Delta(n_1,n_2)\cap H^+_{(n_1+n_2)\lambda}=\Delta(R)\cap H^+_{(n_1+n_2)\lambda}.
$$
\end{Lemma}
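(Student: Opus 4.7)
The plan is to reduce Lemma \ref{LemmaB} to Lemma \ref{LemmaA}. The inclusion $\Delta(n_1,n_2)\cap H^+_{(n_1+n_2)\lambda}\subseteq \Delta(R)\cap H^+_{(n_1+n_2)\lambda}$ is immediate from $\Delta(n_1,n_2)\subseteq \Delta(R)$ and holds for any $\lambda$. For the reverse inclusion, the strategy is to convert the condition $x_1+\cdots+x_d\ge(n_1+n_2)\lambda$ into a condition of the form $x_1\ge(n_1+n_2)\overline\lambda$, where $\overline\lambda$ is the constant from Lemma \ref{LemmaA}. This requires that we control the sum of coordinates by the first coordinate alone on the cone $\Delta(R)$.

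The key geometric step is to establish the existence of a constant $M>0$ such that
\[
x_1+x_2+\cdots+x_d\le M x_1\qquad\text{for every }v=(x_1,\ldots,x_d)\in\Delta(R).
\]
I would prove this by invoking observation (\ref{eq31}) from Section \ref{SecFrame}, which says that for every $0\ne f\in R$ with $\nu(f)=(a_1,\ldots,a_d)$, the vector $\nu(f)$ lies in $\Xi_{a_1}\subseteq\Xi$. Since $\Delta(\Xi)$ is a compact convex subset of $\{1\}\times\RR^{d-1}$ by the cited \cite[Theorem 8.1]{C2}, there is a constant $N>0$ such that $\omega_i/n\le N$ for every $(n,\omega_1,\ldots,\omega_{d-1})\in\Xi$ with $n\ge 1$. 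Applying this to $\nu(f)$ gives $a_i\le N a_1$ for $2\le i\le d$, hence $a_1+\cdots+a_d\le M a_1$ with $M=1+(d-1)N$. Because the rescaled points $\{\nu(f)/m:0\ne f\in R,\,m\in\ZZ_{>0}\}$ are dense in $\Delta(R)$ and the inequality $\sigma(v)\le M v_1$ is closed, the bound extends to all of $\Delta(R)$.

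Now set $\lambda=M\overline\lambda$. Given $v=(x_1,\ldots,x_d)\in\Delta(R)\cap H^+_{(n_1+n_2)\lambda}$, we have $x_1+\cdots+x_d\ge(n_1+n_2)M\overline\lambda$, and the bound above yields $x_1\ge(x_1+\cdots+x_d)/M\ge(n_1+n_2)\overline\lambda$. Lemma \ref{LemmaA} then places $v$ in $\Delta(n_1,n_2)$, giving the reverse inclusion.

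The only nontrivial point is the boundedness step. Once one observes that the cone $\Delta(R)$ sits inside the cone generated by $\Xi$ (by (\ref{eq31})) so that the slice $\Delta(R)\cap\{x_1=1\}$ embeds into the compact body $\Delta(\Xi)$, everything else is routine homogeneity and a direct appeal to Lemma \ref{LemmaA}.
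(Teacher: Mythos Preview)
Your proof is correct and follows essentially the same route as the paper's: both use observation (\ref{eq31}) and the compactness of $\Delta(\Xi)$ to show that on $\Delta(R)$ the coordinate sum is bounded by a fixed multiple of the first coordinate, and then reduce to Lemma \ref{LemmaA}. The paper phrases the final step as a contradiction (if $x_1/m$ were small then the sum would be too small) and uses the slightly cruder constant $\lambda>bd\,\overline\lambda$, whereas you argue the contrapositive directly with $\lambda=M\overline\lambda$; just remember to round $\lambda$ up to an integer as the statement requires.
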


\begin{proof} Recall the definitions of $\Xi$, $\Xi_n$ and $\Delta(\Xi)$ in equations (\ref{eq21}), (\ref{eq22}) and (\ref{eq23}). The set $\Delta(\Xi)\subset \{1\}\times \RR^{d-1}$ is compact and convex as explained after (\ref{eq22}). Thus there exists $b\in \ZZ_{>0}$ such that $\Delta(\Xi)\subset \{1\}\times [0,b]^{d-1}$. Suppose that $f\in R$ and $\mu(f)\le \delta$ for some $\delta$. Let $\nu(f)=(a_1=\mu(f),a_2,\ldots,a_d)$. Then $\nu(f)\in \Xi_{a_1}$ by (\ref{eq31}) which implies 
$$
\left(1,\frac{a_2}{a_1},\ldots,\frac{a_d}{a_1}\right)\in \Delta(\Xi)
$$
so
\begin{equation}\label{eq4}
a_i\le\delta b \mbox{ for all }i.
\end{equation}
Choose $\lambda>\overline\lambda bd$ where $\overline\lambda$ is the constant of Lemma \ref{LemmaA}. Suppose $(a_1,\ldots,a_d,m)\in \Gamma(R)$ is such that $m>0$ and 
\begin{equation}\label{eq5}
\frac{a_1}{m}+\cdots+\frac{a_d}{m}\ge (n_1+n_2)\lambda.
\end{equation}
If 
$$
\frac{a_1}{m}>(n_1+n_2)\overline\lambda\mbox{ then }
\left(\frac{a_1}{m},\ldots,\frac{a_d}{m}\right)\in \Delta(n_1,n_2)
$$
by Lemma \ref{LemmaA}. Suppose $\frac{a_1}{m}\le (n_1+n_2)\overline\lambda$. Then $a_1\le m(n_1+n_2)\overline\lambda$ so that $a_i\le m(n_1+n_2)\overline \lambda b$ by (\ref{eq4}). Thus
$$
\frac{a_1}{m}+\cdots+\frac{a_d}{m}\le (n_1+n_2)bd\overline\lambda<(n_1+n_2)\lambda,
$$
a contradiction to our assumption (\ref{eq5}). Thus the conclusions of our lemma hold. 
\end{proof}

Given $\Phi=(\alpha_1,\alpha_2,\phi)\in \RR^3_{>0}$, define 
$$
H^{+}_{\Phi,n_1,n_2}=\{(x_1,\ldots,x_d)\in \RR^d\mid x_1+\cdots+x_d>(\alpha_1n_1+\alpha_2n_2)\phi\}.
$$
Let $\lambda$ be the number defined in Lemma \ref{LemmaB}. If 
\begin{equation}\label{eq44}
\phi\ge\frac{\lambda}{\min\{\alpha_1,\alpha_2\}},
\end{equation}
 then for $n_1,n_2\in \ZZ_{\ge 0}$ with $n_1+n_2>0$, we have
$$
(\alpha_1n_1+\alpha_2n_2)\phi\ge (n_1+n_2)\lambda
$$ 
so
\begin{equation}\label{eq71}
\Delta(R)\cap H^+_{\Phi,n_1,n_2}=\Delta(n_1,n_2)\cap H^+_{\Phi,n_1,n_2}
=(n_1\Delta(1,0)+n_2\Delta(0,1))\cap H^+_{\Phi,n_1,n_2}.
\end{equation}
The second equality in (\ref{eq71}) is obtained as follows. Lemma \ref{LemmaB} implies 
that 
$$
\Delta(1,0)\cap H_{\Phi,1,0}^+=\Delta(R)\cap H_{\Phi,1,0}^+\mbox{ and }
\Delta(0,1)\cap H_{\Phi,0,1}^+=\Delta(R)\cap H_{\Phi,0,1}^+.
$$
Taking the Minkowski sum, we thus have
$$
(n_1\Delta(1,0)+n_2\Delta(0,1))\cap H^+_{\Phi,n_1,n_2}=\Delta(R)\cap H^+_{\Phi,n_1,n_2}.
$$
Set 
\begin{equation}\label{eq70}
\Delta_{\Phi}(n_1,n_2)=\Delta(n_1,n_2)\setminus \Delta(n_1,n_2)\cap H^+_{\Phi,n_1,n_2},
\end{equation}
$$
\tilde\Delta_{\Phi}(n_1,n_2)=\Delta(R)\setminus \Delta(R)\cap H^+_{\Phi,n_1,n_2}.
$$
These are compact, convex subsets of $\RR^d$. For all $n_1,n_2\in \NN$, we have that the Minkowski sum \begin{equation}\label{eq2}
n_1\Delta_{\Phi}(1,0)+n_2\Delta_{\Phi}(0,1)\subset \Delta_{\Phi}(n_1,n_2)\subset \tilde\Delta_{\Phi}(n_1,n_2).
\end{equation}
We now fix $n_1,n_2\in \NN$.
For $m\in \NN$, let
$$
\begin{array}{l}
\Gamma_{\Phi}(n_1,n_2)_m\\
=\{(\nu(f),m)=(a_1,\ldots,a_d,m)\mid f\in I(1)_{mn_1}I(2)_{mn_2}\mbox{ and }a_1+\cdots+a_d\le m(\alpha_1n_1+\alpha_2n_2)\phi\},
\end{array}
$$
$$
\begin{array}{l}
\tilde\Gamma_{\Phi}(n_1,n_2)_m\\
=\{(\nu(f),m)=(a_1,\ldots,a_d,m)\mid f\in R\mbox{ and }a_1+\cdots+a_d\le m(\alpha_1n_1+\alpha_2n_2)\phi\}.
\end{array}
$$
The semigroups $\Gamma_{\Phi}(n_1,n_2)=\cup_{m\in \NN}\Gamma_{\Phi}(n_1,n_2)_m$ and 
$\tilde \Gamma_{\Phi}(n_1,n_2)=\cup_{m\in \NN}\tilde \Gamma_{\Phi}(n_1,n_2)_m$
satisfy conditions (5) and (6) of \cite[Theorem 3.2]{C2} by the argument after Lemma \ref{LemmaC}. Thus the limits
$$
\lim_{m\rightarrow \infty}\frac{\#\tilde\Gamma_{\Phi}(n_1,n_2)_m}{m^d}={\rm Vol}(\tilde\Delta_{\Phi}(n_1,n_2))
$$
and
$$
\lim_{m\rightarrow \infty}\frac{\#\Gamma_{\Phi}(n_1,n_2)_m}{m^d}={\rm Vol}(\Delta_{\Phi}(n_1,n_2))
$$
exist by \cite[Theorem 3.2]{C2}. As in \cite[Theorem 5.6]{C3}, if 
$\phi\ge \frac{\lambda}{\min\{\alpha_1,\alpha_2\}}$, then 
\begin{equation}\label{eq11}
\lim_{m\rightarrow\infty}\frac{\ell_R(R/I(1)_{mn_1}I(2)_{mn_2})}{m^d}
=\delta[{\rm Vol}(\tilde\Delta_{\Phi}(n_1,n_2))-{\rm Vol}(\Delta_{\Phi}(n_1,n_2))]
\end{equation}
where 
\begin{equation}\label{eq46}
\delta=[\mathcal O_{\nu}/m_{\nu}:R/m_R].
\end{equation}

The function 
\begin{equation}\label{eq40}
f(n_1,n_2):= \lim_{m\rightarrow\infty}\frac{\ell_R(R/I(1)_{mn_1}I(2)_{mn_2})}{m^d}
\end{equation}
for  $n_1,n_2\in \NN$ of (\ref{M2}) and (\ref{eqV6})
is a homogeneous polynomial in $\RR[x]$ of degree $d$.

Since $\Delta_{\Phi}(1,0)$ and $\Delta_{\Phi}(0,1)$ are compact convex subsets of $\RR^d$, the function 
\begin{equation}\label{eq48}
g(n_1,n_2)={\rm Vol}(n_1\Delta_{\Phi}(1,0)+n_2\Delta_{\Phi}(0,1))
\end{equation}
is a homogeneous polynomial over $\RR$ of degree $d$ for all $n_1,n_2\in \RR_{\ge 0}$ by Theorem \ref{ConvMix}. We have that 
\begin{equation}\label{eq41}
\begin{array}{lll}
{\rm Vol}(\tilde\Delta_{\Phi}(n_1,n_2))&=&{\rm Vol}(\Delta(R)\cap H^{-}_{(n_1\alpha_1+n_2\alpha_2)\phi})\\
&=&{\rm Vol}\left((n_1\alpha_1+n_2\alpha_2)\phi(\Delta(R)\cap H_1^{-})\right)\\
&=& (n_1\alpha_1+n_2\alpha_2)^d\phi^d{\rm Vol}(\Delta(R)\cap H_1^{-})
\end{array}
\end{equation}
by (\ref{eq1}). Thus by (\ref{eq11}) and (\ref{eq41}),
\begin{equation}\label{eq42}
{\rm Vol}(\Delta_{\Phi}(n_1,n_2))=(n_1\alpha_1+n_2\alpha_2)^d\phi^d{\rm Vol}(\Delta(R)\cap H_1^{-})-\frac{1}{\delta}f(n_1,n_2)
\end{equation}
where $f(n_1,n_2)$ is the function of (\ref{eq40}). We have that
\begin{equation}\label{eq43}
{\rm Vol}(\Delta_{\Phi}(n_1,n_2))\ge g(n_1,n_2)
\end{equation}
for all $n_1,n_2\in \NN$ by (\ref{eq2}).

\begin{Theorem}\label{MinNew}  Suppose that $R$ is a local ring of dimension $d$ with $\dim N(\hat R)<d$ and $\mathcal I(1)$ and $\mathcal I(2)$ are $m_R$-filtrations of $R$. Suppose that there exist $a,b\in \ZZ_{>0}$ such that 
\begin{equation}\label{eqIC}
\overline{\sum_{n\ge 0}I(1)_{an}t^n}=\overline{\sum_{n\ge 0}I(2)_{bn}t^n}.
\end{equation}
Then $\mathcal I(1)$ and $\mathcal I(2)$ satisfy the Minkowski equality
$$
e_R(\mathcal I(1)\mathcal I(2))^{\frac{1}{d}}=e_R(\mathcal I(1))^{\frac{1}{d}}+e_R(\mathcal I(2))^{\frac{1}{d}}.
$$
\end{Theorem}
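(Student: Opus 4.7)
The plan is to extract from (\ref{eqIC}) an equality of mixed-multiplicity polynomials via Proposition \ref{BoundProp}, and then combine this with the scaling of the polynomial $P$ of (\ref{M2}) under Veronese to pin down each mixed multiplicity $e_i := e_R(\mathcal I(1)^{[d-i]}, \mathcal I(2)^{[i]})$. Set $\mathcal J(1) := \{I(1)_{an}\}_{n\ge 0}$ and $\mathcal J(2) := \{I(2)_{bn}\}_{n\ge 0}$, so that (\ref{eqIC}) reads $\overline{R[\mathcal J(1)]} = \overline{R[\mathcal J(2)]}$. Applying Proposition \ref{BoundProp} to the pairs $(\mathcal J(1), \mathcal J(2))$ and $(\mathcal J(1), \mathcal J(1))$---integral closures match trivially in the first slot and by hypothesis in the second---yields equality of all corresponding mixed multiplicities, and hence equality of the two associated mixed-multiplicity polynomials from (\ref{M2}).

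Direct substitution in (\ref{M2}) gives the scaling identities
$$
P_{(\mathcal J(1), \mathcal J(2))}(n_1, n_2) = P(an_1, bn_2) \quad \text{and} \quad P_{(\mathcal J(1), \mathcal J(1))}(n_1, n_2) = a^d\, P_{(\mathcal I(1), \mathcal I(1))}(n_1, n_2),
$$
where $P$ is the polynomial of $(\mathcal I(1), \mathcal I(2))$. Proposition \ref{BoundProp} therefore delivers
$$
P(an_1, bn_2) = a^d\, P_{(\mathcal I(1), \mathcal I(1))}(n_1, n_2).
$$
To evaluate the right-hand side I would first establish the auxiliary identity $P_{(\mathcal I(1), \mathcal I(1))}(n_1, n_2) = \tfrac{1}{d!}(n_1+n_2)^d\, e_0$, with $e_0 := e_R(\mathcal I(1))$. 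The inequality $\ge$ is immediate from $I(1)_{mn_1} I(1)_{mn_2} \subset I(1)_{m(n_1+n_2)}$, and the matching $\le$ comes from the Minkowski inequality 4 of Theorem \ref{TheoremMI} applied to the two filtrations $\{I(1)_{mn_1}\}_{m\ge 0}$ and $\{I(1)_{mn_2}\}_{m\ge 0}$, whose multiplicities are $n_1^d e_0$ and $n_2^d e_0$ respectively.

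Substituting and changing variables to $v_1 = an_1$, $v_2 = bn_2$ then yields $P(v_1, v_2) = \tfrac{e_0}{d!}\bigl(v_1 + \tfrac{a}{b} v_2\bigr)^d$, and matching coefficients with (\ref{eqV6}) gives $e_i = e_0 (a/b)^i$ for every $0 \le i \le d$; in particular $e_d = (a/b)^d e_0$. Therefore
$$
e_R(\mathcal I(1)\mathcal I(2)) = d!\,P(1,1) = \sum_{i=0}^d \binom{d}{i} e_i = e_0\Bigl(1 + \tfrac{a}{b}\Bigr)^d = \bigl(e_0^{1/d} + e_d^{1/d}\bigr)^d,
$$
and taking $d$-th roots, together with (\ref{eqX31}) to identify $e_0 = e_R(\mathcal I(1))$ and $e_d = e_R(\mathcal I(2))$, delivers the Minkowski equality. (The degenerate case $e_0 = 0$ forces every $e_i$ to vanish, so both sides of the equality are zero.) The only technically nontrivial step is the repeated-slot application of Proposition \ref{BoundProp}, which converts a single integral-closure hypothesis into a complete identification of the mixed-multiplicity polynomial; the remaining moves are scaling and binomial expansion.
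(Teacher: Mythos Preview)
Your argument is correct and takes a genuinely different, more elementary route than the paper's proof. The paper first reduces to the case where $R[\mathcal I(1)]$ and $R[\mathcal I(2)]$ are integrally closed, passes to the minimal primes of $\hat R$, and then invokes the Okounkov body machinery of Sections~\ref{SecMF} and~\ref{SecMMF} (Remark~\ref{Rescale}, equations (\ref{eq11}) and (\ref{eq41})) to prove that the diagonal polynomial $Q(n_1,n_2)=\lim_m \ell_R(R/J_{mn_1}J_{mn_2})/m^d$ has the form $c(n_1+n_2)^d$; only at the end is Proposition~\ref{BoundProp} used to handle the general case. You bypass all of this by observing that the key identity $P_{(\mathcal I(1),\mathcal I(1))}(n_1,n_2)=\tfrac{e_0}{d!}(n_1+n_2)^d$ follows from a simple squeeze: the filtration inclusion $I(1)_{mn_1}I(1)_{mn_2}\subset I(1)_{m(n_1+n_2)}$ gives one inequality, and the already-established Minkowski inequality (Theorem~\ref{TheoremMI}, part~4) gives the other. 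Combined with Proposition~\ref{BoundProp} applied to the pair $(\mathcal J(1),\mathcal J(2))$ versus $(\mathcal J(1),\mathcal J(1))$, this pins down $P$ completely without any convex geometry. Your approach is shorter and uses only results stated earlier in the paper; the paper's approach, while heavier here, develops the $\Delta(n_1,n_2)$ machinery that is genuinely needed for the harder converse direction in later sections.
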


\begin{proof} Let
$$
P(n_1,n_2)=\lim_{m\rightarrow\infty}\frac{\ell_R(R/I(1)_{mn_1}I(2)_{mn_2})}{m^d}.
$$
Let $P_1,\ldots,P_s$ be the minimal primes of $\hat R$ such that $\dim \hat R/P_i=d$. 
Let $R_i=\hat R/P_i$ for $1\le i\le s$. The $R_i$ are analytically irreducible excellent local domains. 

By the proof of Theorem 4.7 of \cite{C2}  we have that
$$
P(n_1,n_2)=\sum_{k=1}^s\lim_{m\rightarrow\infty}\frac{\ell_{R_i}(R_i/I(1)_{mn_1}I(2)_{mn_2}R_i)}{m^d}.
$$

We first suppose that $\mathcal I(1)$ and $\mathcal I(2)$ are such that $R[\mathcal I(1)]=\overline{R[\mathcal I(1)]}$ and $R[\mathcal I(2)]=\overline{R[\mathcal I(2)]}$ are integrally closed. Then 
 $I(1)_{ma}=I(2)_{mb}$ for all $m\in \ZZ_{>0}$.

 Let $J_m=I(1)_{ma}=I(2)_{mb}$ for $m\in \NN$ and $\mathcal J=\{J_m\}$. Let 
 $$
 Q(n_1,n_2)=\lim_{m\rightarrow \infty}\frac{\ell_R(R/J_{mn_1}J_{mn_2})}{m^d}.
 $$
 For $1\le i\le s$, let
 $$
  Q_i(n_1,n_2)=\lim_{m\rightarrow \infty}\frac{\ell_{R_i}(R_i/J(i)_{mn_1}J(i)_{mn_2})}{m^d}
 $$ 
 where $J(i)_m=J_mR_i$. We have that
 $$
 Q(n_1,n_2)=\sum_{i=1}^sQ_i(n_1,n_2)
 $$
 by the proof of Theorem 4.7 of \cite{C2}.
 
 For each $i$, we apply the construction of Section \ref{SecMF}
  to the $m_{R_i}$-filtration $\{J(i)_m\}$ on $R_i$ and we apply the construction of this section (Section \ref{SecMMF})
   to the $m_{R_i}$-filtrations
 $\{J(i)_m\}$ and $\{J(i)_m\}$ on $R_i$. We use the notation of these sections.

 We have by Remark \ref{Rescale}
  that for all $n_1,n_2\in \NN$ with $n_1+n_2>0$ that 
 $$
 \begin{array}{l}
 (n_1+n_2)\Delta(\{J(i)_m\})\subset n_1\Delta(\{J(i)_m\})+n_2\Delta(\{J(i)_m\})=
\Delta(\{J(i)_{mn_1}\})+\Delta(\{J(i)_{mn_2}\})\\
 \subset\Delta(n_1,n_2)=\Delta(\{J(i)_{mn_1}J(i)_{mn_2}\})\subset \Delta(\{J(i)_{m(n_1+n_2)}\})=(n_1+n_2)\Delta(\{J(i)_m\})
 \end{array}
 $$
 so that $\Delta(n_1,n_2)= (n_1+n_2)\Delta(\{J(i)_m\})$. 
 
 Let $\Phi=(1,1,\phi)$ with $\phi$ sufficiently large.  Then
 \begin{equation}\label{eqT5*}
 \Delta_{\Phi}(n_1,n_2)=[(n_1+n_2)\Delta(\{J(i)_m\})]\cap H^-_{(n_1+n_2)\phi}
 =(n_1+n_2)[\Delta(\{J(i)_m\})\cap H^-_{\phi}].
 \end{equation}
 By (\ref{eq11}),
 $$
 Q_i(n_1,n_2)=\delta[{\rm Vol}(\tilde\Delta_{\Phi}(n_1,n_2))-{\rm Vol}(\Delta_{\Phi}(n_1,n_2))]
 $$
 and by (\ref{eq41}),
 $$
 {\rm Vol}(\tilde\Delta_{\Phi}(n_1,n_2))=(n_1+n_2)^d {\rm Vol}(\Delta(R_i)\cap H_{\phi}^-).
 $$
 By (\ref{eqT5*}),
 $$
 {\rm Vol}(\Delta_{\Phi}(n_1,n_2))=(n_1+n_2)^d{\rm Vol}(\Delta(\{J(i)_m\})\cap H_{\phi}^-),
 $$
 so that
 $$
 Q_i(n_1,n_2)=c_i(n_1+n_2)^d
 $$
 where 
 $$
 c_i=\delta[{\rm Vol}(\Delta(R_i)\cap H_{\phi}^-)- {\rm Vol}(\Delta(\{J(i)_m\})\cap H_{\phi}^-)].
 $$ 
 Thus letting $c=\sum_{i=1}^sc_i$, we have that $Q(n_1,n_2)=c(n_1+n_2)^d$.
 
 Now $P(am_1,bm_2)=Q(m_1,m_2)$ so
 $$
 P(n_1,n_2)=c(\frac{n_1}{a}+\frac{n_2}{b})^d,
 $$
 and substituting the values $(n_1,n_2)=(1,1)$, $(n_1,n_2)=(1,0)$ and $(n_1,n_2)=(0,1)$ we get that  
  $\mathcal I(1)$ and $\mathcal I(2)$  satisfy Minkowski's equality, establishing the theorem if  $R[\mathcal I(1)]$ and $R[\mathcal I(2)]$
  are integrally closed.  
  
 Now suppose that $\mathcal I(1)$ and $\mathcal I(2)$ are arbitrary $m_R$-filtrations satisfying (\ref{eqIC}).
 Define $m_R$-filtrations $\mathcal J(1)$ and $\mathcal J(2)$ by setting $\mathcal J(1)=\{J(1)_n\}$ and $\mathcal J(2)=\{J(2)_n\}$ where
 $$
 \sum_{n\ge 0}J(1)_nt^n=\overline{\sum_{n\ge 0}I(1)_nt^n}\mbox{ and }
  \sum_{n\ge 0}J(2)_nt^n=\overline{\sum_{n\ge 0}I(2)_nt^n}. 
 $$
 Now 
 $$
 \overline{\sum I(1)_{an}t^{an}}=\sum J(1)_{an}t^{an}\mbox{ and } \overline{\sum I(2)_{bn}t^{an}}=\sum J(2)_{bn}t^{an}
  $$
 so
 $$
 \sum_{n\ge 0}J(1)_{an}t^n=\overline{\sum_{n\ge 0}I(1)_{an}t^n}
 =\overline{\sum_{n\ge 0}I(1)_{bn}t^n}= \sum_{n\ge 0}J(2)_{bn}t^n.
   $$
 By the first part of the proof, $\mathcal J(1)$ and $\mathcal J(2)$ satisfy Minkowski's equality, and there is an expression
 $$
 \lim_{m\rightarrow\infty}\frac{\ell_R(R/J(1)_{mn_1}J(2)_{mn_2})}{m^d}=c(\frac{n_1}{a}+\frac{n_2}{b})^d.
 $$ 
 By Proposition \ref{BoundProp},
 $$
 P(n_1,n_2)=\lim_{m\rightarrow\infty}\frac{\ell_R(R/J(1)_{mn_1}J(2)_{mn_2})}{m^d}=c(\frac{n_1}{a}+\frac{n_2}{b})^d,
 $$
 and substituting the values $(n_1,n_2)=(1,1)$, $(n_1,n_2)=(1,0)$ and $(n_1,n_2)=(0,1)$ we get that  
  $\mathcal I(1)$ and $\mathcal I(2)$  satisfy Minkowski's equality.
 \end{proof}

\section{Equality in the Minkowski inequality for mixed multiplicities} \label{SecMinEQ}

Let $R$ be a $d$-dimensional analytically unramified local domain and $\mathcal  I(1)$, $\mathcal I(2)$ be $m_R$-filtrations. 

The polynomial $f(n_1,n_2)$ of (\ref{eq40}) has an expansion
$$
f(n_1,n_2)=\sum_{d_1+d_2=d}\frac{1}{d_1!d_2!}e_R(\mathcal I(1)^{[d_1]},\mathcal I(2)^{[d_2]})n_1^{d_1}n_2^{d_2}
$$ 
where $e_R(\mathcal I(1)^{[d_1]},\mathcal I(2)^{[d_2]})\in \RR$ are the mixed multiplicities of $\mathcal I(1)$ and $\mathcal I(2)$ by (\ref{M2}) and (\ref{eqV6}). Set $e_i=e_R(\mathcal I(1)^{[d-i]},\mathcal I(2)^{[i]})$ for $0\le i\le d$. Then
$$
f(n_1,n_2)=\sum_{i=0}^d\frac{1}{(d-i)!i!}e_in_1^{d-i}n_2^i.
$$
We have by (\ref{eqX31}) that
$$
e_0=e_R(\mathcal I(1))\mbox{ and } e_d=e_R(\mathcal I(2)).
$$
By Formulas 3) and 1) of Theorem \ref{TheoremMI},
\begin{equation}\label{eqT2*}
e_i^d\le e_0^{d-i}e_d^i\mbox{ for }0\le i\le d
\end{equation}
and
\begin{equation}\label{eqT3*}
e_i^2\le e_{i-1}e_{i+1}
\end{equation}
for $1\le i\le d-1$.
We expand 
$$
e_R(\mathcal I(1)\mathcal I(2))=d!f(1,1)=
\sum_{i=0}^d\binom{d}{i}e_i\le \sum_{i=0}^d\binom{d}{i}e_0^{\frac{d-i}{d}}e_d^{\frac{i}{d}}
=(e_0^{\frac{1}{d}}+e_d^{\frac{1}{d}})^d
$$
obtaining the Minkowski inequality (\ref{eqMinkIn}). Observe that 
\begin{equation}\label{eqT6}
\mbox{Equality holds in the Minkowski inequality if and only if equality holds in (\ref{eqT2*}) for all $i$.}
\end{equation}
 In this case,
 \begin{equation}\label{eq47}
 f(n_1,n_2)=\sum_{i=0}^d\frac{1}{(d-i)!i!}e_0^{\frac{d-i}{d}}e_d^{\frac{i}{d}}n_1^{d-i}n_2^i
 =\frac{1}{d!}(\sqrt[d]{e_0}n_1+\sqrt[d]{e_d}n_2)^d.
 \end{equation}
 
 We now show that when all $e_j$ are positive, the Minkowski equality holds between $\mathcal I(1)$ and $\mathcal I(2)$ if and only if equality holds in (\ref{eqT3*}).  We use an argument from \cite{HS}. Applying (\ref{eqT3*}), we have 
\begin{equation}\label{eqT4*}
\left(\frac{e_i}{e_{i-1}}\right)^{d-i}\cdots\left(\frac{e_1}{e_0}\right)^{d-i}\le 
\left(\frac{e_d}{e_{d-1}}\right)^i\cdots \left(\frac{e_{i+1}}{e_i}\right)^i
\end{equation}
where there are $i(d-i)$ terms on each side. We have equality in (\ref{eqT4*}) for $1\le i\le d-1$ if and only if equality holds in (\ref{eqT3*}) for all $i$. Now the LHS of (\ref{eqT4*}) is $\frac{e_i^{d-i}}{e_0^{d-i}}$ and the RHS is $\frac{e_d^i}{e_i^i}$ so 
\begin{equation}\label{eqT7*}
\mbox{Equality holds in (\ref{eqT3*}) for all $i$ if and only if  equality holds in (\ref{eqT2*}) for all $i$.}
\end{equation}

 \section{An  analysis of the Minkowski equality}\label{SecGeom}
 In this section, suppose that $R$ is a $d$-dimensional  analytically irreducible local domain and $\mathcal I(1)$ and $\mathcal I(2)$ are $m_R$-filtrations. 
 We further  assume that equality holds in Minkowski's inequality (\ref{eqMinkIn}) for $\mathcal I(1)$ and $\mathcal I(2)$. 
 We also make the additional assumptions that $e_0=e_R(\mathcal I(1))>0$ and $e_d=e_R(\mathcal I(2))>0$.
 We use the notation of Sections \ref{SecMinEQ}, \ref{SecFrame} and \ref{SecMMF}.


 Set $\alpha_1=\sqrt[d]{e_0}$ and $\alpha_2=\sqrt[d]{e_d}$. Choose $\phi$ so that (\ref{eq44}) is satisfied for these values of $\alpha_1$ and $\alpha_2$. Set  $\gamma=\phi^d{\rm Vol}(\Delta(R)\cap H_1^{-})$. We then define the function $h(n_1,n_2)={\rm Vol}(\Delta_{\Phi}(n_1,n_2))$. We have that for all $n_1,n_2\in \NN$,
 \begin{equation}\label{eq3}
 h(n_1,n_2)={\rm Vol}(\Delta_{\Phi}(n_1,n_2))=(\gamma-\frac{1}{\delta d!})(\alpha_1n_1+\alpha_2n_2)^d
 \end{equation} 
 where $\delta$ is the constant of (\ref{eq46}), by (\ref{eq42}) and (\ref{eq47}).
 
  Recall the  polynomial $g(n_1,n_2)$ of (\ref{eq48}).
  We have that ${\rm Vol}(\Delta_{\Phi}(1,0))=g(1,0)=h(1,0)>0$ and ${\rm Vol}(\Delta_{\Phi}(0,1))=g(0,1)=h(0,1)>0$
  and $g(n_1,n_2)\le h(n_1,n_2)$ for all $n_1,n_2\in \NN$ by (\ref{eq43}). Since $g$ and $h$ are homogeneous polynomials of the same degree, we have that $g(a_1,a_2)\le h(a_1,a_2)$ for all $a_1,a_2\in \QQ_{\ge 0}$. Thus, by continuity of polynomials, 
  \begin{equation}\label{eq49}
  g(a_1,a_2)\le h(a_1,a_2)\mbox{ for all $a_1,a_2\in \RR_{\ge 0}$.}
  \end{equation}
  For $0<t<1$, we have that
  $$
  \begin{array}{lll}
  h(1-t,t)^{\frac{1}{d}}&=&(1-t)h(1,0)^{\frac{1}{d}}+th(0,1)^{\frac{1}{d}}
  =(1-t)g(1,0)^{\frac{1}{d}}+tg(0,1)^{\frac{1}{d}}\\
  &&\le g(1-t,t)^{\frac{1}{d}}\le h(1-t,t)^{\frac{1}{d}}.
  \end{array}
  $$
by  (\ref{eq3}),  Theorem \ref{BMTheorem} and (\ref{eq49}). Thus $g(1-t,t)^{\frac{1}{d}}=(1-t)g(1,0)^{\frac{1}{d}}+tg(0,1)^{\frac{1}{d}}$ for $0<t<1$ and so $\Delta_{\Phi}(1,0)$ and $\Delta_{\Phi}(0,1)$ are homothetic by Theorem \ref{BMTheorem}. 

We have $\mbox{Vol}(\Delta_{\Phi}(0,1))=\frac{e_d}{e_0}\mbox{Vol}(\Delta(1,0))$ by (\ref{eq3}).
Let $T:\RR^d\rightarrow \RR^d$, given by 
$$
T(\vec x)=c\vec x+\vec \gamma
$$
 be the homothety such that $T(\Delta_{\Phi}(1,0))=\Delta_{\Phi}(0,1)$. We have that
 $$
 \frac{e_d}{e_0}{\rm Vol}(\Delta_{\Phi}(1,0)={\rm Vol}(\Delta_{\Phi}(0,1))
 ={\rm Vol}(T(\Delta_{\Phi}(1,0))=c^d{\rm Vol}(\Delta_{\Phi}(1,0)
 $$
 so 
 $$
 c=\sqrt{\frac{e_d}{e_0}}.
 $$
By (\ref{eq70}),  (\ref{eq71}) and (\ref{MF2}), we have that
\begin{equation}\label{eq61}
\Delta_{\Phi}(1,0)\cap H_{\psi}=\left\{\begin{array}{ll}
\emptyset&\mbox{ for }\psi>\sqrt[d]{e_0}\phi\\
\Delta(R)\cap H_{\sqrt[d]{e_0}\phi}&\mbox{ for }\psi=\sqrt[d]{e_0}\phi
\end{array}
\right.
\end{equation}
and
\begin{equation}\label{eq62}
\Delta_{\Phi}(0,1)\cap H_{\psi}=\left\{\begin{array}{ll}
\emptyset&\mbox{ for }\psi>\sqrt[d]{e_d}\phi\\
\Delta(R)\cap H_{\sqrt[d]{e_d}\phi}&\mbox{ for }\psi=\sqrt[d]{e_d}\phi.
\end{array}
\right.
\end{equation}

Writing $\vec\gamma=(\gamma_1,\ldots,\gamma_d)$, we have
$T(H_{\sqrt[d]{e_0}\phi})=H_{\sqrt[d]{e_d}\phi+(\gamma_1+\cdots+\gamma_d)}$.
Comparing equations (\ref{eq61}) and (\ref{eq62}), we see that
$\gamma_1+\cdots+\gamma_d=0$. 

Now $\Delta(R)\cap H_{\psi}=\psi(\Delta(R)\cap H_1)$ for all $\psi\in \RR_{>0}$ by (\ref{eq1}). Thus we may factor the homeomorphism $T:\Delta(R)\cap H_{\sqrt[d]{e_0}\phi}\rightarrow \Delta(R)\cap H_{\sqrt[d]{e_d}\phi}$ by homeomorphisms
$$
\Delta(R)\cap H_{\sqrt[d]{e_0}\phi}\stackrel{c}{\rightarrow}\Delta(R)\cap H_{\sqrt[d]{e_d}\phi}\stackrel{+\vec\gamma}{\rightarrow} \Delta(R)\cap H_{\sqrt[d]{e_d}\phi}.
$$
But $\Delta(R)\cap H_{\sqrt[d]{e_d}\phi}$ is a nonempty compact set, so the second map cannot be well defined  unless $\vec\gamma=0$.

In summary, we have established the following theorem. 

\begin{Theorem}\label{Theorem3} Suppose that $R$ is a $d$-dimensional analytically irreducible  excellent local ring and $\mathcal I(1)$ and $\mathcal I(2)$ are $m_R$-filtrations  which have positive multiplicity $e_0=e_R(\mathcal I(1))>0$ and $e_d=e_R(\mathcal I(2))>0$ and that Minkowski's equality
$$
e_R(\mathcal I(1)\mathcal I(2))^{\frac{1}{d}}=e_R(\mathcal I(1))^{\frac{1}{d}}+e_R(\mathcal I(2)^{\frac{1}{d}}
$$
holds. Let notation be as in sections \ref{SecFrame}, \ref{SecMMF} and \ref{SecMinEQ}. Then 
$$
\sqrt[d]{e_d}\Delta_{\Phi}(1,0)=\sqrt[d]{e_0}\Delta_{\Phi}(0,1)
$$
where $\Phi=(\sqrt[d]{e_0},\sqrt[d]{e_d},\phi)$ in (\ref{eq70}) and $\phi$ is sufficiently large.
\end{Theorem}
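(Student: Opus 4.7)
The plan is to combine the Brunn–Minkowski inequality in convex geometry with the explicit description of $f(n_1,n_2)$ that is forced by the Minkowski equality. The first step is to exploit equation (\ref{eq47}): under the hypothesis, $f(n_1,n_2) = \frac{1}{d!}(\sqrt[d]{e_0}\,n_1 + \sqrt[d]{e_d}\,n_2)^d$. Plugging this into (\ref{eq42}) with $\alpha_1 = \sqrt[d]{e_0}$ and $\alpha_2 = \sqrt[d]{e_d}$ shows that $h(n_1,n_2) := \mathrm{Vol}(\Delta_\Phi(n_1,n_2))$ is a perfect $d$-th power $\lambda(\alpha_1 n_1 + \alpha_2 n_2)^d$ for some $\lambda > 0$; in particular, $h(1-t,t)^{1/d}$ is affine in $t$.

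Next, I would introduce the mixed-volume polynomial $g(n_1,n_2) = \mathrm{Vol}(n_1\Delta_\Phi(1,0) + n_2\Delta_\Phi(0,1))$, which is homogeneous of degree $d$ by Theorem \ref{ConvMix}. The Minkowski-sum containment (\ref{eq2}) gives $g \le h$ pointwise on $\NN^2$, extending to $\RR_{\ge 0}^2$ by continuity, and $g(1,0) = h(1,0) > 0$, $g(0,1) = h(0,1) > 0$. Applying Theorem \ref{BMTheorem} then forces, for $0 < t < 1$,
$$h(1-t,t)^{1/d} = (1-t)g(1,0)^{1/d} + t\,g(0,1)^{1/d} \le g(1-t,t)^{1/d} \le h(1-t,t)^{1/d},$$
so equality holds throughout the Brunn–Minkowski inequality. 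By the equality clause of Theorem \ref{BMTheorem}, there exist $c > 0$ and $\vec\gamma \in \RR^d$ with $\Delta_\Phi(0,1) = c\,\Delta_\Phi(1,0) + \vec\gamma$. Comparing volumes gives $c^d = h(0,1)/h(1,0) = e_d/e_0$, so $c = \sqrt[d]{e_d/e_0}$.

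What remains, and what I expect to be the main obstacle, is to show $\vec\gamma = 0$. The key structural input is (\ref{eq61})--(\ref{eq62}): the ``top'' slice $\Delta_\Phi(1,0) \cap H_{\sqrt[d]{e_0}\phi}$ equals the cone slice $\Delta(R) \cap H_{\sqrt[d]{e_0}\phi}$, and similarly for $\Delta_\Phi(0,1)$. The affine map $T(\vec x) = c\vec x + \vec\gamma$ sends $H_{\sqrt[d]{e_0}\phi}$ to $H_{\sqrt[d]{e_d}\phi + (\gamma_1 + \cdots + \gamma_d)}$, and since $T$ must carry the top slice of $\Delta_\Phi(1,0)$ onto the top slice of $\Delta_\Phi(0,1)$, matching hyperplanes gives $\gamma_1 + \cdots + \gamma_d = 0$. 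Finally, using (\ref{eq1}), the cone property makes scaling by $c$ alone a bijection $\Delta(R) \cap H_{\sqrt[d]{e_0}\phi} \to \Delta(R) \cap H_{\sqrt[d]{e_d}\phi}$; factoring $T$ on this slice as scaling by $c$ followed by translation by $\vec\gamma$ shows that $\vec\gamma$ must map the nonempty compact set $\Delta(R) \cap H_{\sqrt[d]{e_d}\phi}$ into itself, which forces $\vec\gamma = 0$ (iterating would otherwise produce an unbounded orbit). Multiplying $\Delta_\Phi(0,1) = c\,\Delta_\Phi(1,0)$ through by $\sqrt[d]{e_0}$ yields the desired equality $\sqrt[d]{e_d}\,\Delta_\Phi(1,0) = \sqrt[d]{e_0}\,\Delta_\Phi(0,1)$.
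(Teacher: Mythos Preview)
Your proposal is correct and follows essentially the same route as the paper's proof: you use (\ref{eq47}) and (\ref{eq42}) to write $h$ as a perfect $d$-th power, sandwich $g$ between $h$ via (\ref{eq2}) and the Brunn--Minkowski inequality to force homothety, compute $c=\sqrt[d]{e_d/e_0}$ from volumes, and then use the top-slice identifications (\ref{eq61})--(\ref{eq62}) together with the cone property (\ref{eq1}) to kill $\vec\gamma$. Your parenthetical justification via iteration is a slightly more explicit version of the paper's one-line ``the second map cannot be well defined unless $\vec\gamma=0$,'' but the argument is the same.
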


We also obtain a partial converse to Theorem \ref{Theorem3}.

\begin{Theorem}\label{Theorem7}  Suppose that $R$ is a $d$-dimensional analytically irreducible excellent  local ring and $\mathcal I(1)$ and $\mathcal I(2)$ are $m_R$-filtrations  which have positive multiplicity $e_0=e_R(\mathcal I(1))>0$ and $e_d=e_R(\mathcal I(2))>0$. 
Suppose that $\sqrt[d]{e_d}\Delta_{\Phi}(1,0)=\sqrt[d]{e_0}\Delta_{\Phi}(0,1)$ for
$\Phi=(\sqrt[d]{e_0},\sqrt[d]{e_d},\phi)$ in (\ref{eq70}) with $\phi$  sufficiently large
and that for the functions of (\ref{eq48}) and (\ref{eq3}) $g(n_1,n_2)=h(n_1,n_2)$ for all $n_1,n_2\in \ZZ^2$.
Then Minkowski's equality 
$$
e_R(\mathcal I(1)\mathcal I(2))^{\frac{1}{d}}=e_R(\mathcal I(1))^{\frac{1}{d}}+e_R(\mathcal I(2))^{\frac{1}{d}}
$$
holds between $\mathcal I(1)$ and $\mathcal I(2)$.
\end{Theorem}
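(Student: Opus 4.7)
The plan is to invert the computation that produced Theorem \ref{Theorem3}, reading (\ref{eq42}) as a formula that recovers the multiplicity polynomial $f(n_1,n_2)$ from $h(n_1,n_2) := \mathrm{Vol}(\Delta_\Phi(n_1,n_2))$. Set $\alpha_1=\sqrt[d]{e_0}$, $\alpha_2=\sqrt[d]{e_d}$ and $\gamma=\phi^d\,\mathrm{Vol}(\Delta(R)\cap H_1^-)$; provided $\phi$ is large enough to satisfy (\ref{eq44}), equation (\ref{eq42}) gives
$$
h(n_1,n_2)\;=\;\gamma\,(n_1\alpha_1+n_2\alpha_2)^d\;-\;\frac{1}{\delta}f(n_1,n_2),
$$
so $h$ is polynomial of degree $d$. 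Since $g$ is a real polynomial of degree $d$ by Theorem \ref{ConvMix}, the hypothesis $g=h$ on $\ZZ^2$ upgrades to an identity of real polynomials.

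First I would exploit the homothety hypothesis, which I rewrite as $\Delta_{\Phi}(0,1)=(\alpha_2/\alpha_1)\,\Delta_{\Phi}(1,0)$. Convexity of $\Delta_{\Phi}(1,0)$ implies the identity $aK+bK=(a+b)K$ for $a,b\ge 0$, so
$$
n_1\Delta_{\Phi}(1,0)+n_2\Delta_{\Phi}(0,1)\;=\;\frac{n_1\alpha_1+n_2\alpha_2}{\alpha_1}\,\Delta_{\Phi}(1,0),
$$
and therefore $g(n_1,n_2)=\frac{(n_1\alpha_1+n_2\alpha_2)^d}{e_0}\,g(1,0)$. Next, $g(1,0)=\mathrm{Vol}(\Delta_{\Phi}(1,0))=h(1,0)$ by definition, and using (\ref{eq42}) together with $f(1,0)=e_0/d!$ one gets $g(1,0)=e_0\bigl(\gamma-\tfrac{1}{\delta\,d!}\bigr)$. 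Substituting back yields the closed form
$$
g(n_1,n_2)\;=\;(n_1\alpha_1+n_2\alpha_2)^d\Bigl(\gamma-\tfrac{1}{\delta\,d!}\Bigr).
$$

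Finally, equating this with $h(n_1,n_2)=\gamma(n_1\alpha_1+n_2\alpha_2)^d-f(n_1,n_2)/\delta$ (which is legal by the assumption $g=h$) and solving, the $\gamma$ terms cancel and I obtain
$$
f(n_1,n_2)\;=\;\tfrac{1}{d!}\bigl(n_1\sqrt[d]{e_0}+n_2\sqrt[d]{e_d}\bigr)^d.
$$
Evaluating at $(n_1,n_2)=(1,1)$ gives $e_R(\mathcal I(1)\mathcal I(2))=d!\,f(1,1)=(\sqrt[d]{e_0}+\sqrt[d]{e_d})^d$, i.e.\ the Minkowski equality. I do not anticipate a genuine obstacle: the argument is a short polynomial identification, and the only points demanding care are that a choice of $\phi$ sufficiently large does satisfy (\ref{eq44}) so that (\ref{eq42}) is applicable, and the elementary convex-geometric identity $aK+bK=(a+b)K$ for convex $K$.
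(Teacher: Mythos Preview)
Your argument is correct and is actually a bit more direct than the paper's. The paper invokes the equality case of the Brunn--Minkowski theorem (Theorem~\ref{BMTheorem}) to deduce from homothety that $g(1-t,t)^{1/d}=(1-t)g(1,0)^{1/d}+tg(0,1)^{1/d}$, specializes to $t=\tfrac12$, transfers this to $h$ via the hypothesis $g=h$, and then does an algebraic computation with the three values $h(1,0)$, $h(0,1)$, $h(1,1)$ to extract the Minkowski equality. You instead exploit the stronger information in the hypothesis---that the homothety has translation vector zero---together with the elementary convex identity $aK+bK=(a+b)K$ to compute $g(n_1,n_2)$ in closed form for all $n_1,n_2$, and then solve (\ref{eq42}) for the entire polynomial $f$. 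This bypasses Brunn--Minkowski (or rather uses only its trivial direction) and recovers not just $f(1,1)$ but the full identity $f(n_1,n_2)=\tfrac{1}{d!}(n_1\sqrt[d]{e_0}+n_2\sqrt[d]{e_d})^d$, which is the content of (\ref{eq47}). Both routes are short; yours is the cleaner inversion of the analysis preceding Theorem~\ref{Theorem3}.
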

\begin{proof}
The convex bodies $\Delta_{\Phi}(1,0)$ and $\Delta_{\Phi}(0,1)$ are homothetic, so by Theorem \ref{BMTheorem},
$$
g(1-t,t)^{\frac{1}{d}}=(1-t)g(1,0)^{\frac{1}{d}}+tg(0,1)^{\frac{1}{d}}
$$
for $0\le t\le 1$. Taking $t=\frac{1}{2}$ and since $g$ is a homogeneous polynomial of degree $d$, we obtain that $g(1,1)^{\frac{1}{d}}=g(1,0)^{\frac{1}{d}}+g(0,1)^{\frac{1}{d}}$.
Thus $h(1,1)^{\frac{1}{d}}=h(1,0)^{\frac{1}{d}}+h(0,1)^{\frac{1}{d}}$. 
By equations (\ref{eq41}) and (\ref{eq42}),
$$
\begin{array}{lll}
f(n_1,n_2)&=& \delta[{\rm Vol}(\tilde \Delta_{\Phi}(n_1,n_2))-{\rm Vol}(\Delta_{\Phi}(n_1,n_2))]\\
&=&\delta\phi^d{\rm Vol}(\Delta(R)\cap H_1^{-})(\sqrt[d]{e_0}n_1+\sqrt[d]{e_d}n_2)^d-\delta h(n_1,n_2).
\end{array}
$$
Set $\xi=d!\delta\phi^d{\rm Vol}(\Delta(R)\cap H_1^{-})$. We have that
$$
e_R(\mathcal I(1)\mathcal I(2))=d!f(1,1)=\xi (\sqrt[d]{e_0}+\sqrt[d]{e_d})^d-d!\delta h(1,1),
$$
$$
e_0=e_R(\mathcal I(1))=d!f(1,0)=\xi e_0-d!\delta h(1,0),
$$
$$
e_d=e_R(\mathcal I(2))=d!f(0,1)=\xi e_d-d!\delta h(0,1).
$$
Let $\chi=\frac{\xi-1}{d!\delta}$, so that $h(1,0)=\chi e_0$ and $h(0,1)=\chi e_d$.
We have
$$
h(1,1)=(h(1,0)^{\frac{1}{d}}+h(0,1)^{\frac{1}{d}})^d=\chi(\sqrt[d]{e_0}+\sqrt[d]{e_d})^d
$$
and
$$
e_R(\mathcal I(1)\mathcal I(2))=(\xi-d!\delta\chi)(\sqrt[d]{e_0}+\sqrt[d]{e_d})^d.
$$
Now $\xi-d!\delta\chi=1$ so the Minkowski equality
$$
e_R(\mathcal I(1)\mathcal I(2))^{\frac{1}{d}}=e_R(\mathcal I(1))^{\frac{1}{d}}+e_R(\mathcal I(2))^{\frac{1}{d}}
$$
holds.
\end{proof}

\begin{Theorem}\label{Theorem1} Suppose that $R$ is a $d$-dimensional  analytically irreducible excellent local ring and $\mathcal I(1)$ and $\mathcal I(2)$ are $m_R$-filtrations  such that 
$e_R(\mathcal I(1))$ and $e_R(\mathcal I(2))$ are both non zero and equality holds in the Minkowski inequality (\ref{eqMinkIn}) for $\mathcal I(1)$ and $\mathcal I(2)$. Then for all $m_R$-valuations $\mu$ of $R$, we have that 
$$
e(\mathcal I(2))^{\frac{1}{d}}\gamma_{\mu}(\mathcal I(1))
=e(\mathcal I(1))^{\frac{1}{d}}\gamma_{\mu}(\mathcal I(2)).
$$
\end{Theorem}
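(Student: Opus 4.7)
The plan is to apply Theorem \ref{Theorem3} after choosing the auxiliary rank-$d$ valuation of Section \ref{SecFrame} to be composite with $\mu$. Since $R$ is excellent, there exists a birational projective morphism $Y\to\mathrm{Spec}(R)$ with $Y$ normal such that $\mu=\nu_E$ for some prime exceptional divisor $E$ of $Y$. Running the flag construction of Section \ref{SecFrame} starting from this $Y$ and this $E$ produces a rank-$d$ Abhyankar valuation $\nu$ on the quotient field of $R$ with $\nu(f)=(\mu(f),\omega(\,\cdot\,))\in\NN^d$ for every nonzero $f\in R$, where $\omega$ is a rank-$(d-1)$ Abhyankar valuation on the function field of $E$. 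In particular, the projection $\pi_1\colon\RR^d\to\RR$ onto the first coordinate records $\mu$.

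With $\nu$ so chosen, I claim that for any $m_R$-filtration $\mathcal I$ on $R$, the infimum of $\pi_1$ on $\Delta(\mathcal I)$ equals $\gamma_{\mu}(\mathcal I)$, and this infimum is attained inside the compact truncation $\Delta(\mathcal I)\cap H_c^{-}$ provided $c$ is sufficiently large. The inequality $\inf\pi_1(\Delta(\mathcal I))\ge\gamma_{\mu}(\mathcal I)$ is immediate from the density in $\Delta(\mathcal I)$ of the points $\nu(f)/m$ with $f\in I_m$, whose first coordinates $\mu(f)/m$ exceed $\gamma_{\mu}(\mathcal I)$; the reverse follows from the definition of $\gamma_{\mu}(\mathcal I)$ as an infimum over $m$. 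Capture of a minimizing point inside a compact truncation for $c$ sufficiently large is exactly the step used in the proof of Theorem \ref{Theorem2}, and the coordinate bound extracted from Lemma \ref{LemmaB} makes it work simultaneously for both $\mathcal I(1)$ and $\mathcal I(2)$. Consequently, for $\phi$ large enough,
$$
\gamma_{\mu}(\mathcal I(1))=\min\pi_1(\Delta_{\Phi}(1,0))\quad\text{and}\quad\gamma_{\mu}(\mathcal I(2))=\min\pi_1(\Delta_{\Phi}(0,1)),
$$
where $\Phi=(\sqrt[d]{e_0},\sqrt[d]{e_d},\phi)$ in the notation of Theorem \ref{Theorem3}.

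Finally, Theorem \ref{Theorem3} (applicable because of the Minkowski equality together with the positivity of $e_0=e_R(\mathcal I(1))$ and $e_d=e_R(\mathcal I(2))$) furnishes the homothety of compact convex bodies
$$
\sqrt[d]{e_d}\,\Delta_{\Phi}(1,0)=\sqrt[d]{e_0}\,\Delta_{\Phi}(0,1).
$$
Applying the linear map $\pi_1$ to both sides and passing to minima immediately yields
$$
\sqrt[d]{e_d}\,\gamma_{\mu}(\mathcal I(1))=\sqrt[d]{e_0}\,\gamma_{\mu}(\mathcal I(2)),
$$
which is the stated identity after substituting $e_0=e_R(\mathcal I(1))$ and $e_d=e_R(\mathcal I(2))$. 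The main obstacle is the truncation step in the second paragraph: one must verify that the minimum of $\pi_1$ on the possibly unbounded convex set $\Delta(\mathcal I(i))$ is in fact attained inside $\Delta_{\Phi}(1,0)$ respectively $\Delta_{\Phi}(0,1)$ for a single choice of $\phi$. This parallels the truncation argument in the proof of Theorem \ref{Theorem2} and rests on the same Izumi-type bound of Lemma \ref{LemmaB}; once it is in place, the conclusion follows from Theorem \ref{Theorem3} by pure affine geometry.
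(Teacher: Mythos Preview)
Your proof is correct and follows essentially the same approach as the paper: choose the rank-$d$ valuation $\nu$ composite with $\mu$ via the flag construction of Section~\ref{SecFrame}, identify $\gamma_{\mu}(\mathcal I(i))$ as the minimum of the first-coordinate projection on the truncated Okounkov body $\Delta_{\Phi}(1,0)$ (respectively $\Delta_{\Phi}(0,1)$), and then read off the desired identity from the homothety $\sqrt[d]{e_d}\,\Delta_{\Phi}(1,0)=\sqrt[d]{e_0}\,\Delta_{\Phi}(0,1)$ supplied by Theorem~\ref{Theorem3}. The paper's own proof is terser about the truncation step (it simply asserts that $\gamma_{\mu}(\mathcal I(1))$ lies in the compact image $\pi(\Delta_{\Phi}(1,0))$), but your more explicit justification via the density argument and the reference to Lemma~\ref{LemmaB} is exactly what underlies that assertion.
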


\begin{proof} Starting with $\nu=(\mu,\omega)$ in the construction of Section \ref{SecFrame}, construct $\Delta_{\Phi}(n_1,n_2)$ as in Section \ref{SecMMF}, so that the conclusions of Theorem \ref{Theorem3} hold. 
Let $\pi:\RR^{d}\rightarrow \RR$ be the projection onto the first factor. By definition of $\gamma_{\mu}(\mathcal I(1))$, $\gamma_{\mu}(\mathcal I(1))$ is in  the compact set $\pi(\Delta_{\Phi}(1,0))$, $\pi^{-1}(\gamma_{\mu}(\mathcal I(1))\cap \Delta_{\Phi}(1,0)\ne \emptyset$ and $\pi^{-1}(a)\cap \Delta_{\Phi}(1,0)=\emptyset$ if $a<\gamma_{\mu}(\mathcal I(1))$. In the same way, we have that $\pi^{-1}(\gamma_{\mu}(\mathcal I(2))\cap \Delta_{\Phi}(0,1)\ne \emptyset$ and $\pi^{-1}(a)\cap \Delta_{\Phi}(0,1)=\emptyset$ if $a<\gamma_{\mu}(\mathcal I(2))$.

Let $T: \RR^d\rightarrow\RR^d$ be the homothety $T(\vec x)=c\vec x$ where $c=\frac{\sqrt[d]{e_d}}{\sqrt[d]{e_0}}$ which takes $\Delta_{\Phi}(1,0)$ to $\Delta_{\Phi}(0,1)$.
Now since $T$ multiplies the first coefficient of an element of $\Delta_{\Phi}(1,0)$ by $c$,
and the smallest first coefficient of an element of $\Delta_{\Phi}(1,0)$ is $\gamma_{\mu}(\mathcal I(1)$, the smallest first coefficient of an element of $\Delta_{\Phi}(0,1)$ is $\gamma_{\mu}(\mathcal I(2))=c\gamma_\mu(\mathcal I(1))$.
\end{proof}

Let us verify that these equalities do in fact hold in the classical case of $m_R$-primary ideals $I(1)$ and $I(2)$ satisfying the Minkowski equality. In this case, we have the (Noetherian) $m_R$-filtrations
$\mathcal I(1)=\{I(1)^i\}$ and $\mathcal I(2)=\{I(2)^i\}$. Since the Minkowski equality holds, we have that there exists $m,n\in \ZZ_+$ such that $\overline{I(1)^m}=\overline{I(2)^n}$ where $\overline{I(1)^m}$ and $\overline{I(2)^n}$ are the respective integral closures of ideals by the Teissier, Rees and Sharp, Katz Theorem \cite{T3}, \cite{RS}, \cite{Ka} recalled in Subsection \ref{SubSecINEQ}. Now 
$$
e(\overline{I(1)^m})=m^de(I(1))=m^de(\mathcal I(1)),
$$
$$
e(\overline{I(2)^n})=n^de(I(2))=n^de(\mathcal I(2))
$$
 and 
 $$
 m\gamma_{\mu}(\mathcal I(1))=m\mu(I(1))=\mu(\overline{I(1)^m}),
 $$
$$
 n\gamma_{\mu}(\mathcal I(2))=n\mu(I(2))=\mu(\overline{I(2)^n}),
 $$
giving the desired formula
 $$
e(\mathcal I(2))^{\frac{1}{d}}\gamma_{\mu}(\mathcal I(1))
=e(\mathcal I(1))^{\frac{1}{d}}\gamma_{\mu}(\mathcal I(2)).
$$

\section{Equality of mixed multiplicities on normal excellent local rings}

\begin{Theorem}\label{Cor1.1} Let $R$ be a $d$-dimensional normal excellent local domain and let $\mathcal I(D_1)$ and $\mathcal I(D_2)$ be real divisorial $m_R$-filtrations. Let $X\rightarrow\mbox{Spec}(R)$ and $D_1=\sum_{i=1}^ra_iE_i$, $D_2=\sum_{i=1}^rb_iE_i$ be a representation of $D_1$ and $D_2$.
Suppose that Minkowski's equality holds for $\mathcal I(D_1)$ and $\mathcal I(D_2)$. Then there exists an effective real Weil divisor $\sum_{i=1}^r \gamma_i E_i$ such that 
$$
\gamma_{E_j}(D_i)=\gamma_je(\mathcal I(D_i))^{\frac{1}{d}}
$$
for all $j$ and $i$ and 
$$
I(mD_i)=
\Gamma(X,\mathcal O_X(-\lceil \sum_{j=1}^rm\gamma_je(\mathcal I(D_i))^{\frac{1}{d}}E_j\rceil)
$$
for $i=1$ and $2$ and all $m\in \NN$.
\end{Theorem}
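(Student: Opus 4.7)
My plan is to derive this theorem as a fairly direct packaging of Theorem \ref{Theorem1} together with Lemma \ref{LemmaAR1}; essentially all of the analytic content is already in those two results.

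First I would check that the hypotheses of Theorem \ref{Theorem1} are satisfied. Since $R$ is normal and excellent, the completion $\hat R$ is a normal local ring, hence a domain, so $R$ is analytically irreducible. Both $\mathcal I(D_1)$ and $\mathcal I(D_2)$ are real divisorial $m_R$-filtrations on an analytically irreducible excellent local domain, so Proposition \ref{PropPos}(1) gives $e_R(\mathcal I(D_1))>0$ and $e_R(\mathcal I(D_2))>0$. Together with the assumed Minkowski equality, this puts us in the situation of Theorem \ref{Theorem1}.

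Next I would apply Theorem \ref{Theorem1} with the $m_R$-valuation $\mu=\nu_{E_j}$ for each $j=1,\ldots,r$. Since $\gamma_{E_j}(D_i)=\gamma_{\nu_{E_j}}(\mathcal I(D_i))$ by definition, this yields
$$
e(\mathcal I(D_2))^{1/d}\,\gamma_{E_j}(D_1)=e(\mathcal I(D_1))^{1/d}\,\gamma_{E_j}(D_2)
$$
for every $j$. I would then define
$$
\gamma_j:=\frac{\gamma_{E_j}(D_1)}{e(\mathcal I(D_1))^{1/d}};
$$
the identity above forces $\gamma_j=\gamma_{E_j}(D_2)/e(\mathcal I(D_2))^{1/d}$ as well, so one real number $\gamma_j$ works simultaneously for $i=1$ and $i=2$. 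By (\ref{eqAR12}) applied to $D_1=\sum a_iE_i$, we have $\gamma_{E_j}(D_1)\ge a_j\ge 0$, so $\gamma_j\ge 0$ and the real Weil divisor $\sum_j\gamma_jE_j$ is effective, giving the first assertion $\gamma_{E_j}(D_i)=\gamma_j\,e(\mathcal I(D_i))^{1/d}$.

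For the second assertion I would invoke Lemma \ref{LemmaAR1}, which gives
$$
I(mD_i)=\Gamma\bigl(X,\mathcal O_X(-\lceil\textstyle\sum_{j=1}^r m\gamma_{E_j}(D_i)E_j\rceil)\bigr)
$$
for all $m\in\NN$ and $i=1,2$. Substituting the relation $\gamma_{E_j}(D_i)=\gamma_j\,e(\mathcal I(D_i))^{1/d}$ into this formula yields exactly the desired expression. There is no real obstacle here: the hard work is done by Theorem \ref{Theorem1} (via the Brunn--Minkowski analysis of Section \ref{SecGeom} and the homothety argument), and this corollary merely translates the resulting numerical equalities of the $\gamma_{E_j}$'s back into an explicit global-section description of the $I(mD_i)$.
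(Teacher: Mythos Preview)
Your proposal is correct and follows essentially the same approach as the paper: invoke Proposition \ref{PropPos} for positivity, apply Theorem \ref{Theorem1} with $\mu=\nu_{E_j}$ to get the proportionality of the $\gamma_{E_j}(D_i)$, define $\gamma_j$ accordingly, and then use Lemma \ref{LemmaAR1} to rewrite $I(mD_i)$. You even supply a couple of details (analytic irreducibility of $R$, effectiveness of $\sum\gamma_jE_j$) that the paper leaves implicit.
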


\begin{proof} We have that both $e_R(\mathcal I(D_1))$ and $e_R(\mathcal I(D_2))$ are positive by Proposition \ref{PropPos}. We have that  $E_1,E_2,\ldots,E_r$ are the irreducible exceptional divisors of $\phi:X\rightarrow \mbox{Spec}(R)$.  For $1\le j\le r$, let 
$$
\gamma_j=\frac{\gamma_{E_j}(D_1)}{e(\mathcal I(D_1))^{\frac{1}{d}}}.
$$
By Theorem \ref{Theorem1},  taking $\mu=\nu_{E_j}$,
$$
\gamma_j=\frac{\gamma_{E_j}(D_2)}{e(\mathcal I(D_2))^{\frac{1}{d}}}
$$
for $1\le j\le r$. Now for $i=1,2$ and $m\in \NN$, we have by Lemma \ref{LemmaAR1} that
$$
I(mD_i)=\Gamma(X,\mathcal O_X(-\lceil \sum_{j=1}^rm\gamma_{E_j}(D_i)E_j\rceil)
=\Gamma(X,\mathcal O_X(-\lceil \sum_{j=1}^rme(\mathcal I(D_i))^{\frac{1}{d}}\gamma_jE_j\rceil).
$$
\end{proof}

  \begin{Corollary}\label{Cor1.2} Let $R$ be a normal $d$-dimensional excellent local domain and let $\mathcal I(D_1)$ and $\mathcal I(D_2)$ be real divisorial $m_R$-filtrations. Thus $e_i=e_R(\mathcal I(D_1)^{[d-i]},\mathcal I(D_2)^{[i]})>0$ for $0\le i\le d$ by Proposition \ref{PropPos}.
  Let $X\rightarrow\mbox{Spec}(R)$ and $D_1=\sum_{i=1}^ra_iE_i$, $D_2=\sum_{i=1}^rb_iE_i$ be a representation of $D_1$ and $D_2$.
   Suppose that Minkowski's equality holds for $\mathcal I(D_1)$ and $\mathcal I(D_2)$ and that for some $i$,
$$
\frac{e_i}{e_{i-1}}=\frac{a}{b}\in \QQ
$$
where $a,b\in \ZZ_{>0}$. Then
$$
I(maD_1)=I(mbD_2)
$$
for all $m\in \NN$.
\end{Corollary}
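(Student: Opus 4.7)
\textbf{Proof plan for Corollary \ref{Cor1.2}.} The plan is to combine the structural description of $I(mD_i)$ from Theorem \ref{Cor1.1} with the numerical rigidity of the sequence $(e_i)$ under Minkowski equality that was extracted in Section \ref{SecMinEQ}. Once both ingredients are in hand, the argument is a short substitution.

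First I will apply Theorem \ref{Cor1.1} to the pair $\mathcal I(D_1), \mathcal I(D_2)$ (which is legitimate since the Minkowski equality is assumed to hold and $e_0, e_d > 0$ by Proposition \ref{PropPos}) to produce nonnegative real scalars $\gamma_1, \ldots, \gamma_r$ such that
$$
I(mD_i) \;=\; \Gamma\Bigl(X,\, \mathcal O_X\bigl(-\bigl\lceil \sum_{j=1}^r m \gamma_j\, e_R(\mathcal I(D_i))^{1/d}\, E_j \bigr\rceil\bigr)\Bigr)
$$
for $i = 1,2$ and every $m \in \NN$. Substituting $m \rightsquigarrow ma$ in the case $i=1$ and $m \rightsquigarrow mb$ in the case $i=2$, the desired equality $I(maD_1) = I(mbD_2)$ reduces to showing that the two effective real divisors
$$
\sum_{j=1}^r ma\gamma_j e_0^{1/d} E_j \quad \text{and} \quad \sum_{j=1}^r mb\gamma_j e_d^{1/d} E_j
$$
coincide, since then their round-ups agree. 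Because the $\gamma_j$ are common to both expressions, this collapses to the single scalar identity $a\, e_0^{1/d} = b\, e_d^{1/d}$.

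Second, I will derive this scalar identity from the hypothesis $e_i/e_{i-1} = a/b$ together with (\ref{eqT6}) and (\ref{eqT7*}). Since Minkowski's equality holds for $\mathcal I(D_1)$ and $\mathcal I(D_2)$, those results yield $e_j^{\,d} = e_0^{\,d-j} e_d^{\,j}$ for all $0 \le j \le d$, hence $e_j = e_0^{(d-j)/d} e_d^{\,j/d}$. Forming the ratio of consecutive terms shows that
$$
\frac{e_j}{e_{j-1}} \;=\; \Bigl(\frac{e_d}{e_0}\Bigr)^{1/d}
$$
is independent of $j \in \{1,\ldots,d\}$. In particular, at the given index $i$ we obtain $(a/b)^d = e_d/e_0$, equivalently $a\, e_0^{1/d} = b\, e_d^{1/d}$, which is exactly what was needed in the first step.

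I do not foresee a substantive obstacle: the corollary is essentially the observation that the sequence $(e_i)$ forced by Minkowski equality is geometric with common ratio $(e_d/e_0)^{1/d}$, so that rationality of any single ratio pins down the exact integer scaling $a, b$ that matches $D_1$ and $D_2$ through the uniform formula of Theorem \ref{Cor1.1}. The positivity of $e_0$ and $e_d$ (from Proposition \ref{PropPos}) ensures that the $d$-th roots and ratios used above are unambiguously defined.
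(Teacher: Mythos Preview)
Your proof is correct and follows essentially the same approach as the paper. The only cosmetic difference is that you invoke the packaged formula of Theorem \ref{Cor1.1} and then reduce to the scalar identity $a\,e_0^{1/d}=b\,e_d^{1/d}$, whereas the paper appeals directly to Theorem \ref{Theorem1} and Lemma \ref{LemmaAR1} to obtain $a\,\gamma_{E_j}(D_1)=b\,\gamma_{E_j}(D_2)$; since Theorem \ref{Cor1.1} is precisely the combination of those two results, the arguments are equivalent.
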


\begin{proof} With our assumption that the Minkowski inequality is an equality, we have from the observation before (\ref{eq47}) that $e_j=e_0^{\frac{d-j}{d}}e_d^{\frac{j}{d}}$ for all $j$. Since $e_0=e_R(\mathcal I(D_1)),e_d=e_R(\mathcal I(D_2))>0$, we have that $e_j^2=e_{j-1}e_{j+1}$ for  $1\le j\le d-1$ by (\ref{eqT7*}). Thus 
$\frac{e_j}{e_{j-1}}=\frac{e_{j+1}}{e_j}$ for  $1\le j\le d-1$, and so $\frac{e_j}{e_{j-1}}=\frac{a}{b}$ for  $1\le j\le d$
and so $$
\left(\frac{a}{b}\right)^d=\frac{e_d}{e_0}.
$$
We have $e_d^{\frac{1}{d}}\gamma_{E_j}(D_1)=e_0^{\frac{1}{d}}\gamma_{E_j}(D_2)$ for all $j$ by Theorem \ref{Theorem1}.
Now for all $j$,
$$
\frac{a}{b}\frac{\gamma_{E_j}(D_1)}{\gamma_{E_j}(D_2)}=
\frac{e_d^{\frac{1}{d}}\gamma_{E_j}(D_1)}{e_0^{\frac{1}{d}}\gamma_{E_j}(D_2)}=1
$$
so that $a\gamma_{E_j}(D_1)=b\gamma_{E_j}(D_2)$ for all $j$. The conclusions of the corollary now follow from Lemma \ref{LemmaAR1}.
\end{proof}

 Suppose that $R$ is a local domain and $\mathcal I=\{I_m\}$ is a filtration of $m_R$-primary ideals. 
 We define a   function $w_{\mathcal I}$ on $R$ by
 \begin{equation}\label{eqT9}
 w_{\mathcal I}(f)=\max\{m\mid f\in I_m\}
 \end{equation}
 for $f\in R$. We have that $w_{\mathcal I}(f)$ is either a natural number or $\infty$.

 \begin{Lemma}\label{ratfun}
 Suppose that $R$ is a normal excellent local domain and that $\mathcal I=\mathcal I(D)$ 
 is a rational divisorial $m_R$-filtration. 
  Suppose that $f\in m_R$ is nonzero. 
 Then $w_{\mathcal I(D)}(f)<\infty$ and 
 there exists $d\in\ZZ_{>0}$ such that $w_{\mathcal I(D)}(f^{nd})=nw_{\mathcal  I(D)}(f^d)$ for all $n\in \ZZ_{>0}$.
 \end{Lemma}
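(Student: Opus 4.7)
The plan is to reduce the problem to an explicit formula for $w_{\mathcal I(D)}(f^k)$ in terms of the valuations $\nu_{E_i}(f)$, and then exploit the rationality of the coefficients of $D$. First I would fix a representation $\phi : X \to \mbox{Spec}(R)$ together with a divisor $D = \sum_{i=1}^s a_i E_i$, $a_i \in \QQ_{\ge 0}$ not all zero, so that
$$
I(mD) = \{g \in R \mid \nu_{E_i}(g) \ge \lceil m a_i \rceil \text{ for } 1 \le i \le s\}.
$$
For $f \in m_R$ nonzero, set $v_i = \nu_{E_i}(f)$; since each $\nu_{E_i}$ is an $m_R$-valuation dominating $R$ and $f \ne 0$, we have $1 \le v_i < \infty$.

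For finiteness, I would choose any index $j$ with $a_j > 0$: the condition $f \in I(mD)$ forces $\lceil m a_j \rceil \le v_j$, and hence $m \le v_j/a_j$, which bounds $w_{\mathcal I(D)}(f)$. For the main identity, the key observation is the elementary fact that for any integer $N$ and any real $x$, $\lceil x \rceil \le N$ is equivalent to $x \le N$. Applying this with $x = m a_i$ and $N = k v_i = \nu_{E_i}(f^k) \in \ZZ$, the membership $f^k \in I(mD)$ is equivalent to $m a_i \le k v_i$ for every $i$, i.e.\ $m \le k v_i / a_i$ for every $i$ with $a_i > 0$. Consequently
$$
w_{\mathcal I(D)}(f^k) = \lfloor k\beta \rfloor, \qquad \beta := \min_{i \,:\, a_i > 0} \frac{v_i}{a_i}.
$$

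Now the rationality hypothesis enters: since each $v_i \in \ZZ$ and each $a_i \in \QQ_{>0}$ (for those $i$ with $a_i > 0$), the quantity $\beta$ lies in $\QQ_{>0}$. Write $\beta = p/q$ with $p,q \in \ZZ_{>0}$ and take $d = q$. Then for every $n \in \ZZ_{>0}$ the product $nd\beta = np$ is an integer, so
$$
w_{\mathcal I(D)}(f^{nd}) = \lfloor nd\beta \rfloor = np = n \lfloor d\beta \rfloor = n\, w_{\mathcal I(D)}(f^d),
$$
as required. The only genuinely delicate point is the equivalence $\lceil m a_i \rceil \le k v_i \Leftrightarrow m a_i \le k v_i$, which uses that $k v_i \in \ZZ$; this is the step that turns a statement about round-ups into the clean linear inequality $m \le k v_i/a_i$, and it is where the rationality hypothesis becomes essential — without it, $\beta$ could be irrational and no such $d$ would exist.
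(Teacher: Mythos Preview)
Your proof is correct and follows essentially the same route as the paper: both set $\beta=t=\min_{a_i>0}\nu_{E_i}(f)/a_i$, observe it is a positive rational number, and take $d$ to be its denominator. Your version makes the formula $w_{\mathcal I(D)}(f^k)=\lfloor k\beta\rfloor$ explicit, while the paper verifies the identity directly at the minimizing index $i_0$; these are the same argument. One small expository quibble: the equivalence $\lceil m a_i\rceil\le kv_i \Leftrightarrow m a_i\le kv_i$ holds for arbitrary real $a_i$ (it only uses $kv_i\in\ZZ$), so the rationality hypothesis is not used there but only in the subsequent step where you need $\beta\in\QQ$.
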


 \begin{proof} By assumption, there exists a representation $X\rightarrow \mbox{Spec}(R)$ and $D=\sum_{i=1}^r a_iE_i$ of $\mathcal I(D)$ where the $a_i$ are all nonnegative rational numbers and some $a_i>0$.
 Let $b_i=\nu_{E_i}(f)$ for $1\le i\le r$. Then $f\in I(mD)$ if and only if $\nu_{E_i}(f)=b_i\ge ma_i$ for $1\le i\le r$. Since some $a_i>0$ we have that $w_{\mathcal I(D)}(f)<\infty$.
 
 Let 
 $$
 t=\min\left\{\frac{b_i}{a_i}\mid 1\le i\le r\mbox{ and }a_i\ne 0\right\}.
 $$
  Since all $b_i>0$ and $D$ is a rational divisor, $t$ is a positive rational number, so we can write $t=\frac{c}{d}$ with $c,d\in \ZZ_{>0}$. Let $i_0$ be an index such that $t=\frac{b_{i_0}}{a_{i_0}}$. For all $n\in \ZZ_{>0}$ we have that
 $\nu_{E_i}(f^{nd})= nd\nu_{E_i}(f)=ndb_i\ge nca_i$ for all $i$ and $\nu_{E_{i_0}}(f^{nd})=ndb_{i_0}=nca_{i_0}$ so that $w_{\mathcal I(D)}(f^{dn})=nc=nw_{\mathcal I(D)}(f^d)$.
 \end{proof}

 \begin{Theorem}\label{Theorem8} Suppose that $R$ is a $d$-dimensional normal excellent local ring. Let $\mathcal I(D_1)$ and $\mathcal I(D_2)$ be rational divisorial $m_R$-filtrations. Let $X\rightarrow \mbox{Spec}(R)$,
$D_1=\sum_{i=1}^r a_iE_i$ and $D_2=\sum_{i=1}^rb_iE_i$ be a representation of $D_1$ and $D_2$ (the $a_i$ and $b_i$ are nonnegative rational numbers). Suppose that the Minkowski equality holds between $\mathcal I(D_1)$ and $\mathcal I(D_2)$. Then 
 $$
 \xi=\frac{\sqrt[d]{e_R(\mathcal I(D_2))}}{\sqrt[d]{e_R(\mathcal I(D_1))}}\in \QQ_{>0}.
 $$ 
 Writing 
 $$
 \frac{\sqrt[d]{e_R(\mathcal I(D_2))}}{\sqrt[d]{e_R(\mathcal I(D_1))}}=\frac{a}{b}
 $$
 with $a,b\in \ZZ_{>0}$, we have that
 $$
 I(maD_1)=I(mbD_2)
 $$
 for all $m\in \NN$.
  \end{Theorem}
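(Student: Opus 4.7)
The plan is to reduce both assertions to Corollary \ref{Cor1.2} by first establishing the rationality of $\xi = e_R(\mathcal I(D_2))^{1/d}/e_R(\mathcal I(D_1))^{1/d}$, using the integer-valued asymptotic invariant $\overline w_{\mathcal I}(f) := \limsup_{n\to\infty} w_{\mathcal I}(f^n)/n$. The key observation is that for a rational divisorial filtration this invariant admits a closed form in the $\gamma_{E_j}(D)$ and is in fact rational; Theorem \ref{Theorem1} will then couple the two filtrations by the factor $\xi$, forcing $\xi$ to be rational as well.

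First, I would derive an explicit formula for $\overline w_{\mathcal I(D)}(f)$. Combining Lemma \ref{LemmaAR1}, which gives $I(mD) = \Gamma(X, \mathcal O_X(-\lceil \sum_j m\gamma_{E_j}(D)E_j\rceil))$, with the integrality $n\nu_{E_j}(f) \in \NN$, the ceiling becomes superfluous and
$$
f^n \in I(mD) \ \Longleftrightarrow \ n\nu_{E_j}(f) \ge m\gamma_{E_j}(D) \text{ for all } j \text{ with } \gamma_{E_j}(D)>0.
$$
This yields $w_{\mathcal I(D)}(f^n) = \lfloor n\cdot\min_j(\nu_{E_j}(f)/\gamma_{E_j}(D))\rfloor$, hence
$$
\overline w_{\mathcal I(D)}(f) = \min_{j\,:\,\gamma_{E_j}(D)>0}\frac{\nu_{E_j}(f)}{\gamma_{E_j}(D)}.
$$
When $D$ is rational, Lemma \ref{ratfun} guarantees that $\overline w_{\mathcal I(D)}(f) = w_{\mathcal I(D)}(f^e)/e \in \QQ_{>0}$ for a suitable $e = e(f) \in \ZZ_{>0}$ and any nonzero $f \in m_R$.

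Next, since $R$ is a normal excellent local domain, $\hat R$ is normal, hence a local domain, so $R$ is analytically irreducible, and Proposition \ref{PropPos}(1) gives $e_0 = e_R(\mathcal I(D_1))>0$ and $e_d = e_R(\mathcal I(D_2))>0$. Applying Theorem \ref{Theorem1} to each $m_R$-valuation $\mu = \nu_{E_j}$ yields
$$
e_d^{1/d}\,\gamma_{E_j}(D_1) = e_0^{1/d}\,\gamma_{E_j}(D_2) \qquad (1 \le j \le r),
$$
so $\gamma_{E_j}(D_1)$ and $\gamma_{E_j}(D_2)$ vanish simultaneously and satisfy $\gamma_{E_j}(D_1) = \xi^{-1}\gamma_{E_j}(D_2)$ wherever nonzero. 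Substituting into the preceding formula, for every nonzero $f \in m_R$,
$$
\overline w_{\mathcal I(D_1)}(f) = \xi \cdot \overline w_{\mathcal I(D_2)}(f).
$$
Since $D_2$ has some $b_{j_0}>0$, we have $\gamma_{E_{j_0}}(D_2) \ge b_{j_0} > 0$; moreover $\nu_{E_{j_0}}(f) > 0$ because $\nu_{E_{j_0}}$ dominates $R$, so $\overline w_{\mathcal I(D_2)}(f) > 0$ and $\xi$ is a ratio of two positive rationals, whence $\xi \in \QQ_{>0}$.

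Finally, writing $\xi = a/b$ with $a, b \in \ZZ_{>0}$, the Minkowski-equality identity $e_j = e_0^{(d-j)/d}e_d^{j/d}$ (equation (\ref{eq47})) gives $e_1/e_0 = e_d^{1/d}/e_0^{1/d} = a/b$, which is precisely the hypothesis of Corollary \ref{Cor1.2} with $i=1$; that corollary delivers $I(maD_1) = I(mbD_2)$ for every $m \in \NN$. The hard part is the rationality step: as the example of Section \ref{SecExample} shows, each of $e_R(\mathcal I(D_i))$ and $\gamma_{E_j}(D_i)$ may itself be irrational, so rationality of $\xi$ is a genuinely nonobvious consequence of the Minkowski equality, extracted only after passing to the integer sequences $w_{\mathcal I(D_i)}(f^n)$ and invoking Lemma \ref{ratfun}, where the rationality hypothesis on $D_1$ and $D_2$ is essential.
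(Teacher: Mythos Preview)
Your proof is correct and follows the same overall architecture as the paper's: first invoke Theorem~\ref{Theorem1} (via Corollary~\ref{Cor1.1}) to obtain $\gamma_{E_j}(D_2)=\xi\,\gamma_{E_j}(D_1)$ for all $j$, then use the invariant $w_{\mathcal I(D)}$ together with Lemma~\ref{ratfun} to force $\xi\in\QQ_{>0}$, and finally read off $I(maD_1)=I(mbD_2)$.

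The execution of the rationality step, however, is genuinely different. The paper fixes $f=g^{\overline d}$ as furnished by Lemma~\ref{ratfun} and then uses the Diophantine approximation Lemma~\ref{LemmaHW} to sandwich $w_{\mathcal I(D_1)}(f)$ between $(\xi-\varepsilon)w_{\mathcal I(D_2)}(f)$ and $(\xi+\varepsilon)w_{\mathcal I(D_2)}(f)$: choosing rationals $p_0/q_0\le\xi\le p_0'/q_0'$ close to $\xi$ yields containments $I(qD_2)\subset I(pD_1)$ and $I(p'D_1)\subset I(q'D_2)$, from which the squeeze follows. You bypass this entirely by writing down the closed formula
\[
\overline w_{\mathcal I(D)}(f)=\min_{j:\,\gamma_{E_j}(D)>0}\frac{\nu_{E_j}(f)}{\gamma_{E_j}(D)},
\]
which makes the relation $\overline w_{\mathcal I(D_1)}(f)=\xi\,\overline w_{\mathcal I(D_2)}(f)$ an immediate substitution once Theorem~\ref{Theorem1} is in hand; Lemma~\ref{ratfun} then enters only to certify that each $\overline w_{\mathcal I(D_i)}(f)$ is rational. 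Your route is shorter and avoids Lemma~\ref{LemmaHW} altogether; the paper's route has the minor advantage of working with the integer $w_{\mathcal I(D_i)}(f)$ itself rather than the limit $\overline w_{\mathcal I(D_i)}(f)$. For the final statement you quote Corollary~\ref{Cor1.2} with $e_1/e_0=\xi=a/b$, whereas the paper reproves that corollary inline from Theorem~\ref{Theorem1} and Lemma~\ref{LemmaAR1}; these are the same argument.
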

 
 \begin{proof}
 Minkowski's inequality holds by assumption, and $e_R(\mathcal I(D_i))>0$ for  $i=1,2$ by Proposition \ref{PropPos}. Thus by Corollary \ref{Cor1.1} we have that  there exist $\overline \gamma_j\in \RR_{> 0}$ such that $\gamma_{E_j}(D_i)=e(\mathcal I(D_i))^{\frac{1}{d}}\overline \gamma_j$ for $i=1,2$ and $1\le j\le r$.
 Let $\gamma_j=\gamma_{E_j}(D_1)$ for $1\le j\le r$.
  Thus, with $\xi$ as defined in the statement of the theorem, 
  $$
  \gamma_{E_i}(D_2)=\xi \gamma_{E_i}(D_1)=\xi \gamma_i
  $$ 
  for all $i$ and for all $m\in \NN$,
  $$
  I(mD_1)=\Gamma(X,\mathcal O_X(-\lceil \sum_{i=1}^rm\gamma_iE_i\rceil)
  $$
  and
  $$
  I(mD_2)=\Gamma(X,\mathcal O_X(-\lceil \sum_{i=1}^rm\xi_i\gamma_i E_i\rceil).
  $$
  
  Let $g\in m_R$ be nonzero.  By Lemma \ref{ratfun}, there exists $\overline d\in \ZZ_+$ such that
  $w_{\mathcal I(D_i)}(g^{\overline dl})=lw_{\mathcal I(D_i)}(g^{\overline d})$ for $i=1,2$ and all $l\in \ZZ_{>0}$. Let $f=g^{\overline d}$.
  
 Let $m=w_{\mathcal I(D_2)}(f)>0$ and $n=w_{\mathcal I(D_1)}(f)>0$.
 Let $\delta\in\RR_{>0}$.
 Now by Lemma \ref{LemmaHW},  there exists $\alpha\in \RR_{>0}$ such that $\alpha<\frac{\delta}{m}$ and there exists $\alpha'\in \RR_{>0}$ such that $\alpha'<\frac{\delta}{m}$ and there exist positive integers $p_0,q_0,p_0',q_0'$ such that 
 $$
 \xi-\frac{\alpha}{q_0}<\frac{p_0}{q_0}\le\xi
 $$
 and
 $$
 \xi\le\frac{p_0'}{q_0'}<\xi+\frac{\alpha'}{q_0'}.
 $$
Let $p=p_0m$, $q=q_0m$. Then 
$\gamma_ip\le \xi\gamma_iq$ for all $i$ so that 
$I(qD_2)\subset I(pD_1)$. We have that $f^{q_0}\in I(qD_2)\subset  I(pD_1)$ so that $w_{\mathcal I(D_1)}(f^{q_0})\ge p$. Thus since $w_{\mathcal I(D_1)}(f^{q_0})=q_0w_{\mathcal I(D_1)}(f)$ (by our choice of $f$),
\begin{equation}\label{val3} 
w_{\mathcal I(D_1)}(f)\ge \frac{p_0}{q_0}m=\frac{p_0}{q_0}w_{\mathcal I(D_2)}(f)>(\xi-\frac{\alpha}{q_0})w_{\mathcal I(D_2)}(f).
\end{equation} 

Let $p'=p_0'n$, $q'=q_0'n$. Then $\gamma_i\xi q'\le  \gamma_ip'$ for all $i$ so that $I(p'D_1)\subset I(q'D_2)$. We have that $f^{p_0'}\in I(p'D_1)\subset I(q'D_2)$. Thus  since $w_{\mathcal I(D_2)}(f^{p_0'})=p_0'w_{\mathcal I(D_2)}(f)$ (by our choice of $f$), we have that
$w_{\mathcal I(D_2)}(f)\ge \frac{q_0'n}{p_0'}=\frac{q_0'}{p_0'}w_{\mathcal I(D_1)}(f)$. So
 \begin{equation}\label{val4}
w_{\mathcal I(D_1)}(f)\le\frac{p_0'}{q_0'}w_{\mathcal I(D_2)}(f)<(\xi+\frac{\alpha'}{q_0'})w_{\mathcal I(D_2)}(f).
\end{equation}
Combining equations (\ref{val3}) and (\ref{val4}), we have that 
$$
\begin{array}{lll}
w_{\mathcal I(D_1)}(f)&\le& (\xi+\frac{\alpha'}{q_0'})w_{\mathcal I(D_2)}(f)\\
&=&(\xi-\frac{\alpha}{q_0})w_{\mathcal I(D_2)}(f)+(\frac{\alpha}{q_0}+\frac{\alpha'}{q_0'})w_{\mathcal I(D_2)}(f)\\
&\le& w_{\mathcal I(D_1)}(f)+(\frac{\alpha}{q_0}+\frac{\alpha'}{q_0'})w_{\mathcal I(D_2)}(f)\\
&<&w_{\mathcal I(D_1)}(f)+2\delta.
\end{array}
$$
All these inequalities approach  equalities when the limit is taken as $\delta\mapsto 0$.
Thus $w_{\mathcal I(D_1)}(f)=\xi w_{\mathcal I(D_2)}(f)$, and so 
$$
\xi=\frac{w_{\mathcal I(D_1)}(f)}{w_{\mathcal I(D_2)}(f)}\in \QQ_{>0}.
$$

Now we prove the last statement of the theorem. By Theorem \ref{Theorem1}, we have that 
$$
\gamma_{E_i}(D_1)=\frac{e_R(\mathcal I(D_1))^{\frac{1}{d}}}{e_R(\mathcal I(D_2))^{\frac{1}{d}}}\gamma_{E_i}(D_2)
$$
for $1\le i\le r$. Substituting into 
$I(maD_1)=\Gamma(X,\mathcal O_X(-\lceil \sum_{i=1}^rma\gamma_{E_i}(D_1)E_i\rceil)$,
we obtain that 
$$
I(maD_1)=\Gamma(X,\mathcal O_X(-\lceil\sum_{i=1}^rmb\gamma_{E_i}(D_2)E_i\rceil)=I(mbD_2)
$$
for all $m\in \NN$.
\end{proof}
  
\begin{Theorem}\label{Theorem6} 
Suppose that $R$ is a $d$-dimensional normal excellent local ring. Let $\mathcal I(D_1)$ and $\mathcal I(D_2)$ be real divisorial $m_R$-filtrations. Let $X\rightarrow \mbox{Spec}(R)$,
$D_1=\sum_{i=1}^r a_iE_i$ and $D_2=\sum_{i=1}^rb_iE_i$ be a representation of $D_1$ and $D_2$. 
Then the following are equivalent
\begin{enumerate}
\item[1)] The Minkowski equality holds for $\mathcal I(D_1)$ and $\mathcal I(D_2)$
\item[2)]
$$
\frac{\gamma_{E_i}(D_2)}{\gamma_{E_i}(D_1)}=\frac{e_R(\mathcal I(D_2))^{\frac{1}{d}}}{e_R(\mathcal I(D_1))^{\frac{1}{d}}}
$$
for all $i$.
\item[3)]  For all $i$ and $j$ we have that
\begin{equation}\label{eqT7}
\frac{\gamma_{E_i}(D_2)}{\gamma_{E_i}(D_1)}=\frac{\gamma_{E_j}(D_2)}{\gamma_{E_j}(D_1)}.
\end{equation}
\end{enumerate}
\end{Theorem}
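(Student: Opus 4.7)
The plan is to establish the cycle of implications $1)\Rightarrow 2)\Rightarrow 3)\Rightarrow 1)$, with all the substantive work concentrated in the last implication. For $1)\Rightarrow 2)$, since $R$ is normal and excellent it is analytically irreducible, so Proposition \ref{PropPos} ensures $e_R(\mathcal I(D_1))>0$ and $e_R(\mathcal I(D_2))>0$; I then apply Theorem \ref{Theorem1} with $\mu=\nu_{E_i}$ for each $i$, invoking the identification $\gamma_{\nu_{E_i}}(\mathcal I(D_j))=\gamma_{E_i}(D_j)$. The implication $2)\Rightarrow 3)$ is immediate: under $2)$ both ratios in (\ref{eqT7}) equal $e_R(\mathcal I(D_2))^{\frac{1}{d}}/e_R(\mathcal I(D_1))^{\frac{1}{d}}$.

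For $3)\Rightarrow 1)$, let $\xi\in\RR_{>0}$ denote the common value of the ratios (positivity of each $\gamma_{E_i}(D_j)$ comes from Rees's Izumi theorem, as recalled after Proposition \ref{PropPos}). Lemma \ref{LemmaAR1} identifies $\mathcal I(D_j)$ with the real divisorial filtration $\mathcal I(\sum_i\gamma_{E_i}(D_j)E_i)$, and the hypothesis $\gamma_{E_i}(D_2)=\xi\gamma_{E_i}(D_1)$ puts me in position to apply Proposition \ref{Prop11} with $\gamma_i=\gamma_{E_i}(D_1)$, yielding $\Delta(\mathcal I(D_2))=\xi\Delta(\mathcal I(D_1))$. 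A rescaling in (\ref{eqMF5}), using $\xi(\Delta(\mathcal I(D_1))\cap H_c^{-})=\Delta(\mathcal I(D_2))\cap H_{c\xi}^{-}$ together with $\Delta(R)$ being a cone, then gives $e_R(\mathcal I(D_2))=\xi^d e_R(\mathcal I(D_1))$.

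The heart of the argument is the identification of the mixed convex body $\Delta(n_1,n_2)$ attached to the product filtration $\{I(mn_1D_1)I(mn_2D_2)\}_m$. Lemma \ref{Lemma1} together with the previous paragraph furnishes the lower containment $(n_1+n_2\xi)\Delta(\mathcal I(D_1))=n_1\Delta(\mathcal I(D_1))+n_2\Delta(\mathcal I(D_2))\subseteq\Delta(n_1,n_2)$. For the reverse, given $f_1\in I(mn_1D_1)$ and $f_2\in I(mn_2D_2)$ I use
$$\nu_{E_i}(f_1f_2)\geq \lceil mn_1\gamma_{E_i}(D_1)\rceil+\lceil mn_2\xi\gamma_{E_i}(D_1)\rceil\geq m(n_1+n_2\xi)\gamma_{E_i}(D_1),$$
together with $\nu_{E_i}(f_1f_2)\in\NN$ to force $\nu_{E_i}(f_1f_2)\geq\lceil m(n_1+n_2\xi)\gamma_{E_i}(D_1)\rceil$, so $f_1f_2\in\mathcal Q_m:=I(m(n_1+n_2\xi)\sum_i\gamma_{E_i}(D_1)E_i)$; a second application of Proposition \ref{Prop11} delivers $\Delta(n_1,n_2)\subseteq\Delta(\mathcal Q)=(n_1+n_2\xi)\Delta(\mathcal I(D_1))$. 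Equality of the convex bodies implies equality of multiplicities via (\ref{eqMF5}), so $d!\,P(n_1,n_2)=e_R(\mathcal Q)=(n_1+n_2\xi)^d e_0$ with $e_0=e_R(\mathcal I(D_1))$ and $e_d=\xi^d e_0$; this rearranges to $P(n_1,n_2)=\frac{1}{d!}(e_0^{\frac{1}{d}}n_1+e_d^{\frac{1}{d}}n_2)^d$, which by (\ref{eq47}) and (\ref{eqT6}) is precisely Minkowski's equality. The main obstacle is this double containment for $\Delta(n_1,n_2)$: the lower bound is tautological from Lemma \ref{Lemma1}, while the upper bound hinges on using the $\NN$-valuedness of $\nu_{E_i}$ to collapse the sum of two ceilings into a single ceiling so that Proposition \ref{Prop11} becomes applicable on that side.
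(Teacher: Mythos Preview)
Your proof is correct and follows essentially the same strategy as the paper: the implications $1)\Rightarrow 2)\Rightarrow 3)$ are handled identically, and for $3)\Rightarrow 1)$ both you and the paper sandwich $\Delta(n_1,n_2)$ between $n_1\Delta(\mathcal I(D_1))+n_2\Delta(\mathcal I(D_2))$ (via Lemma~\ref{Lemma1}) and $\Delta(\mathcal K(n_1+n_2\xi))$ (via the ideal containment $I(mn_1D_1)I(mn_2D_2)\subset K(n_1+n_2\xi)_m$), then collapse the sandwich using Proposition~\ref{Prop11}.

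The one point of divergence is the endgame. The paper, having established $\Delta(\mathcal J(n_1,n_2))=n_1\Delta(\mathcal I(D_1))+n_2\Delta(\mathcal I(D_2))$, invokes Theorem~\ref{Theorem7}, whose proof in turn appeals to the equality case of the Brunn--Minkowski theorem. You instead read off $P(n_1,n_2)=\frac{(n_1+n_2\xi)^d e_0}{d!}$ directly from (\ref{eqMF5}) and the scaling $\Delta(\mathcal J(n_1,n_2))=(n_1+n_2\xi)\Delta(\mathcal I(D_1))$, then substitute $(n_1,n_2)=(1,1)$. This is a genuine simplification: your route avoids Theorem~\ref{Theorem7} and the Brunn--Minkowski machinery entirely, since in the present situation the two bodies are not merely homothetic but actually scalar multiples of one another, making the mixed volume computation trivial.
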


\begin{proof} We have that both $e_R(\mathcal I(D_1))$ and $e_R(\mathcal I(D_2))$ are positive by Proposition \ref{PropPos}. 

First suppose that Minkowski's equality holds between $\mathcal I(D_1)$ and $\mathcal I(D_2)$. Then by Theorem \ref{Theorem1},
$$
\frac{\gamma_{E_i}(D_2)}{\gamma_{E_i}(D_1)}=\frac{e_R(\mathcal I(D_2))^{\frac{1}{d}}}{e_R(\mathcal I(D_1))^{\frac{1}{d}}}
$$
for all $i$. Thus 2) holds. If 2) holds then 3) certainly holds.

Now suppose that
3) holds
for all $i,j$. 
Let $\gamma_i=\gamma_{E_i}(D_1)$ and let $\xi\in \RR_{>0}$ be such that 
$$
\xi=\frac{\gamma_{E_i}(D_2)}{\gamma_{E_i}(D_1)}
$$
for all $i$. Then $\gamma_{E_i}(D_2)=\xi\gamma_{E_i}(D_i)$ for all $i$. 
For $\lambda\in \RR_{>0}$ and $n\in \NN$, define
$$
K(\lambda)_n=\Gamma(X,\mathcal O_X(-\lceil n\lambda \gamma_1E_1+\cdots+n\lambda \gamma_rE_r\rceil)),
$$
and a filtration of $m_R$-primary ideals $\mathcal K(\lambda) =\{K(\lambda)_n\}$.
Observe that $\mathcal K(\lambda)=\mathcal I(\sum_{i=1}^r\lambda\gamma_iE_i)$.

 For $n_1,n_2\in \NN$ define
$$
J(n_1,n_2)_m=I(mn_1D_1)I(mn_2D_2)
$$
and a filtration of $m_R$-primary ideals $\mathcal J(n_1,n_2)=\{J(n_1,n_2)_m\}$. We have that for all $n_1,n_2$, $J(n_1,n_2)_m\subset K(n_1+n_2\xi)_m$ for all $m$ so that 
$$
\Delta(\mathcal J(n_1,n_2))\subset \Delta(\mathcal K(n_1+n_2\xi))
$$
for all $n_1,n_2$. We further have that $n_1\Delta(\mathcal I(D_1))+n_2\Delta(\mathcal I(D_2))\subset \Delta(\mathcal J(n_1,n_2))$ for all $n_1,n_2$. We have that $\Delta(\mathcal I(D_1))=\Delta(\mathcal K(1))$.
Now by Proposition \ref{Prop11}, we have that $\Delta(\mathcal I(D_2))=\xi\Delta(\mathcal K(1))$ and $\Delta(\mathcal K(n_1+n_2\xi))=(n_1+n_2\xi)\Delta(\mathcal K(1))$. So $n_1\Delta(\mathcal I(D_1))+n_2\Delta(\mathcal I(D_2))=(n_1+n_2\xi)\Delta(\mathcal K(1))$ and thus
$\Delta(\mathcal J(n_1,n_2))=n_1\Delta(\mathcal I(D_1))+n_2\Delta(\mathcal I(D_2))$ for all $n_1,n_2\in \NN$.
Thus the conditions of Theorem \ref{Theorem7} are satisfied, and so Minkowski's equality holds between $\mathcal I(D_1)$ and $\mathcal I(D_2)$.
\end{proof}

\begin{Theorem}\label{Theorem9} Suppose that $R$ is a normal excellent local ring. Let $\mathcal I(D_1)$ and $\mathcal I(D_2)$ be rational divisorial $m_R$-filtrations.
Then $\mathcal I(D_1)$ and $\mathcal I(D_2)$ satisfy the Minkowski equality if and only if there exist $a,b\in \ZZ_{>0}$ such that $I(amD_1)=I(bmD_2)$ for all $m\in \NN$. 
\end{Theorem}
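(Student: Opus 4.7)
The plan is to derive each implication as a direct corollary of a result already established. For the forward direction I will invoke Theorem \ref{Theorem8}, and for the reverse direction Theorem \ref{MinNew}.

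Suppose first that the Minkowski equality holds between the rational divisorial $m_R$-filtrations $\mathcal I(D_1)$ and $\mathcal I(D_2)$. Fix a common representation $X\to\mathrm{Spec}(R)$ on which both $D_1$ and $D_2$ are supported. Since $D_1$ and $D_2$ are rational, Theorem \ref{Theorem8} applies verbatim and produces the ratio
$$\xi=\frac{\sqrt[d]{e_R(\mathcal I(D_2))}}{\sqrt[d]{e_R(\mathcal I(D_1))}}\in\QQ_{>0},$$
together with a decomposition $\xi=a/b$ with $a,b\in\ZZ_{>0}$ such that $I(amD_1)=I(bmD_2)$ for every $m\in\NN$. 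This is exactly the desired conclusion.

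Conversely, suppose $a,b\in\ZZ_{>0}$ are such that $I(amD_1)=I(bmD_2)$ for all $m\in\NN$. With $\mathcal I(1):=\mathcal I(D_1)$ and $\mathcal I(2):=\mathcal I(D_2)$, the hypothesis means the $R$-subalgebras $\sum_{n\ge 0}I(1)_{an}t^n$ and $\sum_{n\ge 0}I(2)_{bn}t^n$ of $R[t]$ coincide as sets, and therefore their integral closures in $R[t]$ coincide as well. Since $R$ is a normal excellent local domain, $\hat R$ is a normal domain, so in particular $\dim N(\hat R)<d$, and Theorem \ref{MinNew} applies with these $a,b$. It yields
$$e_R(\mathcal I(D_1)\mathcal I(D_2))^{1/d}=e_R(\mathcal I(D_1))^{1/d}+e_R(\mathcal I(D_2))^{1/d}.$$

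I do not anticipate any genuine obstacle specific to this theorem: it is a packaging of the harder content of Theorems \ref{Theorem8} and \ref{MinNew}. The only subtlety worth flagging is that the rationality of $D_1$ and $D_2$ is indispensable in the forward direction, because it enters through Lemma \ref{ratfun}: that lemma supplies, for any nonzero $f\in m_R$, a $d\in\ZZ_{>0}$ with $w_{\mathcal I(D_i)}(f^{nd})=nw_{\mathcal I(D_i)}(f^d)$, and this is exactly what is used inside the proof of Theorem \ref{Theorem8} to force $\xi$ to be a positive rational number.
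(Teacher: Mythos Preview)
Your proof is correct. The forward direction matches the paper exactly, invoking Theorem \ref{Theorem8}. For the converse, the paper takes a slightly different route: from $I(amD_1)=I(bmD_2)$ it reads off $\gamma_{E_i}(aD_1)=\gamma_{E_i}(bD_2)$ for each $i$, uses $\gamma_{E_i}(aD_1)=a\gamma_{E_i}(D_1)$ and $\gamma_{E_i}(bD_2)=b\gamma_{E_i}(D_2)$ to get a common ratio $\gamma_{E_i}(D_2)/\gamma_{E_i}(D_1)=a/b$, and then applies criterion 3) of Theorem \ref{Theorem6}. Your appeal to Theorem \ref{MinNew} is equally valid and arguably more direct, since it avoids unpacking the $\gamma$ invariants altogether; indeed the paper itself uses precisely your argument for the converse in the more general Theorem \ref{Theorem10}. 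The trade-off is that the paper's route through Theorem \ref{Theorem6} stays entirely within the divisorial framework and makes the role of the $\gamma_{E_i}$ visible, whereas your route exploits the general integral-closure criterion and needs only the observation that equal filtrations have equal integral closures.
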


\begin{proof} 
 Let $X\rightarrow \mbox{Spec}(R)$,
$D_1=\sum_{i=1}^r a_iE_i$ and $D_2=\sum_{i=1}^rb_iE_i$ be a representation of $D_1$ and $D_2$.

If $\mathcal I(D_1)$ and $\mathcal I(D_2)$ satisfy the Minkowski equality then there exist $a,b\in \ZZ_{>0}$ such that $I(amD_1)=I(bmD_2)$ for all $m\in \NN$ by Theorem \ref{Theorem8}.

Suppose that there exist $a,b\in \ZZ_{>0}$ such that $I(amD_1)=I(bmD_2)$ for all $m\in \NN$. With this assumption, $\gamma_{E_i}(aD_1)=\gamma_{E_i}(bD_2)$ for $1\le i\le r$. Now $\gamma_{E_i}(aD_1)=a\gamma_{E_i}(D_1)$ and $\gamma_{E_i}(bD_2)=b\gamma_{E_i}(D_2)$, so
$$
\frac{\gamma_{E_i}(D_2)}{\gamma_{E_i}(D_1)}=\frac{a}{b}
$$
for $1\le i\le r$. Thus the Minkowski equality holds for $D_1$ and $D_2$ by Theorem \ref{Theorem6}.
\end{proof}

\section{Excellent local domains and the Minkowski equality}\label{SecExD}

\begin{Theorem}\label{Theorem10} Suppose that $R$ is a $d$-dimensional excellent local domain.
Let $\mathcal I(D_1)$ and $\mathcal I(D_2)$ be integral divisorial $m_R$-filtrations. Then the Minkowski equality holds between $\mathcal I(D_1)$ and $\mathcal I(D_2)$ if and only if there exist $a,b\in \ZZ_{>0}$ such that $I(amD_1)=I(bmD_2)$ for all $m\in \NN$.
\end{Theorem}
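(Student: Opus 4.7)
The plan is to handle the two directions separately.  For the easy direction ($\Leftarrow$), if there exist $a,b\in\ZZ_{>0}$ with $I(amD_1)=I(bmD_2)$ for every $m$, then the Rees algebras $\sum_{m\ge 0}I(amD_1)t^m$ and $\sum_{m\ge 0}I(bmD_2)t^m$ coincide as subrings of $R[t]$, so their integral closures in $R[t]$ are equal, and Theorem \ref{MinNew} (the hypothesis $\dim N(\hat R)<d$ is satisfied since $R$ is an excellent domain) yields the Minkowski equality.

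For the non-trivial direction ($\Rightarrow$), I would reduce to Theorem \ref{Theorem9} by passing to the normalization $S$ of $R$ with maximal ideals $m_1,\ldots,m_t$.  With the notation of Subsection \ref{Not}, write $D_k=\sum_{i,j}a^{(k)}_{i,j}E_{i,j}$ and $D(i)_k=\sum_j a^{(k)}_{i,j}E_{i,j}$, so each $D(i)_k$ is an integral effective exceptional divisor on the excellent normal local domain $S_{m_i}$.  A mixed version of Lemma \ref{LemmaR1} (the same argument applied to the product $I(mn_1D_1)I(mn_2D_2)$) combined with the length decomposition (\ref{eqR15}) yields the additivity
\[
e_R(\mathcal I(D_1)^{[d_1]},\mathcal I(D_2)^{[d_2]})=\sum_{i=1}^t [S/m_i:R/m_R]\, e_{S_{m_i}}(\mathcal J(D(i)_1)^{[d_1]},\mathcal J(D(i)_2)^{[d_2]})
\]
for all $d_1+d_2=d$.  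Set $w_i=[S/m_i:R/m_R]$, $A_i=e_{S_{m_i}}(\mathcal J(D(i)_1))$, $B_i=e_{S_{m_i}}(\mathcal J(D(i)_2))$ and $C_i=e_{S_{m_i}}(\mathcal J(D(i)_1)\mathcal J(D(i)_2))$; the assumed Minkowski equality then reads $(\sum w_iC_i)^{1/d}=(\sum w_iA_i)^{1/d}+(\sum w_iB_i)^{1/d}$.  Combining the componentwise Minkowski inequality $C_i^{1/d}\le A_i^{1/d}+B_i^{1/d}$ (Theorem \ref{TheoremMI} on each $S_{m_i}$) with the classical $L^d$-Minkowski inequality applied to the weighted sequences $(w_i^{1/d}A_i^{1/d})$ and $(w_i^{1/d}B_i^{1/d})$ produces a chain of inequalities that collapses to equality throughout.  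This forces both (i) $C_i^{1/d}=A_i^{1/d}+B_i^{1/d}$ for every $i$ (Minkowski equality on each $S_{m_i}$) and (ii) the proportionality $B_i=\lambda^d A_i$ for a single constant $\lambda\in\RR_{\ge 0}$.  Proposition \ref{PropPos} on each analytically irreducible $S_{m_i}$ ensures that $A_i=0\iff D(i)_1=0$ (similarly for $B_i$), so (ii) yields $D(i)_1=0\iff D(i)_2=0$.  Applying Theorem \ref{Theorem8} on any $S_{m_i}$ with both divisors nonzero shows $\lambda\in\QQ_{>0}$, and I write $\lambda=a/b$ with $a,b\in\ZZ_{>0}$.

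To conclude, Theorem \ref{Theorem6} on each $S_{m_i}$ with $D(i)_1,D(i)_2\ne 0$ gives $\gamma_{E_{i,j}}(D(i)_2)=\lambda\,\gamma_{E_{i,j}}(D(i)_1)$ for every $j$, and hence $b\gamma_{E_{i,j}}(D(i)_2)=a\gamma_{E_{i,j}}(D(i)_1)$; Lemma \ref{LemmaAR1} then gives $J(amD(i)_1)=J(bmD(i)_2)$ for every $m$, and the same holds trivially when $D(i)_1=D(i)_2=0$.  Because $J(amD_k)S_{m_i}=J(amD(i)_k)$, the ideals $J(amD_1)$ and $J(bmD_2)$ of the semilocal ring $S$ agree at every maximal localization, hence are equal, and intersecting with $R$ yields $I(amD_1)=I(bmD_2)$ for all $m$.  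The main obstacle I anticipate is the first step — securing the mixed additivity of multiplicities across the normalization and carefully handling components where $D(i)_k$ may vanish — after which the $L^d$-Minkowski equality case cleanly delivers both the componentwise Minkowski equality and the uniform rational proportionality constant needed to extract common $a,b$.
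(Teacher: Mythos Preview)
Your proof is correct and follows the same overall architecture as the paper's: reduce to the normalization $S$ with maximal ideals $m_1,\ldots,m_t$, use the additivity $P(n_1,n_2)=\sum_i w_i P_i(n_1,n_2)$ coming from Lemma~\ref{LemmaR1} and (\ref{eqR15}), deduce that the Minkowski equality holds componentwise on each $S_{m_i}$ with a single common ratio, and then invoke the normal case (Theorem~\ref{Theorem8}) to extract $a,b\in\ZZ_{>0}$ and conclude $J(amD_1(i))=J(bmD_2(i))$, hence $I(amD_1)=I(bmD_2)$.

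The one genuine difference is in the key reduction step. The paper works with the full sequence of mixed multiplicities $e_j=\sum_i w_i e(i)_j$ and the log-concavity inequalities $e(i)_j^2\le e(i)_{j-1}e(i)_{j+1}$: from the global relation $\xi=e_{j+1}/e_j$ it runs a sum-of-squares computation
\[
0\le\sum_i w_i\bigl(e(i)_{j+1}^{1/2}-\xi\, e(i)_{j-1}^{1/2}\bigr)^2\le e_{j+1}-2\xi e_j+\xi^2 e_{j-1}=0,
\]
which forces $e(i)_{j+1}=\xi^2 e(i)_{j-1}$ and $e(i)_j^2=e(i)_{j-1}e(i)_{j+1}$ for every $i,j$. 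You bypass the intermediate $e(i)_j$ entirely, applying the $\ell^d$-Minkowski inequality directly to the endpoint sequences $(w_i^{1/d}A_i^{1/d})$ and $(w_i^{1/d}B_i^{1/d})$; its equality case simultaneously delivers the componentwise Minkowski equality $C_i^{1/d}=A_i^{1/d}+B_i^{1/d}$ and the proportionality $B_i=\lambda^d A_i$. Your route is a bit more economical---only $e_0$, $e_d$ and the product multiplicity appear, and the named inequality is standard---while the paper's route stays entirely inside the algebra of the $e(i)_j$ already set up in Section~\ref{SecMinEQ}. Either way one lands on the same common rational ratio $\xi=\lambda=a/b$ and finishes identically.
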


\begin{proof} We use the notation of Subsection \ref{Not}. Let $S$ be the normalization of $R$ with maximal ideals $m_i$. we have that $D_1=\sum_{i=1}^tD_1(i)$, $D_2=\sum_{i=1}^tD_2(i)$. Write
$$
P(n_1,n_2)=\lim_{m\rightarrow\infty}\frac{\ell_R(R/I(mn_1D_1)I(mn_2D_2))}{m^d}=\sum_{j=0}^d \frac{1}{(d-j)!j!}e_jn_1^{d-j}n_2^j
$$
and 
$$
P_i(n_1,n_2)=\lim_{m\rightarrow\infty}\frac{\ell_{S_{m_i}}(S_{m_i}/J(mn_1D_1(i))J(mn_2D_2(i)))}{m^d}=\sum_{j=0}^d\frac{1}{(d-j)!j!}e(i)_jn_1^{d-j}n_2^j.
$$
We have that
\begin{equation}\label{eqE1}
P(n_1,n_2)=\sum_{i=1}^ta_iP_i(n_1,n_2)
\end{equation}
with $a_i=[S/m_i:R/m_R]$ for $1\le i\le t$ by Lemma \ref{LemmaR1} and (\ref{eqR15}).
Let $\mathcal J(D_k(i))$ be the filtration $\{J(mD_k(i))\}$ for $k=1,2$ and all $i$. 

Since $D_1,D_2\ne 0$ we have that some $D_1(i)\ne 0$ and some $D_2(j)\ne 0$. Thus $e(i)_0>0$ and $e(j)_d>0$ by Proposition \ref{PropPos} and so $e_0>0$ and $e_d>0$ by (\ref{eq1}).
 Since the Minkowski equality holds between $\mathcal I(D_1)$ and $\mathcal I(D_2)$ we have by (\ref{eqT6}) that equality holds in (\ref{eqT3*}) for the $e_i$, so (\ref{eq47}) holds which implies all $e_i>0$. Thus there exists $\xi\in \RR_{>0}$ such that
\begin{equation}\label{eqT11}
\xi=\frac{e_1}{e_0}=\cdots=\frac{e_d}{e_{d-1}}.
\end{equation}
By (\ref{eqE1}) we have that $e_j=\sum_{i=1}^t a_ie(i)_j$ for all $j$. By the inequality (\ref{eqT3*}) and (\ref{eqT11}) we have that
$$
\begin{array}{lll}
0&\le& \sum_{i=1}^ta_i(e(i)_{j+1}^{\frac{1}{2}}-\xi e(i)_{j-1}^{\frac{1}{2}})^2
=\sum_{i=1}^ta_i(e(i)_{j+1}-2\xi e(i)_{j+1}^{\frac{1}{2}}e(i)_{j-1}^{\frac{1}{2}}+\xi^2e(i)_{j-1})\\
&\le& \sum_{i=1}^ta_i(e(i)_{j+1}-2\xi e(i)_j+\xi^2 e(i)_{j-1})\\
&=& e_{j+1}-2\xi e_j+\xi^2e_{j-1}\\
&=&\xi^2 e_{j-1}-2\xi^2 e_{j-1}+\xi^2 e_{j-1}=0.
\end{array}
$$
 Thus 
$$
e(i)_{j+1}^{\frac{1}{2}}=\xi e(i)_{j-1}^{\frac{1}{2}}\mbox{ and }e(i)_j^2=e(i)_{j-1}e(i)_{j+1}
$$
for all $i$. Since this holds for all $j$, we have that equality holds in (\ref{eqT3*}) for all $i$ and $j$. Further, we have that for a particular $i$, either 
\begin{equation}\label{eqG1}
e(i)_j=0\mbox{ for all }j
\end{equation}
or
\begin{equation}\label{eqG2}
e(i)_j>0\mbox{ for all }j.
\end{equation}
If (\ref{eqG1}) holds for  a particular $i$, then $e(i)_0=e(i)_d=0$ so we have the degenerate case $D(i)_1=D(i)_2=0$ by Proposition \ref{PropPos}, so that
\begin{equation}\label{eqG3}
J(mD_1(i))=J(nD_2(i))=S_{m_i}\mbox{ for all }m,n \in \NN.
\end{equation}
Suppose that (\ref{eqG2}) holds for a particular $i$. Then by
(\ref{eqT7*}),  the Minkowski equality holds between $\mathcal I(D(i)_1)$ and $\mathcal I(D(i)_2)$ for this $i$. Thus there exists $\lambda_i\in \RR_{>0}$ such that 
$$
\frac{e(i)_{j+1}}{e(i)_j}=\lambda_i
$$
 for all $j$. Thus
 $$
 \xi^2=\frac{e(i)_{j+1}}{e(i)_{j-1}}=\frac{e(i)_{j+1}}{e(i)_j}\frac{e(i)_j}{e(i)_{j-1}}
 =\lambda_i^2
 $$
 so that $\lambda_i=\xi$  and so
 $$
 \frac{e(i)_d^{\frac{1}{d}}}{e(i)_0^{\frac{1}{d}}}=\xi.
 $$
 Since $D_1,D_2\ne 0$, (\ref{eqG2}) holds for some $i$, so that  $\xi\in \QQ_{>0}$ by Theorem \ref{Theorem8}. Write $\xi=\frac{a}{b}$ with $a,b\in \ZZ_{>0}$. We have that $J(maD_1(i))=J(mbD_2(i))$ for all $i$ such that (\ref{eqG2}) holds and $m\in \NN$ by Theorem \ref{Theorem8}. Thus $J(maD_1)=J(mbD_2)$ for all $m\in \NN$ by formula (\ref{eqR6}) and thus
 $I(maD_1)=I(mbD_2)$ for all $m\in \NN$ since $I(maD_1)=J(maD_1)\cap R$ and $I(mbD_2)=J(mbD_2)\cap R$ for all $m$.
 
 The converse follows from Theorem \ref{MinNew}, since $\overline{R[\mathcal I(D_j)]}=R[\mathcal I(D_j)]$ for $j=1,2$.

\end{proof}

Theorem \ref{Theorem10} is proven in dimension $d=2$ in 
 \cite[Theorem 5.9]{C6} using the theory of relative Zariski decomposition, which requires dimension two. This theory is also used to prove the  fact that the mixed multiplicities $e_i$ of integral divisorial filtrations are rational numbers in dimension two. This fact is used in the proof of \cite[Theorem 5.9]{C6}. The mixed multiplicities of integral divisorial filtrations can be irrational numbers in dimension $\ge 3$, as is shown in the example of Section \ref{SecExample}.

The following corollary is proven in the case that $d=2$ in \cite[Corollary 5.10]{C6}.

\begin{Corollary}\label{Cor1.10} Suppose that $R$ is an excellent local domain and $\nu_1$ and $\nu_2$ are $m_R$-valuations such that 
Minkowski's equality holds between the $m_R$-filtrations $\mathcal I(\nu_1)=\{I(\nu_1)_m\}$ and $\mathcal I(\nu_2)=\{I(\nu_2)_m\}$. Then $\nu_1=\nu_2$.
\end{Corollary}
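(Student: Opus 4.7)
The plan is to reduce the claim to Theorem~\ref{Theorem10} and then extract the equality of valuations from the resulting equality of valuation ideals by a short pointwise argument.

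First I would observe that each $\mathcal I(\nu_i)$ is an integral divisorial $m_R$-filtration of the form $\mathcal I(D_i)$ with $D_i=\nu_i$ (one prime exceptional divisor with coefficient $1$ on a suitable normal model of an $m_R$-primary ideal). Since $R$ is an excellent local domain and the Minkowski equality holds by hypothesis, Theorem~\ref{Theorem10} produces positive integers $a,b\in \mathbb Z_{>0}$ such that
$$
I(\nu_1)_{am}=I(\nu_2)_{bm}\qquad\text{for all }m\in\mathbb N.
$$

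Next I would convert this ideal equality into a pointwise comparison of the two valuations on $R$. For nonzero $f\in R$, the largest $m\in\mathbb N$ with $f\in I(\nu_1)_{am}$ is $\lfloor \nu_1(f)/a\rfloor$, and the largest with $f\in I(\nu_2)_{bm}$ is $\lfloor \nu_2(f)/b\rfloor$, so the above forces $\lfloor \nu_1(f)/a\rfloor=\lfloor \nu_2(f)/b\rfloor$. Applying this with $f^n$ in place of $f$, dividing by $n$ and letting $n\to\infty$ gives $\nu_1(f)/a=\nu_2(f)/b$, i.e.\ $b\,\nu_1(f)=a\,\nu_2(f)$ for every $f\in R$. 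Writing any element of $K=\mathrm{QF}(R)$ as a quotient $f/g$ with $f,g\in R\setminus\{0\}$ extends this to the identity $b\,\nu_1=a\,\nu_2$ of homomorphisms $K^{*}\to\mathbb Q$.

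Finally I would use that $\nu_1$ and $\nu_2$ are $m_R$-valuations, so each is realized as a DVR $\mathcal O_{X,E}$ on a suitable normal model and consequently has value group $\nu_i(K^{*})=\mathbb Z$ with uniformizer sent to $1$. The identity $b\nu_1=a\nu_2$ then yields $a\mathbb Z=b\mathbb Z$ as subgroups of $\mathbb Q$, so $a=b$, and therefore $\nu_1=\nu_2$.

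The main work is done by Theorem~\ref{Theorem10}; once that is invoked, the rest is elementary. The only point requiring a moment's care is the last step, where one uses that the normalization built into the definition of an $m_R$-valuation gives value group exactly $\mathbb Z$, which is immediate from the DVR presentation of the valuation ring, so there is no real obstacle.
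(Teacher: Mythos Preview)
Your proof is correct and follows essentially the same approach as the paper: both invoke Theorem~\ref{Theorem10} to obtain $I(\nu_1)_{am}=I(\nu_2)_{bm}$, deduce the proportionality $b\nu_1=a\nu_2$ on $K^*$, and then use that each value group is exactly $\ZZ$ to force $a=b$. Your floor-and-limit extraction of the proportionality is a slightly cleaner variant of the paper's direct back-and-forth argument on the valuation ideals, but the structure is the same.
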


 \begin{proof}
We have by Theorem \ref{Theorem10} that $I(\nu_1)_{an}=I(\nu_2)_{bn}$ for all $n$ and some positive integers $a$ and $b$ which we can take to be relatively prime.

Suppose that $0\ne f\in I(\nu_1)_n$. Then $f^a\in I(\nu_1)_{an}=I(\nu_2)_{bn}$ so that $a\nu_2(f)\ge bn$. If $f^a\in I(\nu_2)_{bn+1}$ then $f^{ab}\in I(\nu_2)_{b(bn+1)}=I(\nu_1)_{a(bn+1)}$ so that $\nu_1(f)>n$. Thus
\begin{equation}\label{eqX21}
\nu_1(f)=n\mbox{ if and only if }\nu_2(f)=\frac{b}{a}n.
\end{equation}
Further, (\ref{eqX21}) holds for every nonzero $f\in {\rm QF}(R)$ since $f$ is a quotient of nonzero elements of $R$.

Now the maps 
 $\nu_1:{\rm QF}(R)\setminus \{0\}\rightarrow \ZZ$ and 
$\nu_2:{\rm QF}(R)\setminus \{0\}\rightarrow \ZZ$ are surjective, so there exists $0\ne f\in {\rm QF}(R)$ such that $\nu_1(f)=1$  and there exists $0\ne g\in {\rm QF}(R)$ such that $\nu_2(g)=1$ which implies that $a=b=1$ since $a,b$ are relatively prime. Thus $\nu_1=\nu_2$.

\end{proof}

\begin{Remark}
With the assumptions of the above corollary and further assuming  that $R$ is normal,   the functions $w_{\mathcal I(\nu_i)}$ of (\ref{eqT9}) are  $w_{\mathcal I(\nu_i)}=\nu_i$. Thus 
$$
w_{\mathcal I(\nu_i)}(f^n)=\nu_i(f^n)=n\nu_i(f)=nw_{\mathcal I(\nu_i)}(f)
$$
 for all nonzero $f\in R$ and $i=1,2$. Thus the proof of Theorem \ref{Theorem8} shows that
$$
\xi=\frac{w_{\mathcal I(\nu_1)}(f)}{w_{\mathcal I(\nu_2)}(f)}=\frac{\nu_1(f)}{\nu_2(f)}
$$
for all nonzero $f\in m_R$. 
\end{Remark}




\section{Bounded $m_R$-Filtrations}\label{SecBound}

Bounded $m_R$-filtrations are defined in Subsection \ref{SubSecBound}.

\begin{Theorem}\label{RBT} Suppose that $R$ is an excellent local domain, $\mathcal I(1)$ is a real bounded $m_R$-filtration and $\mathcal I(2)$ is an arbitrary $m_R$-filtration such that $\mathcal I(1)\subset \mathcal I(2)$. Then the following are equivalent
\begin{enumerate}
\item[1)]  $e(\mathcal I(1))=e(\mathcal I(2))$.
\item[2)]  There is equality of  integral closures  
$$
\overline{\sum_{m\ge 0}I(1)_mt^m}=\overline{\sum_{m\ge 0}I(2)_mt^m}
$$
  in $R[t]$.
\end{enumerate}
 \end{Theorem}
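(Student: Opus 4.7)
The plan is to reduce both implications to machinery already in hand, namely Proposition \ref{BoundProp} and Corollary \ref{CorDiv2}. First note that since $R$ is an excellent local domain, it is reduced and excellent, hence analytically unramified, so $N(\hat R)=0$ and the hypothesis $\dim N(\hat R)<d$ needed for those results is automatic.

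For the direction 2)$\Rightarrow$1), I would simply apply Proposition \ref{BoundProp} with $r=1$: equality of the integral closures $\overline{R[\mathcal I(1)]}=\overline{R[\mathcal I(2)]}$ yields equality of all mixed multiplicities, and in particular $e(\mathcal I(1))=e(\mathcal I(2))$ via the specialization $(d_1)=(d)$.

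For the main implication 1)$\Rightarrow$2), the strategy is to use the real divisorial filtration supplied by the real boundedness hypothesis as an intermediate object. Since $\mathcal I(1)$ is real bounded, there is a real divisorial $m_R$-filtration $\mathcal I(D)$ with $\overline{R[\mathcal I(1)]}=R[\mathcal I(D)]$; by Proposition \ref{BoundProp} this gives $e(\mathcal I(D))=e(\mathcal I(1))$. Next, define $\mathcal J=\{J_m\}$ by $\overline{R[\mathcal I(2)]}=\sum_{m\ge 0}J_m t^m$, which is an $m_R$-filtration by Lemma \ref{BdLemma1} (each $J_m$ contains the $m_R$-primary ideal $I(2)_m$). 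Applying Proposition \ref{BoundProp} again to $\mathcal I(2)$ and $\mathcal J$ gives $e(\mathcal J)=e(\mathcal I(2))$, and combining this with the assumed equality $e(\mathcal I(1))=e(\mathcal I(2))$ yields $e(\mathcal I(D))=e(\mathcal J)$. From $\mathcal I(1)\subset \mathcal I(2)$ one gets the inclusion of Rees algebras and hence of their integral closures $R[\mathcal I(D)]=\overline{R[\mathcal I(1)]}\subset\overline{R[\mathcal I(2)]}=\sum J_m t^m$, so $I(mD)\subset J_m$ for every $m$.

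At this point Corollary \ref{CorDiv2} applies verbatim to the pair $(\mathcal I(D),\mathcal J)$, since $R$ is an excellent local domain, $\mathcal I(D)$ is real divisorial, $\mathcal J$ is an arbitrary $m_R$-filtration dominating $\mathcal I(D)$, and the two multiplicities agree. It delivers $\mathcal J=\mathcal I(D)$, whence
\[
\overline{R[\mathcal I(2)]}=\sum_{m\ge 0}J_m t^m=\sum_{m\ge 0}I(mD)t^m=R[\mathcal I(D)]=\overline{R[\mathcal I(1)]},
\]
which is 2). The main obstacle is really already absorbed into Corollary \ref{CorDiv2}: the delicate step is knowing that an arbitrary $m_R$-filtration sandwiched between a real divisorial filtration and the same multiplicity must coincide with it, and no further work beyond invoking that corollary is required here. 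The only small verification is that $\mathcal J$ truly is an $m_R$-filtration (so that Corollary \ref{CorDiv2} applies), which is immediate from Lemma \ref{BdLemma1} together with $I(2)_m\subset J_m$.
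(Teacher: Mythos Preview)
Your proof is correct and follows essentially the same route as the paper's: pass from $\mathcal I(1)$ to the real divisorial filtration $\mathcal I(D)$ via the boundedness hypothesis, pass from $\mathcal I(2)$ to its integral closure $\mathcal J=\overline{\mathcal I(2)}$, use the invariance of multiplicity under integral closure (the paper cites \cite[Theorem 6.9]{CSS} and \cite[Appendix]{C6} directly, you cite Proposition~\ref{BoundProp}, which is the same thing), and then apply Corollary~\ref{CorDiv2} to conclude $\mathcal I(D)=\mathcal J$. The only cosmetic difference is packaging; the argument is the paper's.
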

 
 \begin{proof} 2) implies 1) follows from \cite[Theorem 6.9]{CSS} or \cite[Appendix]{C6} as summarized in Subsection \ref{SubSecEqChar}.
 
We now prove 1) implies 2). Let $\mathcal I(D_1)$  be  the real divisorial $m_R$-filtrations such that $\overline{R(\mathcal I(1))}=R(\mathcal I(D_1))$.   Thus $R(\mathcal I(D_1))\subset \overline{R(\mathcal I(2))}=R[\overline{\mathcal I(2)}]$ so that $\mathcal I(D_1)\subset \overline{\mathcal I(2)}$. We have that $e(\mathcal I(1))=e(\mathcal I(D_1))$ and $e(\mathcal I(2))=e(\overline{\mathcal I(2)})$ by \cite[Theorem 6.9]{CSS} or \cite[Appendix]{C6}.  Thus $e(\mathcal I(D_1))=e(\overline{\mathcal I(2)})$ and so $R(\mathcal I(D_1))=R(\overline{\mathcal I(2)})$ by Theorem \ref{CorDiv2}. Thus 2) holds for $\mathcal I(1)$ and $\mathcal I(2)$.

   \end{proof}

\begin{Theorem}\label{TRSKT} Suppose that $R$ is a $d$-dimensional excellent local domain and $\mathcal I(1)$ and $\mathcal I(2)$ are bounded $m_R$-filtrations. Then the following are equivalent
\begin{enumerate}
\item[1)]  The Minkowski inequality
$$
e(\mathcal I(1)\mathcal I(2))^{\frac{1}{d}}=e(\mathcal I(1))^{\frac{1}{d}}+e(\mathcal I(2))^{\frac{1}{d}}
$$
holds.
\item[2)] There exist positive integers $a,b$ such that there is equality of  integral closures
$$
\overline{\sum_{n \ge 0}I(1)_{an}t^n}=\overline{\sum_{n\ge 0}I(2)_{bn}t^n}
$$
 in $R[t]$.
\end{enumerate}
 \end{Theorem}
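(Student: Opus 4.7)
The plan is to reduce to Theorem \ref{Theorem10} for integral divisorial filtrations. The direction 2) implies 1) is immediate from Theorem \ref{MinNew}, which applies because an excellent local domain is analytically unramified and in particular satisfies $\dim N(\hat R)<d$; no boundedness hypothesis is used in this direction.

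For the implication 1) implies 2), the boundedness hypothesis gives integral divisorial $m_R$-filtrations $\mathcal I(D_1)$ and $\mathcal I(D_2)$ with $\overline{R[\mathcal I(i)]}=R[\mathcal I(D_i)]$ for $i=1,2$. Since $R[\mathcal I(D_i)]$ is integrally closed in $R[t]$ by Lemma \ref{BdLemma2}, we have $\overline{R[\mathcal I(D_i)]}=\overline{R[\mathcal I(i)]}$. Proposition \ref{BoundProp} then yields equality of every mixed multiplicity,
\[
e(\mathcal I(1)^{[d_1]},\mathcal I(2)^{[d_2]})=e(\mathcal I(D_1)^{[d_1]},\mathcal I(D_2)^{[d_2]}),
\]
for all $d_1+d_2=d$. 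Specialising to $(d_1,d_2)=(d,0),(0,d)$ gives $e(\mathcal I(i))=e(\mathcal I(D_i))$, while the expansion $e(\mathcal J\mathcal K)=\sum_{i=0}^{d}\binom{d}{i}e(\mathcal J^{[d-i]},\mathcal K^{[i]})$ from Section \ref{SecMinEQ} gives $e(\mathcal I(1)\mathcal I(2))=e(\mathcal I(D_1)\mathcal I(D_2))$. Hence the Minkowski equality assumed for $\mathcal I(1),\mathcal I(2)$ transfers verbatim to $\mathcal I(D_1),\mathcal I(D_2)$.

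Theorem \ref{Theorem10} now supplies positive integers $a,b$ with $I(amD_1)=I(bmD_2)$ for every $m\in\NN$. To convert this into the required statement about integral closures, I apply Lemma \ref{BdLemma1} to the rescaled filtration $\{I(1)_{an}\}_{n\in\NN}$: its Rees algebra has integral closure $\sum_{n\ge 0}J'_nt^n$ with
\[
J'_n=\{f\in R\mid f^r\in\overline{I(1)_{arn}}\text{ for some }r>0\}.
\]
Comparing with Lemma \ref{BdLemma1} applied to $\mathcal I(1)$ itself, we recognise $J'_n$ as the degree-$an$ piece of $\overline{R[\mathcal I(1)]}=R[\mathcal I(D_1)]$, i.e.\ $J'_n=I(anD_1)$. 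Thus $\overline{\sum_{n\ge 0}I(1)_{an}t^n}=\sum_{n\ge 0}I(anD_1)t^n$ and analogously $\overline{\sum_{n\ge 0}I(2)_{bn}t^n}=\sum_{n\ge 0}I(bnD_2)t^n$, and the equalities $I(anD_1)=I(bnD_2)$ from Theorem \ref{Theorem10} finish the proof.

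The substantive obstacle is packaged inside Theorem \ref{Theorem10}, whose proof relies on the normal case (Theorem \ref{Theorem8}) together with a passage to the normalization and the semigroup arguments of Theorem \ref{Theorem6}. Once that theorem is available, the reduction described above is essentially formal, its only delicate point being the bookkeeping in the rescaling step, where one must verify that the integral closure of a reindexed filtration is computed from the correct subsequence of the components $J_m$ of Lemma \ref{BdLemma1}.
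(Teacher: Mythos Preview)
Your proof is correct and follows essentially the same route as the paper: reduce to the divisorial case via Proposition \ref{BoundProp} and then invoke Theorem \ref{Theorem10}. The paper's proof is terser and simply asserts that since 1) and 2) are equivalent for $\mathcal I(D_1),\mathcal I(D_2)$ by Theorem \ref{Theorem10}, they are also equivalent for $\mathcal I(1),\mathcal I(2)$; you have carefully spelled out the rescaling step showing $\overline{\sum_{n\ge 0}I(i)_{cn}t^n}=\sum_{n\ge 0}I(cnD_i)t^n$, which the paper leaves implicit (it is made explicit only later, in the proof of Theorem \ref{TRSKTA}).
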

 
 \begin{proof} Let $\mathcal I(D_1)$ and $\mathcal I(D_2)$ be  integral divisorial $m_R$-filtrations such that $\overline{R(\mathcal I(1))}=R(\mathcal I(D_1))$ and $\overline{R(\mathcal I(2))}=R(\mathcal I(D_2))$. By Proposition \ref{BoundProp}, we have equality of functions 
 $$
 \lim_{m\rightarrow \infty} \frac{\ell(R/I(i)_{mn_1}I(i)_{mn_2})}{m^d}
 =\lim_{m\rightarrow \infty} \frac{\ell(R/I(D_i)_{mn_1}I(D_i)_{mn_2})}{m^d}
  $$
  for $i=1,2$ and all $n_1,n_2\in \NN$. Since 1) and 2) are equivalent for the integral divisorial filtrations $\mathcal I(D_1)$ and $\mathcal I(D_2)$ by Theorem \ref{Theorem10}, they are also equivalent for the bounded $m_R$-filtrations  $\mathcal I(1)$ and $\mathcal I(2)$.
 \end{proof}
 
 \section{Analytically Irreducible local Rings} Let $R$ be an analytically irreducible local domain. A local ring $R$ is analytically irreducible if   the $m_R$-adic completion $\hat R$ is a domain.  The complete local ring  $\hat R$ is then an excellent local domain.
 
 \begin{Lemma}\label{AI1}(\cite[Proposition 9.3.5]{HS}) Let $R$ be an analytically irreducible local domain. Then there is a 1-1 correspondence between $m_R$-valuations of $R$ and $m_{\hat R}$-valuations of $\hat R$.
 \end{Lemma}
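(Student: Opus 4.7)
The plan is to set up a bijection by completing the valuation ring in one direction and restricting in the other. First observe that any $m_R$-valuation $\nu$ is discrete of rank one: the defining transcendence-degree condition is equality in Abhyankar's inequality, so $\nu$ is an Abhyankar valuation of maximal residue transcendence degree, which forces rank one and a DVR as valuation ring. The same holds for $m_{\hat R}$-valuations, since by analytic irreducibility $\hat R$ is a $d$-dimensional Noetherian local domain.

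For the forward map, given an $m_R$-valuation $\nu$ with DVR $V\subset K=\mathrm{QF}(R)$, I would form the completion $\hat V$ of $V$ along its maximal ideal, which is again a DVR. Since $V$ dominates $R$, the ideal $m_R V$ is $m_V$-primary, so the $m_V$-adic topology on $V$ restricts to the $m_RV$-adic topology. The essential analytic input is Rees's Izumi theorem (applicable because $R$ is analytically irreducible): the $m_V$-adic topology on $R$, pulled back through $R\hookrightarrow V$, is linearly equivalent to the $m_R$-adic topology. Hence $R\hookrightarrow V$ induces an injection $\hat R\hookrightarrow\hat V$, and passing to fraction fields embeds $\hat K:=\mathrm{QF}(\hat R)$ into $\mathrm{QF}(\hat V)$. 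The DVR $\hat V$ restricted to $\hat K$ gives a valuation $\hat\nu$ with ring $\hat W:=\hat V\cap\hat K$ that clearly dominates $\hat R$ and has rank one. For the residue field: since $V\subset K\subset\hat K$ and $V\subset\hat V$, we get $V\subset\hat W$, whence $V/m_V\hookrightarrow\hat W/m_{\hat W}$ and $\mathrm{trdeg}_k(\hat W/m_{\hat W})\ge d-1$; Abhyankar's inequality supplies the reverse inequality, so $\hat\nu$ is an $m_{\hat R}$-valuation.

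For the backward map, given $\hat\nu$ with DVR $\hat W\subset\hat K$, I set $V:=\hat W\cap K$. Because $m_R\subset m_{\hat R}\hat W\cap K\subset m_V$, the ring $V$ is a nontrivial rank-one valuation ring dominating $R$. To confirm $V$ is an $m_R$-valuation I would invoke the remark (recorded in the paper) that every $m_{\hat R}$-valuation is a Rees valuation of some $m_{\hat R}$-primary ideal $J\subset\hat R$; writing $J\supset I\hat R$ for some $m_R$-primary $I\subset R$ and exploiting that the normalized blowup of $I\hat R$ base-changes compatibly with the normalized blowup of $I$ under the analytically irreducible extension $R\to\hat R$, one identifies the prime exceptional divisor producing $\hat\nu$ with one coming from $\mathrm{Spec}(R)$, whose associated valuation is $\nu$ on $R$. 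Mutual inversion of the two maps is then a clean consequence of the standard identity $\hat V\cap K=V$ for DVR completions, giving $\Psi\circ\Phi=\mathrm{id}$, and the construction above shows $\Phi\circ\Psi=\mathrm{id}$.

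The main obstacle is surjectivity of the forward map, i.e., showing that the restriction of an $m_{\hat R}$-valuation to $K$ genuinely has residue field of transcendence degree $d-1$ and that completing this restriction recovers the original DVR on $\hat K$. Both issues are exactly what analytic irreducibility plus Izumi's theorem are designed to handle: without these, the restriction could drop transcendence degree, and the descent of a divisorial center on $\mathrm{Spec}(\hat R)$ to $\mathrm{Spec}(R)$ would be obstructed by multiple analytic branches.
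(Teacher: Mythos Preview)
The paper does not supply its own proof of this lemma; it is quoted directly from \cite[Proposition 9.3.5]{HS} and used as a black box. So there is nothing in the paper to compare your argument against.

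As for your sketch on its own merits: the overall strategy is the standard one and is essentially what Swanson--Huneke do, but several steps remain informal. The forward map via completing $V$ and restricting to $\mathrm{QF}(\hat R)$ is sound once you carefully justify that $\hat R\to\hat V$ is injective; your appeal to Izumi's linear equivalence of topologies is the right ingredient, but you should spell out that linear equivalence on $R$ passes to separatedness of the induced topology on $\hat R$. The backward direction is where you are most hand-wavy. Your idea of descending via Rees valuations and base change of normalized blowups is correct, but two points need work: first, normalization of the blowup of $I$ must commute with the flat base change $R\to\hat R$, which uses that $\hat R$ is reduced (automatic from analytic irreducibility) together with finiteness of normalization; second, you need each prime exceptional divisor over $\mbox{Spec}(R)$ to stay prime after base change to $\hat R$, which is exactly where irreducibility of $\hat R$ (not merely reducedness) enters. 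You gesture at both of these in your final paragraph but do not actually carry them out. Filling these in would give a complete proof along the lines of the cited reference.
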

 
 \begin{Lemma}\label{AI2} Let $R$ be a analytically irreducible  local domain. Let $\mu_1,\ldots,\mu_s$ be $m_R$-valuations and $n_1,\ldots,n_s\in \ZZ_s$. Let $\hat \mu_i$ be the unique extension of $\mu_i$ to a $m_{\hat R}$-valuation for $1\le i\le s$. Then 
 $$
 I(\mu_1)_{n_1}\cap \cdots \cap I(\mu_s)_{n_s}\hat R=I(\hat\mu_1)_{n_1}\cap \cdots \cap I(\hat\mu_s)_{n_s}
 $$
 and
 $$
 (I(\hat\mu_1)_{n_1}\cap \cdots \cap I(\hat\mu_s)_{n_s})\cap R=I(\mu_1)_{n_1}\cap \cdots \cap I(\mu_s)_{n_s}.
 $$
 \end{Lemma}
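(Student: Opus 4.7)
The plan is to reduce both equalities to the single per-valuation statement
\[
I(\mu_i)_{n_i}\hat R = I(\hat\mu_i)_{n_i} \qquad (1\le i\le s),
\]
and then to deduce the lemma by standard flatness manipulations. Since $\hat R$ is flat over $R$, extension of ideals commutes with finite intersections, so
\[
(I(\mu_1)_{n_1}\cap\cdots\cap I(\mu_s)_{n_s})\hat R = I(\mu_1)_{n_1}\hat R\cap\cdots\cap I(\mu_s)_{n_s}\hat R;
\]
granting the per-valuation equality, the right-hand side becomes $I(\hat\mu_1)_{n_1}\cap\cdots\cap I(\hat\mu_s)_{n_s}$, giving the first asserted equality. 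For the second, I would intersect this identity with $R$ and invoke the faithful flatness identity $J\hat R\cap R = J$, valid for any ideal $J$ of the Noetherian ring $R$.

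For the per-valuation inclusion $I(\mu_i)_{n_i}\hat R\subseteq I(\hat\mu_i)_{n_i}$, any element of the form $\sum_j a_j f_j$ with $a_j\in \hat R$ and $f_j\in I(\mu_i)_{n_i}$ satisfies
\[
\hat\mu_i\!\left(\sum_j a_j f_j\right)\ge \min_j\bigl(\hat\mu_i(a_j)+\hat\mu_i(f_j)\bigr)\ge \min_j \mu_i(f_j)\ge n_i,
\]
using that $\hat\mu_i$ dominates $\hat R$ (so $\hat\mu_i(a_j)\ge 0$) and restricts to $\mu_i$ on $R$ under the correspondence of Lemma \ref{AI1}.

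The reverse containment $I(\hat\mu_i)_{n_i}\subseteq I(\mu_i)_{n_i}\hat R$ is the heart of the argument and the main obstacle. The key observation is that $I(\mu_i)_{n_i}$ is $m_R$-primary, so $R/I(\mu_i)_{n_i}$ has finite length and the natural map $R/I(\mu_i)_{n_i}\to\hat R/I(\mu_i)_{n_i}\hat R$ is an isomorphism. Given $f\in \hat R$ with $\hat\mu_i(f)\ge n_i$, I would lift the image of $f$ in $\hat R/I(\mu_i)_{n_i}\hat R$ to an element $g\in R$, so that $f-g\in I(\mu_i)_{n_i}\hat R$. By the easy direction just shown, $\hat\mu_i(f-g)\ge n_i$, and the ultrametric inequality then forces $\mu_i(g)=\hat\mu_i(g)\ge n_i$, so $g\in I(\mu_i)_{n_i}$ and hence $f=g+(f-g)\in I(\mu_i)_{n_i}\hat R$, as desired. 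Both the analytic irreducibility of $R$ (to ensure the extension $\hat\mu_i$ exists and restricts to $\mu_i$, via Lemma \ref{AI1}) and the $m_R$-primariness of each $I(\mu_i)_{n_i}$ (so that completion is transparent modulo these ideals) enter essentially at this step.
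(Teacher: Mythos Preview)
Your proof is correct, and the core idea---lift an element of $\hat R$ to $R$ modulo an $m_R$-primary ideal and apply the ultrametric inequality---is the same as the paper's. The organizational difference is that you first establish the per-valuation identity $I(\mu_i)_{n_i}\hat R = I(\hat\mu_i)_{n_i}$ and then invoke flatness of $\hat R$ over $R$ to commute extension with the finite intersection, whereas the paper handles the full intersection in one pass: it picks $a\gg 0$ with $m_{\hat R}^a \subset \bigcap_i I(\hat\mu_i)_{n_i}$ and $a\hat\mu_i(m_{\hat R})>n_i$ for all $i$, writes $f=g+h$ with $g\in R$ and $h\in m_{\hat R}^a = m_R^a\hat R$, and checks directly that $g$ and each generator of $h$ lie in $\bigcap_i I(\mu_i)_{n_i}$. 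Your reduction via flatness is a bit more modular and lets you lift modulo the primary ideal $I(\mu_i)_{n_i}$ itself rather than a power of $m_R$; the paper's argument is slightly more elementary in that it avoids citing the flatness-and-intersections fact, but is otherwise no shorter. In both proofs the second equality is obtained the same way, from faithful flatness.
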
  
 
 \begin{proof} We certainly have that $ I(\mu_1)_{n_1}\cap \cdots \cap I(\mu_s)_{n_s}\hat R\subset I(\hat\mu_1)_{n_1}\cap \cdots \cap I(\hat\mu_s)_{n_s}$. Suppose that 
 $f\in I(\hat\mu_1)_{n_1}\cap\cdots\cap I(\hat\mu_s)_{n_s}$. There exists $a>0$ such that 
 $m_{\hat R}^a\subset  I(\hat\mu_1)_{n_1}\cap\cdots\cap I(\hat\mu_s)_{n_s}$ and $a\hat\mu_i(m_{\hat R})>n_i$ for all $i$. Since $\hat R/m_{\hat R}^a\cong R/m_R^a$,  there exists $g\in R$ and $h\in m_{\hat R}^a$ such that $f=g+h$. For all $i$, we have
 $$
 \mu_i(g)=\hat\mu_i(f-h)\ge \min\{\hat\mu_i(f),\hat\mu_i(h)\}\ge n_i.
 $$
 Thus $g\in I(\mu_1)_{n_1}\cap \cdots \cap I(\mu_s)_{n_s}$. Now $h=\sum a_jb_j$ with $a_j\in m_R^a$ and $b_j\in \hat R$.  We have that
 $$
 \nu_i(a_j) =\hat\nu_i(a_j)\ge a \hat\nu_i(m_{\hat R})>n_i
 $$
 for all $i$ and $j$ so that $a_j\in I(\mu_1)_{n_1}\cap \cdots \cap I(\mu_s)_{n_s}$ for all $j$.
 Thus $f\in  (I(\mu_1)_{n_1}\cap \cdots \cap I(\mu_s)_{n_s})\hat R$.
 
 Since $A\rightarrow \hat A$ is faithfully flat, we have that 
 $$
 (I(\hat\mu_1)_{n_1}\cap \cdots \cap I(\hat\mu_s)_{n_s})\cap R
 = (I(\mu_1)_{n_1}\cap \cdots \cap I(\mu_s)_{n_s}\hat R)\cap R =I(\mu_1)_{n_1}\cap \cdots \cap I(\mu_s)_{n_s}.
 $$ 
 \end{proof}

 By Lemma  \ref{BdLemma2},  if $D=a_1\mu_1+\cdots+a_s\mu_s$ where $\mu_1,\ldots,\mu_s$ are $m_R$-valuations and $a_1,\ldots, a_s\in \RR_{>0}$, then $R[\mathcal I(D)]$ is integrally closed in $R[t]$.
 Let $\hat D=a_1\hat\mu_1+\cdots+a_s\hat\mu_s$ and $\mathcal I(\hat D)$ be the induced $m_R$-filtration on $\hat R$.
 
\begin{Lemma} Suppose that $\mathcal I=\{I_m\}$ is a (real) bounded $m_R$-filtration; that is, there exists a (real) divisorial $m_R$-filtration $\mathcal I(D)$ such that the integral closure $\overline{R[\mathcal I]}$ of $R[\mathcal I]$ in $R[t]$ is $R[\mathcal I(D)]$. Let $\hat{\mathcal I}=\{I_m\hat R\}$. Then $\hat{\mathcal I}$ is a (real) bounded $m_{\hat R}$-filtration and the integral closure $\overline{\hat R[\hat{\mathcal I}]}$ of $\hat R[\hat{\mathcal I}]$ in $\hat R[t]$ is $\hat R[\mathcal I(\hat D)]$. 
 \end{Lemma}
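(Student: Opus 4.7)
The plan is to establish the equality $\overline{\hat R[\hat{\mathcal I}]}=\hat R[\mathcal I(\hat D)]$ by a double inclusion, using Lemma~\ref{AI2} to compare $\mathcal I(\hat D)$ with the extension of $\mathcal I(D)$, Lemma~\ref{BdLemma2} to see the right-hand side is integrally closed, and the gradedness of integral closure (as in Lemma~\ref{BdLemma1}, via \cite[Theorem~2.3.2]{HS}) to transfer monic integral relations from $R[t]$ to $\hat R[t]$. As a preliminary, I observe that $\hat{\mathcal I}$ is an $m_{\hat R}$-filtration: the containment $I_i\hat R\cdot I_j\hat R\subset I_{i+j}\hat R$ is immediate, and $I_m\hat R$ is $m_{\hat R}$-primary since $R\to\hat R$ is faithfully flat and $m_R\hat R=m_{\hat R}$. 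Moreover, by Lemma~\ref{AI1} each $\mu_i$ has a unique extension $\hat\mu_i$ which is an $m_{\hat R}$-valuation, so the divisor $\hat D$ and filtration $\mathcal I(\hat D)$ are defined.

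First I would compute $\hat R[\mathcal I(\hat D)]$ explicitly. Writing $D=\sum a_i\mu_i$ with $a_i\in\RR_{\ge 0}$, Lemma~\ref{AI2} applied with $n_i=\lceil m a_i\rceil$ gives
$$
I(m\hat D)=\bigcap_{i=1}^s I(\hat\mu_i)_{\lceil m a_i\rceil}=\Bigl(\bigcap_{i=1}^s I(\mu_i)_{\lceil m a_i\rceil}\Bigr)\hat R=I(mD)\hat R
$$
for every $m\in\NN$. Hence $\hat R[\mathcal I(\hat D)]=\bigoplus_{m\ge 0}I(mD)\hat R\,t^m$, which is precisely the $\hat R$-subalgebra of $\hat R[t]$ generated by $R[\mathcal I(D)]$.

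Next, for the inclusion $\hat R[\mathcal I(\hat D)]\subset \overline{\hat R[\hat{\mathcal I}]}$: by hypothesis $R[\mathcal I(D)]=\overline{R[\mathcal I]}$ in $R[t]$, so every homogeneous element $ft^m\in R[\mathcal I(D)]$ satisfies a monic relation of the form (\ref{eqBd1}) with coefficients in $R[\mathcal I]$. That same relation, read inside $\hat R[t]$, has coefficients in $\hat R[\hat{\mathcal I}]$ and exhibits $ft^m$ as integral over $\hat R[\hat{\mathcal I}]$. Since $\overline{R[\mathcal I]}$ is graded (Lemma~\ref{BdLemma1}), this covers all of $R[\mathcal I(D)]$. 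The integral closure $\overline{\hat R[\hat{\mathcal I}]}$ is an $\hat R$-algebra, so it contains $\hat R\cdot R[\mathcal I(D)]=\hat R[\mathcal I(\hat D)]$. For the reverse inclusion, note $R$ analytically irreducible forces $\hat R$ to be an excellent local domain, so Lemma~\ref{BdLemma2} (whose proof applies verbatim to real coefficients) shows $\hat R[\mathcal I(\hat D)]$ is integrally closed in $\hat R[t]$. Since $R[\mathcal I]\subset R[\mathcal I(D)]$ implies $I_m\subset I(mD)$ for all $m$, we get $\hat R[\hat{\mathcal I}]\subset \hat R[\mathcal I(\hat D)]$; taking integral closures in $\hat R[t]$ and using Lemma~\ref{BdLemma2} yields $\overline{\hat R[\hat{\mathcal I}]}\subset\hat R[\mathcal I(\hat D)]$.

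The main subtlety I anticipate is that in the non-Noetherian setting one cannot invoke general commutation of integral closure with flat base change, because $R[\mathcal I(D)]$ need not be a finite $R[\mathcal I]$-module. The workaround is exactly the homogeneous lifting of individual integral equations from $R[t]$ to $\hat R[t]$ described above, which depends critically on the gradedness of integral closures (Lemma~\ref{BdLemma1}) and on the explicit formula $I(m\hat D)=I(mD)\hat R$ from Lemma~\ref{AI2}. With both in hand, the two inclusions combine to give $\overline{\hat R[\hat{\mathcal I}]}=\hat R[\mathcal I(\hat D)]$, proving that $\hat{\mathcal I}$ is a (real) bounded $m_{\hat R}$-filtration with associated divisorial filtration $\mathcal I(\hat D)$.
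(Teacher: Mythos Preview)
Your proof is correct and follows essentially the same approach as the paper's own proof, which is extremely terse: the paper simply observes that $R[\mathcal I(D)]$ integral over $R[\mathcal I]$ forces $\hat R[\mathcal I(\hat D)]=\sum_{m\ge 0}I(mD)\hat R\,t^m$ to be integral over $\hat R[\hat{\mathcal I}]$, and that $\hat R[\mathcal I(\hat D)]$ is integrally closed. You have carefully unpacked exactly these two assertions, invoking Lemma~\ref{AI2} for the identification $I(m\hat D)=I(mD)\hat R$ and Lemma~\ref{BdLemma2} for integral closedness (the paper records the real-coefficient version of Lemma~\ref{BdLemma2} in the sentence immediately preceding the lemma), so your argument is a faithful elaboration rather than a different route.
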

 
 \begin{proof} $R[\mathcal I(D)]=\sum_{m\ge 0}I(mD)t^m$ is integral over $R[\mathcal I]=\sum_{n\ge 0}I_mt^m$ so the integrally closed ring $\hat R[\mathcal I(\hat D)]=\sum_{m\ge 0}I(mD)\hat Rt^m$ is integral over $\hat R[\hat{\mathcal I}]=\sum_{n\ge 0}I_m\hat Rt^m$.
  \end{proof}

\begin{Theorem}\label{RBTA} Suppose that $R$ is an analytically irreducible local ring and $\mathcal I(1)$ and $\mathcal I(2)$ are real bounded $m_R$-filtrations such that $\mathcal I(1)\subset \mathcal I(2)$. Then the following are equivalent
\begin{enumerate}
\item[1)]  $e(\mathcal I(1))=e(\mathcal I(2))$.
\item[2)]  There is equality of  integral closures  
$$
\overline{\sum_{m\ge 0}I(1)_mt^m}=\overline{\sum_{m\ge 0}I(2)_mt^m}
$$
  in $R[t]$.
\end{enumerate}
 \end{Theorem}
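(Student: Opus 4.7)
The plan is to reduce to Theorem \ref{RBT} by passing to the $m_R$-adic completion $\hat R$, which is a complete Noetherian local ring (hence excellent) and is a domain because $R$ is analytically irreducible. All the preliminary lemmas immediately before this theorem have been set up precisely to make this reduction work.

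The implication 2) $\Rightarrow$ 1) I would dispose of first: since $R$ analytically irreducible gives $N(\hat R) = 0$, in particular $\dim N(\hat R) < d$, so \cite[Theorem 6.9]{CSS} (or the Appendix of \cite{C6}) yields $e_R(\mathcal I(1)) = e_R(\mathcal I(2))$ whenever the integral closures agree.

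For 1) $\Rightarrow$ 2), define $\hat{\mathcal I}(i) = \{I(i)_m\hat R\}$ for $i=1,2$. By the Lemma immediately preceding the theorem, each $\hat{\mathcal I}(i)$ is a real bounded $m_{\hat R}$-filtration, with $\overline{\hat R[\hat{\mathcal I}(i)]} = \hat R[\mathcal I(\hat D_i)]$, where $\hat D_i$ is obtained from $D_i$ by replacing each $m_R$-valuation $\mu_j$ by its unique $m_{\hat R}$-extension $\hat\mu_j$ (Lemma \ref{AI1}). The inclusion $\mathcal I(1) \subset \mathcal I(2)$ passes to $\hat{\mathcal I}(1) \subset \hat{\mathcal I}(2)$, and because $\ell_R(R/I_m) = \ell_{\hat R}(\hat R/I_m\hat R)$ for every $m_R$-primary ideal, multiplicities are preserved: $e_R(\mathcal I(i)) = e_{\hat R}(\hat{\mathcal I}(i))$. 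Hence the hypothesis 1) transfers to the analogous equality in $\hat R$, and I can apply Theorem \ref{RBT} to $\hat R$ (an excellent local domain) to conclude
$$
\hat R[\mathcal I(\hat D_1)] = \overline{\hat R[\hat{\mathcal I}(1)]} = \overline{\hat R[\hat{\mathcal I}(2)]} = \hat R[\mathcal I(\hat D_2)],
$$
that is, $I(\hat D_1)_m = I(\hat D_2)_m$ for every $m \in \NN$.

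The remaining step is to descend this equality back to $R$. By Lemma \ref{AI2}, $I(D_i)_m = I(\hat D_i)_m \cap R$ for each $i$ and $m$, so $I(D_1)_m = I(D_2)_m$ for all $m$; equivalently, $R[\mathcal I(D_1)] = R[\mathcal I(D_2)]$. Since $\mathcal I(i)$ is real bounded with $\overline{R[\mathcal I(i)]} = R[\mathcal I(D_i)]$, this is exactly the desired condition 2). The only point that requires care is checking that the preceding two lemmas genuinely interface the $R$- and $\hat R$-theories as needed; but since those lemmas give both (a) the correspondence of valuations and intersections (Lemma \ref{AI2}) and (b) the stability of boundedness and the explicit form of the integral closure under completion, no further obstacle arises and the reduction to the excellent case is routine.
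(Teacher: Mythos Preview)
Your proposal is correct and follows essentially the same route as the paper: pass to the completion $\hat R$ (an excellent local domain), use that lengths and hence multiplicities are preserved, apply Theorem~\ref{RBT} there, and descend the equality $\hat R[\mathcal I(\hat D_1)]=\hat R[\mathcal I(\hat D_2)]$ back to $R[\mathcal I(D_1)]=R[\mathcal I(D_2)]$ via Lemma~\ref{AI2}. The paper's proof is terser (it treats both implications simultaneously through the equivalence of the two conditions under completion), while you handle 2)$\Rightarrow$1) directly and spell out the descent in more detail, but the substance is the same.
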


  \begin{proof} We have that $\ell_{\hat R}(\hat R/I(j)_m\hat R)=\ell_R(R/I(j)_m)$
  for $j=1,2$ and all $m\in \NN$. Thus $e_{R}(\mathcal I(j))=e_{\hat R}(\hat{\mathcal I}(j))$
  for $j=1,2$.
 We have that $\hat R[\mathcal I(\hat D_1)]=\hat R[\mathcal I(\hat D_2)]$ if and only if $R[\mathcal I(D_1)]=R[\mathcal I(D_2)]$ by Lemma \ref{AI2}. Theorem \ref{RBTA} thus follows from Theorem \ref{RBT}.
  \end{proof}

\begin{Theorem}\label{TRSKTA} Suppose that $R$ is a $d$-dimensional analytically irreducible local ring and $\mathcal I(1)$ and $\mathcal I(2)$ are bounded $m_R$-filtrations. Then the following are equivalent
\begin{enumerate}
\item[1)]  The Minkowski inequality
$$
e(\mathcal I(1)\mathcal I(2))^{\frac{1}{d}}=e(\mathcal I(1))^{\frac{1}{d}}+e(\mathcal I(2))^{\frac{1}{d}}
$$
holds.
\item[2)] There exist positive integers $a,b$ such that there is equality of  integral closures
$$
\overline{\sum_{n \ge 0}I(1)_{an}t^n}=\overline{\sum_{n\ge 0}I(2)_{bn}t^n}
$$
 in $R[t]$.
\end{enumerate}
 \end{Theorem}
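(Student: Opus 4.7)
The plan is to reduce to the already-proven excellent case (Theorem \ref{TRSKT}) by passing to the $m_R$-adic completion $\hat R$, exactly as in the proof of Theorem \ref{RBTA}. Since $R$ is analytically irreducible, $\hat R$ is a complete, hence excellent, local domain, and by the Lemma preceding Theorem \ref{RBTA}, the extended filtrations $\hat{\mathcal I}(j) = \{I(j)_n\hat R\}$ are bounded $m_{\hat R}$-filtrations with $\overline{\hat R[\hat{\mathcal I}(j)]} = \hat R[\mathcal I(\hat D_j)]$ whenever $\overline{R[\mathcal I(j)]}=R[\mathcal I(D_j)]$, where $\hat D_j$ is the divisor obtained by replacing each $m_R$-valuation $\mu$ in the support of $D_j$ by its unique extension $\hat\mu$ (Lemma \ref{AI1}). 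So Theorem \ref{TRSKT} is directly available for the pair $\hat{\mathcal I}(1),\hat{\mathcal I}(2)$ on $\hat R$.

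The first step is to transfer condition 1). Since $R\to \hat R$ is faithfully flat and each $I(j)_n$ is $m_R$-primary, $\ell_R(R/I(j)_n) = \ell_{\hat R}(\hat R/I(j)_n\hat R)$, and likewise $\ell_R(R/I(1)_n I(2)_n) = \ell_{\hat R}(\hat R/(I(1)_nI(2)_n)\hat R)$. Taking the limits defining the multiplicities, $e_R(\mathcal I(1))=e_{\hat R}(\hat{\mathcal I}(1))$, $e_R(\mathcal I(2))=e_{\hat R}(\hat{\mathcal I}(2))$ and $e_R(\mathcal I(1)\mathcal I(2))=e_{\hat R}(\hat{\mathcal I}(1)\hat{\mathcal I}(2))$. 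Thus the Minkowski equality holds for $\mathcal I(1),\mathcal I(2)$ in $R$ if and only if it holds for $\hat{\mathcal I}(1),\hat{\mathcal I}(2)$ in $\hat R$.

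The second step is to transfer condition 2). By boundedness, $\overline{\sum_{n\ge 0} I(1)_{an}t^n}$ is the integral closure of the Veronese of a bounded filtration, which has the form $R[\mathcal I(aD_1)] = \sum_{m\ge 0} I(amD_1)t^m$ (using Lemma \ref{BdLemma2}), and similarly on the $\mathcal I(2)$ side and on the $\hat R$ side. So condition 2) in $R$ becomes $I(amD_1)=I(bmD_2)$ for all $m$, and condition 2) in $\hat R$ becomes $I(am\hat D_1)=I(bm\hat D_2)$ for all $m$. By Lemma \ref{AI2}, $I(amD_1)\hat R = I(am\hat D_1)$ and $I(bmD_2)\hat R = I(bm\hat D_2)$; by faithful flatness of $R\to\hat R$, one equality holds if and only if the other does. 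Combining the two transfers with Theorem \ref{TRSKT} applied to $\hat R$ finishes the argument.

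The main obstacle is the bookkeeping in the second step: one must verify that the integral closure in $R[t]$ of the Veronese filtration $\{I(j)_{cn}\}$ is exactly the Veronese $R[\mathcal I(cD_j)]$ of the integrally closed divisorial algebra $R[\mathcal I(D_j)]$, and similarly in $\hat R[t]$, so that conditions 2) in $R$ and $\hat R$ can both be recast as genuine equalities of divisorial filtrations. Once this identification is made, Lemma \ref{AI2} (together with Lemma \ref{AI1}) transports the equality losslessly between $R$ and $\hat R$, and no further analysis is needed.
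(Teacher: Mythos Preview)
Your proposal is correct and follows essentially the same approach as the paper: pass to the completion $\hat R$ (excellent), transfer condition 1) via the length equalities $\ell_R(R/I(1)_{mn_1}I(2)_{mn_2})=\ell_{\hat R}(\hat R/I(1)_{mn_1}I(2)_{mn_2}\hat R)$, transfer condition 2) by identifying $\overline{\sum_{n\ge 0}I(j)_{cn}t^n}=\sum_{n\ge 0}I(D_j)_{cn}t^n$ on both sides and invoking Lemma \ref{AI2}, and then apply Theorem \ref{TRSKT}. The bookkeeping point you flag---that the integral closure of the Veronese is the Veronese of the divisorial algebra---is exactly what the paper asserts in the line ``Since $\overline{\sum_{n\ge 0}I(j)_{cn}t^n}=\sum_{n\ge 0}I(D_j)_{cn}t^n$'' and uses in the same way.
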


 \begin{proof} 
  
  Since $\ell_{\hat R}(\hat R/I(1)_{mn_1}I(2)_{mn_2}\hat R)=\ell_R(R/I(1)_{mn_1}I(2)_{mn_2})$  for all $m,n_1,n_2\in \NN$, we have that
  $$
e_R(\mathcal I(1)\mathcal I(2))^{\frac{1}{d}}=e_R(\mathcal I(1))^{\frac{1}{d}}+e_R(\mathcal I(2))^{\frac{1}{d}}
$$  
if and only if
$$
e_{\hat R}(\hat{\mathcal I}(1)\hat{\mathcal I}(2))^{\frac{1}{d}}=e_{\hat R}(\hat{\mathcal I}(1))^{\frac{1}{d}}+e_{\hat R}(\hat{\mathcal I}(2))^{\frac{1}{d}}.
$$

  By Lemma \ref{AI2}, we have that
  $\sum_{n\ge 0}I(D_1)_{an}\hat Rt^n
  =\sum_{n\ge 0}I(D_2)_{bn}\hat Rt^n$ if and only if $\sum_{n\ge 0}I(D_1)_{an}t^n=\sum_{n\ge 0}I(D_2)_{bn}t^n$. 
  Since $\overline{ \sum_{n\ge 0}I(j)_{cn}t^n}=\sum_{n\ge 0}I(D_j)_{cn}t^n$ and
  $$
  \overline{ \sum_{n\ge 0}I(j)_{cn}\hat Rt^n}=\sum_{n\ge 0}I(D_j)_{cn}\hat Rt^n
  $$
  for all $c\in \ZZ_{>0}$ and $j=1,2$, we have that   
  $\overline{\sum_{n\ge 0}I(1)_{an}t^n}=\overline{\sum_{n\ge 0}I(2)_{bn}t^n}$ 
  if and only if $\overline{\sum_{n\ge 0}I(1)_{an}\hat Rt^n}=\overline{\sum_{n\ge 0}I(2)_{bn}\hat Rt^n}$.   
  
  By Theorem \ref{TRSKT} we have that the conclusions of Theorem \ref{TRSKTA} holds.
  \end{proof}

\section{An Example}\label{SecExample}

In Theorem 1.4 \cite{C7}, the following example is constructed.
Let $k$ be an algebraically closed field. A 3-dimensional normal algebraic local ring $R$ over $k$
is constructed,  and the blow up $\phi:X\rightarrow \mbox{Spec}(R)$  of an $m_R$-primary ideal such that $X$ is nonsingular with two  irreducible exceptional divisors $E_1$ and $E_2$ is constructed.   

 The resolution of singularities of a three dimensional normal local ring which we construct is similar to    the one constructed in  \cite[Example 6]{CS} which is used to give an example of a  divisorial filtration  with irrational multiplicity.

  \begin{Theorem}(\cite[Theorem 1.4]{C7})\label{Theorem4} Let $D=n_1E_1+n_2E_2$ with $n_1,n_2\in \NN$.
   Then 
    $$
   \lim_{m\rightarrow\infty}
  \frac{\ell_R(R/I(mD))}{m^3} 
  =\left\{\begin{array}{ll}
  33 n_1^3&\mbox{ if }n_2<n_1\\
  78n_1^3-81n_1^2n_2+27n_1n_2^2+9n_2^3&\mbox{ if }n_1\le n_2<n_1\left(3-\frac{\sqrt{3}}{3}\right)\\
  \left(\frac{2007}{169}-\frac{9\sqrt{3}}{338}\right)n_2^3&\mbox{ if }n_1\left(3-\frac{\sqrt{3}}{3}\right)<n_2.
  \end{array}\right.
  $$
    \end{Theorem}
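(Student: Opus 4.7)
The plan is to reduce the length computation to intersection theory on $X$ and then analyze three subcones of $\RR_{\ge 0}^2$ via a Zariski-type decomposition of $-D$. Since $R$ is normal and $\phi_*\mathcal O_X = R$, the short exact sequence $0\to\mathcal O_X(-mD)\to\mathcal O_X\to\mathcal O_{mD}\to 0$ gives, after applying $\phi_*$,
\[
\ell_R(R/I(mD)) = \ell_R(\phi_*\mathcal O_{mD}) - \ell_R(R^1\phi_*\mathcal O_X(-mD)).
\]
Whenever $-D$ is $\phi$-nef, $R^1\phi_*\mathcal O_X(-mD)$ has length $O(m^2)$ (via a relative Kawamata--Viehweg-type vanishing), and an asymptotic Riemann--Roch on the exceptional fibres yields
\[
\lim_{m\to\infty}\frac{\ell_R(R/I(mD))}{m^3} = \tfrac{1}{6}(-D_{\mathrm{pos}})^3,
\]
where $D_{\mathrm{pos}}$ is the positive (nef) part of a relative Zariski decomposition of $-D$ on $X$.

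The first step is to record the intersection numbers $E_i\cdot E_j\cdot E_k$ on $X$ from the explicit construction in \cite{C7}. With these, the $\phi$-nef subcone of $\RR_{\ge 0}E_1+\RR_{\ge 0}E_2$ is cut out by the conditions that $-D$ pair nonnegatively with each extremal ray of the relative Mori cone of $X$. The two bounding rays $n_2=n_1$ and $n_2 = n_1(3-\sqrt3/3)$ arise from these linear conditions; the irrational slope $3-\sqrt3/3$ reflects the fact that one of the extremal curve classes on $X$ is defined over $\QQ(\sqrt3)$ rather than $\QQ$.

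The three subcase calculations then go as follows. In Region 2 the divisor $-D$ is itself $\phi$-nef, so one directly evaluates $\tfrac{1}{6}(-n_1E_1-n_2E_2)^3$ against the intersection pairing to recover the cubic $78n_1^3-81n_1^2n_2+27n_1n_2^2+9n_2^3$. In Region 1 the nef part lies on the boundary ray $n_2=n_1$, giving $D_{\mathrm{pos}}=n_1(E_1+E_2)$ and the specialisation $33n_1^3$ of the Region 2 formula. In Region 3 the nef part lies on the boundary ray of slope $3-\sqrt3/3$, giving $D_{\mathrm{pos}}=\tfrac{3}{9-\sqrt3}n_2\,E_1+n_2\,E_2$ (cf.\ Theorem \ref{Theorem5}); substitution produces the irrational coefficient $\tfrac{2007}{169}-\tfrac{9\sqrt3}{338}$. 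Continuity at the two boundary rays provides a strong internal consistency check.

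The main obstacle is justifying the relative Zariski decomposition in Regions 1 and 3: in dimension three such a decomposition is not automatic, so one must verify by hand that the claimed $D_{\mathrm{pos}}$ really is $\phi$-nef and that the effective difference $-D-D_{\mathrm{pos}}$ is contracted to lower-order contributions in the length function, i.e.\ that $I(\lceil mD\rceil)$ and $I(\lceil mD_{\mathrm{pos}}\rceil)$ agree up to an $O(m^2)$ defect. This amounts to solving a small linear system in the intersection matrix of $X$; it is precisely the occurrence of $\sqrt3$ in this matrix that forces both the irrational bounding slope and the irrational leading coefficient in Region 3, features impossible for divisorial filtrations on a two-dimensional excellent normal local domain.
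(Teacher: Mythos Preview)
The paper does not prove this statement; it is quoted from \cite[Theorem~1.4]{C7}. The surrounding material, however, assembles exactly the ingredients the computation needs: the identity $\lim_{m\to\infty}\ell_R(R/I(mD))/m^3=-\langle(-D)^3\rangle/3!$ of (\ref{eq11T}), the intersection numbers (\ref{eqT12}), Proposition~\ref{Prop1T} reducing the anti-positive intersection product to an ordinary one once $-\sum_i\gamma_{E_i}(D)E_i$ is nef, and the explicit values of $\gamma_{E_1}(D),\gamma_{E_2}(D)$ in Theorem~\ref{Theorem5}. Your $D_{\mathrm{pos}}$ is precisely $\sum_i\gamma_{E_i}(D)E_i$, and your ``relative Zariski decomposition'' is $-D=(-\sum_i\gamma_{E_i}(D)E_i)+(\sum_i\gamma_{E_i}(D)E_i-D)$. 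The step you flag as the main obstacle---showing that $I(mD)$ and $I(\lceil mD_{\mathrm{pos}}\rceil)$ agree asymptotically---is not an obstacle at all: Lemma~\ref{LemmaAR1} shows they are \emph{equal} for every $m$, with no linear-algebra argument, no vanishing theorem, and no $O(m^2)$ defect.

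Your explanation of the irrational boundary is wrong and would lead you astray if you tried to make it precise. Extremal curve classes on a variety over an algebraically closed field lie in the integral lattice $N_1(X)_{\ZZ}$; no individual curve class is ``defined over $\QQ(\sqrt3)$''. The irrational ray $n_2=n_1(3-\sqrt3/3)$ bounds the nef cone because the construction in \cite{C7} (modelled on \cite[Example~6]{CS}) arranges one exceptional divisor to carry a surface with non-polyhedral nef cone, so that the relevant cone is cut by a quadric rather than by finitely many rational hyperplanes. Correspondingly, $\sqrt3$ cannot be extracted from the rational intersection matrix (\ref{eqT12}) by solving any finite linear system; it enters only through the asymptotic invariant $\gamma_{E_1}(D)$ of Theorem~\ref{Theorem5}, which encodes base-locus behaviour, not intersection numbers.

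Finally, your displayed formula has a sign error: since $d=3$ is odd, $\tfrac16(-D_{\mathrm{pos}})^3=-\tfrac16(D_{\mathrm{pos}})^3$, which is negative. The correct expression, consistent with (\ref{eq11T}) and Proposition~\ref{Prop1T}, is $-\tfrac16\langle(-D)^3\rangle=\tfrac16\bigl(\sum_i\gamma_{E_i}(D)E_i\bigr)^3$; substituting the values from Theorem~\ref{Theorem5} and (\ref{eqT12}) then gives the three stated formulas.
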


 We compute the functions $\gamma_{E_1}$ and $\gamma_{E_2}$ in \cite[Theorem 4.1]{C7}.   
    
  \begin{Theorem}\label{Theorem5}(\cite[Theorem 4.1]{C7}) Let $D=n_1E_1+n_2E_2$ with $n_1,n_2\in \NN$, an effective exceptional  divisor on $X$.
  \begin{enumerate}
  \item[1)] Suppose that $n_2<n_1$. Then  $\gamma_{E_1}(D)=n_1$ and $\gamma_{E_2}(D)=n_1$.
  \item[2)]   Suppose that $n_1\le n_2<n_1 \left(3-\frac{\sqrt{3}}{3}\right)$. Then   $\gamma_{E_1}(D)=n_1$ and $\gamma_{E_2}(D)=n_2$.
  \item[3)] Suppose that $n_1 \left(3-\frac{\sqrt{3}}{3}\right)<n_2$. Then   $\gamma_{E_1}(D)=\frac{3}{9-\sqrt{3}}n_2$ and $\gamma_{E_2}(D)=n_2$.
  \end{enumerate}
  In all three cases, $-\gamma_{E_1}(D)E_1-\gamma_{E_2}(D)E_2$ is nef on $X$. 
    \end{Theorem}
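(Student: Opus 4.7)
The plan is to compute $\gamma_{E_i}(D)$ by producing, in each region, a divisor $P(D)=\gamma_{E_1}(D)E_1+\gamma_{E_2}(D)E_2$ with $\gamma_{E_i}(D)\geq n_i$ such that $-P(D)$ is nef on $X$ and $I(mD)=\Gamma(X,\mathcal{O}_X(-\lceil mP(D)\rceil))$ for every $m\in\NN$. Given such a $P(D)$, Lemma \ref{LemmaAR1} together with the definition \eqref{eqAR3} identifies $\gamma_{\nu_{E_i}}(\mathcal{I}(D))$ with the coefficient of $E_i$ in $P(D)$, and the nef assertion at the end of the theorem becomes tautological.

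The first step is to work out the intersection theory of $E_1$ and $E_2$ on the smooth threefold $X$. Because the exceptional locus of $\phi$ has only two components meeting along a curve, the cone of curves supported on $E_1\cup E_2$ is spanned by finitely many explicit classes, and one tabulates the numbers $E_i\cdot C$ on each. The cone of $\RR$-divisors $-\alpha_1E_1-\alpha_2E_2$ that are nef on $X$ is then cut out by finitely many linear inequalities $(-\alpha_1E_1-\alpha_2E_2)\cdot C\geq 0$, and two of these will turn out to be the active walls $\alpha_2=\alpha_1$ and $\alpha_1/\alpha_2=3/(9-\sqrt{3})$. The irrational slope should come from an intersection on a non-toric curve inside $E_1$ or $E_2$, exactly as in the construction of \cite[Example~6]{CS}. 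For $D$ in Region~2, where $-D$ is already nef by definition of the region, one sets $P(D)=D$; for $D$ in Region~1 or Region~3, one projects $D$ onto the nearest wall of the nef cone, fixing the coordinate $n_1$ (resp.\ $n_2$) and raising the other coordinate to land on the wall.

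The main obstacle is the equality $I(mD)=\Gamma(X,\mathcal{O}_X(-\lceil mP(D)\rceil))$. The inclusion $\Gamma(X,\mathcal{O}_X(-\lceil mP(D)\rceil))\subseteq I(mD)$ is immediate from the coefficient comparison $P(D)\geq D$. The reverse inclusion is a three-dimensional substitute for a Zariski decomposition, and the plan is to show that any $f\in I(mD)$ must vanish along the curve $C$ with $-D\cdot C<0$ to high order, and that this additional vanishing propagates across $E_1\cap E_2$ onto the adjacent exceptional divisor, asymptotically reaching the infimum $\gamma_{E_i}(D)$. In Region~3 the coefficient $\gamma_{E_1}(D)=\frac{3}{9-\sqrt{3}}n_2$ is irrational, so the equality can only be realized in the limit in $m$, which is consistent with $\gamma_{E_1}(D)$ being an infimum rather than a finitely attained minimum; this is also why the multiplicity formula in Theorem \ref{Theorem4} already carries an irrational leading coefficient in that region. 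In Region~2, on the other hand, one additionally needs to show the matching lower bound $\tau_{\nu_{E_i},m}(\mathcal{I}(D))\leq mn_i$, which should follow from the semiampleness of $-D$: produce sections of $\mathcal{O}_X(-mD)$ attaining the prescribed orders $mn_i$ along $E_i$ for sufficiently divisible $m$.
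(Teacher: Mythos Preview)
The paper does not prove this theorem; it is quoted verbatim from \cite[Theorem~4.1]{C7} and used as input for the examples in Section~\ref{SecExample}. So there is no ``paper's own proof'' to compare against here.

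That said, your outline is the right shape for how the argument in \cite{C7} goes: one identifies the nef cone of exceptional $\RR$-divisors on $X$ via the explicit intersection numbers (the numbers in \eqref{eqT12} are the relevant ones), checks that its two walls are $\alpha_1=\alpha_2$ and $\alpha_1/\alpha_2=3/(9-\sqrt{3})$, and for $D$ outside the nef cone replaces $D$ by the ``Zariski-type'' projection $P(D)$ onto the nearest wall. The substantive content, as you correctly flag, is the equality $I(mD)=\Gamma(X,\mathcal O_X(-\lceil mP(D)\rceil))$ in Regions~1 and~3, and the attainment $\tau_{\nu_{E_i},m}=mn_i$ in Region~2.

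What you have written, however, is a plan rather than a proof: you have not actually computed the curve classes generating the Mori cone, nor verified which inequalities cut out the nef cone, nor carried out the propagation-of-vanishing argument that you yourself identify as the main obstacle. In particular, the sentence ``this additional vanishing propagates across $E_1\cap E_2$ onto the adjacent exceptional divisor, asymptotically reaching the infimum $\gamma_{E_i}(D)$'' is the entire content of the theorem in Regions~1 and~3, and it requires the specific geometry of the construction in \cite{C7} (the $E_i$ are projective bundles over curves with prescribed self-intersections, and one must trace how a section of $\mathcal O_X(-mD)$ restricts to $E_1$ and $E_2$ and use the negativity of certain sub-line-bundles to force extra vanishing). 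Your proposal does not engage with any of this; to turn it into a proof you would need to reproduce the explicit computations from \cite{C7}.
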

    
    \begin{Corollary}\label{Cor20T} Suppose that $D_1$ and $D_2$ are
 effective integral exceptional divisors
 on $X$.   
 If $D_1$ and $D_2$ are in the first region of Theorem \ref{Theorem4}, then Minkowski's equality holds between them. If $D_1$ and $D_2$ are  in the second region, then Minkowski's equality holds between them if and only if $D_2$ is a rational multiple of $D_1$. If $D_1$ and $D_2$ are in the third region, then Minkowski's equality holds between them. 
  Minkowski's equality cannot hold between $D_1$ and $D_2$ in different regions.  
  \end{Corollary}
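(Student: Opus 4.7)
The plan is to apply Theorem~\ref{Theorem6}. Since $X$ has exactly two prime exceptional divisors $E_1,E_2$ and both $e(\mathcal I(D_1))$ and $e(\mathcal I(D_2))$ are positive by Proposition~\ref{PropPos}, Minkowski's equality for $\mathcal I(D_1),\mathcal I(D_2)$ is equivalent to the single scalar identity
$$
\frac{\gamma_{E_1}(D_2)}{\gamma_{E_1}(D_1)} \;=\; \frac{\gamma_{E_2}(D_2)}{\gamma_{E_2}(D_1)}.
$$
Writing $D_1=a_1E_1+a_2E_2$ and $D_2=b_1E_1+b_2E_2$, the argument reduces to substituting the three explicit formulas for $(\gamma_{E_1}(D),\gamma_{E_2}(D))$ supplied by Theorem~\ref{Theorem5} and examining the six region-pair possibilities.

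For the three same-region statements: in Region~1 the $\gamma$-pairs are $(a_1,a_1)$ and $(b_1,b_1)$, so both ratios collapse to $b_1/a_1$; in Region~3 they are the proportional pairs $(\tfrac{3}{9-\sqrt{3}}a_2,a_2)$ and $(\tfrac{3}{9-\sqrt{3}}b_2,b_2)$, so both collapse to $b_2/a_2$; and in Region~2 they are $(a_1,a_2)$ and $(b_1,b_2)$, so the identity becomes $a_1b_2=a_2b_1$, i.e., $D_2$ is a rational scalar multiple of $D_1$. This yields each of the three ``same-region'' assertions.

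For the cross-region claim, one checks each unordered pair of regions. Region~1 vs.\ Region~2 yields ratios $b_1/a_1$ and $b_2/a_1$, equal only if $b_1=b_2$, which places $D_2$ on the common boundary $n_1=n_2$ where the Region~1 and Region~2 formulas for $\gamma$ coincide, so that degenerate case is already absorbed into the same-region analysis. Region~1 vs.\ Region~3 forces $\tfrac{3}{9-\sqrt{3}}=1$, i.e., $\sqrt{3}=6$, impossible. Region~2 vs.\ Region~3 reduces to $\sqrt{3}\,a_1 = 9a_1-3a_2$; since $D_1\in{}$Region~2 forces $a_1\ge 1$, this identity would express $\sqrt{3}$ as a rational number, a contradiction. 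The main piece of bookkeeping is precisely this nondegeneracy $a_1\ge 1$, which prevents the $\sqrt{3}$-identity in the Region~2 vs.\ Region~3 case from collapsing trivially; once that is observed, the corollary follows.
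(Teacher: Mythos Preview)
Your proposal is correct and follows exactly the approach of the paper, which simply writes ``This follows from Theorems~\ref{Theorem6} and~\ref{Theorem5}.'' You have merely made explicit the case-by-case verification of the $\gamma$-ratio criterion that the paper leaves to the reader, including the observation that the Region~1/Region~2 boundary $n_1=n_2$ is where the two $\gamma$-formulas agree.
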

  
  \begin{proof} This follows from Theorems \ref{Theorem6} and \ref{Theorem5}.
  \end{proof}

  The interpretation of mixed multiplicities as anti-positive intersection multiplicities is particularly useful in the calculation of examples.  
  We quote some statements from \cite{C6} which, along with the calculations in Theorem \ref{Theorem5} and the identities
  
 \begin{equation}\label{eqT12}
 (E_1^3)=468, (E_1^2\cdot E_2)=-162, (E_1\cdot E_2^2)=54, (E_2^3)=54
 \end{equation}
 on page 15 of \cite{C7}
  allow us to compute the mixed multiplicities of any divisors $D_1=a_1E+a_2E_2$ and $D_2=b_1E_1+b_2E_2$.

 It is shown in \cite[Theorem 8.3]{C6} that  we have  identities
\begin{equation}\label{eq21T}
e_R(\mathcal I(D_1)^{[d_1]},\mathcal I(D_2)^{[d_2]};R)=-\langle(-D_1)^{d_1}\cdot(-D_2)^{d_2}\rangle
\end{equation}
where $\langle(-D_1)^{d_1}\cdot (-D_2)^{d_2}\rangle$ are the anti-positive intersection products defined in \cite{C6}.
In particular, $e_R(\mathcal I(D);R)=-\langle(-D)^d\rangle$.  Thus by (\ref{eqV6}), we have that  \cite[Formula (1.8)]{C7}
 
 \begin{equation}\label{eq10T}
 \begin{array}{l}
\lim_{m\rightarrow \infty}  \frac{\ell_R(R/I(mn_1D_1)I(mn_2D_2))}{m^d}\\
  =-\sum_{d_1+d_2=d}\frac{1}{d_1!d_2!}\langle(-D_1)^{d_1}\cdot (-D_2)^{d_2}\rangle
n_1^{d_1} n_2^{d_2}.
\end{array}
\end{equation} 
 and \cite[Formula (1.9)]{C7}
 \begin{equation}\label{eq11T}
 \lim_{m\rightarrow \infty}\frac{\ell_R(R/I(mD))}{m^d}=-\frac{\langle(-D)^d\rangle}{d!}.
 \end{equation}

 \begin{Proposition}\label{Prop1T}(\cite[Proposition 2.4]{C7}) Suppose that $D_1,\ldots,D_d
 $ are effective $\QQ$-Cartier divisors with exceptional support such that the divisors $-\sum\gamma_{E_i}(D_j)E_i$ are nef for $1\le j\le d$.
  Then the positive intersection product $\langle-D_1\cdot,\ldots,\cdot -D_d\rangle$ is the ordinary intersection product
  $(-\sum\gamma_{E_i}(D_1)E_i\cdot\ldots\cdot-\sum\gamma_{E_i}(D_d)E_i)$. 
  \end{Proposition}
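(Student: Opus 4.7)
The plan is to combine two observations: (i) the graded family $\{I(mD_j)\}$ that underlies the definition of $\langle-D_1\cdots-D_d\rangle$ only depends on the divisor $F_j:=\sum_i\gamma_{E_i}(D_j)E_i$, so we may replace each $-D_j$ by $-F_j$; and (ii) the anti-positive intersection product of nef $\QQ$-Cartier divisors coincides with the ordinary intersection product. Under the nefness hypothesis on each $-F_j$, these two steps give the claim.

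First I would invoke Lemma \ref{LemmaAR1}: for every $m\in\NN$,
$$
I(mD_j)=\Gamma\bigl(X,\mathcal O_X(-\lceil m F_j\rceil)\bigr).
$$
The anti-positive intersection product constructed in \cite{C6} is built from the graded $R$-algebras $R[\mathcal I(D_j)]$, equivalently from the convex bodies $\Delta(\mathcal I(D_j))$ of Section \ref{SecFrame}, so it depends on $D_j$ only through the $m_R$-filtration $\mathcal I(D_j)$. Since this filtration is unchanged under replacing $D_j$ by $F_j$, we obtain
$$
\langle(-D_1)\cdots(-D_d)\rangle=\langle(-F_1)\cdots(-F_d)\rangle.
$$
At this point each $F_j$ is an effective $\QQ$-divisor (the $\gamma_{E_i}(D_j)$ are nonnegative rationals in this setting, since $D_j$ is $\QQ$-Cartier and the $\gamma_{E_i}$ can be read off the Zariski-type decomposition on the fixed model $X$), and $-F_j$ is nef on $X$ by hypothesis.

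Next I would argue that for nef $\QQ$-Cartier divisors the anti-positive intersection product is the ordinary intersection product. Concretely, let $\pi\colon Y\to X$ be any projective birational modification appearing in the supremum/limit that defines $\langle\,\cdot\,\rangle$. The pullback $\pi^*(-F_j)$ is still nef, so in any asymptotic Zariski-type decomposition its nef (``positive'') part is $\pi^*(-F_j)$ itself and its effective (``negative'') part is zero. Applying the projection formula $d$ times,
$$
\bigl(\pi^*(-F_1)\cdot\ldots\cdot\pi^*(-F_d)\bigr)=\bigl((-F_1)\cdot\ldots\cdot(-F_d)\bigr),
$$
so the supremum/limit over modifications collapses to the single intersection number $((-F_1)\cdot\ldots\cdot(-F_d))$ on $X$.

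The main obstacle will be lining up the definition of $\langle\,\cdot\,\rangle$ used in \cite{C6} (through asymptotic volumes of convex bodies, see identity (\ref{eq21T}) and its proof in \cite[Theorem 8.3]{C6}) with the nef-envelope description used in step two, so that one can actually conclude that the positive part of $\pi^*(-F_j)$ is $\pi^*(-F_j)$. I would extract this reduction directly from the proof of \cite[Theorem 8.3]{C6}; the auxiliary rounding $\lceil mF_j\rceil-mF_j$ is uniformly bounded on $X$ and therefore contributes $O(m^{d-1})$ to lengths, so it disappears in the $m^{-d}$ limit that defines the intersection product and plays no role in either equality.
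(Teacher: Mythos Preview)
The paper does not supply a proof of this proposition; it is quoted verbatim from \cite[Proposition 2.4]{C7}, so there is no in-paper argument to compare against. I can therefore only assess your proposal on its own terms.

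Your two-step strategy is sound: step (i) is exactly Lemma \ref{LemmaAR1}, and the identity (\ref{eq21T}) confirms that $\langle(-D_1)\cdots(-D_d)\rangle$ depends only on the filtrations $\mathcal I(D_j)=\mathcal I(F_j)$, so replacing $D_j$ by $F_j=\sum_i\gamma_{E_i}(D_j)E_i$ is legitimate. Step (ii) is also the right idea: for a nef class the ``positive part'' in any Fujita/Zariski-type approximation is the class itself, so the supremum defining the positive intersection product collapses via the projection formula.

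There is, however, a genuine error in your bridge between the two steps. You assert that the $\gamma_{E_i}(D_j)$ are rational ``since $D_j$ is $\QQ$-Cartier and the $\gamma_{E_i}$ can be read off the Zariski-type decomposition on the fixed model $X$.'' This is false, and the paper itself provides the counterexample: in Theorem \ref{Theorem5}, $X$ is nonsingular (so $E_2$ is Cartier, hence certainly $\QQ$-Cartier) yet $\gamma_{E_1}(E_2)=\frac{3}{9-\sqrt{3}}$ is irrational. There is no Zariski decomposition on a fixed threefold in general, and even when one exists its coefficients need not be rational. Consequently $F_j$ is in general only an $\RR$-divisor, and your step (ii) as written---which invokes pullback and projection for $\QQ$-Cartier divisors---does not literally apply.

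The fix is routine but must be said: extend the intersection product and nefness to $\RR$-divisors by multilinearity and closure, and then either (a) approximate the nef $\RR$-divisor $-F_j$ by nef $\QQ$-divisors and pass to the limit using continuity of both the ordinary intersection product and the anti-positive product (the latter continuity is what actually needs to be checked against the definition in \cite{C6}), or (b) work directly with the filtration $\mathcal I(F_j)$ and compute the mixed multiplicity via asymptotic Riemann--Roch after perturbing $-F_j$ to ample. Your remark about the rounding error $\lceil mF_j\rceil-mF_j$ being $O(1)$ is correct and handles that technicality, but it does not substitute for the missing rationality.
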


 We now use this method to compute the mixed multiplicities of $\mathcal I(E_1)$ and $\mathcal I(E_2)$.
 By Theorem \ref{Theorem5}
 $$
 \gamma_{E_1}(E_1)=1\,\, \gamma_{E_2}(E_1)=1,\,\,
 \gamma_{E_1}(E_2)=\frac{3}{9-\sqrt{3}},\,\, \gamma_{E_2}(E_2)=1.
 $$
 By formulas (\ref{eqT12}) and (\ref{eq10T}) and Proposition \ref{Prop1T}, 
 \begin{equation}\label{eqT20}
 \begin{array}{l}
 \lim_{m\rightarrow\infty}\frac{\ell_R(R/I(mn_1E_1)I(mn_2E_2))}{m^3}\\
 =\sum_{i_1+i_2=3}\frac{1}{i_1!i_2!}e_R(\mathcal I(E_1)^{[i_1]},\mathcal I(E_2)^{[i_2]})n_1^{i_1}n_2^{i_2}\\
 =\sum_{i_1+i_2=3}-\frac{1}{i_1!i_2!}\left((-\gamma_{E_1}(E_1)E_1-\gamma_{E_2}(E_1)E_2)^{i_1}\cdot(-\gamma_{E_1}(E_2)E_1-\gamma_{E_2}(E_2)E_2)^{i_2})\right)n_1^{i_1}n_2^{i_2}\\ 
 =\sum_{i_1+i_2=3}\frac{1}{i_1!i_2!}\left((E_1+E_2)^{i_1}\cdot(\frac{3}{9-\sqrt{3}}E_1+E_2)^{i_2}\right)n_1^{i_1}n_2^{i_2}\\
 =33n_1^3+(\frac{891}{26}+\frac{99}{26}\sqrt{3})n_1^2n_2+(\frac{12042}{338}-\frac{27}{338}\sqrt{3})n_1n_2^2+\left(\frac{2007}{169}-\frac{9\sqrt{3}}{338}\right)
n_2^3,
 \end{array} \end{equation}
  in contrast to the function of Theorem \ref{Theorem4}.

  We make a more detailed analysis in the third region. 
     
 \begin{Example} Suppose that $D_1=a_1E_1+a_2E_2$ and $D_2=b_1E_1+b_2E_2$ are integral divisors in the third region of Theorem \ref{Theorem4}, $n_1(3-\frac{\sqrt{3}}{3})<n_2$. Then $\mathcal I(D_1)$ and $\mathcal I(D_2)$ satisfy equality in Minkowski's inequality. We have 
 $$
 e_i=e_R(\mathcal I(D_1)^{[3-i]},\mathcal I(D_2)^{[i]})=a_2^{3-i}b_2^i\left(\frac{12042}{169}-\frac{27\sqrt{3}}{169}\right)
 $$
 for $0\le i\le 3$ and $\frac{e_i}{e_{i-1}}=\frac{b_2}{a_2}$ for $0\le i\le 3$. Thus
 $$
I(mb_2D_1)=I(ma_2D_2)
$$
for all $m\in \NN$.
 \end{Example}
 
 \begin{proof} By Theorem \ref{Theorem5}
 $$
 \gamma_{E_1}(D_1)=\frac{3}{9-\sqrt{3}}a_2,\,\, \gamma_{E_2}(D_1)=a_2,\,\,
 \gamma_{E_1}(D_2)=\frac{3}{9-\sqrt{3}}b_2,\,\, \gamma_{E_2}(D_2)=b_2.
 $$
 By formula (\ref{eq10T}) and Proposition \ref{Prop1T}, 
 $$
 \begin{array}{l}
 \lim_{m\rightarrow\infty}\frac{\ell_R(R/I(mD_1)I(mD_2))}{m^3}\\
 =\sum_{i_1+i_2=3}\frac{1}{i_1!i_2!}e_R(\mathcal I(D_1)^{[i_1]},\mathcal I(D_2)^{[i_2]})n_1^{i_1}n_2^{i_2}\\
 =\sum_{i_1+i_2=3}-\frac{1}{i_1!i_2!}\left((-\gamma_{E_1}(D_1)E_1-\gamma_{E_2}(D_1)E_2)^{i_1}\cdot(-\gamma_{E_1}(D_2)E_1-\gamma_{E_2}(D_2)E_2)^{i_2})\right)n_1^{i_1}n_2^{i_2}\\ 
 =\sum_{i_1+i_2=3}-\frac{1}{i_1!i_2!}\left( \left(-\frac{3}{9-\sqrt{3}}a_2E_1-a_2E_2\right)^{i_1}\cdot
 \left(-\frac{3}{9-\sqrt{3}}b_2E_1-b_2E_2\right)^{i_2}\right)n_1^{i_1}n_2^{i_2}\\
 =-\left(-\frac{3}{9-\sqrt{3}}E_1-E_2\right)^3\left[\sum_{i_1+i_2=3}\frac{1}{i_1!i_2!}a_2^{i_1}b_2^{i_2}n_1^{i_1}n_2^{i_2}\right]\\
 =\left(\frac{12042}{169}-\frac{27\sqrt{3}}{169}\right)\left[\sum_{i_1+i_2=3}\frac{1}{i_1!i_2!}a_2^{i_1}b_2^{i_2}n_1^{i_1}n_2^{i_2}\right].
   \end{array}
 $$
  We obtain the formulas for the $e_i$ of the statement of the theorem from which we conclude that the Minkowski equality is satisfied. The identity $I(mb_2D_1)=I(ma_2D_2)$ now follows from Corollary \ref{Cor1.2} and Corollary \ref{Cor1.1}.
 \end{proof}

\end{document}